\providecommand{\U}[1]{\protect\rule{.1in}{.1in}}
\newtheorem{theorem}{Théorème}[section]
\newtheorem{corollary}{Corollaire}[section]
\newtheorem{definition}{Définition}[section]
\newtheorem{lemma}{Lemme}[section]
\newtheorem{notation}{Notation}[section]
\newtheorem{proposition}{Proposition}[section]
\newtheorem{remark}{Remarque}
\numberwithin{equation}{section}
\begin{document}
\title[Introduction à l'équation de Burgers stochastique et à la Burgulence]
{Introduction à l'équation de Burgers stochastique \\ et à la Burgulence}
\author{Takfarinas Kelaï \& Sergei Kuksin}
\address{Institut de Mathématiques de Jussieu-Paris Rive Gauche, UMR 7586, Univ. Paris
Diderot, Sorbonne Paris Cité, Sorbonne Universités, UPMC Univ. Paris 06,
F-75013, Paris, France}
\email{takfarinas.kelai@imj-prg.fr}
\address{CNRS, Institut de Mathématiques de Jussieu-Paris Rive Gauche, UMR 7586, Univ.
Paris Diderot, Sorbonne Paris Cité, Sorbonne Universités, UPMC Univ. Paris 06,
F-75013, Paris, France}
\email{sergei.kuksin@imj-prg.fr}
\date{\today}
\maketitle

\begin{abstract}
Cet article propose une introduction à l'équation de Burgers visqueuse
stochastique unidimensionnelle périodique, perturbée par une force aléatoire
de type bruit blanc en temps, et suffisamment régulière en espace. Nous
prouvons des résultats classiques sur l'existence et l'unicité des solutions,
nous étudions leur régularité et nous discutons leurs propriétés quand le temps tend
vers l'infini ou quand la viscosité $\nu$ tend vers zéro. La dernière limite
décrit la turbulence dans l'équation de Burgers, nommée burgulence par U.
Frisch. Notre article sert d'introduction élémentaire aux méthodes
modernes de l'analyse des équations aux dérivées partielles stochastiques.

\end{abstract}
\tableofcontents

\section{Introduction}

On se propose d'étudier les propriétés qualitatives de l'équation de Burgers
visqueuse stochastique avec une force aléatoire $\eta^{\omega}:$
\begin{equation}
\left\{
\begin{array}
[c]{c}%
\operatorname{u}%
_{t}(t,x)+%
\operatorname{u}%
(t,x)%
\operatorname{u}%
_{x}(t,x)-\nu%
\operatorname{u}%
_{xx}(t,x)=\eta^{\omega}(t,x),\\
t\geq0,\text{ }x\in\mathbb{S}^{1}=%
\mathbb{R}
/%
\mathbb{Z}
.
\end{array}
\right. \tag{B}\label{B}%
\end{equation}
Ici $\nu\in(0,1]$ est la \textit{viscosité}. On suppose que pour tout $t\geq0$
:%
\[
\int_{\mathbb{S}^{1}}\eta^{\omega}(t,x)dx=\int_{\mathbb{S}^{1}}%
\operatorname{u}%
(0,x)dx=0.
\]
D'ici et d'après (\ref{B}), on a pour tout $t\geq0$%
\[
\int_{\mathbb{S}^{1}}%
\operatorname{u}%
(t,x)dx=0.
\]
On note $H$ l'espace de Hilbert défini par
\[
H=\{v\in L_{2}(\mathbb{S}^{1}):\int_{\mathbb{S}^{1}}v(x)dx=0\},
\]
muni de la norme usuelle et du produit scalaire de $L_{2}$ qu'on notera
$||\cdot||$ et $\langle\cdot,\cdot\rangle,$ et de la base Hilbertienne
$\{e_{k}(x),k\in%
\mathbb{Z}
^{\ast}\}$ avec%
\begin{equation}
\left\{
\begin{array}
[c]{c}%
e_{s}=\sqrt{2}\cos(2\pi sx)\\
e_{-s}=\sqrt{2}\sin(2\pi sx)
\end{array}
\right.  s\in%
\mathbb{N}
^{\star}.\label{base}%
\end{equation}
Notons que si $%
\operatorname{u}%
(x)=\underset{s\in%
\mathbb{Z}
^{\ast}}{\sum}%
\operatorname{u}%
_{s}e_{s}(x)$ alors
\begin{equation}%
\operatorname{u}%
(x)=\underset{s\in%
\mathbb{Z}
^{\ast}}{\sum}\widehat{%
\operatorname{u}%
}_{s}e^{2i\pi sx},\quad\widehat{%
\operatorname{u}%
}_{s}=\overline{\widehat{%
\operatorname{u}%
}}_{-s}=\left(  \sqrt{2}\right)  ^{-1}\left(
\operatorname{u}%
_{s}-i%
\operatorname{u}%
_{-s}\right)  \text{ pour tout }s\in%
\mathbb{N}
^{\star}.\label{expforme}%
\end{equation}
Dans la suite, on donne $\eta^{\omega}$ de la forme :%
\begin{equation}
\eta^{\omega}(t,x)=\partial_{t}\xi^{\omega}(t,x),\quad\text{et }\xi^{\omega
}(t,x)=\sum_{s\in%
\mathbb{Z}
^{\ast}}b_{s}\beta_{s}^{\omega}(t)e_{s}(x),\label{B*}%
\end{equation}
où $b_{s}\in%
\mathbb{R}
$ et $\beta_{s}^{\omega}$ sont des processus de Wiener standard indépendants
(appendice A), définie sur un espace de probabilité standard $(\Omega
,\mathcal{F},\mathbb{P})$ \cite{KaS}, \cite{Shir}. Pour tout $m\geq0 $, on
pose
\begin{equation}
B_{m}=\underset{s\in%
\mathbb{Z}
^{\ast}}{\sum}|s|^{2m}b_{s}^{2}\leq+\infty.\label{B_m}%
\end{equation}
Nous supposerons toujours que $B_{0}<\infty.$ On appelle la force aléatoire
$\eta^{\omega}$ le \textit{Bruit blanc}\emph{\ }dans l'espace $H$ et
$\xi^{\omega}$ le \textit{processus de Wiener dans }$H.$ On dit que $%
\operatorname{u}%
^{\omega}$ est une solution de (\ref{B}), (\ref{B*})$,$ si
\[%
\operatorname{u}%
^{\omega}(t,x)-%
\operatorname{u}%
^{\omega}(0,x)+\int_{0}^{t}\left(
\operatorname{u}%
(s,x)%
\operatorname{u}%
_{x}(s,x)-\nu%
\operatorname{u}%
_{xx}(s,x)\right)  ds=\xi^{\omega}(t,x)-\xi^{\omega}(0,x),
\]
pour tout $t\geq0,$ $x\in\mathbb{S}^{1},$ et pour tout $\omega\notin Q,$ où
$Q$ est un ensemble négligeable (C'est-à-dire, $Q\in\mathcal{F}$ tel que $\mathbb{P(}%
Q\mathbb{)=}0).$

L'équation (\ref{B}), (\ref{B*}) est un modèle populaire en physique théorique
moderne \cite{BK07}. Nous étudions le problème de Cauchy relatif à (\ref{B}),
ainsi que les propriétés qualitatives de ses solutions. À savoir, dans les
sections 2-6, nous prouvons les propriétés d'existence et d'unicité des
solutions de (\ref{B}) et discutons les processus de Markov correspondants.
Puis, dans les sections 7-8, nous obtenons des bornes supérieures et inférieures
pour les normes Sobolev des solutions, lesquelles sont asymptotiquement
précises quand $\nu\rightarrow0$. Dans la section 9, nous déduisons de ces
estimations que l'échelle d'espace pour les solutions de (\ref{B}) est égale à
$\nu$. Les résultats des sections 7-9 sont basés sur les thèses \cite{Birphd},
\cite{Borphd} (voir aussi \cite{Biry01}, \cite{Bor}), où les méthodes
suggérées dans \cite{Kuk97}, \cite{Kuk99} pour étudier l'équation de
Ginzburg-Landau complexe, sont appliquées à (\ref{B}). Dans les sections
10-12, nous étudions les propriétés de mélange de l'équation de Burgers,
utilisant l'approche du couplage de Döblin et suivant \cite{Bor},
\cite{KuShir}. Enfin, dans les sections 13-14, on montre que les estimations
des sections 7-8 entraînent que le spectre de l'energie des solutions de
(\ref{B}) est de la forme de la loi de Kolmogorov \cite{Frisch}, $E_{k}\sim
k^{-2}$. Là, nous suivons \cite{Borphd}, \cite{Bor}, où la dérivation
heuristique du spectre, suggérée dans \cite{AFLV92}, est rigoureusement
justifiée, ceci étant basé sur les résultats des sections précédentes.

Cet article est basé sur les notes de cours pour la seconde année de Master de
Mathématiques Fondamentales à l'université de Paris Diderot, donné par S. K
lors des années universitaires 2012/2013 et 2014/2015.

\section{Estimations de la force $\xi^{\omega}$\label{sectionksi}}

Pour $T>0$, notons par $X^{T}$ l'espace de Banach $\mathcal{C}(0,T;H),$ muni
de la norme $||\xi||_{X^{T}}=\underset{t\in\lbrack0,T]}{\sup}||\xi(t)||.$

\begin{theorem}
\label{th1}Il existe une ensemble négligeable $Q\in\mathcal{F}$ tel que si
$\omega\notin Q$, alors pour tout $T>0$ nous avons $\xi^{\omega}\in X^{T}$. De
plus, il existe $\alpha(T,B_{0})>0,$ $C^{\prime}(T,B_{0})>0,$ et pour chaque
$p\geq1,$ il existe $C(p,T,B_{0})>0$ telle que%
\begin{align}
\mathbb{E[}e^{\alpha||\xi^{\omega}(t)||^{2}}\mathbb{]} &  \leq C^{\prime
},\label{aa}\\
\mathbb{E}[\sup_{t\in(0,T)}||\xi^{\omega}(t)||]^{p} &  \leq C.\label{bb}%
\end{align}

\end{theorem}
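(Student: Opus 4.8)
The plan is to reduce the whole statement to elementary one-dimensional Brownian motion estimates via Parseval's identity. Since $\{e_s\}$ is an orthonormal basis of $H$, for every $t$ one has $\|\xi^\omega(t)\|^2=\sum_{s\in\mathbb{Z}^\ast}b_s^2\,\beta_s^\omega(t)^2$, and the only analytic inputs needed are Doob's maximal inequality applied to each $\beta_s^\omega$ and the elementary bound $\sup_t\sum_s a_s(t)\le\sum_s\sup_t a_s(t)$, valid because the summands are nonnegative. \emph{Convergence in $X^T$.} Writing $\xi^N(t)=\sum_{0<|s|\le N}b_s\beta_s^\omega(t)e_s$ (a continuous $H$-valued process), for $M<N$ one gets
\[
\mathbb{E}\,\|\xi^N-\xi^M\|_{X^T}^2=\mathbb{E}\sup_{t\in[0,T]}\sum_{M<|s|\le N}b_s^2\beta_s^\omega(t)^2\le\sum_{M<|s|\le N}b_s^2\,\mathbb{E}\sup_{t\in[0,T]}\beta_s^\omega(t)^2\le 4T\sum_{|s|>M}b_s^2,
\]
using $\mathbb{E}\sup_{[0,T]}\beta_s^\omega(t)^2\le 4\,\mathbb{E}\beta_s^\omega(T)^2=4T$. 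Since $B_0<\infty$ this tends to $0$, so $(\xi^N)$ is Cauchy in $L^2(\Omega;X^T)$; a subsequence converges a.s. in $X^T$, and since for each fixed $t$ the partial sums also converge in $L^2(\Omega;H)$, the limit coincides a.s. with $\xi^\omega(t)=\sum_s b_s\beta_s^\omega(t)e_s$. Running this for $T=n\in\mathbb{N}$ along a diagonal subsequence and letting $Q$ be the union over $n$ of the exceptional sets gives a single negligible $Q$ off which $\xi^\omega\in X^T$ for all $T>0$.

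\emph{Exponential bound (aa).} Fix $t\in[0,T]$. The variables $b_s^2\beta_s^\omega(t)^2$ are independent and $\beta_s^\omega(t)^2\overset{d}{=}tZ_s^2$ with $Z_s\sim\mathcal{N}(0,1)$, so by monotone convergence along $\xi^N(t)$ and independence,
\[
\mathbb{E}\,e^{\alpha\|\xi^\omega(t)\|^2}=\prod_{s}\mathbb{E}\,e^{\alpha b_s^2 t Z_s^2}=\prod_s\bigl(1-2\alpha b_s^2 t\bigr)^{-1/2}
\]
whenever $2\alpha b_s^2 t<1$ for all $s$. Choosing $\alpha=\alpha(T,B_0)<(2TB_0)^{-1}$ gives $2\alpha b_s^2 t\le 2\alpha B_0 T<1$ (as $b_s^2\le B_0$), and $-\log(1-x)\le 2x$ on $[0,\tfrac12]$ yields $-\tfrac12\sum_s\log(1-2\alpha b_s^2 t)\le 2\alpha tB_0\le 2\alpha TB_0$, so the product is bounded by $C'(T,B_0):=e^{2\alpha TB_0}$ (the case $B_0=0$ being trivial).

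\emph{Moment bound (bb).} With $M_s:=\sup_{t\in[0,T]}|\beta_s^\omega(t)|$ one has $\sup_{t\in(0,T)}\|\xi^\omega(t)\|^2\le\sum_s b_s^2M_s^2$ exactly as above. For $p\ge 2$, Minkowski's inequality in $L^{p/2}(\Omega)$ together with Doob's $L^p$ inequality $\mathbb{E}M_s^p\le(p/(p-1))^p\,\mathbb{E}|\beta_s^\omega(T)|^p=c_pT^{p/2}$ gives $\|\sum_s b_s^2M_s^2\|_{L^{p/2}}\le\sum_s b_s^2\|M_s\|_{L^p}^2\le c_p^{2/p}TB_0$, hence $\mathbb{E}[\sup_{(0,T)}\|\xi^\omega\|]^p\le(c_p^{2/p}TB_0)^{p/2}=:C(p,T,B_0)$; for $1\le p<2$ one reduces to $p=2$ by Jensen. (Alternatively, (bb) follows from a supremum version of (aa), proved by the same product computation after bounding $M_s^2\le 2(\sup_t\beta_s^\omega(t))^2+2(\sup_t(-\beta_s^\omega(t)))^2$ and using the reflection principle with Cauchy--Schwarz.) The plan involves no serious obstacle; the only delicate points will be the two interchanges with the infinite series -- $\sup_t$ versus $\sum_s$ and $\mathbb{E}$ versus $\prod_s$ -- handled respectively by positivity of the summands and by monotone convergence plus the logarithm estimate, together with the routine bookkeeping needed to obtain one null set valid for all $T>0$.
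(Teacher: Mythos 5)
Votre démonstration est correcte et suit pour l'essentiel la même stratégie que celle de l'article : troncature en sommes finies, calcul explicite du moment exponentiel gaussien via le produit des facteurs $(1-2\alpha b_s^2 t)^{-1/2}$ avec l'estimation $-\ln(1-x)\le 2x$, suite de Cauchy dans $L^2(\Omega;X^T)$ avec extraction diagonale sur $T\in\mathbb{N}$, et convergence monotone pour passer à la limite. La seule variation est cosmétique : vous appliquez l'inégalité de Doob coordonnée par coordonnée aux browniens scalaires (combinée avec Minkowski dans $L^{p/2}$) là où l'article l'applique directement à la (sous-)martingale $\|\xi^{N\omega}(t)\|$ à valeurs dans $H$ ; les deux donnent les mêmes bornes.
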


Si $\omega\in Q$, alors on redéfinit $\xi^{\omega}$ par $\xi^{\omega}=0$.
Ainsi $\xi^{\omega}\in X^{T}$ pour tout $\omega.$

\begin{proof}
\textit{Étape 1 : troncature. Fixons }$T>0$ et posons pour tout $N\in%
\mathbb{N}
:$%
\[
H^{(N)}=vect\{e_{j},|j|\leq N\}\subset H\text{ et }\xi^{N\omega}%
(t,x)=\sum_{|s|\leq N}b_{s}\beta_{s}^{\omega}(t)e_{s}(x).
\]
Alors $\xi^{N\omega}$ est un processus aléatoire continu dans $H^{(N)}%
$\footnote{C'est à dire, pour presque tout $\omega$, l'application $t\longmapsto
\xi^{N\omega}(t)$ $\in H^{(N)}$ est continue.}. Pour $t\geq0,$ on a%
\[
||\xi^{N\omega}(t)||^{2}=\sum_{|s|\leq N}b_{s}^{2}\beta_{s}^{\omega}(t)^{2}%
\]
Maintenant on va estimer les moments exponentiels de $\underset{t\in
(0,T)}{\sup}||\xi^{N\omega}(t)||.$ On a immédiatement pour tout $\alpha>0:$
\[
\mathbb{E}[e^{\alpha||\xi^{N\omega}(T)||^{2}}]=\mathbb{E}\left[
e^{\alpha\underset{|s|\leq N}{\sum}b_{s}^{2}\beta_{s}^{\omega}(T)^{2}}\right]
=%
{\displaystyle\prod\limits_{|s|\leq N}}
\mathbb{E}[e^{\alpha b_{s}^{2}\beta_{s}^{\omega}(T)^{2}}],
\]
car les processus $\{\beta_{s}\}$ sont indépendants. Comme $\beta
_{s}(T)=\mathcal{N}(0,\sqrt{T}),$ nous avons que%
\[
\mathbb{E}[e^{\alpha b_{s}^{2}\beta_{s}^{\omega}(T)^{2}}]=\frac{1}{\sqrt{2\pi
T}}\int_{%
\mathbb{R}
}e^{\alpha b_{s}^{2}x^{2}}e^{-\frac{x^{2}}{2T}}dx=\frac{1}{\sqrt{1-2T\alpha
b_{s}^{2}}},\quad\text{si }\alpha<\frac{1}{2Tb_{s}^{2}}.
\]
En notant $b_{\max}=\sup_{s\geq1}|b_{s}|,$ on obtient que%
\begin{equation}
\mathbb{E}[e^{\alpha||\xi^{N\omega}(T)||^{2}}]=e^{-\frac{1}{2}\underset
{|s|\leq N}{\sum}\ln(1-2T\alpha b_{s}^{2})},\text{ si }\alpha<\frac
{1}{2Tb_{\max}^{2}}.\label{Mexp2}%
\end{equation}
Comme pour tout $|x|\leq\frac{1}{2}$ nous avons $-\ln(1-x)\leq2x$, alors
(\ref{Mexp2}) implique
\begin{equation}
\mathbb{E}[e^{\alpha||\xi^{N\omega}(T)||^{2}}]\leq e^{\underset{|s|\leq
N}{\sum}2T\alpha b_{s}^{2}}=e^{2T\alpha B^{(N)}},\text{ si }\alpha<\frac
{1}{2Tb_{\max}^{2}},\label{C}%
\end{equation}
avec $B^{(N)}=\underset{|s|\leq N}{\sum}b_{s}^{2}\leq B_{0}.$ Or, pour tout
$p\geq1$ et $\alpha>0$ il existe $C(p,\alpha)>0$ telle que pour chaque $x\geq0$,
on a $x^{p}\leq Ce^{\alpha x^{2}}$. Donc, en utilisant l'inégalité (\ref{C}), nous
obtenons
\[
\mathbb{E}[||\xi^{N\omega}(T)||^{p}]\leq C(p,T,B_{0}).
\]
Par l'inégalité de Doob (\ref{doob}) (appendice A), cette inégalité implique que :
\begin{equation}
\mathbb{E}[\sup_{t\in(0,T)}||\xi^{N\omega}(t)||]^{p}\leq C^{\prime}%
(p,T,B_{0}),\quad\text{pour }p>1.\label{C'}%
\end{equation}
L'inégalité pour $p=1$ suit de (\ref{C'}) avec $p=2.$

\textit{Étape 2 : passage à la limite}. Pour $N<M,$ considérons
\[
\xi_{N}^{M}:=\xi^{M\omega}-\xi^{N\omega}=\underset{N<|s|\leq M}{\sum}%
b_{s}\beta_{s}^{\omega}e_{s}.
\]
Comme $\mathbb{E}||\xi_{N}^{M}(t)||^{2}=\underset{N<|s|\leq M}{\sum}b_{s}^{2}%
$, alors par l'inégalité de Doob, nous avons
\begin{equation}
\mathbb{E}\left[  ||\xi_{N}^{M}||_{X^{T}}^{2}\right]  =\mathbb{E}\left[
\sup_{t\in\lbrack0,T]}||\xi_{N}^{M}(t)||^{2}\right]  \leq4\underset{N<|s|\leq
M}{\sum}b_{s}^{2}.\label{NM}%
\end{equation}
On considère l'espace de Hilbert $\mathbb{L}_{2}=L_{2}(\Omega;X^{T}).$ Par
(\ref{NM}), pour $N<M,$ on a $||\xi^{M\omega}-\xi^{N\omega}||_{\mathbb{L}_{2}%
}^{2}\leq4\underset{N<|s|\leq M}{\sum}b_{s}^{2}.$ Comme $\sum b_{s}^{2}%
=B_{0}<\infty$, alors $\{\xi^{M\omega}\}\subset\mathbb{L}_{2}$ est une suite
de Cauchy et $\xi^{M\omega}\rightarrow\xi^{\omega}\in\mathbb{L}_{2}$ quand
$M\rightarrow+\infty.$ Comme la convergence dans $\mathbb{L}_{2}$ implique la
convergence p.p pour une sous-suite, alors, il existe un ensemble
négligeable $Q^{T}$ tel que $\xi^{M_{j}\omega}\rightarrow\xi^{\omega}$ dans
$X^{T}$ pour une sous-suite $M_{j}\rightarrow+\infty$ et $\omega\notin Q^{T}.$
Soit $Q=Q^{1}\cup Q^{2}\cup\cdots.$ Par le procédé diagonale de Cantor, on
construit une suite $M_{j}\rightarrow+\infty$ telle que $\xi^{M_{j}\omega
}\rightarrow\xi^{\omega}=:\underset{s\in%
\mathbb{Z}
^{\ast}}{\sum}b_{s}\beta_{s}^{\omega}e_{s}$ dans $X^{T}$, $T=1,2,\cdots$ et
pour tout $\omega\notin Q.$

En utilisant cette convergence, la relation (\ref{C}) et le théorème de Beppo-Levi,
on obtient l'inégalité (\ref{aa}). De même, (\ref{C'}) et le théorème de
Beppo-Levi nous donnent (\ref{bb}).
\end{proof}

Dans la suite, on désigne par $H^{m}$ l'espace de Sobolev défini par%
\[
H^{m}=\{v\in H,v^{(m)}\in H\},
\]
où $u^{(m)}:=\partial_{x}^{m}u$ représente la dérivée faible en $x$ à l'ordre
$m$ de $u.$ On munit $H^{m}$ du produit scalaire homogène
\[
\langle u,v\rangle_{m}=\int_{\mathbb{S}^{1}}u^{(m)}(x)v^{(m)}(x)dx,
\]
On note par $||\cdot||_{m}$ la norme associée au produit scalaire. Notons que
$\partial_{x}:H^{m+1}\rightarrow H^{m}$ est un isomorphisme et $||\partial
_{x}^{k}u||_{m}=||u||_{m+k},$ pour tout $m,k\in%
\mathbb{N}
.$

Si $u\in H$ alors $u$ s'écrit dans la base trigonométrique (\ref{base}) comme
$u(x)=\underset{s\in%
\mathbb{Z}
^{\ast}}{\sum}u_{s}e_{s}(x),$ et la norme de $u$ dans $H^{m}$ est%
\[
||u||_{m}^{2}=(2\pi)^{m}\sum_{s\in%
\mathbb{Z}
^{\ast}}|s|^{2m}|u_{s}|^{2}.
\]
On utilise cette caractérisation pour définir $H^{m}$ pour tout $m\geq0:$%
\[
H^{m}=\{u\in H,||u||_{m}<\infty\}.
\]
Pour $m<0$, on définit $H^{m}$ comme le complété de $H$ par rapport à la norme
$||\cdot||_{m}.$\newline On rappelle l'injection de Sobolev :%
\begin{equation}
H^{m}\hookrightarrow\mathcal{C}^{k}(\mathbb{S}^{1})\Longleftrightarrow
m>k+\frac{1}{2},\label{sob}%
\end{equation}
et que l'espace $H^{m}$ pour $m>\frac{1}{2}$ est une algèbre Hilbertienne :%
\begin{equation}
||uv||_{m}\leq c_{m}||u||_{m}||v||_{m}.\label{alg}%
\end{equation}
Pour $T>0,$ on note par $X_{m}^{T}$ l'espace de Banach $\mathcal{C}%
(0,T;H^{m})$ muni de la norme $||u||_{X_{m}^{T}}=\underset{t\in\lbrack
0,T]}{\sup}||u(t)||_{m}$. Avec ces notations, on a le théorème suivant :

\begin{theorem}
\label{th2}Soit $m\in%
\mathbb{N}
$ et supposons que $B_{m}<\infty.$ Alors pour chaque $\omega\in\Omega$ et pour
tout $T>0$ nous avons $\xi^{\omega}\in X_{m}^{T}$, et il existe $\alpha
(T,B_{m})>0$ telle que%
\begin{align}
\mathbb{E[}e^{\alpha||\xi^{\omega}(T)||_{m}^{2}}\mathbb{]} &  \leq
C_{m}^{\prime},\label{csqfern}\\
\mathbb{E}[\sup_{t\in(0,T)}||\xi^{\omega}(t)||_{m}]^{p} &  \leq C_{m}%
^{\prime\prime},\quad\forall p\geq1,
\end{align}
pour des constantes convenables $C_{m}^{\prime}(T,B_{m})>0$, $C_{m}%
^{\prime\prime}(p,T,B_{0})>0.$
\end{theorem}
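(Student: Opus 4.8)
The plan is to observe that Theorem~\ref{th2} is the exact analogue of Theorem~\ref{th1} with $H$ replaced by $H^m$, and that its proof is obtained by \emph{rerunning the proof of Theorem~\ref{th1} verbatim} after the substitution $b_s\mapsto\widetilde b_s:=(2\pi)^{m/2}|s|^m b_s$. Indeed, by the characterization $\|u\|_m^2=(2\pi)^m\sum_s|s|^{2m}|u_s|^2$ recalled above, for the truncation $\xi^{N\omega}(t,x)=\sum_{|s|\le N}b_s\beta_s^\omega(t)e_s(x)$, which is a continuous process with values in the finite-dimensional space $H^{(N)}\subset H^m$, one has
\[
\|\xi^{N\omega}(t)\|_m^2=(2\pi)^m\sum_{|s|\le N}|s|^{2m}b_s^2\,\beta_s^\omega(t)^2=\sum_{|s|\le N}\widetilde b_s^2\,\beta_s^\omega(t)^2 ,
\]
which is \emph{literally} the expression for $\|\xi^{N\omega}(t)\|^2$ appearing in the proof of Theorem~\ref{th1}, with $b_s$ replaced by $\widetilde b_s$. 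The hypothesis $B_m<\infty$ plays the two roles played there by $B_0<\infty$: it gives $\sum_s\widetilde b_s^2=(2\pi)^m B_m<\infty$, and, since the terms of a convergent series are bounded, it gives $\widetilde b_{\max}:=\sup_{s\ge1}|\widetilde b_s|=(2\pi)^{m/2}\sup_{s\ge1}|s|^m|b_s|<\infty$. This last finiteness is exactly what is needed for the argument of Theorem~\ref{th1} to go through, as there $b_{\max}$ enters only through the admissibility condition $\alpha<\tfrac1{2Tb_{\max}^2}$ in (\ref{C}).

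With this substitution in hand, \emph{Step~1} of the proof of Theorem~\ref{th1} carries over word for word: by independence of the $\beta_s$ and the Gaussian exponential-moment formula,
\[
\mathbb{E}[e^{\alpha\|\xi^{N\omega}(T)\|_m^2}]=e^{-\frac12\sum_{|s|\le N}\ln(1-2T\alpha\widetilde b_s^2)}\le e^{2T\alpha\sum_{|s|\le N}\widetilde b_s^2}\le e^{2T\alpha(2\pi)^m B_m}\quad\text{whenever }\alpha<\frac1{2T\widetilde b_{\max}^2},
\]
using $-\ln(1-x)\le 2x$ for $|x|\le\tfrac12$; then $x^p\le C(p,\alpha)e^{\alpha x^2}$ yields $\mathbb{E}[\|\xi^{N\omega}(T)\|_m^p]\le C(p,T,B_m)$, and Doob's inequality (\ref{doob}) upgrades this to $\mathbb{E}[\sup_{t\in(0,T)}\|\xi^{N\omega}(t)\|_m]^p\le C'(p,T,B_m)$ for $p>1$, the case $p=1$ following from the case $p=2$. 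All constants are now functions of $T,B_m$ (and $p$) instead of $B_0$.

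\emph{Step~2} (passage to the limit) is likewise identical. For $N<M$ set $\xi_N^M:=\xi^{M\omega}-\xi^{N\omega}$; then $\mathbb{E}\|\xi_N^M(t)\|_m^2=(2\pi)^m\sum_{N<|s|\le M}|s|^{2m}b_s^2$ is the tail of the convergent series $(2\pi)^m B_m$, so by Doob $\mathbb{E}[\|\xi_N^M\|_{X_m^T}^2]\le 4(2\pi)^m\sum_{N<|s|\le M}|s|^{2m}b_s^2\to 0$ as $N\to\infty$, uniformly in $M$. Hence $(\xi^{M\omega})_M$ is Cauchy in $L_2(\Omega;X_m^T)$; its limit coincides almost surely with the process $\xi^\omega=\sum_s b_s\beta_s^\omega e_s$ already constructed in Theorem~\ref{th1} (because the inclusion $X_m^T\hookrightarrow X^T$ is continuous and $\xi^{M\omega}\to\xi^\omega$ in $L_2(\Omega;X^T)$, so, by uniqueness of limits, the two $L_2$-limits agree). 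Extracting an a.s.-convergent subsequence in $X_m^T$ and applying Cantor's diagonal procedure over $T=1,2,\dots$ produces a negligible set $Q_m$ outside which $\xi^{M_j\omega}\to\xi^\omega$ in $X_m^T$ for every $T$; redefining $\xi^\omega=0$ on $Q_m$ (consistently with the earlier redefinition, since $0\in X_m^T$) gives $\xi^\omega\in X_m^T$ for \emph{every} $\omega\in\Omega$. Finally, combining this a.s.\ convergence with the uniform bounds of Step~1 via the Beppo--Levi theorem produces (\ref{csqfern}) and the second stated inequality, with the asserted constants.

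There is no genuine obstacle here: the only non-tautological point is the implication $B_m<\infty\Rightarrow\widetilde b_{\max}<\infty$ (immediate, since $|s|^{2m}b_s^2\to 0$), together with the small bookkeeping remark that the $L_2(\Omega;X_m^T)$-limit must be identified with the process $\xi^\omega$ fixed in Theorem~\ref{th1} rather than treated as a new object — which follows from the continuity of $X_m^T\hookrightarrow X^T$. Everything else is a transcription of the proof of Theorem~\ref{th1}.
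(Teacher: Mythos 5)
Your proposal is correct and follows exactly the route the paper intends: the paper's entire proof of Theorem~\ref{th2} is the remark that it is similar to Theorem~\ref{th1}, and your substitution $b_s\mapsto(2\pi)^{m/2}|s|^m b_s$ (with the observation that $B_m<\infty$ supplies both the summability and the boundedness of the new coefficients) is precisely how that similarity is realized. The extra care you take in identifying the $L_2(\Omega;X_m^T)$-limit with the process already constructed in Theorem~\ref{th1} is a detail the paper leaves implicit, and it is handled correctly.
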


Ici, comme pour le théorème \ref{th1}, nous avons modifié $\xi^{\omega}$ sur
un ensemble négligeable.

La démonstration est similaire à celle du théorème \ref{th1}.

\section{Problème de Cauchy}

Dans cette section on étudie le problème aux limites périodique (\ref{B}),
(\ref{B*}) avec la condition initiale.%
\begin{equation}%
\operatorname{u}%
(0,x)=%
\operatorname{u}%
_{0}(x).\label{B2}%
\end{equation}

\subsection{Préliminaires\label{preliminaire}}

Soit $m\in%
\mathbb{N}
$. On considère l'équation de la chaleur
\begin{equation}
\left\{
\begin{array}
[c]{c}%
v_{t}(t,x)-\nu v_{xx}(t,x)=\xi_{t}(t,x),\quad\xi\in X_{m}^{T}\\
v(0,x)=%
\operatorname{u}%
_{0}(x).
\end{array}
\right. \tag{CH}\label{CH}%
\end{equation}
On dit que $v\in X_{m}^{T}$ est une solution de (\ref{CH}) si pour tout
$t\in\lbrack0,T]:$%
\[
v(t)-\nu\int_{0}^{t}v_{xx}(s)ds=%
\operatorname{u}%
_{0}+\xi(t).
\]

Le lemme suivant est un résultat élémentaire des équations aux dérivées
partielles paraboliques. Pour le démontrer, on décompose la force $\xi$ et $v$
dans la base trigonométrique (\ref{base}) et on retrouve les coefficients de
Fourier de $v.$

\begin{lemma}
\label{lem3}Soit $m\geq0$ et $T>0.$ Alors pour tout $\xi\in X_{m}^{T}$ et $%
\operatorname{u}%
_{0}\in H^{m}$, il existe une unique solution $v$ $\in X_{m}^{T}$ de
(\ref{CH}). De plus l'application $H^{m}\times X_{m}^{T}\rightarrow X_{m}%
^{T},$ $(%
\operatorname{u}%
_{0},\xi)\longmapsto v$ est linéaire et continue.
\end{lemma}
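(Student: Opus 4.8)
The plan is to solve the heat equation (\ref{CH}) componentwise in the trigonometric basis, exactly as the hint preceding the statement suggests. First I would write $\xi(t,x)=\sum_{s\in\mathbb{Z}^\ast}\xi_s(t)e_s(x)$ and $\operatorname{u}_0(x)=\sum_{s\in\mathbb{Z}^\ast}u_{0,s}e_s(x)$, and look for $v(t,x)=\sum_{s\in\mathbb{Z}^\ast}v_s(t)e_s(x)$. Since $\partial_x^2 e_s=-(2\pi|s|)^2e_s=-\lambda_s e_s$ with $\lambda_s=(2\pi s)^2>0$, the integral formulation of (\ref{CH}) projects onto each mode to give the scalar integral equation $v_s(t)+\nu\lambda_s\int_0^t v_s(r)\,dr=u_{0,s}+\xi_s(t)$. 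Treating $\xi_s$ as a fixed continuous scalar function, this is a linear Volterra equation whose unique continuous solution is obtained by the Duhamel / integrating-factor formula
\begin{equation}
v_s(t)=e^{-\nu\lambda_s t}u_{0,s}+\xi_s(t)-\nu\lambda_s\int_0^t e^{-\nu\lambda_s(t-r)}\xi_s(r)\,dr.\label{plandu}
\end{equation}
(One derives this by setting $w_s(t)=\int_0^t v_s(r)\,dr$, solving the resulting linear ODE $w_s'+\nu\lambda_s w_s=u_{0,s}+\xi_s$, and differentiating; uniqueness is immediate since the homogeneous Volterra equation forces $v_s\equiv0$ by Grönwall.)

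Next I would check that the function $v$ defined by these coefficients actually lies in $X_m^T$, i.e. that $t\mapsto\sum_s v_s(t)e_s\in H^m$ is continuous and bounded on $[0,T]$. From (\ref{plandu}), using $\sup_{t\le T}e^{-\nu\lambda_s t}\le 1$ and the elementary bound $\nu\lambda_s\int_0^t e^{-\nu\lambda_s(t-r)}\,dr\le 1$, one gets $\sup_{t\le T}|v_s(t)|\le |u_{0,s}|+2\sup_{t\le T}|\xi_s(t)|$. Multiplying by $(2\pi)^m|s|^{2m}$, summing over $s$ and using the Fourier characterisation $\|w\|_m^2=(2\pi)^m\sum_s|s|^{2m}|w_s|^2$ gives, after Cauchy–Schwarz, $\|v\|_{X_m^T}\le C\big(\|\operatorname{u}_0\|_m+\|\xi\|_{X_m^T}\big)$, which simultaneously shows $v(t)\in H^m$ for every $t$ and gives the continuity of the linear map $(\operatorname{u}_0,\xi)\mapsto v$ once continuity in $t$ is established. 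For the continuity of $t\mapsto v(t)$ in $H^m$: each $v_s$ is continuous by (\ref{plandu}), so the partial sums are continuous $H^m$-valued; the tail $\sum_{|s|>N}v_s(t)e_s$ is bounded in $X_m^T$-norm by $C\big(\sum_{|s|>N}|s|^{2m}(|u_{0,s}|+\ldots)^2\big)^{1/2}\to0$ as $N\to\infty$, so the series converges uniformly in $X_m^T$ and the limit is continuous. Linearity of $(\operatorname{u}_0,\xi)\mapsto v$ is clear from (\ref{plandu}) since each $v_s$ depends linearly on $(u_{0,s},\xi_s)$.

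The main (modest) obstacle is the bookkeeping with the $|s|^{2m}$ weights when passing from the scalar estimates to the $H^m$-norm, and in particular making sure the uniform-in-$s$ bounds on the Duhamel kernel are $\nu$- and $s$-independent so that the tail estimate really goes to zero. Concretely the only inequality that needs care is $\nu\lambda_s\int_0^t e^{-\nu\lambda_s(t-r)}|\xi_s(r)|\,dr\le\big(\sup_{r\le T}|\xi_s(r)|\big)\nu\lambda_s\int_0^t e^{-\nu\lambda_s r}\,dr\le\sup_{r\le T}|\xi_s(r)|$, which is uniform in everything. Everything else is routine: uniqueness reduces to Grönwall mode by mode, and the linear continuity estimate is exactly the bound derived above. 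I would also remark that the argument is purely deterministic — $\xi$ here is any element of $X_m^T$, the probabilistic structure from Section~\ref{sectionksi} playing no role in this lemma.
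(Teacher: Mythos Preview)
Your proposal is correct and follows exactly the approach the paper indicates: the paper gives no detailed proof of this lemma, only the one-sentence hint preceding the statement (decompose $\xi$ and $v$ in the trigonometric basis and recover the Fourier coefficients of $v$), and you have carried out precisely this program. Your modewise Duhamel formula, the uniform bound $\nu\lambda_s\int_0^t e^{-\nu\lambda_s(t-r)}\,dr\le 1$, and the tail estimate giving uniform convergence in $X_m^T$ are all sound, and together they yield existence, uniqueness, continuity in $t$, and the linear continuous dependence on $(\operatorname{u}_0,\xi)$.
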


\subsection{Décomposition des solutions de (\ref{B})}

On écrit maintenant une solution $%
\operatorname{u}%
$ de (\ref{B}), (\ref{B2}) comme%
\[%
\operatorname{u}%
(t,x)=w(t,x)+v(t,x),
\]
où $v$ est la solution de (\ref{CH}). Ainsi, $w$ vérifie l'équation de Burgers
perturbée :%
\begin{equation}
\left\{
\begin{array}
[c]{c}%
w_{t}-\nu w_{xx}+\frac{1}{2}((w+v)^{2})_{x}=0,\\
w(0)=0.
\end{array}
\right. \tag{BP}\label{BP}%
\end{equation}
L'avantage de se ramener de l'équation (\ref{B}) à (\ref{BP}) est le fait que
tous les coefficients de cette dernière sont réguliers. Dans la suite on
résout l'équation (\ref{BP}). On commence par deux lemmes qui portent sur des
inégalités fonctionnelles.

\begin{lemma}
\label{GNtermeL}(inégalité de Gagliardo-Niremberg \cite{Tay3})

Pour tout $(m,(r,q))\in%
\mathbb{N}
^{\ast}\times\lbrack1,\infty]^{2}$ et $\beta\in\lbrack0,m-1],$ il existe
$C(\beta,q,r,m)>0$ telle que
\begin{equation}
|w^{(\beta)}|_{r}\leq C||w||_{m}^{\theta}|w|_{q}^{1-\theta},\quad\theta
(\beta,q,r,m)=\frac{\beta+\frac{1}{q}-\frac{1}{r}}{m+\frac{1}{q}-\frac{1}{2}%
}.\label{GN}%
\end{equation}

\end{lemma}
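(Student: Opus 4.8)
L'id\'ee est d'obtenir (\ref{GN}) en composant trois in\'egalit\'es \'el\'ementaires, puis en identifiant les exposants. On suppose, comme il se doit dans ce type d'in\'egalit\'es, que $\beta/m\le\theta\le1$ : l'in\'egalit\'e $\theta\le1$ r\'esulte automatiquement de l'hypoth\`ese $\beta\le m-1$ (le d\'enominateur $m+\tfrac1q-\tfrac12$ \'etant toujours $>0$), tandis que $\theta\ge\beta/m$ est n\'ecessaire, comme le montre le test $w=e_n$ lorsque $n\to+\infty$ : alors $|w^{(\beta)}|_r\asymp n^\beta$, $\|w\|_m\asymp n^m$, $|w|_q\asymp1$, et (\ref{GN}) impose $\beta\le m\theta$.

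\textit{Trois in\'egalit\'es auxiliaires.} (i) \emph{Interpolation $L^2$-Sobolev} : pour $0\le\sigma\le m$, l'in\'egalit\'e de H\"older appliqu\'ee \`a $\sum_k|k|^{2\sigma}|w_k|^2=\sum_k(|k|^{2m}|w_k|^2)^{\sigma/m}(|w_k|^2)^{1-\sigma/m}$ donne $\|w\|_\sigma\le C_{m,\sigma}\,\|w\|_m^{\sigma/m}\|w\|_0^{1-\sigma/m}$. (ii) \emph{In\'egalit\'e $L^\infty$ unidimensionnelle} : comme $w$, et pour la m\^eme raison $w^{(\beta)}$, est de moyenne nulle, il existe $x_0\in\mathbb{S}^1$ avec $w(x_0)=0$, d'o\`u, par le th\'eor\`eme fondamental de l'analyse, $|w|_\infty^s\le s\int_{\mathbb{S}^1}|w|^{s-1}|w'|\,dx$ pour tout $s\ge1$, puis, par H\"older, $|w|_\infty^s\le s\,|w|_{(s-1)a'}^{s-1}\,|w'|_a$ pour tout $a\in[1,\infty]$ ; en particulier $|w|_\infty\le\sqrt2\,\|w\|_0^{1/2}\|w\|_1^{1/2}$. (iii) \emph{Interpolation $L^p$ sur $\mathbb{S}^1$}, qui est de mesure $1$ : $|g|_r\le|g|_2^{2/r}|g|_\infty^{1-2/r}$ si $r\ge2$, et $|g|_r\le|g|_2$ si $1\le r\le2$.

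\textit{Assemblage.} Traitons d'abord $q=2$ : on applique (iii) \`a $g=w^{(\beta)}$, puis (ii) avec $s=2$ \`a $w^{(\beta)}$ (licite car $w^{(\beta)}$ est de moyenne nulle), ce qui exprime $|w^{(\beta)}|_r$ comme produit de puissances de $\|w\|_\beta=\|w^{(\beta)}\|_0$, $\|w\|_{\beta+1}=\|w^{(\beta)}\|_1$ et $|w|_2=\|w\|_0$ ; on conclut par (i) appliqu\'e \`a $\|w\|_\beta$ et $\|w\|_{\beta+1}$, ce qui est possible gr\^ace \`a l'hypoth\`ese $\beta\le m-1$, via $\beta+1\le m$. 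Le d\'ecompte des exposants fournit pr\'ecis\'ement la puissance $\theta=\frac{\beta+1/2-1/r}{m}$ sur $\|w\|_m$ et $1-\theta$ sur $|w|_2$, soit (\ref{GN}) pour $q=2$. Pour $q\in[1,\infty]$ quelconque, on proc\`ede de m\^eme, mais avec la version \`a param\`etre $s$ de (ii) : un choix convenable de $s$ et de $a$ fait appara\^itre le facteur $|w|_q$, les autres facteurs \'etant des normes $\|w\|_\sigma$ ($0\le\sigma\le m$) que l'on traite par (i) ; l'homog\'en\'eit\'e (formelle, par dilatation) du membre de droite force alors les exposants \`a \^etre ceux annonc\'es.

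\textit{Point d\'elicat.} L'obstacle n'est pas conceptuel : il r\'eside dans la v\'erification comptable que la composition des trois in\'egalit\'es restitue exactement l'exposant $\theta(\beta,q,r,m)$, et dans le traitement des valeurs limites $r$ ou $q\in\{1,\infty\}$, pour lesquelles les projecteurs de Littlewood--Paley ne sont pas born\'es. C'est pr\'ecis\'ement pour contourner ce dernier point qu'on s'appuie ci-dessus sur des arguments \'el\'ementaires (th\'eor\`eme fondamental de l'analyse, in\'egalit\'e de H\"older) plut\^ot que sur des multiplicateurs de Fourier.
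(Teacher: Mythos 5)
Le texte ne d\'emontre pas ce lemme~: il renvoie simplement \`a \cite{Tay3}. Votre tentative est donc la seule preuve en pr\'esence, et son sch\'ema (th\'eor\`eme fondamental de l'analyse appliqu\'e \`a une fonction de moyenne nulle, in\'egalit\'e de H\"older, interpolation des normes $\|\cdot\|_\sigma$ par H\"older sur les coefficients de Fourier) est bien la d\'emonstration classique de Gagliardo--Nirenberg en dimension un. Le cas $q=2$, $r\ge2$ est complet et correct~: la composition de (iii), de (ii) avec $s=a=2$ appliqu\'ee \`a $w^{(\beta)}$, puis de (i), donne pour $\|w\|_m$ l'exposant $\bigl(\tfrac12+\tfrac1r\bigr)\tfrac{\beta}{m}+\bigl(\tfrac12-\tfrac1r\bigr)\tfrac{\beta+1}{m}=\tfrac{\beta+1/2-1/r}{m}$, qui est exactement $\theta(\beta,2,r,m)$. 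Votre ajout de l'hypoth\`ese $\beta/m\le\theta\le1$ est non seulement prudent mais indispensable~: sans elle l'\'enonc\'e est faux (pour $\beta=0$ et $r<q$ on a $\theta<0$, et $w=e_n$, $n\to\infty$, fournit un contre-exemple). Signalez toutefois que cette restriction est en tension avec l'usage du lemme dans la preuve du th\'eor\`eme \ref{th19}~: le choix $\theta(m,\infty,2,m+1)=\frac{2m-1}{2m+1}$ est strictement inf\'erieur \`a $\frac{m}{m+1}$, et le test $w=e_n$ montre que l'in\'egalit\'e $\|w\|_m\le C\|w\|_{m+1}^{(2m-1)/(2m+1)}|w|_\infty^{2/(2m+1)}$ est effectivement fausse~; votre hypoth\`ese suppl\'ementaire n'est donc pas une simple pr\'ecaution, elle exclut une application que fait le texte.

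La lacune r\'eelle de votre r\'edaction est le cas $q\ne2$, que vous ne traitez pas~: vous affirmez qu'un \emph{choix convenable} de $s$ et de $a$ dans (ii) fait appara\^itre $|w|_q$ et que l'homog\'en\'eit\'e force les exposants, mais vous n'exhibez ni ce choix ni la cha\^ine compl\`ete d'in\'egalit\'es. Or c'est l\`a que r\'eside l'essentiel du travail~: le facteur $|w|_q$ ne peut provenir que d'une application de (ii) \`a $w$ lui-m\^eme (avec $(s-1)a'=q$), ce qui introduit un facteur $|w'|_a$ qu'il faut \`a son tour ramener \`a $\|w\|_m$ et $|w|_q$, d'o\`u une r\'ecurrence dont il faut v\'erifier qu'elle se referme. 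L'argument d'homog\'en\'eit\'e ne d\'etermine les exposants qu'\`a condition de savoir d\'ej\`a qu'une composition licite existe, et le contre-exemple du paragraphe pr\'ec\'edent montre pr\'ecis\'ement que ce n'est pas automatique~: la \og v\'erification comptable \fg{} que vous reportez est exactement l'endroit o\`u la contrainte $\theta\ge\beta/m$ intervient. En l'\'etat, la preuve est compl\`ete pour $q=2$, $r\ge2$, et seulement esquiss\'ee ailleurs~; il faudrait soit mener le calcul g\'en\'eral, soit se restreindre aux couples $(q,r)$ effectivement utilis\'es dans la suite du texte et les traiter un par un.
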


\begin{lemma}
\label{GNtermeNL}Soit $m\in%
\mathbb{N}
^{\ast}$, $q\in\lbrack1,\infty]$ et $w\in H^{m+1}$. Alors, il existe $C(m)>0$
telle que%
\begin{equation}
|\langle\partial_{x}^{2m}w,\partial_{x}w^{2}\rangle|\leq C||w||_{m+1}%
^{1+\theta}|w|_{q}^{2-\theta},\quad\theta(m,q)=\frac{m+\frac{2}{q}-\frac{1}%
{2}}{m+\frac{1}{q}+\frac{1}{2}},\label{GNNL}%
\end{equation}

\end{lemma}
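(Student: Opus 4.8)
The plan is to reduce the left-hand side, by repeated integrations by parts on the circle, to a finite sum of integrals of the form $\int_{\mathbb{S}^1}w^{(a)}w^{(b)}w^{(c)}\,dx$ with $a+b+c=2m+1$ and $0\le a,b,c\le m$, and then to estimate each such integral by Hölder's inequality followed by the Gagliardo--Nirenberg inequality of Lemma \ref{GNtermeL} applied with top Sobolev index $m+1$.

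First, take $w$ smooth, say a trigonometric polynomial; the general case $w\in H^{m+1}$ will follow by density, both sides being continuous on $H^{m+1}$. Integrating by parts $m$ times gives $\langle\partial_x^{2m}w,\partial_x w^2\rangle=(-1)^m\langle\partial_x^m w,\partial_x^{m+1}(w^2)\rangle$. Expanding $\partial_x^{m+1}(w^2)=\sum_{j=0}^{m+1}\binom{m+1}{j}w^{(j)}w^{(m+1-j)}$ by the Leibniz rule, the two extreme terms $j=0$ and $j=m+1$ together contribute $2(-1)^m\int_{\mathbb{S}^1}w\,w^{(m)}w^{(m+1)}\,dx$; since $w^{(m)}w^{(m+1)}=\tfrac12\partial_x\big((w^{(m)})^2\big)$, one more integration by parts turns this into $(-1)^{m+1}\int_{\mathbb{S}^1}w_x(w^{(m)})^2\,dx$, an integral whose derivative orders all lie in $[1,m]$. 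For $j=1,\dots,m$ the term $\int_{\mathbb{S}^1}w^{(m)}w^{(j)}w^{(m+1-j)}\,dx$ already has all orders in $[1,m]$. Hence
\[
\langle\partial_x^{2m}w,\partial_x w^2\rangle=(-1)^{m+1}\!\int_{\mathbb{S}^1}\!w_x(w^{(m)})^2\,dx+(-1)^m\sum_{j=1}^m\binom{m+1}{j}\!\int_{\mathbb{S}^1}\!w^{(m)}w^{(j)}w^{(m+1-j)}\,dx .
\]

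Next, estimate each of these finitely many integrals. For a generic term $\int_{\mathbb{S}^1}w^{(a)}w^{(b)}w^{(c)}\,dx$ with $1\le a,b,c\le m$ and $a+b+c=2m+1$, apply Hölder's inequality with the three exponents equal to $3$, then bound each factor by Lemma \ref{GNtermeL} (with $m$ there replaced by $m+1$, so each of $a,b,c$ is an admissible $\beta\in[0,m]$, and $r=3$): $|w^{(\beta)}|_3\le C\,\|w\|_{m+1}^{\theta_\beta}|w|_q^{1-\theta_\beta}$ with $\theta_\beta=(\beta+\tfrac1q-\tfrac13)/(m+\tfrac1q+\tfrac12)$, which lies in $(0,1)$ because $1\le\beta\le m$. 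Multiplying the three bounds, the exponent of $\|w\|_{m+1}$ is $\theta_a+\theta_b+\theta_c=\big((2m+1)+\tfrac3q-1\big)/(m+\tfrac1q+\tfrac12)=(2m+\tfrac3q)/(m+\tfrac1q+\tfrac12)$, which is exactly $1+\theta$ for $\theta$ as in (\ref{GNNL}); correspondingly the exponent of $|w|_q$ is $3-(1+\theta)=2-\theta$. Summing over the finitely many terms, with a constant depending only on $m$, yields (\ref{GNNL}).

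The only non-mechanical point is the bookkeeping that guarantees, after the integrations by parts, that no factor carries more than $m$ derivatives --- this is what makes Lemma \ref{GNtermeL} applicable, since it requires $\beta\le m$ --- together with the elementary verification that the exponents sum correctly; the extra integration by parts on the top term $\int_{\mathbb{S}^1}w\,w^{(m)}w^{(m+1)}\,dx$ is the crucial manoeuvre there. The density argument at the start is routine, since by Sobolev's embedding the right-hand side of (\ref{GNNL}) is itself bounded by a constant times $\|w\|_{m+1}^3$.
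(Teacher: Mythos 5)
Your proof is correct, and it is essentially the paper's argument: integrate by parts, expand the square by Leibniz, and estimate each trilinear integral by H\"older followed by the Gagliardo--Nirenberg inequality of Lemma \ref{GNtermeL} with top index $m+1$, checking that the interpolation exponents sum to $1+\theta$. The only difference is in the bookkeeping: the paper stops at $\pm\langle\partial_x^{m+1}w,\partial_x^m(w^2)\rangle$, keeps the factor $w^{(m+1)}$ in $L^2$ (bounded directly by $\|w\|_{m+1}$), and applies (\ref{GN}) only to the two factors of order $\le m$ with exponents $\tfrac1{p_1}+\tfrac1{p_2}=\tfrac12$; you instead push all derivatives down to order $\le m$ (at the cost of the extra integration by parts on the term $w\,w^{(m)}w^{(m+1)}$) and use symmetric $L^3$ H\"older. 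Both routes give the same exponent $\theta_a+\theta_b+\theta_c=1+\theta$, so your extra manoeuvre is harmless but unnecessary.
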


\begin{proof}
Par la formule de Leibniz, nous avons%
\begin{equation}
|\langle\partial_{x}^{2m}w,\partial_{x}w^{2}\rangle|\leq C(m)\sum_{n=0}%
^{m}\int|w^{(n)}w^{(m-n)}w^{(m+1)}|dx.\label{leibnizI1}%
\end{equation}
Nous majorons l'intégrale du membre de droite de (\ref{leibnizI1}) par
l'inégalité de Hölder. On obtient
\[
\int|w^{(n)}w^{(m-n)}w^{(m+1)}|dx\leq|w^{(n)}|_{p_{1}}|w^{(m-n)}|_{p_{2}%
}||w||_{m+1},
\]
avec $\frac{1}{p_{1}}+\frac{1}{p_{2}}=\frac{1}{2}.$ D'où en utilisant (\ref{GN}) avec
$m:=m+1,$ et $r=n$ puis $r=m-n,$ nous avons
\[
\int|w^{(n)}w^{(m-n)}w^{(m+1)}|dx\leq C||w||_{m+1}^{1+\theta}|w|_{q}%
^{2-\theta},
\]
où $\theta=\theta(n,q,p_{1},m+1)+\theta(m-n,q,p_{2},m+1)=\frac{m+\frac{2}%
{q}-\frac{1}{2}}{m+\frac{1}{q}+\frac{1}{2}}.$ De cette dernière inégalité et de
la relation (\ref{leibnizI1}), on déduit (\ref{GNNL}).
\end{proof}

\begin{theorem}
\label{thwhm}Soit $m\geq1$, $B_{m}<\infty$, $T>0$ et $v\in X_{m}^{T}$. Alors
il existe une unique solution $w\in X_{m}^{T}$ de (\ref{BP}) et il existe
$C_{m}(T,\nu,||v||_{X_{m}^{T}})>0$ telle que
\begin{equation}
||w(t)||_{X_{m}^{T}}+\int_{0}^{T}||w(t)||_{m+1}^{2}dt\leq C_{m}.\label{wHm}%
\end{equation}

\end{theorem}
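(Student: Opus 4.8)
The plan is to treat (\ref{BP}) as a semilinear parabolic equation with smooth, bounded (in $t$) coefficients and prove local existence by a fixed-point argument, then globalise using an a priori energy estimate of the form (\ref{wHm}), and finally bootstrap to the regularity $X_m^T$. First I would set up the contraction: writing (\ref{BP}) in mild form $w(t)=-\tfrac12\int_0^t e^{\nu(t-s)\partial_x^2}\partial_x\big((w+v)^2\big)(s)\,ds$, the operator $e^{\nu t\partial_x^2}\partial_x$ maps $H^m\to H^m$ with norm $O((\nu t)^{-1/2})$ on the zero-mean space, so on a short interval $[0,\tau]$, using that $H^m$ ($m\ge1$) is a Hilbert algebra (\ref{alg}), the map is a contraction on a ball of $X_m^\tau$; this yields a unique local solution, with $\tau$ depending only on $\nu$, $m$, and the size of $v$ in $X_m^T$.

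The heart of the matter is the a priori estimate (\ref{wHm}), which simultaneously shows the solution does not blow up and gives the claimed bound. I would proceed by energy estimates on the Galerkin (or mollified) approximations. At level $m$: multiply the equation by $\partial_x^{2m}w$ and integrate. The linear term gives $\tfrac12\frac{d}{dt}\|w\|_m^2 + \nu\|w\|_{m+1}^2$. The nonlinear term expands as $\tfrac12\langle\partial_x^{2m}w,\partial_x(w+v)^2\rangle$; I split it into the pure-$w$ part, the mixed part, and the pure-$v$ part. For the pure-$w$ part I invoke Lemma \ref{GNtermeNL} with a low exponent $q$ (e.g. $q=1$ or $q=2$), bounding $|w|_q$ by $\|w\|_0$ (controlled at level $m=0$, see below) and absorbing the $\|w\|_{m+1}^{1+\theta}$ factor into $\nu\|w\|_{m+1}^2$ via Young's inequality since $\theta<1$; the remaining factors involve only lower-order norms of $w$ and norms of $v$. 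The mixed and pure-$v$ terms are handled similarly with Hölder and Gagliardo–Nirenberg (Lemma \ref{GNtermeL}), again absorbing the top-order $w$-factor into the viscous term and leaving a right-hand side polynomial in $\|w\|_m$ and $\|v\|_{X_m^T}$. This produces a differential inequality $\frac{d}{dt}\|w\|_m^2 + \nu\|w\|_{m+1}^2 \le P_m\big(\|w\|_m^2,\|v\|_{X_m^T}\big)$, and one must organise the induction so that at each level the bound is closed: start with $m=0$, where multiplying by $w$ and using $\int w^2 w_x\,dx=0$ leaves only the $v$-terms, giving an a priori bound on $\|w\|_{X_0^T}$ and on $\int_0^T\|w\|_1^2$; then climb to general $m$, at each stage using the already-established bounds on $\|w\|_{m-1}$, etc. Integrating the differential inequality over $[0,T]$ (Grönwall where needed) yields (\ref{wHm}).

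The main obstacle I anticipate is bookkeeping in the nonlinear estimates — choosing $q$ and the interpolation exponents so that the highest-order factor $\|w\|_{m+1}$ appears with power $<2$ and can be absorbed, while the residual depends only on norms already under control; Lemma \ref{GNtermeNL} is precisely designed for the symmetric term $\langle\partial_x^{2m}w,\partial_x w^2\rangle$, but the mixed term $\langle\partial_x^{2m}w,\partial_x(wv)\rangle$ needs a separate Leibniz expansion and Hölder splitting, and one has to check $\theta<1$ there too. Once the a priori bound is in hand, the global existence follows by a standard continuation argument (the local existence time $\tau$ depends only on quantities now known to stay bounded), uniqueness follows from a Grönwall estimate on the difference of two solutions in $X_1^T\supset X_m^T$, and the regularity $w\in X_m^T$ (continuity in time with values in $H^m$, not merely $L^2$ in time with values in $H^{m+1}$) is obtained from the mild formulation together with the bounds, or by passing to the limit in the approximations.
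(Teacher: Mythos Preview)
Your plan is correct, and its core --- the a priori energy estimates --- coincides with the paper's \'Etape~1: first close the $m=0$ estimate using $\int w^2 w_x\,dx=0$, then at level $m$ bound the pure-$w$ term by Lemma~\ref{GNtermeNL} (the paper takes $q=2$, giving $\theta=\frac{2m+1}{2m+2}<1$) and the mixed and pure-$v$ terms by the algebra property (\ref{alg}) plus Gagliardo--Nirenberg, absorbing the $\|w\|_{m+1}$ factors into $\nu\|w\|_{m+1}^2$ via Young and closing with the already-established $\|w\|_0$ bound. Uniqueness is also done in the paper by Gr\"onwall on the difference, as you propose.

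Where you differ is in the existence mechanism. You propose local existence by contraction on the mild formulation followed by continuation; the paper instead constructs Galerkin approximations $w^N$ (\'Etape~2), proves the same a priori bounds on them (where all integrations by parts are trivially justified), and passes to the limit by compactness of $W^m=\{u\in L_2(0,T;H^{m+1}):u_t\in L_2(0,T;H^{m-1})\}\Subset L_2(0,T;H^m)$ together with the embedding $W^m\subset X_m^T$ (\'Etape~3). The Galerkin route makes the energy computations rigorous for free and delivers the $X_m^T$ regularity as a structural fact about $W^m$; your fixed-point route gives $w\in X_m^\tau$ directly and is conceptually clean, but to justify the $H^m$ energy identity on the actual solution (which a priori sits only in $X_m^\tau$, not enough to pair with $\partial_x^{2m}w$) you yourself note the need to work on Galerkin or mollified approximations --- at which point the two approaches essentially merge. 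Either route is fine here.
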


\begin{proof}
\textit{Étape 0 : unicité. }Soient $w^{\prime},w^{\prime\prime}\in X_{m}^{T}$
solutions de (\ref{BP}) et $v\in X_{m}^{T}$. Alors $w:=w^{\prime}%
-w^{\prime\prime}$ vérifie l'équation%
\[
w_{t}-\nu w_{xx}=\frac{1}{2}((w^{\prime}+w^{\prime\prime}+2v)w)_{x},\quad
w(0)=0.
\]
En prenant le produit scalaire $L_{2}$ de cette équation avec $w$, et en
faisant des intégrations par partie, nous obtenons
\[
\frac{1}{2}\frac{d}{dt}||w(t)||^{2}+\nu||w(t)||_{1}^{2}=-\frac{1}{2}%
\int(w^{\prime}(t)+w^{\prime\prime}(t)+2v(t))w(t)w_{x}(t)dx.
\]
En utilisant l'inégalité de Cauchy-Schwarz puis l'injection de $H^{1}$ dans
$L_{\infty},$ et l'inégalité de Young, le membre à droite est majoré par
\begin{align*}
\int|(w^{\prime}(t)+w^{\prime\prime}(t)+2v)w(t)w_{x}(t)|dx  & \leq
||(w^{\prime}(t)+w^{\prime\prime}(t)+2v(t))w(t)||\cdot||w_{x}(t)||\\
& \leq C(\frac{C}{2\nu}||w(t)||^{2}+\frac{\nu}{2C}||w(t)||_{1}^{2}).
\end{align*}

Donc $\frac{d}{dt}||w(t)||^{2}\leq C_{1}||w(t)||^{2}$. 
Comme $w(0)=0,$ le lemme de Gronwall implique que $||w(t)||^{2}=0$. D'où l'unicité.

\textit{Étape 1 : estimations à priori.} Supposons que$\ v\in X_{m}^{T}$ et
que $w\in X_{m}^{T}$ est la solution de (\ref{BP}) et montrons d'abord
l'inégalité (\ref{wHm}), où à gauche, $m$ est remplacer par $m=0$. Après avoir multiplié (\ref{BP}) par $w$
et intégré en espace, le membre de gauche s'écrit comme
\[
\int(w_{t}-\nu w_{xx})wdx=\frac{1}{2}\frac{d}{dt}||w(t)||^{2}+\nu
||w(t)||_{1}^{2},
\]
et le membre de droite devient%
\[
-\frac{1}{2}\int((w+v)^{2})_{x}wdx=\frac{1}{2}\int(w^{2}w_{x}+v^{2}%
w_{x}+2vww_{x})dx=\frac{1}{2}\int(v^{2}w_{x}+2vww_{x})dx.
\]
Par l'inégalité de Cauchy-Schwarz puis celle de Young, et utilisant
(\ref{sob}) avec $k=0$, nous obtenons que le terme à droite est borné par
\begin{equation}
\frac{\nu}{4}||w(t)||_{1}^{2}+c||v||_{X_{1}^{T}}^{4}+\frac{\nu}{4}%
||w(t)||_{1}^{2}+c||v||_{X_{1}^{T}}^{2}||w(t)||^{2},\quad c=c(\nu).\nonumber
\end{equation}
Donc, on a
\[
\frac{d}{dt}||w(t)||^{2}+\nu||w(t)||_{1}^{2}\leq c_{1}||v||_{X_{1}^{T}}%
^{2}||w(t)||^{2}+c_{2}||v||_{X_{1}^{T}}^{4}.
\]
Par le lemme de Gronwall, il en résulte que
\begin{equation}
||w(t)||^{2}\leq tc_{2}||v||_{X_{1}^{T}}^{4}e^{c_{1}||v||_{X_{1}^{T}}^{2}%
t}\leq c_{0}(T),\quad0\leq t\leq T.\label{estimationwm=0}%
\end{equation}
Maintenant, on va utiliser (\ref{estimationwm=0}) pour estimer $||w(t)||_{m}.
$ Multiplions l'équation dans (\ref{BP}) par $w^{(2m)}.$ Alors, par une
intégration par parties en espace, nous obtenons%
\begin{align*}
& \frac{1}{2}\frac{d}{dt}||w(t)||_{m}^{2}+\nu||w(t)||_{m+1}^{2}\leq\left\vert
\langle\frac{d^{m}}{dx^{m}}(w(t)+v(t))^{2},\frac{d^{m+1}}{dx^{m+1}}%
w(t)\rangle\right\vert \\
& \leq\underset{=:I_{1}}{\underbrace{\left\vert \langle w^{2}(t)^{(1)}%
,w(t)^{(2m)}\rangle\right\vert }}+2\underset{=:I_{2}}{\underbrace{\left\vert
\langle(wv)(t)^{(m)},w(t)^{(m+1)}\rangle\right\vert }}\underset{=:I_{3}%
}{+\underbrace{\left\vert \langle v^{2}(t)^{(m)},w(t)^{(m+1)}\rangle
\right\vert }}.
\end{align*}
Par (\ref{GNNL}) (avec $\theta=\theta(m,2)$), nous avons%
\[
|I_{1}|\leq C_{1}||w(t)||_{m+1}^{1+\theta}||w(t)||^{2-\theta},\quad
\theta=\frac{2m+1}{2m+2}.
\]
L'inégalité de Young appliquée au membre à droite de cette inégalité implique que
\[
I_{1}\leq\frac{\nu}{4}||w(t)||_{m+1}^{2}+C_{1}^{^{\prime}}||w(t)||^{c_{1}%
^{\prime}}.
\]
Nous avons aussi%
\[
I_{2}\leq\frac{\nu}{4}||w(t)||_{m+1}^{2}+C_{2}^{^{\prime}}(||v||_{X_{m}^{T}%
})||w(t)||^{c_{2}^{\prime}},\quad I_{3}\leq\frac{\nu}{4}||w(t)||_{m+1}%
^{2}+C_{3}^{^{\prime}}(||v||_{X_{m}^{T}})||w(t)||^{c_{3}^{\prime}}.
\]
Pour estimer $I_{2}=2\langle wv(t),w_{x}(t)\rangle_{m},$ on utilise
(\ref{alg}) et (\ref{GN}) :
\begin{align*}
2\langle wv(t),w_{x}(t)\rangle_{m}  & \leq C||wv(t)||_{m}||w(t)||_{m+1}\leq
C_{1}||w(t)||_{m}||v(t)||_{m}||w(t)||_{m+1}\\
& \leq C_{m}||v(t)||_{m}||w(t)||^{1-\frac{m}{m+1}}||w(t)||_{m+1}^{1+\frac
{m}{m+1}},
\end{align*}
et on conclut par l'inégalité de Young. De manière similaire, on dérive une
estimation du même type pour $I_{3}.$ En utilisant (\ref{estimationwm=0}), on a
\begin{equation}
\frac{1}{2}\frac{d}{dt}||w(t)||_{m}^{2}+\frac{\nu}{4}||w(t)||_{m+1}^{2}\leq
C^{^{\prime}}(||v||_{X_{m}^{T}})c_{0}(T)^{c^{\prime}}.\label{gronwallpret}%
\end{equation}
Finalement, l'intégration de cette relation sur $[0,T]$ permet de conclure à
la validité de (\ref{wHm}) pour tout $m\geq1.$

\textit{Étape 2 : approximation de Galerkin. }Pour tout $N\in%
\mathbb{N}
,$ soit
\[
H^{(N)}=vect\{e_{j},|j|\leq N\}\subset H,
\]
et $\{w^{N}\}$ la suite des solutions approchées de Galerkin du problème
(\ref{BP}) définie par
\[
w^{N}(t)=\sum_{|s|\leq N}\alpha_{s}(t)e_{s},
\]
où les $\alpha_{s}(t)$ sont à déterminer. En effet, en substituant $w^{N}$
dans l'équation de (\ref{BP}) et en prenant le produit scalaire $L_{2}$ dans
les deux membres de l'égalité avec $e_{j}$, $|j|\leq N,$ nous obtenons une EDO
de la forme :%
\begin{equation}
\frac{d\alpha_{j}}{dt}(t)=-(2\pi j)^{2}\alpha_{j}(t)+P_{j}(\alpha
(t),t),\quad\alpha_{j}(0)\equiv0,\quad|j|\leq N,\label{edo}%
\end{equation}
où $P_{j}(\alpha,t)$ est un polynôme fini en $\alpha=(\alpha_{k})_{|k|\leq N}$
et en $(v_{k}(t))_{k\in%
\mathbb{Z}
^{\ast}}.$ Donc, $P_{j}(\alpha,t)$ est une fonction lisse en $\alpha$ et
continue en $t$, on conclut à l'existence d'une unique solution $w^{N}%
\in\mathcal{C}^{1}([0,T^{N}[;H^{(N)})$ de (\ref{edo}) , où $T^{N}$ est le
temps maximal d'existence.

Exactement comme dans l'étape 1, on montre que $w^{N}$ vérifie l'inégalité
(\ref{wHm}) et donc $||w^{N}(t)||$ est borné uniformément en $N$. En
particulier $\underset{t\rightarrow T^{N}}{\lim\sup}||w^{N}(t)||<\infty$ ce
qui implique que $T^{N}=T$.

\textit{Étape 3 : Convergence. }En utilisant l'équation (\ref{BP}), on trouve
que $\{w^{N}\}$ est borné dans l'espace de Hilbert
\[
W^{m}=\{u\in L_{2}(0,T;H^{m+1}):u_{t}\in L_{2}(0,T;H^{m-1})\}.
\]
Comme $W^{m}\Subset L_{2}(0,T;H^{m})$ \cite[Théorème 5.1]{Lions}, alors il
existe une sous-suite $\left\{  w^{N_{j}}\right\}  $ qui converge vers $w\in
W^{m}$ faiblement dans $W^{m}$ et fortement dans $L_{2}(0,T;H^{m}).$ On
procède comme dans \cite{Lions} pour vérifier que $w$ est solution de
(\ref{BP}) et satisfait bien les estimations à priori. De plus, $W^{m}\subset
X_{m}^{T}$ \cite{LM}, donc $w\in X_{m}^{T}$.

La suite $\{w^{N}\}$ est bornée dans $X_{m}^{T}$ et $H^{m}$ est relativement
compact dans $H^{m-1}$%
. Comme $\{\partial_{t}w^{N}\}$ est borné dans $L_{2}(0,T;H^{m-1})$, alors la
suite $\{w^{N}\}$ est équicontinue dans $H^{m-1}$. Donc, par le théorème
d'Ascoli, $\{w^{N}\}$ est précompact dans $X_{m-1}^{T}$, et $\left\{
w^{N_{j}}\right\}  $ converge vers $w$ dans $X_{m-1}^{T}$ (Nous utiliserons
cette convergence plus tard).
\end{proof}

\begin{remark}
\label{nlch}Une version simplifiée de cet argument nous montre que pour tout
$T>0,$ $m\geq1,$ $v\in X_{m}^{T}$ et $z\in X_{m-1}^{T}$, l'équation linéaire
\begin{equation}
\left\{
\begin{array}
[c]{c}%
w_{t}-\nu w_{xx}+(vw)_{x}=z,\\
w_{|t=0}=0,
\end{array}
\right. \label{pnll}%
\end{equation}
admet une unique solution $w\in X_{m}^{T}$ vérifiant :%
\begin{equation}
||w||_{X_{m}^{T}}\leq C||z||_{X_{m-1}^{T}},\quad C=C(\nu,T,||v||_{X_{m}^{T}%
}).\label{estipnll}%
\end{equation}

\end{remark}

\subsection{Existence et unicité pour Burgers}

D'après le lemme \ref{lem3} et le théorème \ref{thwhm} on peut énoncer le résultat
suivant :

\begin{theorem}
\label{burgersok}Pour tout $T>0$ $m\geq1$, $%
\operatorname{u}%
_{0}\in H^{m}$ et $\xi\in X_{m}^{T}$ il existe une unique solution $%
\operatorname{u}%
$ dans $X_{m}^{T}$ de (\ref{B}), (\ref{B*}),(\ref{B2}). De plus, $%
\operatorname{u}%
$ satisfait%
\begin{equation}
||%
\operatorname{u}%
||_{X_{m}^{T}}^{2}+\int_{0}^{T}||%
\operatorname{u}%
(t)||_{m+1}^{2}dt\leq C_{m},\quad C_{m}=C_{m}(T,\nu,||%
\operatorname{u}%
_{0}||_{m},||\xi||_{X_{m}^{T}})>0
\end{equation}

\end{theorem}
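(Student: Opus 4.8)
The strategy is to combine the linear heat‐equation solver of Lemma~\ref{lem3} with the nonlinear estimate of Theorem~\ref{thwhm} via the decomposition $\operatorname{u}=w+v$ introduced before Theorem~\ref{thwhm}. First I would fix $T>0$, $m\geq1$, $\operatorname{u}_0\in H^m$ and $\xi\in X_m^T$. Apply Lemma~\ref{lem3} to obtain the unique solution $v\in X_m^T$ of the linear Cauchy problem (\ref{CH}) with initial data $\operatorname{u}_0$, together with the continuity estimate $\|v\|_{X_m^T}\leq C(\|\operatorname{u}_0\|_m+\|\xi\|_{X_m^T})$. Then feed this $v$ into Theorem~\ref{thwhm}: since $B_m<\infty$, $v\in X_m^T$, there is a unique $w\in X_m^T$ solving (\ref{BP}) with the bound (\ref{wHm}), namely $\|w\|_{X_m^T}+\int_0^T\|w(t)\|_{m+1}^2\,dt\leq C_m(T,\nu,\|v\|_{X_m^T})$. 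Setting $\operatorname{u}:=w+v$ gives, by linearity, a solution of (\ref{B}),(\ref{B*}),(\ref{B2}) in $X_m^T$.

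Next I would establish the a priori bound. From $\operatorname{u}=w+v$ one has $\|\operatorname{u}\|_{X_m^T}\leq\|w\|_{X_m^T}+\|v\|_{X_m^T}$ and $\int_0^T\|\operatorname{u}(t)\|_{m+1}^2\,dt\leq 2\int_0^T\|w(t)\|_{m+1}^2\,dt+2\int_0^T\|v(t)\|_{m+1}^2\,dt$. The $w$‐contributions are controlled by (\ref{wHm}); the $v$‐contributions are controlled by Lemma~\ref{lem3} (for the sup norm) and by the standard parabolic smoothing estimate for (\ref{CH}) (for the $\int_0^T\|v\|_{m+1}^2$ term, which one reads off from the Fourier representation of $v$ used to prove Lemma~\ref{lem3}, or equivalently from multiplying (\ref{CH}) by $v^{(2m)}$ and integrating). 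Since $\|v\|_{X_m^T}\leq C(\|\operatorname{u}_0\|_m+\|\xi\|_{X_m^T})$, every constant is ultimately of the advertised form $C_m=C_m(T,\nu,\|\operatorname{u}_0\|_m,\|\xi\|_{X_m^T})$.

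For uniqueness, suppose $\operatorname{u}',\operatorname{u}''\in X_m^T$ both solve (\ref{B}),(\ref{B*}),(\ref{B2}). Writing $\operatorname{u}'=w'+v$ and $\operatorname{u}''=w''+v$ with the \emph{same} $v$ (the linear solver is deterministic and uniquely determined by $\operatorname{u}_0,\xi$), one sees $w'$ and $w''$ both solve (\ref{BP}) for that $v$; the uniqueness part of Theorem~\ref{thwhm} (\emph{étape 0}) gives $w'=w''$, hence $\operatorname{u}'=\operatorname{u}''$. Alternatively one can redo the $L_2$ energy argument of \emph{étape 0} directly on $\operatorname{u}'-\operatorname{u}''$, using Cauchy--Schwarz, the embedding $H^1\hookrightarrow L_\infty$, Young's inequality and Gronwall.

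The only genuine point requiring care — the ``main obstacle'' — is checking that the pair $(w,v)$ so constructed really is a solution of (\ref{B}) in the integral sense stated in the introduction, i.e.\ that adding the integral identity defining $v$ (Lemma~\ref{lem3}) to the integral identity defining $w$ (the weak form of (\ref{BP}), established in \emph{étape 3} of Theorem~\ref{thwhm}) reproduces exactly the integral formulation of (\ref{B}),(\ref{B2}) with the nonlinear term $\frac12(\operatorname{u}^2)_x=\operatorname{u}\operatorname{u}_x$. This is a bookkeeping verification: expand $(w+v)^2=w^2+2wv+v^2$, match it against the $\frac12((w+v)^2)_x$ term in (\ref{BP}), and observe that the $-\nu v_{xx}$ and $-\nu w_{xx}$ terms sum to $-\nu\operatorname{u}_{xx}$ while $\partial_t v$ contributes $\xi_t$; all terms make sense in $X_{m-1}^T$ since $m\geq1$ and $H^m$ is an algebra for $m>1/2$ by (\ref{alg}). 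Everything else is an immediate consequence of the two results quoted.
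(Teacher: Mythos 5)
Your argument is exactly the paper's: Théorème~\ref{burgersok} is stated there as an immediate corollary of the decomposition $\operatorname{u}=v+w$, with $v$ given by le lemme~\ref{lem3} and $w$ by le théorème~\ref{thwhm}, and the uniqueness reduction to l'étape~0 of that theorem is also how the paper intends it. So for existence, uniqueness, and the bound on $\|\operatorname{u}\|_{X_m^T}$ and $\int_0^T\|w(t)\|_{m+1}^2\,dt$, your proposal matches the paper.

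One caveat on the single point where you go beyond what the paper writes down: you control $\int_0^T\|v(t)\|_{m+1}^2\,dt$ by a ``standard parabolic smoothing estimate''. For $\xi$ merely in $X_m^T=\mathcal{C}(0,T;H^m)$ this is not available: setting $z=v-\xi$, one has $z_t-\nu z_{xx}=\nu\xi_{xx}$ with $\nu\xi_{xx}\in L^2(0,T;H^{m-2})$, so maximal regularity yields only $z\in L^2(0,T;H^{m})$, and $v=z+\xi$ inherits no more spatial regularity than $\xi(t)\in H^m$ itself (take $\xi(t)=g(t)\phi$ with $\phi\in H^m\setminus H^{m+1}$). The gain of one derivative in $L^2_t$ is genuine for $w$ (that is what (\ref{wHm}) provides, via the dissipation term $\nu\|w\|_{m+1}^2$), but not for $v$. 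This is really a gap in the literal statement of the theorem for arbitrary $\xi\in X_m^T$ rather than in your strategy --- in the stochastic setting the missing integrability of $\|\operatorname{u}\|_{m+1}^2$ is recovered later through the Itô correction under $B_m<\infty$ --- but as written your justification of that term does not go through, and you should either restrict the claim to the $w$-part or add a hypothesis on $\xi$.
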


\subsection{Analyticité du flot}

Désormais, nous étudions la régularité de l'application
\[
H^{m}\times X_{m}^{T}\rightarrow X_{m}^{T},(%
\operatorname{u}%
_{0},\xi)\longmapsto%
\operatorname{u}%
\]
où $%
\operatorname{u}%
$ est la solution de (\ref{B}), (\ref{B2}) avec $\eta=\partial_{t}\xi$.

Considérons l'équation de la chaleur
\begin{equation}
w_{t}-\nu w_{xx}=z,\text{\quad}w_{|t=0}=0.\label{pnl}%
\end{equation}
Par la remarque \ref{nlch}, si $m\geq1$ et $z\in X_{m-1}^{T}$ alors
(\ref{pnl}) a une unique solution $w\in X_{m}^{T}$ et
\begin{equation}
||w||_{X_{m}^{T}}\leq C||z||_{X_{m-1}^{T}},\quad C=C(m,\nu,T).\label{estipnl}%
\end{equation}
Soit $m\geq1$ et notons par $X_{m,0}^{T} l'ensemble \{w\in X_{m}^{T}:w(0)=0\},$ et soit
$\mathcal{L}$ l'application définie par
\[
\mathcal{L}:X_{m+1,0}^{T}\cap\mathcal{C}^{1}(0,T;H^{m-1})\rightarrow
X_{m-1}^{T},\text{\quad}w\longmapsto w_{t}-\nu w_{xx}.
\]
Par (\ref{estipnl}), l'application $\mathcal{L}^{-1}:X_{m-1}^{T}\rightarrow
X_{m,0}^{T}$ est une immersion continue. On définit l'espace $Z_{m}%
^{T}=\mathcal{L}^{-1}(X_{m-1}^{T})$ muni de la norme $||w||_{Z_{m}^{T}%
}=||\mathcal{L}w||_{X_{m-1}^{T}}$ qui est un espace de Banach. Cet espace
s'injecte continûment dans $X_{m,0}^{T}$ et $\mathcal{L}$ définit une
isométrie entre $Z_{m}^{T}$ et $X_{m-1}^{T}.$

Considérons l'équation (\ref{BP}) avec $v\in X_{m}^{T}.$ Par le théorème
\ref{thwhm} la solution $w\in X_{m}^{T}.$ Donc, $\partial_{x}((w+v)^{2})\in
X_{m-1}^{T}$ et $w=\mathcal{L}^{-1}\left(  -\frac{1}{2}\partial_{x}%
((w+v)^{2})\right)  \in Z_{m}^{T}.$ Écrivons (\ref{BP}) comme%

\begin{equation}
\Phi(w,v)=0,\quad\Phi(w,v)=w+\frac{1}{2}\mathcal{L}^{-1}\circ\partial
_{x}((w+v)^{2}).\label{phi=0}%
\end{equation}
Notons que pour chaque $v\in X_{m}^{T}$ l'équation $\Phi(w,v)=0$ possède une
unique solution $w\in Z_{m}^{T}.$

\begin{lemma}
\label{phianaly}Pour tout $T>0$ et $m\geq1$, $\Phi$ définit une application
analytique de $Z_{m}^{T}\times X_{m}^{T}$ dans $Z_{m}^{T}$.
\end{lemma}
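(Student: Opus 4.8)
The plan is to show that $\Phi$ is a (real-)analytic map by decomposing it into pieces whose analyticity is easy to check, then invoking the standard stability of analytic maps under composition and sums. Recall $\Phi(w,v)=w+\tfrac12\,\mathcal L^{-1}\circ\partial_x\big((w+v)^2\big)$, so we must handle three ingredients: the linear map $(w,v)\mapsto w$, the bounded linear operator $\mathcal L^{-1}\circ\partial_x\colon X_m^T\to Z_m^T$, and the ``quadratic'' nonlinearity $(w,v)\mapsto (w+v)^2$. The first is trivially analytic. For the second, note that $\partial_x\colon H^m\to H^{m-1}$ is bounded (indeed an isomorphism onto, as recalled after Theorem \ref{th2}), hence $\partial_x\colon X_m^T\to X_{m-1}^T$ is a bounded linear operator, and by the very definition of $Z_m^T$ the map $\mathcal L^{-1}\colon X_{m-1}^T\to Z_m^T$ is an isometry; composing, $\mathcal L^{-1}\circ\partial_x\colon X_m^T\to Z_m^T$ is bounded linear, therefore analytic. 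It remains to treat the nonlinear term and to check that the target space is correct.

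The heart of the argument is that the map $Q\colon (w,v)\mapsto (w+v)^2$ from $Z_m^T\times X_m^T$ into $X_m^T$ is analytic. First I would observe that $Z_m^T\hookrightarrow X_{m,0}^T\subset X_m^T$ continuously (stated just before the lemma), so it suffices to show that the squaring map $u\mapsto u^2$ is analytic from $X_m^T$ to $X_m^T$; then $Q$ is the composition of the bounded linear map $(w,v)\mapsto w+v$ (from $Z_m^T\times X_m^T$ into $X_m^T$) with squaring. Now for each fixed $t$, the algebra inequality \eqref{alg} — valid since $m\geq1>\tfrac12$ — gives $\|u^2\|_m\le c_m\|u\|_m^2$, and more precisely the bilinear map $(u_1,u_2)\mapsto u_1u_2$ is bounded from $H^m\times H^m$ to $H^m$; taking suprema over $t\in[0,T]$, the pointwise multiplication $(u_1,u_2)\mapsto u_1u_2$ is a bounded bilinear map $X_m^T\times X_m^T\to X_m^T$. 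A bounded bilinear map is analytic (it equals its own Taylor expansion), hence so is the squaring map $u\mapsto u^2=B(u,u)$, and therefore so is $Q$.

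Finally I would assemble the pieces: $\Phi = \mathrm{id}_{Z_m^T}\,\pi_1 + \tfrac12\,(\mathcal L^{-1}\circ\partial_x)\circ Q$, where $\pi_1$ is the projection onto the first factor, $\mathcal L^{-1}\circ\partial_x\colon X_m^T\to Z_m^T$ is bounded linear (hence analytic) by the above, and $Q\colon Z_m^T\times X_m^T\to X_m^T$ is analytic; a sum and composition of analytic maps is analytic, so $\Phi\colon Z_m^T\times X_m^T\to Z_m^T$ is analytic, with image landing in $Z_m^T$ precisely because $\mathcal L^{-1}$ maps into $Z_m^T$. The only genuinely delicate point is the bookkeeping on which spaces the various arrows act — one must use $m\geq1$ so that $X_m^T$ is a multiplicative algebra, and one must use the identification of $Z_m^T$ with $X_{m-1}^T$ (via $\mathcal L$) together with $\partial_x\colon X_m^T\to X_{m-1}^T$ to land the nonlinear term back in $Z_m^T$; the analyticity itself is then immediate from the fact that all nonlinearity here is polynomial of degree $2$ and $X_m^T$ is a Banach algebra.
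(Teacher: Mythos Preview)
Your proof is correct and follows essentially the same approach as the paper's: both argue that the squaring map $(w,v)\mapsto(w+v)^2$ is analytic from $X_m^T$ into itself (using the Banach-algebra property \eqref{alg} for $m\geq1$), then compose with the bounded linear operators $\partial_x$ and $\mathcal L^{-1}$ and use the continuous embedding $Z_m^T\hookrightarrow X_{m,0}^T$ to land back in $Z_m^T$. Your version simply spells out the bookkeeping on the spaces more carefully than the paper does.
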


\begin{proof}
L'application $X_{m}^{T}\rightarrow X_{m}^{T},w\longmapsto(v+w)^{2}$ est
analytique car polynomiale et continue \cite{Car}. Comme $Z_{m}^{T}%
\hookrightarrow X_{m,0}^{T}$ alors l'application
\[
Z_{m}^{T}\times X_{m}^{T}\rightarrow Z_{m}^{T},(w,v)\longmapsto(v+w)^{2}%
\longmapsto\partial_{x}((w+v)^{2})\longmapsto\mathcal{L}^{-1}\circ\partial
_{x}((w+v)^{2})
\]
est aussi analytique. Donc $\Phi$ est analytique.
\end{proof}

Pour tout $(w,v)\in Z_{m}^{T}\times X_{m}^{T}$, la différentielle de $\Phi$ en
$w\in Z_{m}^{T}$ évaluée en $h\in Z_{m}^{T}$ est l'application $d_{w}%
\Phi(w,v)(h):Z_{m}^{T}\rightarrow Z_{m}^{T},$ telle que $d_{w}\Phi
(w,v)(h)=h+\mathcal{L}^{-1}\circ\partial_{x}(w+v)h$ \cite{Car}. Cette
application est continue par le lemme \ref{phianaly}.

\begin{lemma}
\label{isomorphisme}Pour tout $T>0,$ $m\geq1$ et $(w,v)\in Z_{m}^{T}\times
X_{m}^{T}$, $d_{w}\Phi$ est un isomorphisme de $Z_{m}^{T}.$
\end{lemma}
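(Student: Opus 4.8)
The plan is to show that for fixed $(w,v)\in Z_m^T\times X_m^T$ the bounded linear operator $d_w\Phi(w,v): h\mapsto h+\mathcal{L}^{-1}\circ\partial_x((w+v)h)$ is invertible on $Z_m^T$. First I would transport the problem to the space $X_{m-1}^T$ via the isometry $\mathcal{L}: Z_m^T\to X_{m-1}^T$: since $\mathcal{L}\,d_w\Phi(w,v)\,\mathcal{L}^{-1}(z) = z + \partial_x\big((w+v)\,\mathcal{L}^{-1}z\big)$, invertibility of $d_w\Phi$ on $Z_m^T$ is equivalent to invertibility on $X_{m-1}^T$ of the operator $z\mapsto z + \partial_x\big((w+v)\,\mathcal{L}^{-1}z\big)$, i.e. equivalent to solving, for given $z\in X_{m-1}^T$, the linear parabolic Cauchy problem
\[
h_t-\nu h_{xx}+\partial_x\big((w+v)h\big)=z,\qquad h_{|t=0}=0 ,
\]
uniquely in $Z_m^T$ with continuous dependence on $z$. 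But this is precisely the content of Remark \ref{nlch}, applied with the drift coefficient $w+v\in X_m^T$ (using $w\in Z_m^T\hookrightarrow X_{m,0}^T\subset X_m^T$) and right-hand side $z\in X_{m-1}^T$: equation \eqref{pnll} has a unique solution $h\in X_m^T$ obeying \eqref{estipnll}. One checks additionally that this $h$ lies in $Z_m^T$, because $\mathcal{L}h = z-\partial_x((w+v)h)\in X_{m-1}^T$ (the algebra property \eqref{alg} gives $(w+v)h\in X_m^T$, hence its $x$-derivative is in $X_{m-1}^T$), so by definition $h=\mathcal{L}^{-1}(\mathcal{L}h)\in Z_m^T$, and the $Z_m^T$-norm bound follows from \eqref{estipnll} together with the algebra estimate.

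Thus $d_w\Phi(w,v)$ is a continuous linear bijection of the Banach space $Z_m^T$; continuity of the inverse is then automatic by the open mapping theorem, but in fact it is quantitative here, the bound on $\|h\|_{Z_m^T}$ in terms of $\|z\|_{X_{m-1}^T}$ being exactly \eqref{estipnll} pushed through the isometry $\mathcal{L}$. Since continuity of $d_w\Phi(w,v)$ itself was already recorded (it follows from Lemma \ref{phianaly}), this completes the proof that $d_w\Phi$ is an isomorphism of $Z_m^T$.

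The only genuinely non-routine point is verifying that the solution $h$ furnished by Remark \ref{nlch} actually belongs to $Z_m^T$ and not merely to $X_{m,0}^T$; everything else is bookkeeping with the isometry and the algebra inequality \eqref{alg}. I would therefore be careful to state the reduction to \eqref{pnll} explicitly and to invoke \eqref{alg} to see that $\partial_x((w+v)h)\in X_{m-1}^T$, which is what legitimizes writing $h=\mathcal{L}^{-1}\big(z-\partial_x((w+v)h)\big)$ and concluding $h\in Z_m^T$. Note in particular that no smallness of $w+v$ is needed: Remark \ref{nlch} already provides a global-in-data solvability statement for the linear problem \eqref{pnll}, so the isomorphism property holds at every point $(w,v)$, which is exactly what is required for the subsequent implicit-function-theorem argument establishing analyticity of the flow.
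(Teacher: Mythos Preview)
Your proposal is correct and follows essentially the same route as the paper: both reduce the equation $d_w\Phi(h)=g$ to the linear parabolic problem \eqref{pnll} with drift $w+v\in X_m^T$ and right-hand side $\mathcal{L}g\in X_{m-1}^T$, and invoke Remark~\ref{nlch} for existence, uniqueness, and continuous dependence. Your version is in fact slightly more careful than the paper's, since you explicitly verify via the algebra property \eqref{alg} that the resulting $h$ lies in $Z_m^T$ and not merely in $X_{m,0}^T$---a point the paper leaves implicit.
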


\begin{proof}
Il est clair que pour tout $w,h,g\in Z_{m}^{T}$ et $v\in X_{m}^{T},$ on a
$d_{w}\Phi(h)=g$ si et seulement si
\begin{equation}
h_{t}-\nu h_{xx}+\partial_{x}(w+v)h=\mathcal{L}g.\label{equaLh}%
\end{equation}
Comme $(w+v,$ $\mathcal{L}g)\in X_{m}^{T}\times X_{m-1}^{T}$, la remarque
\ref{nlch} implique qu'il existe une unique solution $h\in X_{m,0}^{T}$ de
(\ref{equaLh}) et que l'application $X_{m}^{T}\rightarrow X_{m,0}^{T},$
$\mathcal{L}g\longmapsto h$ est continue.
\end{proof}

Finalement, par les lemmes \ref{phianaly} et \ref{isomorphisme}, et la version
analytique du théorème des fonctions implicites \cite[Appendix B]{PT}, nous
obtenons que l'unique solution $w\in Z_{m}^{T}$ de l'équation $\Phi(w,v)=0$
est une fonction analytique de $v\in X_{m}^{T}.$ Comme nous avons écrit la
solution $%
\operatorname{u}%
$ de l'équation de Burgers comme $%
\operatorname{u}%
=v+w,$ on a par le lemme \ref{lem3}, le

\begin{theorem}
\label{th6}Soit $m\geq1$ et $T>0$. Si $%
\operatorname{u}%
$ est la solution de (\ref{B}),(\ref{B2}) avec $\eta=\partial_{t}\xi$, alors
l'application $\mathcal{M}:H^{m}\times X_{m}^{T}\rightarrow X_{m}^{T},(%
\operatorname{u}%
_{0},\xi)\longmapsto%
\operatorname{u}%
$ est analytique. En particulier $\mathcal{M}$ est continue.
\end{theorem}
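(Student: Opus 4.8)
The plan is to assemble the analyticity of $\mathcal{M}$ from the three ingredients already prepared: the linear continuous map from the heat equation, the implicit-function-theorem argument for $\Phi$, and the composition with $v \mapsto w$. First I would recall that by Lemma \ref{lem3} the map $\Lambda : H^m \times X_m^T \to X_m^T$, $(\operatorname{u}_0,\xi) \mapsto v$, where $v$ solves (\ref{CH}), is linear and continuous, hence in particular analytic as a (continuous) affine map. Next, from Lemmas \ref{phianaly} and \ref{isomorphisme} together with the analytic implicit function theorem \cite[Appendix B]{PT}: since $\Phi(w,v)=0$ has for each $v\in X_m^T$ a unique solution $w = w(v) \in Z_m^T$, and since $d_w\Phi(w,v)$ is at every point a topological isomorphism of $Z_m^T$, the solution map $\mathcal{W} : X_m^T \to Z_m^T$, $v \mapsto w(v)$, is analytic. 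Composing, $v \mapsto w(v)$ is analytic as a map $X_m^T \to Z_m^T$, and $Z_m^T \hookrightarrow X_{m,0}^T \subset X_m^T$ continuously, so $\mathcal{W}$ is also analytic into $X_m^T$.

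Then I would assemble $\mathcal{M}$. Given $(\operatorname{u}_0,\xi)$, we set $v = \Lambda(\operatorname{u}_0,\xi) \in X_m^T$, then $w = \mathcal{W}(v) \in X_m^T$, and finally $\operatorname{u} = v + w$; by the decomposition in \S3.2 this $\operatorname{u}$ is precisely the solution of (\ref{B}), (\ref{B2}), and by Theorem \ref{burgersok} it is the unique one in $X_m^T$. Thus
\[
\mathcal{M}(\operatorname{u}_0,\xi) = \Lambda(\operatorname{u}_0,\xi) + \mathcal{W}\bigl(\Lambda(\operatorname{u}_0,\xi)\bigr).
\]
Analyticity of the right-hand side follows because the class of analytic maps between Banach spaces is closed under composition and under addition: $\Lambda$ is analytic, $\mathcal{W}$ is analytic, the composition $\mathcal{W}\circ\Lambda$ is analytic, and adding the analytic map $\Lambda$ keeps it analytic. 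Continuity of $\mathcal{M}$ is then immediate since analytic maps between Banach spaces are in particular continuous (indeed locally Lipschitz).

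The only genuinely delicate point is checking that all the preceding lemmas are being invoked with mutually compatible parameters — in particular that the target space $Z_m^T$ into which $\mathcal{W}$ lands really does embed continuously in $X_m^T$ and that the isomorphism property in Lemma \ref{isomorphisme} holds at the specific solution point $(w(v),v)$ and not merely generically, so that the analytic implicit function theorem applies with no loss. This has however already been arranged: Lemma \ref{isomorphisme} asserts $d_w\Phi$ is an isomorphism of $Z_m^T$ for \emph{all} $(w,v)\in Z_m^T\times X_m^T$, and the continuous embedding $Z_m^T \hookrightarrow X_{m,0}^T$ was established right after the definition of $Z_m^T$. Hence there is no real obstacle remaining; the theorem is a short formal consequence of Lemmas \ref{lem3}, \ref{phianaly}, \ref{isomorphisme} and the closure properties of analytic maps, and one finishes by noting that continuity follows from analyticity.
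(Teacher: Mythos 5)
Votre démonstration est correcte et suit essentiellement la même démarche que l'article : le lemme \ref{lem3} donne l'analyticité (linéarité continue) de $(\operatorname{u}_0,\xi)\mapsto v$, les lemmes \ref{phianaly} et \ref{isomorphisme} combinés au théorème des fonctions implicites analytique donnent l'analyticité de $v\mapsto w$, et l'on conclut par $\operatorname{u}=v+w$ et la stabilité des applications analytiques par composition et somme. Les vérifications supplémentaires que vous signalez (injection continue $Z_m^T\hookrightarrow X_{m,0}^T$, validité de l'isomorphisme en tout point) sont bien celles déjà établies dans le texte.
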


On munit l'espace $H^{m}\times X_{m}^{T}$ de la norme :
\[
|||(f,h)|||=||f||_{H^{m}}+||h||_{X_{m}^{T}}\text{.}%
\]

\begin{proposition}
\label{Mlip}l'application $\mathcal{M}$ du théorème \ref{th6} est localement
Lipschitzienne\footnote{C'est à dire, il existe une fonction $C\in\mathcal{C}(%
\mathbb{R}
^{+};%
\mathbb{R}
^{+})$ telle que pour tout $%
\operatorname{u}%
_{0}%
\operatorname{u}%
_{0}^{^{\prime}}\in H^{m}$ et $\xi,\xi^{^{\prime}}\in X_{m}^{T}:$
\[
||\mathcal{M}(%
\operatorname{u}%
_{0},\xi)-\mathcal{M}(%
\operatorname{u}%
_{0}^{^{\prime}},\xi^{^{\prime}})||_{X_{m}^{T}}\leq C(\max(|||%
\operatorname{u}%
_{0},\xi|||,|||%
\operatorname{u}%
_{0}^{\prime},\xi^{\prime}|||)|||(%
\operatorname{u}%
_{0},\xi)-(%
\operatorname{u}%
_{0}^{^{\prime}},\xi^{^{\prime}})|||.
\]
}.
\end{proposition}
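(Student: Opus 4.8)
Le plan est d'exploiter la d\'ecomposition $u=v+w$ d\'ej\`a en place. Notons $\mathcal{K}:H^{m}\times X_{m}^{T}\to X_{m}^{T}$, $(u_{0},\xi)\mapsto v$, l'op\'erateur de r\'esolution de l'\'equation de la chaleur (\ref{CH}) : par le lemme \ref{lem3} il est lin\'eaire et continu, donc il existe $c_{0}=c_{0}(m,\nu,T)$ tel que $\|v\|_{X_{m}^{T}}\le c_{0}\,|||(u_{0},\xi)|||$. Comme $\mathcal{M}(u_{0},\xi)=\mathcal{K}(u_{0},\xi)+\mathcal{W}\big(\mathcal{K}(u_{0},\xi)\big)$, o\`u $\mathcal{W}:X_{m}^{T}\to X_{m}^{T}$, $v\mapsto w$, d\'esigne (notation ad hoc) l'op\'erateur de r\'esolution de (\ref{BP}) donn\'e par le th\'eor\`eme \ref{thwhm}, il suffit d'\'etablir que $\mathcal{W}$ est Lipschitzienne sur chaque boule de $X_{m}^{T}$, avec une constante de Lipschitz contr\^ol\'ee par une fonction continue du rayon de la boule.

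Fixons $R>0$ et $v',v''\in X_{m}^{T}$ avec $\|v'\|_{X_{m}^{T}},\|v''\|_{X_{m}^{T}}\le R$, et posons $w'=\mathcal{W}(v')$, $w''=\mathcal{W}(v'')$, $h=w'-w''$, $\psi=v'-v''$. Le th\'eor\`eme \ref{thwhm} donne $\|w'\|_{X_{m}^{T}},\|w''\|_{X_{m}^{T}}\le C_{m}(T,\nu,R)$. En soustrayant les formulations int\'egrales de (\ref{BP}) pour $w'$ et $w''$ et en utilisant l'identit\'e $(w'+v')^{2}-(w''+v'')^{2}=g\cdot(h+\psi)$ avec $g:=w'+v'+w''+v''$, on voit que $h$ r\'esout l'\'equation lin\'eaire
\[
h_{t}-\nu h_{xx}+\partial_{x}\!\big(\tfrac12\,g\,h\big)=-\partial_{x}\!\big(\tfrac12\,g\,\psi\big),\qquad h_{|t=0}=0.
\]
Puisque $m\ge1>\tfrac12$, l'alg\`ebre de Sobolev (\ref{alg}) fournit $\|\tfrac12 g\psi\|_{X_{m}^{T}}\le c_{m}\|g\|_{X_{m}^{T}}\|\psi\|_{X_{m}^{T}}$, et comme $\|\partial_{x}u\|_{X_{m-1}^{T}}=\|u\|_{X_{m}^{T}}$, le second membre $z:=-\partial_{x}(\tfrac12 g\psi)$ est dans $X_{m-1}^{T}$ avec $\|z\|_{X_{m-1}^{T}}\le c_{m}\|g\|_{X_{m}^{T}}\|\psi\|_{X_{m}^{T}}$, tandis que $\|g\|_{X_{m}^{T}}\le 2R+2C_{m}(T,\nu,R)=:K(R)$. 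La remarque \ref{nlch}, appliqu\'ee avec le coefficient $\tfrac12 g\in X_{m}^{T}$ et le second membre $z\in X_{m-1}^{T}$, garantit alors que $h$ est l'unique solution dans $X_{m}^{T}$ et que $\|h\|_{X_{m}^{T}}\le C(\nu,T,\tfrac12 K(R))\,\|z\|_{X_{m-1}^{T}}$, d'o\`u $\|\mathcal{W}(v')-\mathcal{W}(v'')\|_{X_{m}^{T}}\le C_{1}(R)\,\|v'-v''\|_{X_{m}^{T}}$ avec $C_{1}(R):=c_{m}K(R)\,C(\nu,T,\tfrac12 K(R))$.

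Il reste \`a recoller. \'Etant donn\'e $(u_{0},\xi)$ et $(u_{0}',\xi')$, posons $v=\mathcal{K}(u_{0},\xi)$, $v'=\mathcal{K}(u_{0}',\xi')$ et $R=c_{0}\max\big(|||(u_{0},\xi)|||,|||(u_{0}',\xi')|||\big)$, de sorte que $\|v\|_{X_{m}^{T}},\|v'\|_{X_{m}^{T}}\le R$. Par lin\'earit\'e de $\mathcal{K}$ on a $\|v-v'\|_{X_{m}^{T}}\le c_{0}\,|||(u_{0},\xi)-(u_{0}',\xi')|||$, et l'\'etape pr\'ec\'edente donne $\|\mathcal{W}(v)-\mathcal{W}(v')\|_{X_{m}^{T}}\le C_{1}(R)\,c_{0}\,|||(u_{0},\xi)-(u_{0}',\xi')|||$ ; comme $\mathcal{M}(u_{0},\xi)-\mathcal{M}(u_{0}',\xi')=(v-v')+(\mathcal{W}(v)-\mathcal{W}(v'))$, on conclut avec $C(\rho):=\big(1+C_{1}(c_{0}\rho)\big)c_{0}$. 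La continuit\'e et la monotonie de $\rho\mapsto C(\rho)$ r\'esultent de ce que les constantes $C_{m}(T,\nu,\cdot)$ du th\'eor\`eme \ref{thwhm} et $C(\nu,T,\cdot)$ de la remarque \ref{nlch} proviennent d'arguments de type Gronwall et peuvent \^etre choisies continues et croissantes en leur dernier argument (quitte \`a les majorer par de telles fonctions).

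Je ne vois pas d'obstacle s\'erieux : l'estimation lin\'eaire cl\'e est d\'ej\`a encapsul\'ee dans la remarque \ref{nlch}. Les seuls points de vigilance sont le suivi de la d\'ependance explicite des constantes par rapport \`a $R$, et la v\'erification que $h=w'-w''$ appartient \`a $X_{m}^{T}$ et r\'esout l'\'equation lin\'eaire au sens int\'egral, de sorte que l'unicit\'e de la remarque \ref{nlch} l'identifie \`a la solution estim\'ee. On pourrait aussi, de mani\`ere alternative, refaire directement l'estimation d'\'energie pour $h$ comme \`a l'\'etape 0 de la preuve du th\'eor\`eme \ref{thwhm}, cette fois avec un second membre non nul, au prix d'in\'egalit\'es de Gagliardo-Nirenberg suppl\'ementaires ; le recours \`a la remarque \ref{nlch} est plus \'economique.
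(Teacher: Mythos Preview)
Your argument is correct and takes a genuinely different route from the paper's own proof. The paper proceeds via complex analysis: it complexifies the spaces $H^{m}$ and $X_{m}^{T}$, uses the analytic (complex) implicit function theorem to extend $\mathcal{M}$ to a bounded analytic map on a complex neighbourhood $\mathcal{O}^{\mathbb{C}}$ of the real data, and then invokes the Cauchy inequality to bound $\|d\mathcal{M}(u_{0},\xi)\|$ in terms of the radius of that neighbourhood and the sup of $\mathcal{M}$ on it, which yields the local Lipschitz bound. Your approach is instead a direct PDE estimate: you write the equation for the difference $h=w'-w''$, recognise it as an instance of the linear problem~(\ref{pnll}) with coefficient $\tfrac12 g\in X_{m}^{T}$ and right-hand side $-\partial_{x}(\tfrac12 g\psi)\in X_{m-1}^{T}$, and read off the estimate from Remark~\ref{nlch}. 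This is more elementary, avoids complexification entirely, and makes sharper use of the linear estimate already isolated in Remark~\ref{nlch}; the paper's method, by contrast, is more abstract and would transfer unchanged to any analytic solution map. Your closing remarks about tracking the $R$-dependence of the constants and about $h\in X_{m}^{T}$ satisfying the linear equation in the integral sense (so that the uniqueness part of Remark~\ref{nlch} applies) are exactly the right points to watch, and both go through without difficulty.
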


\begin{proof}
Soit $m\geq1$ et $T>0$. Nous notons par $H^{m%
\mathbb{C}
}$ l'espace de Sobolev complexe (la complixification de l'espace $H^{m}$). On
définit $X_{m}^{T%
\mathbb{C}
}$ et $Z_{m}^{T%
\mathbb{C}
}$ comme les analogues complexes des espaces $X_{m}^{T}$ et $Z_{m}^{T}.$

En utilisant la version analytique complexe du théorème des fonctions
implicites \cite[Appendix B]{PT}, nous trouvons que si $w^{\prime}\in Z_{m}^{T%
\mathbb{C}
}$, $v^{\prime}\in X_{m}^{T%
\mathbb{C}
}$ et $\Phi(w^{\prime},v^{\prime})=0$ (cf. (\ref{phi=0})) alors $w^{\prime}$
est une fonction analytique de $v^{\prime}$, et il existe $\varepsilon
=\varepsilon(v^{\prime})>0$ telle que l'application $v^{\prime}\longmapsto
w^{\prime}(v^{\prime})$ vérifiant $\Phi(w^{\prime}(v^{\prime}),v^{\prime})=0$
se prolonge en une application analytique complexe :%
\[
\Phi:B_{v^{\prime}}^{\varepsilon}:=\{v\in X_{m}^{T%
\mathbb{C}
}:||v-v^{\prime}||_{X_{m}^{T%
\mathbb{C}
}}<\varepsilon\}\rightarrow Z_{m}^{T%
\mathbb{C}
},
\]
qui est bornée en norme par une constance $K(v^{\prime})>0.$ De plus, nous
pouvons choisir $\varepsilon=\varepsilon(||v^{\prime}||_{X_{m}^{T%
\mathbb{C}
}})$ et $K=K(||v^{\prime}||_{X_{m}^{T%
\mathbb{C}
}})$ telles que $\varepsilon,K$ soient des fonctions continues positives. Donc,
il existe $\widetilde{\varepsilon},\widetilde{K}\in\mathcal{C}(%
\mathbb{R}
^{+},%
\mathbb{R}
^{+})$ tels que l'application $\mathcal{M}$ du théorème \ref{th6} se prolonge
en une application analytique complexe :%
\begin{equation}
\mathcal{M}:\mathcal{O}^{%
\mathbb{C}
}\rightarrow X_{m}^{T%
\mathbb{C}
},\label{MM}%
\end{equation}
où $\mathcal{O}^{%
\mathbb{C}
}=\{%
\operatorname{u}%
_{0}\in H^{m%
\mathbb{C}
}:||\operatorname{Im}%
\operatorname{u}%
_{0}||_{m}\leq\widetilde{\varepsilon}(||\operatorname{Re}%
\operatorname{u}%
_{0}||_{m})\}\times\{\xi\in X_{m}^{T%
\mathbb{C}
}:||\operatorname{Im}\xi||_{X_{m}^{T%
\mathbb{C}
}}\leq\widetilde{\varepsilon}(||\operatorname{Re}\xi||_{X_{m}^{T%
\mathbb{C}
}})\}$, et telle que $||\mathcal{M}(%
\operatorname{u}%
_{0},\xi)||_{X_{m}^{T%
\mathbb{C}
}}\leq\widetilde{K}(||%
\operatorname{u}%
_{0}||_{H^{m%
\mathbb{C}
}}+||\xi||_{X_{m}^{T%
\mathbb{C}
}}).$ Chaque couple $(%
\operatorname{u}%
_{0},\xi)\in H^{m}\times X_{m}^{T}$ se plonge dans $\mathcal{O}^{%
\mathbb{C}
}\subset H^{m%
\mathbb{C}
}\times X_{m}^{T%
\mathbb{C}
}$ avec son voisinage complexe de rayon $\widetilde{\varepsilon}(|||(%
\operatorname{u}%
_{0},\xi)|||)$. Donc, par l'analyticité de (\ref{MM}) et l'inégalité de Cauchy
\cite[Section VII.A]{KP}, nous avons que pour tout $%
\operatorname{u}%
_{0}\in H^{m}$ et $\xi\in X_{m}^{T}:$%
\[
||d\mathcal{M}(%
\operatorname{u}%
_{0},\xi)||_{H^{m}\times X_{m}^{T},X_{m}^{T}}\leq\min\left(  \widetilde
{\varepsilon}(||%
\operatorname{u}%
_{0}||_{m}),\widetilde{\varepsilon}(||\xi||_{X_{m}^{T}})\right)
^{-1}\widetilde{K}(||%
\operatorname{u}%
_{0}||_{H^{m%
\mathbb{C}
}}+||\xi||_{X_{m}^{T%
\mathbb{C}
}}).
\]
Donc, $\mathcal{M}_{t}$ est localement Lipschitzienne.
\end{proof}

Pour $t\in\lbrack0,T]$ et $m\geq1$, nous notons par $\mathcal{M}_{t}$
l'application
\begin{equation}
\mathcal{M}_{t}:H^{m}\times X_{m}^{T}\rightarrow H^{m},(%
\operatorname{u}%
_{0},\xi)\longmapsto%
\operatorname{u}%
(t)=\mathcal{M}_{|t=\text{const}}\text{.}\label{Mt}%
\end{equation}
Elle est analytique par le théorème \ref{th6}.

Parfois, on notera 
\[\operatorname{u}(\cdot;\operatorname{u}_{0},\xi)=\mathcal{M}(\operatorname{u}_{0},\xi)\] 
la solution de (\ref{B}), (\ref{B2}) avec $\eta=\partial_{t}\xi$, et
on notera \[%
\operatorname{u}%
^{\omega}(\cdot;%
\operatorname{u}%
_{0})=\mathcal{M}(%
\operatorname{u}%
_{0},\xi^{\omega})\] la solution de (\ref{B}), (\ref{B*}), (\ref{B2}).

\section{Principe du maximum de Kru\v{z}kov}

Le but de cette section est d'estimer $%
\operatorname{u}%
,$ $%
\operatorname{u}%
_{x}$ et $%
\operatorname{u}%
_{x}^{+}:=\max(0,%
\operatorname{u}%
_{x})$, en se basant sur la méthode de Kru\v{z}kov \cite{Kru64}. Commençons
avec un lemme évident.

\begin{lemma}
\label{lem18}Si $f\in\mathcal{C}^{1}(\mathbf{S}^{1};%
\mathbb{R}
)$ telle que $\int_{\mathbb{S}^{1}}f(x)dx=0$ et $f^{\prime}(x)\leq c^{\ast}$
pour tout $x$. Alors $|f|_{\infty}\leq c^{\ast}$ et $|f^{\prime}|_{1}%
\leq2c^{\ast}.$
\end{lemma}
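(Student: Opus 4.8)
Le plan est d'utiliser le théorème fondamental du calcul en exploitant la périodicité de $f$. D'abord je noterais que nécessairement $c^{\ast}\geq0$: comme $f$ est $1$-périodique, $\int_{\mathbb{S}^{1}}f^{\prime}(x)\,dx=0$, et puisque $f^{\prime}\leq c^{\ast}$ partout, cette intégrale est $\leq c^{\ast}$, d'où $c^{\ast}\geq0$. Ensuite, $f$ étant continue sur le cercle avec $\int_{\mathbb{S}^{1}}f=0$, elle s'annule en un point $x_{0}$ (sinon $f$ garderait un signe constant et son intégrale serait non nulle).

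Pour la borne $|f|_{\infty}\leq c^{\ast}$: pour $x\in\mathbb{S}^{1}$ quelconque, en choisissant le représentant de $x$ tel que $x-x_{0}\in[0,1)$, j'écrirais $f(x)=f(x_{0})+\int_{x_{0}}^{x}f^{\prime}(s)\,ds=\int_{x_{0}}^{x}f^{\prime}(s)\,ds\leq c^{\ast}(x-x_{0})\leq c^{\ast}$; en prenant cette fois le représentant tel que $x_{0}-x\in[0,1)$, j'obtiendrais de même $f(x)=-\int_{x}^{x_{0}}f^{\prime}(s)\,ds\geq-c^{\ast}(x_{0}-x)\geq-c^{\ast}$. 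Les deux estimations donnent $|f(x)|\leq c^{\ast}$ pour tout $x$.

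Pour la borne $|f^{\prime}|_{1}\leq2c^{\ast}$: je décomposerais $f^{\prime}=(f^{\prime})^{+}-(f^{\prime})^{-}$ avec $(f^{\prime})^{\pm}\geq0$. Comme $\int_{\mathbb{S}^{1}}f^{\prime}=0$ par périodicité, on a $\int_{\mathbb{S}^{1}}(f^{\prime})^{+}=\int_{\mathbb{S}^{1}}(f^{\prime})^{-}$, donc $|f^{\prime}|_{1}=\int_{\mathbb{S}^{1}}(f^{\prime})^{+}+\int_{\mathbb{S}^{1}}(f^{\prime})^{-}=2\int_{\mathbb{S}^{1}}(f^{\prime})^{+}$. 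Enfin, puisque $c^{\ast}\geq0$, on a ponctuellement $(f^{\prime})^{+}=\max(f^{\prime},0)\leq c^{\ast}$, et comme $\mathbb{S}^{1}$ est de mesure $1$, on conclut $|f^{\prime}|_{1}\leq2c^{\ast}$. Il n'y a pas d'obstacle sérieux ici — le lemme est élémentaire; le seul point auquel il faut prêter attention est le traitement correct du recollement sur le cercle dans l'application du théorème fondamental du calcul, ainsi que l'observation préliminaire $c^{\ast}\geq0$, qui rend licites les majorations ci-dessus.
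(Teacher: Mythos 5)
Votre démonstration est correcte. Le papier ne donne aucune preuve de ce lemme~: il est simplement annoncé comme \og un lemme évident \fg{} et utilisé tel quel dans la section sur le principe du maximum de Kru\v{z}kov. Votre argument comble donc cette omission de façon valable~: l'observation préliminaire $c^{\ast}\geq0$ (via $\int_{\mathbb{S}^{1}}f^{\prime}=0$), le choix d'un zéro $x_{0}$ de $f$ et l'intégration de $f^{\prime}$ dans les deux sens du cercle (chaque arc étant de longueur $\leq1$) donnent bien $|f|_{\infty}\leq c^{\ast}$, et la décomposition $|f^{\prime}|_{1}=2\int(f^{\prime})^{+}\leq2c^{\ast}$ est exactement le calcul attendu. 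C'est très vraisemblablement la preuve que les auteurs avaient en tête en qualifiant le lemme d'évident.
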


Le théorème suivant nous fournit des estimations portant sur $%
\operatorname{u}%
_{x}^{+}(t),$ $%
\operatorname{u}%
(t)$ dans $L_{\infty},$ et sur $%
\operatorname{u}%
_{x}(t)$ dans $L_{1}.$

\begin{theorem}
\label{th17}Soit $B_{4}<\infty,$ $T>0,$ $t\in\lbrack0,T]$ et $%
\operatorname{u}%
(t,x)$ solution de (\ref{B}), (\ref{B2}) avec $\eta(t,x)=\partial_{t}\xi
(t,x)$, $\xi\in X_{4}^{T}$ et $%
\operatorname{u}%
(0)\in H^{1}$. Alors, il existe $C(T)>0$ indépendante de $%
\operatorname{u}%
_{0}$ et de $\nu\in(0,1],$ telle que pour tout $\theta\in(0,T]$%
\begin{align}
\sup_{\theta\leq t\leq T}|%
\operatorname{u}%
_{x}^{+}(t,\cdot)|_{\infty} &  \leq C\theta^{-1}(1+||\xi||_{X_{4}^{T}%
}),\label{a}\\
\sup_{\theta\leq t\leq T}(|%
\operatorname{u}%
(t,\cdot)|_{\infty}+|%
\operatorname{u}%
_{x}(t,\cdot)|_{1}) &  \leq C\theta^{-1}(1+||\xi||_{X_{4}^{T}}).\label{b}%
\end{align}

\end{theorem}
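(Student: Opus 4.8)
Le plan est de suivre la méthode de Kružkov : on écrit une équation parabolique scalaire satisfaite par la dérivée $\operatorname{u}_x$, puis on contrôle $\operatorname{u}_x^+$ par un principe du maximum appliqué à une fonction auxiliaire bien choisie, qui explique le facteur $\theta^{-1}$. Concrètement, posons $v=\operatorname{u}_x$. En dérivant (\ref{B}) en $x$, on obtient
\begin{equation}
v_t+\operatorname{u} v_x+v^2-\nu v_{xx}=\eta_x=\partial_t\partial_x\xi,\nonumber
\end{equation}
où par l'hypothèse $\xi\in X_4^T$ et l'injection de Sobolev (\ref{sob}), le terme $\partial_x\xi$ est une fonction $\mathcal C^1$ en $x$, à variation contrôlée par $\|\xi\|_{X_4^T}$. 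L'idée centrale est alors d'introduire, pour $\theta\in(0,T]$ fixé et $t_0\in[\theta,T]$, la quantité $\Psi(t,x)=(t-t_0+\theta)\,\operatorname{u}_x(t,x)-M(t)$ où $M(t)$ absorbe la contribution de la force (typiquement $M(t)=c\int_0^t\|\partial_x\xi(s)\|_\infty ds$ ou une primitive analogue), et de montrer que $\Psi$ ne peut pas devenir trop grand. Là où $\Psi$ atteint son maximum spatial, on a $\Psi_x=0$, $\Psi_{xx}\le 0$, et l'équation pour $v$ donne une inégalité différentielle du type $\frac{d}{dt}\max_x\Psi\le C(1+\|\xi\|_{X_4^T})-\big(\text{terme quadratique en }v\big)$, le terme quadratique $-（t-t_0+\theta)v^2$ venant de $v^2$ dans l'équation et assurant que $\max_x\Psi$ reste borné par une constante indépendante de $\nu$ et de $\operatorname{u}_0$. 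En évaluant en $t=t_0$, le poids vaut $\theta$, d'où $\operatorname{u}_x(t_0,\cdot)\le C\theta^{-1}(1+\|\xi\|_{X_4^T})$ ponctuellement, ce qui est (\ref{a}).

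Une fois (\ref{a}) acquise, on obtient (\ref{b}) par le lemme \ref{lem18} appliqué à $f=\operatorname{u}(t,\cdot)$ : pour $t\in[\theta,T]$, la fonction $\operatorname{u}(t,\cdot)$ est de moyenne nulle (comme rappelé dans l'introduction) et vérifie $\operatorname{u}_x(t,x)\le c^\ast:=C\theta^{-1}(1+\|\xi\|_{X_4^T})$, donc $|\operatorname{u}(t,\cdot)|_\infty\le c^\ast$ et $|\operatorname{u}_x(t,\cdot)|_1\le 2c^\ast$, ce qui donne exactement (\ref{b}) avec la même constante, à un facteur $3$ près. Il faut noter que ces deux estimations découlent immédiatement de (\ref{a}) sans aucun recours supplémentaire à la viscosité, ce qui garantit l'uniformité en $\nu\in(0,1]$.

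Pour la rigueur, le point technique à surveiller est que $\operatorname{u}$ n'est a priori régulière que dans $X_m^T$ pour $m\ge1$ via le théorème \ref{burgersok} (avec $m=1$ ici), alors que l'argument du maximum demande assez de régularité pour dériver l'équation en $x$ et pour que $\max_x\Psi(t,\cdot)$ soit une fonction de $t$ absolument continue avec la bonne inégalité différentielle. La parade standard est un argument de régularisation/bootstrap : on observe d'abord que l'hypothèse $B_4<\infty$ et $\xi\in X_4^T$ permettent, par le théorème \ref{th2} et le théorème \ref{thwhm} appliqués avec $m$ plus grand, d'obtenir une solution plus régulière sur tout sous-intervalle $[\theta/2,T]$ (effet régularisant parabolique : même si $\operatorname{u}_0\in H^1$ seulement, $\operatorname{u}(t)$ appartient à $H^m$ pour $t>0$), de sorte que les manipulations ponctuelles sont licites. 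Alternativement, on travaille avec les approximations de Galerkin $\operatorname{u}^N$ de l'étape 2 de la preuve du théorème \ref{thwhm}, pour lesquelles tout est lisse, on établit (\ref{a})–(\ref{b}) uniformément en $N$, puis on passe à la limite grâce à la convergence dans $X_{m-1}^T$ déjà établie. C'est ce passage à la limite, ou la justification de l'effet régularisant avec des bornes explicites en $\theta$ et indépendantes de $\nu$, qui constitue le principal obstacle technique ; le cœur de l'argument — le choix du poids $(t-t_0+\theta)$ et l'exploitation du signe du terme $v^2$ dans le principe du maximum — est en revanche direct.
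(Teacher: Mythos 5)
Votre sch\'ema g\'en\'eral (principe du maximum de Kru\v{z}kov avec un poids en temps, exploitation du signe du terme quadratique $v^{2}$, puis le lemme \ref{lem18} pour passer de (\ref{a}) \`a (\ref{b}), et l'approximation de $\operatorname{u}_{0}\in H^{1}$ par des donn\'ees plus r\'eguli\`eres) co\"incide avec celui du papier. Mais il y a un trou r\'eel au c\oe ur du calcul : vous d\'erivez l'\'equation (\ref{B}) en $x$ et obtenez au second membre $\eta_{x}=\partial_{t}\partial_{x}\xi$, que vous proposez d'absorber par une primitive $M(t)=c\int_{0}^{t}|\partial_{x}\xi(s)|_{\infty}\,ds$. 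Or le th\'eor\`eme est \'enonc\'e pour un $\xi\in X_{4}^{T}$ quelconque, donc seulement continu en temps (et il est ensuite appliqu\'e trajectoire par trajectoire au processus de Wiener, nulle part d\'erivable en $t$) : $\partial_{t}\partial_{x}\xi$ n'existe pas comme fonction, et $M'(t)=c|\partial_{x}\xi(t)|_{\infty}$ ne majore en aucun cas une d\'eriv\'ee temporelle de $\xi_{x}$. L'in\'egalit\'e diff\'erentielle $\frac{d}{dt}\max_{x}\Psi\leq C(1+||\xi||_{X_{4}^{T}})-(\cdots)$ que vous annoncez n'a donc pas de sens, et m\^eme pour un $\xi$ r\'egulier en temps la constante ferait intervenir $\sup_{t}|\partial_{t}\xi_{x}|_{\infty}$, qui n'est pas contr\^ol\'e par $||\xi||_{X_{4}^{T}}$ : la forme annonc\'ee de la borne (\ref{a}) serait perdue.

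C'est pr\'ecis\'ement pour \'eviter cela que le papier commence par poser $\operatorname{u}=\xi+v$ : l'\'equation (\ref{equa_v}) pour $v$ a alors pour second membre $\nu\xi_{xx}$, qui est une vraie fonction continue born\'ee par $C||\xi||_{X_{4}^{T}}$ (injection de Sobolev), et c'est \`a $w=t\,v_{x}=t(\operatorname{u}_{x}-\xi_{x})$ --- et non \`a $t\,\operatorname{u}_{x}$ --- que s'applique l'argument du maximum; on revient ensuite \`a $\operatorname{u}_{x}$ via $t\operatorname{u}_{x}=w+t\xi_{x}$. Votre poids $(t-t_{0}+\theta)$ est une variante admissible du poids $t$ du papier et le reste de votre plan (conditions d'optimalit\'e $w_{x}=0$, $w_{xx}\leq0$, $w_{t}\geq0$ au point de maximum, puis lemme \ref{lem18}, puis approximation --- le papier prouve d'abord le r\'esultat pour $\operatorname{u}_{0}\in H^{4}$ et passe \`a la limite par le th\'eor\`eme \ref{th6} avec $m=1$) est correct; il manque l'\'etape de soustraction de la force, sans laquelle la d\'emonstration ne d\'emarre pas.
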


\begin{proof}
Pour simplifier les notations, nous supposons que $T=2$. Supposons d'abord que
$%
\operatorname{u}%
(0)\in H^{4}.$ Alors, le théorème \ref{burgersok} implique que $%
\operatorname{u}%
\in X_{4}^{T}$. On se propose d'écrire la solution $%
\operatorname{u}%
$ de (\ref{B}), (\ref{B*}),(\ref{B2}) comme
\[%
\operatorname{u}%
=\xi+v.
\]
Par conséquent, $v$ est solution de
\begin{equation}
v_{t}+%
\operatorname{u}%
\operatorname{u}%
_{x}-\nu v_{xx}=\nu\xi_{xx}.\label{equa_v}%
\end{equation}
On dérive cette équation par rapport à $x$ et on multiplie par $t,$ on trouve%
\begin{equation}
tv_{tx}+t(%
\operatorname{u}%
_{x}^{2}+%
\operatorname{u}%
\operatorname{u}%
_{xx})=\nu tv_{xxx}+\nu t\xi_{xxx}.\label{tv_x}%
\end{equation}
On pose $w:=tv_{x}$ et on réécrit (\ref{tv_x}) comme%
\begin{equation}
(w_{t}-v_{x})+t%
\operatorname{u}%
_{x}^{2}+(t%
\operatorname{u}%
\xi_{xx}+%
\operatorname{u}%
w_{x})=\nu w_{xx}+\nu t\xi_{xxx}.\label{equaw=tv_x}%
\end{equation}
Maintenant, on considère la fonction $w(t,x)$ sur le cylindre $Q=[0,2]\times
\mathbb{S}^{1}.$ Comme $w_{|t=0}=0$ et $\int w(x)dx=0$ alors, soit $w$ est
identiquement nulle, soit $w$ atteint son maximum $M>0$ en $(t_{1},x_{1})\in Q$
avec $t_{1}>0.$ Si $w$ est identiquement nulle alors $v_{x}(t,x)\equiv0,$ et
$v(t,x)=0$ sur $Q.$ D'où (\ref{a}) et (\ref{b}). Maintenant, supposons que $w$
atteint son maximum $M>0$ en $(t_{1},x_{1})\in Q$ et notons%
\begin{equation}
K:=\max(1,\max_{0\leq t\leq2}|\xi(t,\cdot)|_{\mathcal{C}^{3}(\mathbb{S}^{1}%
)}).\label{KmaxC3}%
\end{equation}
Alors%
\[
K\leq\max(1,C||\xi(t,\cdot)||_{X_{4}^{T}})<\infty,\quad C>0
\]
Démontrons que
\begin{equation}
M\leq9K.\label{claim M<=9K}%
\end{equation}
Par un raisonnement par l'absurde, supposons que $M>9K.$ Comme $\xi,%
\operatorname{u}%
\in X_{4}^{T}$, on trouve de l'équation (\ref{equaw=tv_x}) que $w_{t}%
,w_{x},w_{xx}\in\mathcal{C}(Q)$ \cite{Evans}. Alors les conditions
d'optimalité impliquent que
\begin{equation}
w_{t}\geq0,\text{ }w_{x}=0\text{ et }w_{xx}\leq0\quad\text{au point }%
(t_{1},x_{1}).\label{op}%
\end{equation}
Par (\ref{op}) et (\ref{equaw=tv_x}), on a au point $(t_{1},x_{1}),$ la
relation%
\[
-v_{x}+t%
\operatorname{u}%
_{x}^{2}+t%
\operatorname{u}%
\xi_{xx}\leq\nu t\xi_{xxx}.
\]
On multiplie cette dernière inégalité par $t$ et en utilisant le fait que
$w(t_{1},x_{1})=M$ et $t^{2}%
\operatorname{u}%
_{x}^{2}=(w+t\xi_{x})^{2}$, nous avons au point $(t_{1},x_{1})$,
\begin{equation}
-M+(M+t\xi_{x})^{2}+t^{2}%
\operatorname{u}%
\xi_{xx}\leq\nu t^{2}\xi_{xxx}.\label{Op}%
\end{equation}
Comme $t\leq2$, par (\ref{KmaxC3}) on a
\begin{equation}
|t\partial_{x^{j}}^{j}\xi|\leq2K,\quad j=0,\ldots,3.\label{|tksi_(j)|<2K}%
\end{equation}
De plus,%
\begin{equation}
(M+t\xi_{x})^{2}\geq(M-2K)^{2},\quad\text{au point }(t_{1},x_{1}),\label{M-2K}%
\end{equation}
car $M>2K.$ De plus, $\int_{\mathbb{S}^{1}}tv(x)dx=0$ et $tv_{x}=w\leq M$,
alors le lemme \ref{lem18} implique que $|tv|\leq M$, ce qui entraîne
\begin{equation}
|t%
\operatorname{u}%
|\leq|t\xi|+M\leq2K+M,\quad\forall(t,x)\in Q,\label{|tu|<2K+M}%
\end{equation}
et
\[
|t^{2}%
\operatorname{u}%
\xi_{xx}|=|(t%
\operatorname{u}%
)(t\xi_{xx})|\leq(2K+M)(2K),\quad\forall(t,x)\in Q.
\]
Comme $\nu t^{2}\xi_{xxx}\leq4K,$ alors en utilisant (\ref{|tu|<2K+M}), on en déduit que%
\[
-M+(M-2K)^{2}\leq2K(2K+M)+4K,
\]
Cette dernière relation implique que $M<6K+2.$ Comme $M>9K$ et $K\geq1$ alors
$6K+2<9K.$ D'où la contradiction, et (\ref{claim M<=9K}) est établi.

Comme $t%
\operatorname{u}%
_{x}=w+t\xi_{x}$ et $M\leq9K,$ alors pour tout $(t,x)\in Q, $ on a $t%
\operatorname{u}%
_{x}\leq11K.$ Donc, $%
\operatorname{u}%
$ satisfait (\ref{a}) si $%
\operatorname{u}%
(0)\in H^{4}$. Encore une fois, le lemme \ref{lem18} implique que
\[
t|%
\operatorname{u}%
|_{\infty},t|%
\operatorname{u}%
_{x}|_{1}\leq22K\leq22(1+||\xi||_{X_{4}^{T}}),
\]
c'est-à-dire, $%
\operatorname{u}%
$ satisfait (\ref{b}) si $%
\operatorname{u}%
(0)\in H^{4}$.

Maintenant, supposons que $%
\operatorname{u}%
(0)\in H^{1}$ est montrons que la solution $%
\operatorname{u}%
\in H^{1}$ de (\ref{B}) vérifie bien (\ref{a}) et (\ref{b}). Choisissons une
suite $\{%
\operatorname{u}%
^{j}(0)\}_{j\in%
\mathbb{N}
}$ $\in H^{4}$ qui converge vers $%
\operatorname{u}%
(0)$ dans $H^{1}$. Pour chaque $j\in%
\mathbb{N}
,$ $%
\operatorname{u}%
^{j}(0)$ définit une solution $%
\operatorname{u}%
^{j}\in H^{4}$ de (\ref{B}) qui satisfait (par ce qui précède) (\ref{a}) et
(\ref{b}). Par conséquent, pour tout $x\in\mathbb{S}^{1}$%
\begin{equation}
|%
\operatorname{u}%
_{x}^{+}(t,x)|\leq|%
\operatorname{u}%
_{x}^{+}(t,x)-%
\operatorname{u}%
_{x}^{j+}(t,x)|+C(1+||\xi||_{X_{4}^{T}}),\quad C>0.\label{ux+H1}%
\end{equation}
Par le théorème \ref{th6} avec $m=1,$
\begin{equation}%
\operatorname{u}%
^{j}(t,\cdot)\rightarrow%
\operatorname{u}%
(t,\cdot)\text{ dans }H^{1}.\label{**}%
\end{equation}
D'ici $\{%
\operatorname{u}%
_{x}^{j}(t,x)\}$ converge (à extraction d'une sous-suite) vers $%
\operatorname{u}%
_{x}(t,x)$ p.p en $x.$ En passant à la limite dans (\ref{ux+H1}) quand
$j\rightarrow+\infty$ nous obtenons que $|%
\operatorname{u}%
_{x}^{+}(t,x)|\leq C(1+||\xi||_{X_{4}^{T}})$ p.p, et (\ref{a}) est vérifiée.

La convergence (\ref{**}) et le fait que (\ref{b}) est vrai pour $%
\operatorname{u}%
^{j}$ implique que (\ref{b}) reste vrai pour $%
\operatorname{u}%
.$
\end{proof}

Par le théorème \ref{th2} et le théorème \ref{th17}, nous avons immédiatement
le :

\begin{corollary}
\label{cor17}Si $B_{4}<\infty$ et $%
\operatorname{u}%
(0)\in H^{1}$, alors pour tout $p\geq1$ et $0<\theta\leq T,$ il existe
$C(p,T,B_{4})>0$ telle que
\begin{align*}
\mathbb{E}[\sup_{\theta\leq t\leq T}|%
\operatorname{u}%
_{x}^{+}(t,\cdot)|_{\infty}^{p}] &  \leq C\theta^{-p},\\
\mathbb{E}[\sup_{\theta\leq t\leq T}(|%
\operatorname{u}%
(t,\cdot)|_{\infty}^{p}+|%
\operatorname{u}%
_{x}(t,\cdot)|_{1}^{p})] &  \leq C\theta^{-p}.
\end{align*}

\end{corollary}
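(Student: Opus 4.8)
The plan is to deduce Corollary~\ref{cor17} directly from the deterministic estimates of Theorem~\ref{th17} by integrating over $\omega$ and invoking the exponential moment bound of Theorem~\ref{th2}. Theorem~\ref{th17} gives, for almost every $\omega$ (on the negligible set we have redefined $\xi^{\omega}$ to be $0$, and there both sides are finite trivially), the pathwise inequalities
\[
\sup_{\theta\leq t\leq T}|\operatorname{u}_{x}^{+}(t,\cdot)|_{\infty}\leq C(T)\,\theta^{-1}\bigl(1+\|\xi^{\omega}\|_{X_{4}^{T}}\bigr),\qquad
\sup_{\theta\leq t\leq T}\bigl(|\operatorname{u}(t,\cdot)|_{\infty}+|\operatorname{u}_{x}(t,\cdot)|_{1}\bigr)\leq C(T)\,\theta^{-1}\bigl(1+\|\xi^{\omega}\|_{X_{4}^{T}}\bigr),
\]
with $C(T)$ independent of $\operatorname{u}_{0}$ and of $\nu\in(0,1]$. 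Raising both sides to the power $p\geq1$ and using $(a+b)^{p}\leq 2^{p-1}(a^{p}+b^{p})$, it suffices to control $\mathbb{E}\bigl[\|\xi^{\omega}\|_{X_{4}^{T}}^{p}\bigr]$.

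First I would note that the hypothesis $B_{4}<\infty$ is exactly what is needed to apply Theorem~\ref{th2} with $m=4$, which gives $\xi^{\omega}\in X_{4}^{T}$ for every $\omega$ together with the bound
\[
\mathbb{E}\Bigl[\sup_{t\in(0,T)}\|\xi^{\omega}(t)\|_{4}\Bigr]^{p}\leq C_{4}^{\prime\prime}(p,T,B_{4})<\infty\qquad\text{for every }p\geq1 .
\]
Since $\|\xi^{\omega}\|_{X_{4}^{T}}=\sup_{t\in[0,T]}\|\xi^{\omega}(t)\|_{4}$, this is precisely a bound on $\mathbb{E}\bigl[\|\xi^{\omega}\|_{X_{4}^{T}}^{p}\bigr]$. (Alternatively one can invoke the exponential moment \eqref{csqfern} and the elementary inequality $x^{p}\leq C(p,\alpha)e^{\alpha x^{2}}$ together with a Doob-type argument as in the proof of Theorem~\ref{th1}; either route yields finiteness of all polynomial moments of $\|\xi^{\omega}\|_{X_{4}^{T}}$.)

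Combining the two displays: for each $p\geq1$ and each $0<\theta\leq T$,
\[
\mathbb{E}\Bigl[\sup_{\theta\leq t\leq T}|\operatorname{u}_{x}^{+}(t,\cdot)|_{\infty}^{p}\Bigr]
\leq C(T)^{p}\theta^{-p}\,\mathbb{E}\bigl[(1+\|\xi^{\omega}\|_{X_{4}^{T}})^{p}\bigr]
\leq 2^{p-1}C(T)^{p}\theta^{-p}\bigl(1+\mathbb{E}[\|\xi^{\omega}\|_{X_{4}^{T}}^{p}]\bigr)
=:C(p,T,B_{4})\,\theta^{-p},
\]
and identically for the second estimate, with $|\operatorname{u}(t,\cdot)|_{\infty}^{p}+|\operatorname{u}_{x}(t,\cdot)|_{1}^{p}\leq\bigl(|\operatorname{u}(t,\cdot)|_{\infty}+|\operatorname{u}_{x}(t,\cdot)|_{1}\bigr)^{p}$ absorbed into the same constant. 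This is the claimed bound.

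There is no real obstacle here — the corollary is a routine "take expectations of a pathwise bound" argument — but the one point requiring a word of care is measurability: one must check that $\omega\mapsto\sup_{\theta\leq t\leq T}|\operatorname{u}_{x}^{+}(t,\cdot)|_{\infty}$ and the analogous quantity are genuinely random variables, so that the expectations make sense. This follows because, by Theorem~\ref{th6}, the solution map $(\operatorname{u}_{0},\xi)\mapsto\operatorname{u}$ is continuous from $H^{1}\times X_{1}^{T}$ (a fortiori from $H^{4}\times X_{4}^{T}$) into $X_{1}^{T}$, hence $\omega\mapsto\operatorname{u}^{\omega}$ is measurable into $X_{1}^{T}=\mathcal{C}(0,T;H^{1})$; the suprema over the compact time interval $[\theta,T]$ of the continuous functionals $v\mapsto|v_{x}^{+}|_{\infty}$, $v\mapsto|v|_{\infty}$, $v\mapsto|v_{x}|_{1}$ on $H^{1}$ are then measurable by the usual argument (restrict the sup to a countable dense set of times). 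With measurability in hand the monotone passage through the expectation is immediate, and the proof is complete.
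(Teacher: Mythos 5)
Your argument is exactly the paper's: the authors state that the corollary follows immediately from Theorem \ref{th2} (polynomial moments of $\|\xi^{\omega}\|_{X_{4}^{T}}$ when $B_{4}<\infty$) combined with the pathwise bounds of Theorem \ref{th17}, which is precisely the "raise to the power $p$ and take expectations" computation you carry out. Your write-up simply supplies the routine details (the elementary inequality $(1+x)^{p}\leq 2^{p-1}(1+x^{p})$ and the measurability remark) that the paper leaves implicit, and it is correct.
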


\section{Loi de la solution $%
\operatorname{u}%
$ et les deux semi groupes de Markov}

Soit $(X,\mathcal{B}_{X})$ et $(Y,\mathcal{B}_{Y})$ deux espaces mesurables.
Par exemple, $X$ et $Y$ sont deux sous-ensembles fermés d'espaces de Banach,
munis de leurs tribus Boréliennes \cite{Shir}. Soit $F:X\rightarrow Y$ une
application mesurable. Notons par $\mathcal{P}(X,\mathcal{B}_{X})$ (resp.
$\mathcal{P}(Y,\mathcal{B}_{Y})$) l'espace des mesures de probabilités sur
$(X,\mathcal{B}_{X})$ (resp. $(Y,\mathcal{B}_{Y})$). Rappelons \cite{Shir} que
$F$ définit l'application
\[
F_{\ast}:\mathcal{P}(X,\mathcal{B}_{X})\rightarrow\mathcal{P}(Y,\mathcal{B}%
_{Y}),\quad m\longmapsto F\circ m,
\]
où pour tout $Q\in$ $\mathcal{B}_{Y}:(F\circ m)(Q)=m(F^{-1}(Q)).$
L'application $F_{\ast}$ est linéaire dans le sens où:%
\begin{equation}
F_{\ast}(am_{1}+(1-a)m_{2})=aF_{\ast}(m_{1})+(1-a)F_{\ast}(m_{2}%
),\label{F_*lineaire}%
\end{equation}
pour tout $m_{1},m_{2}\in\mathcal{P}(X,\mathcal{B}_{X})$ et $a\in\lbrack0,1].$

\begin{lemma}
\label{loi}\cite{Shir} Soient $X$ et $Y$ deux espaces de Banach séparables et
$\mu\in\mathcal{P}(X)$. Alors, il existe une variable aléatoire $\varsigma
:\Omega\rightarrow X$ telle que sa loi, notée $\mathcal{D}(\varsigma),$ est égale
à $\mu.$ De plus, si $F:X\rightarrow Y$ est une application mesurable, alors
$F\circ\mu=\mathcal{D}(F\circ\varsigma).$
\end{lemma}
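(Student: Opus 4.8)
The plan is to split the statement into its two halves: first, the existence of a random variable $\varsigma$ with $\mathcal D(\varsigma)=\mu$; second, the pushforward identity $F\circ\mu=\mathcal D(F\circ\varsigma)$, which will turn out to be a purely formal consequence of the first half together with the definition of $F_\ast$. For the second half, once we have $\varsigma:\Omega\to X$ measurable with $\mathbb P(\varsigma^{-1}(A))=\mu(A)$ for all $A\in\mathcal B_X$, we observe that $F\circ\varsigma:\Omega\to Y$ is measurable as a composition of measurable maps, and that for every $Q\in\mathcal B_Y$ one has $F^{-1}(Q)\in\mathcal B_X$, hence
\[
\mathcal D(F\circ\varsigma)(Q)=\mathbb P\big((F\circ\varsigma)^{-1}(Q)\big)=\mathbb P\big(\varsigma^{-1}(F^{-1}(Q))\big)=\mu\big(F^{-1}(Q)\big)=(F\circ\mu)(Q).
\]
Since $Q\in\mathcal B_Y$ is arbitrary this yields $\mathcal D(F\circ\varsigma)=F\circ\mu$; in passing this also confirms that $F\circ\mu$ is a probability measure, since $(F\circ\mu)(Y)=\mu(F^{-1}(Y))=\mu(X)=1$.

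It remains to construct $\varsigma$. The quickest route is the canonical model: take $(\Omega,\mathcal F,\mathbb P)=(X,\mathcal B_X,\overline\mu)$, the completion of $(X,\mathcal B_X,\mu)$, which is a standard probability space because $X$, being a separable Banach space, is Polish; then $\varsigma:=\mathrm{id}_X$ obviously has law $\mu$. If instead one wants $\varsigma$ realized on a prescribed standard probability space — for instance the non-atomic one already carrying the Wiener processes $\beta^\omega_s$ — one reduces to the one-dimensional case: $X$ is Borel-isomorphic, via some $\iota$, to a Borel subset of $\mathbb R$; the measure $\iota_\ast\mu$ on $\mathbb R$ is the law of its quantile function $q(t)=\inf\{x:\iota_\ast\mu((-\infty,x])\ge t\}$ evaluated on $((0,1),\mathrm{Leb})$; and finally any non-atomic standard probability space is isomorphic mod $0$ to $((0,1),\mathrm{Leb})$, so transporting $\iota^{-1}\circ q$ back along that isomorphism produces the desired $\varsigma$ on $(\Omega,\mathcal F,\mathbb P)$.

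The only genuinely non-elementary ingredients are thus measure-theoretic facts about standard probability spaces: that a Polish space equipped with a Borel probability measure yields a standard space, the Borel isomorphism theorem, and the uniqueness up to isomorphism mod $0$ of non-atomic standard spaces — all classical and recorded in \cite{Shir}, which is exactly why the lemma is attributed to that reference. Once these are invoked, the remainder is the elementary pushforward computation displayed above, so I expect no real obstacle; this lemma is essentially a bookkeeping statement setting up the measure-transport language used throughout Section~5.
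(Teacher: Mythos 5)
Your proof is correct. The paper itself gives no argument for this lemma --- it is stated with a bare citation to \cite{Shir} --- and what you have written is precisely the classical argument that citation points to: the pushforward identity is the formal computation $\mathbb P\bigl((F\circ\varsigma)^{-1}(Q)\bigr)=\mu\bigl(F^{-1}(Q)\bigr)$, and the existence of $\varsigma$ follows either from the canonical model $(X,\mathcal B_X,\mu)$ with $\varsigma=\mathrm{id}$ or, on a prescribed non-atomic standard space, from the Borel isomorphism theorem plus the quantile construction. The only point worth keeping in mind for the lemma's later use (not a gap in your proof of the statement as written) is that when the paper invokes it to produce an initial condition $\operatorname{u}_0^\omega$ \emph{independent} of $\xi^\omega$ on the same $\Omega$, one implicitly passes to a product space; your quantile construction on a fresh copy of $((0,1),\mathrm{Leb})$ is exactly what makes that enlargement possible.
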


Soit $m\geq0$. On munit l'espace de Banach séparable $X_{m}^{T}$ de sa tribu
Borélienne $\mathcal{B}:\mathcal{=B}_{X_{m}^{T}}.$ On note par $\mathcal{P}%
(X_{m}^{T})$ l'espace des mesures de probabilité Boréliennes sur $(X_{m}%
^{T},\mathcal{B}),$ et par $\mathcal{C}_{b}(X_{m}^{T})$ l'espace des fonctions
continues et bornées sur $X_{m}^{T}$ muni de la norme de convergence uniforme
\cite{Dud}. On rappelle (théorème \ref{th2}) que si $B_{m}<\infty,$ $m\geq1$,
alors $\xi^{\omega}$ définit une variable aléatoire $\xi:\Omega\rightarrow
X_{m}^{T}.$ Sa loi, notée $\mathcal{D}(\xi^{\omega}),$ est la mesure $\pi
=\xi\circ\mathbb{P}$, c'est-à-dire, la mesure telle que pour tout $Q\in\mathcal{B}%
(X_{m}^{T}):\pi(Q)=\mathbb{P}\{\omega:\xi^{\omega}\in Q\}.$ Alors, pour tout
$f\in\mathcal{C}_{b}(X_{m}^{T})$, on a
\[
\langle f,\pi\rangle:=\int_{X_{m}^{T}}f(\xi)\pi(d\xi)=\int_{\Omega}%
f(\xi^{\omega})\mathbb{P}(d\omega)=\mathbb{E}[f(\xi^{\omega})]\text{,}%
\]
(voir \cite{Shir}). Soit $f\in(X_{m}^{T})^{\ast},$ c'est-à-dire, $f$ est une forme
linéaire continue sur $X_{m}^{T}$.$\ $Comme les processus $\{\beta_{s}\}$ sont
indépendants, alors
\[
f\circ\pi=\mathcal{D}\left(  f\left(  \sum_{s\in%
\mathbb{Z}
^{\ast}}b_{s}\beta_{s}^{\omega}(t)e_{s}(x)\right)  \right)  =\mathcal{D}%
\left(  \sum_{s\in%
\mathbb{Z}
^{\ast}}b_{s}f(\beta_{s}^{\omega}(t)e_{s}(x))\right)  =\pi_{1}\star\pi
_{2}\star\cdots.
\]
Ici $\pi_{j}=\mathcal{D}\left(  b_{j}f(e_{j}\beta_{j}^{\omega})\right)  $ et
$\star$ est la convolution des mesures. Comme les $\pi_{j}$ sont des mesures
Gaussiennes alors $f\circ\pi$ l'est aussi \cite{Shir}. C'est à dire, $\pi$ est
une mesure Gaussienne dans l'espace de Banach $X_{m}^{T}$ \cite{Bog},
\cite{Kuo}. Comme $\pi_{j}$ dépend seulement de $f,b_{s}$ et de la loi de
$\beta_{s}^{\omega}$ alors
\begin{equation}
\pi\text{ dépend seulement de la suite }\{b_{k},k\in%
\mathbb{Z}
^{\ast}\}.\label{mesure}%
\end{equation}
Notons qu'on peut obtenir (\ref{csqfern}) comme conséquence du célèbre
théorème de \textit{Fernique\ }qui affirme que si $\mu$ est une mesure
Gaussienne dans un espace de Banach $X$, alors il existe un réel positif
$\alpha$ tel que
\[
\int_{X}e^{(\alpha||x||_{X})}\mu(dx)<\infty\text{ (voir \cite{Bog},
\cite{Kuo})}.
\]

Soit $B_{m}<\infty,$ $m\geq1.$ Considérons le problème de Cauchy (\ref{B}),
(\ref{B*}), (\ref{B2}). Soit $m_{0}\in\mathcal{P}(H^{m})$ et $%
\operatorname{u}%
_{0}^{\omega}\in H^{m}$ une variable aléatoire indépendante de $\xi^{\omega}$,
telle que $\mathcal{D}(%
\operatorname{u}%
_{0}^{\omega})=m_{0}$ (lemme \ref{loi}). Alors
\[
\mathcal{P}(H^{m}\times X_{m}^{T})\ni\mathcal{D}(%
\operatorname{u}%
_{0}^{\omega},\xi^{\omega})=m_{0}\times\mathcal{D}(\xi^{\omega}).
\]
On note dans la suite $\mathcal{D}(\xi^{\omega})$ par $\pi$. On
supposera $\pi$ fixé et on étudie la dépendance de la loi de la solution $%
\operatorname{u}%
^{\omega}(t,x)$ du problème de Cauchy par rapport à $m_{0}.$ On désigne par $%
\operatorname{u}%
^{\omega}(t;%
\operatorname{u}%
_{0}^{\omega})$ la solution issue de $%
\operatorname{u}%
_{0}^{\omega}$ à l'instant $t$, et on construit l'application $S_{t}^{\ast}$ définie par :%
\begin{equation}
S_{t}^{\ast}:\mathcal{P}(H^{m})\rightarrow\mathcal{P}(H^{m}),\quad
m_{0}\mapsto\mathcal{M}_{t}\circ(m_{0}\times\pi):=\mathcal{D}(%
\operatorname{u}%
^{\omega}(t;%
\operatorname{u}%
_{0}^{\omega})),\nonumber
\end{equation}
(cf. théorème \ref{th6} et (\ref{Mt})). L'application $S_{t}^{\ast}$ est
linéaire (cf. (\ref{F_*lineaire})).

\begin{lemma}
\label{00}Soient $\widetilde{%
\operatorname{u}%
}^{\omega}$ une solution de (\ref{B}) avec $\widetilde{\eta}^{\omega}%
=\partial_{t}\widetilde{\xi}^{\omega},$ $\widetilde{\xi}^{\omega
}(t,x)=\underset{s\in%
\mathbb{Z}
^{\ast}}{\sum}b_{s}\widetilde{\beta}_{s}^{\omega}(t)e_{s}(x),$ et
$\{\widetilde{\beta}_{s}^{\omega}(t),$ $s\in%
\mathbb{Z}
^{\ast}\}$ sont des processus de Wiener standard indépendants. Soit
$\widetilde{%
\operatorname{u}%
}^{\omega}(0)=\widetilde{%
\operatorname{u}%
}_{0}^{\omega}\in H^{m} $ une variable aléatoire indépendante de
$\widetilde{\xi}^{\omega}$ telle que $\mathcal{D}(\widetilde{%
\operatorname{u}%
}_{0}^{\omega})=m_{0}. $ Alors $\mathcal{D}(\widetilde{%
\operatorname{u}%
}(t))=S_{t}^{\ast}(m_{0}), $ pour tout $t\in(0,T).$
\end{lemma}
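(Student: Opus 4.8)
Lemma \ref{00} asserts that the law of the solution to Burgers driven by *any* independent family of standard Wiener processes with the *same* coefficients $b_s$, and with the *same* initial law $m_0$, coincides with $S_t^\ast(m_0)$, which was defined using the particular noise $\xi^\omega$ from \eqref{B*}. So the content is: the law of $\widetilde{\operatorname{u}}(t)$ depends on the noise only through $(b_s)_{s\in\mathbb{Z}^\ast}$ and the initial law $m_0$, not on the particular realization of the Wiener processes.

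The plan is to reduce Lemma \ref{00} to two ingredients already available: the pushforward formula of Lemma \ref{loi} applied to the (measurable, indeed analytic) solution map $\mathcal{M}_t$, and the observation \eqref{mesure} that the law in $X_m^T$ of a noise of the form \eqref{B*} depends only on the coefficients $(b_s)_{s\in\mathbb{Z}^\ast}$. First I would record that, under the standing hypothesis $B_m<\infty$ of this section, Theorem \ref{th2} applies to $\widetilde\xi^\omega$ as well (after the usual modification on a null set), so that $\widetilde\xi^\omega$ defines a random variable $\Omega\to X_m^T$ whose law I denote $\widetilde\pi$; and that, $\widetilde{\operatorname{u}}_0^\omega$ being independent of $\widetilde\xi^\omega$, one has $\mathcal{D}(\widetilde{\operatorname{u}}_0^\omega,\widetilde\xi^\omega)=m_0\times\widetilde\pi$ in $\mathcal{P}(H^m\times X_m^T)$.

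The only step with genuine content is the identity $\widetilde\pi=\pi$. I would repeat verbatim the argument preceding \eqref{mesure}: for any $f\in(X_m^T)^\ast$, independence of the $\widetilde\beta_s^\omega$ gives $f\circ\widetilde\pi=\widetilde\pi_1\star\widetilde\pi_2\star\cdots$ with $\widetilde\pi_j=\mathcal{D}\big(b_j f(e_j\widetilde\beta_j^\omega)\big)$; since each $\widetilde\beta_j^\omega$ is a standard Wiener process, $\widetilde\pi_j$ coincides with the corresponding $\pi_j$ built from $\beta_j^\omega$, whence $f\circ\widetilde\pi=f\circ\pi$. As $\pi$ and $\widetilde\pi$ are Borel (in fact Gaussian) measures on the separable Banach space $X_m^T$, equality of all their one-dimensional projections forces $\widetilde\pi=\pi$; this is precisely what \eqref{mesure} asserts.

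Finally I would assemble the pieces. By Theorem \ref{burgersok} the Cauchy problem \eqref{B}, \eqref{B2} with data $(\widetilde{\operatorname{u}}_0^\omega,\widetilde\xi^\omega)$ has a unique solution in $X_m^T$, so $\widetilde{\operatorname{u}}^\omega(\cdot)=\mathcal{M}(\widetilde{\operatorname{u}}_0^\omega,\widetilde\xi^\omega)$ almost surely, and in particular $\widetilde{\operatorname{u}}(t)=\mathcal{M}_t(\widetilde{\operatorname{u}}_0^\omega,\widetilde\xi^\omega)$ a.s. Since $\mathcal{M}_t$ is analytic (Theorem \ref{th6}), hence Borel, Lemma \ref{loi} with $F=\mathcal{M}_t$ and $\varsigma=(\widetilde{\operatorname{u}}_0^\omega,\widetilde\xi^\omega)$ yields
\[
\mathcal{D}(\widetilde{\operatorname{u}}(t))=\mathcal{M}_t\circ\mathcal{D}(\widetilde{\operatorname{u}}_0^\omega,\widetilde\xi^\omega)=\mathcal{M}_t\circ(m_0\times\widetilde\pi)=\mathcal{M}_t\circ(m_0\times\pi)=S_t^\ast(m_0),
\]
which is the assertion. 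The step I expect to require the most care is the middle one: it is not enough to say that $\widetilde\beta_s$ and $\beta_s$ have the same law — one must pass through the characteristic-functional/Gaussian argument to conclude that the full laws of the infinite series in $X_m^T$ agree; everything else is routine bookkeeping with the linear pushforward $\mathcal{M}_t\circ(\cdot)$ and the uniqueness statement of Theorem \ref{burgersok}.
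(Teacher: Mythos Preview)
Your proof is correct and follows essentially the same route as the paper: identify $\widetilde{\operatorname{u}}(t)=\mathcal{M}_t(\widetilde{\operatorname{u}}_0^\omega,\widetilde{\xi}^\omega)$ via the well-posedness theorem, push forward using Lemma~\ref{loi}, factor the joint law by independence, and invoke \eqref{mesure} to get $\mathcal{D}(\widetilde{\xi}^\omega)=\pi$. You spell out the Gaussian characteristic-functional argument behind \eqref{mesure} in more detail than the paper does, but the overall structure is identical.
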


\begin{proof}
Par le théorème \ref{th6}, nous avons $\widetilde{%
\operatorname{u}%
}^{\omega}(t)=\mathcal{M}_{t}(\widetilde{%
\operatorname{u}%
}_{0}^{\omega},\widetilde{\xi}^{\omega})$, et par le lemme \ref{loi}, nous
obtenons que $\mathcal{D}(\widetilde{%
\operatorname{u}%
}(t))=\mathcal{M}_{t}\circ\mathcal{D}(\widetilde{%
\operatorname{u}%
}_{0}^{\omega},\widetilde{\xi}^{\omega}).$ Étant donné (\ref{mesure}) et comme
$\widetilde{%
\operatorname{u}%
}_{0}^{\omega}$ est indépendante de $\widetilde{\xi}^{\omega}$, alors%
\[
\mathcal{D}(\widetilde{%
\operatorname{u}%
}_{0}^{\omega},\widetilde{\xi}^{\omega})=\mathcal{D}(\widetilde{%
\operatorname{u}%
}_{0}^{\omega})\times\mathcal{D}(\widetilde{\xi}^{\omega})=m_{0}\times\pi.
\]
Donc, $\mathcal{D}(\widetilde{%
\operatorname{u}%
}(t))=\mathcal{M}_{t}\circ(m_{0}\times\pi)=S_{t}^{\ast}(m_{0})$.
\end{proof}

Si $%
\operatorname{u}%
_{0}:=v\in H^{m}$ est indépendante de $\omega$, alors $\mathcal{D}(%
\operatorname{u}%
_{0})=\delta_{v}\in\mathcal{P}(H^{m})$ est la mesure de Dirac et $S_{t}^{\ast
}(\delta_{v})=\mathcal{D}(%
\operatorname{u}%
(t;v)).$ On considère l'application
\[
\Sigma:%
\mathbb{R}
^{+}\times H^{m}\rightarrow\mathcal{P}(H^{m}),\quad(t,v)\mapsto S_{t}^{\ast
}(\delta_{v})=\mathcal{D}(%
\operatorname{u}%
(t;v))=\Sigma_{t}(v).
\]
Elle est appelée : \textit{la fonction de transition de Markov.}

\begin{theorem}
\label{th7}$S_{0}^{\ast}=Id$ et pour tout $t_{1},t_{2}\geq0:$%
\begin{equation}
S_{t_{1}}^{\ast}\circ S_{t_{2}}^{\ast}=S_{t_{1}+t_{2}}^{\ast}%
.\label{St*semigroupe}%
\end{equation}

\end{theorem}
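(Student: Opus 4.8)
The plan is to treat the two assertions of Theorem \ref{th7} separately. The identity $S_0^* = \mathrm{Id}$ is immediate: given $m_0 \in \mathcal{P}(H^m)$, pick (by Lemma \ref{loi}) a random variable $u_0^\omega \in H^m$ with $\mathcal{D}(u_0^\omega) = m_0$, independent of $\xi^\omega$; since the corresponding solution satisfies $u^\omega(0; u_0^\omega) = u_0^\omega$, we get $S_0^*(m_0) = \mathcal{D}(u_0^\omega) = m_0$. The real content is the flow (semigroup) property (\ref{St*semigroupe}), which I would derive from the concatenation property of solutions of the \emph{autonomous} equation (\ref{B}), together with the stationarity and independence of the increments of the Wiener process $\xi^\omega$, invoking Lemma \ref{00} at the end.

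So fix $t_1, t_2 \ge 0$ (the cases $t_1 = 0$ or $t_2 = 0$ being covered by $S_0^* = \mathrm{Id}$) and $m_0 \in \mathcal{P}(H^m)$; let $u_0^\omega$ be as above and $u^\omega(\cdot) := u^\omega(\cdot; u_0^\omega)$ the solution of (\ref{B}),(\ref{B*}),(\ref{B2}), which by uniqueness (Theorem \ref{burgersok}) is well defined on all of $[0,\infty)$. Set, for $s \ge 0$, $\tilde\xi^\omega(s) := \xi^\omega(t_2 + s) - \xi^\omega(t_2)$ and $\tilde u^\omega(s) := u^\omega(t_2 + s)$. Subtracting the defining integral identity of $u^\omega$ at time $t_2$ from that at time $t_2 + s$ and changing variables $\tau = t_2 + \sigma$ in the time integral, one checks (using $\tilde\xi^\omega(0)=0$) that $\tilde u^\omega$ solves (\ref{B}),(\ref{B2}) with initial datum $u^\omega(t_2) \in H^m$ and forcing $\partial_s \tilde\xi^\omega$; by the uniqueness part of Theorem \ref{burgersok}, $\tilde u^\omega$ is \emph{the} solution issued from $u^\omega(t_2)$ driven by $\tilde\xi^\omega$. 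Moreover $\tilde\xi^\omega = \sum_s b_s \tilde\beta_s^\omega e_s$ with $\tilde\beta_s^\omega(\cdot) := \beta_s^\omega(t_2 + \cdot) - \beta_s^\omega(t_2)$, and these $\tilde\beta_s^\omega$ are again standard independent Wiener processes, so $\tilde\xi^\omega$ has precisely the form required in Lemma \ref{00}.

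Next, the two facts needed to apply that lemma with $m_0$ there replaced by $S_{t_2}^*(m_0)$. First, $\mathcal{D}(u^\omega(t_2)) = S_{t_2}^*(m_0)$, which is the definition of $S_{t_2}^*$ (equivalently, Lemma \ref{00}). Second, $u^\omega(t_2)$ is independent of $\tilde\xi^\omega$: by causality (uniqueness for (\ref{B}) on $[0,t_2]$) one has $u^\omega(t_2) = \mathcal{M}_{t_2}(u_0^\omega, \xi^\omega|_{[0,t_2]})$, which is a Borel-measurable function — $\mathcal{M}_{t_2}$ being continuous, cf. Theorem \ref{th6} and (\ref{Mt}) — of $u_0^\omega$ and of $\{\beta_s^\omega(\tau) : 0 \le \tau \le t_2\}$; since the increments $\{\tilde\beta_s^\omega\}$ are independent of $u_0^\omega$ and of $\sigma\{\beta_s^\omega(\tau) : 0 \le \tau \le t_2\}$, the claim follows. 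Applying Lemma \ref{00} (on a time horizon $> t_1$, all processes being defined on $[0,\infty)$) with $(m_0, \widetilde u_0^\omega, \widetilde\xi^\omega)$ replaced by $(S_{t_2}^*(m_0), u^\omega(t_2), \tilde\xi^\omega)$ gives $\mathcal{D}(\tilde u^\omega(t_1)) = S_{t_1}^*(S_{t_2}^*(m_0))$. Since $\tilde u^\omega(t_1) = u^\omega(t_1 + t_2)$ and $\mathcal{D}(u^\omega(t_1 + t_2)) = S_{t_1 + t_2}^*(m_0)$ by definition of $S_{t_1+t_2}^*$, this is exactly (\ref{St*semigroupe}).

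The main obstacle is not a computation but the bookkeeping in the middle step: verifying via uniqueness that the time-shifted trajectory is genuinely the solution of the autonomous equation started at $u^\omega(t_2)$ and driven by the shifted noise, and checking that $u^\omega(t_2)$ — a nonlinear functional of the whole path $\xi^\omega|_{[0,t_2]}$ — is independent of the future increments $\tilde\xi^\omega$. Once these are in place, Theorem \ref{th7} follows at once from Lemma \ref{00}.
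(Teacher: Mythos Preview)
Your proof is correct and follows essentially the same approach as the paper: shift the solution in time, identify the shifted trajectory as the solution driven by the shifted noise with the appropriate initial condition, use independence of Wiener increments to check the hypothesis of Lemma~\ref{00}, and conclude. The only cosmetic difference is that you shift by $t_2$ and obtain $S_{t_1}^*\circ S_{t_2}^*=S_{t_1+t_2}^*$ directly, whereas the paper shifts by $t_1$ and obtains $S_{t_2}^*\circ S_{t_1}^*=S_{t_1+t_2}^*$.
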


On dit que $\{S_{t}^{\ast}\}_{t\geq0}$ est \textit{le semi groupe de Markov dans
}$\mathcal{P}(H^{m})$ de l'équation (\ref{B}).

\begin{proof}
La première affirmation est évidente. Pour démontrer la dernière, on fixe
$t_{1},t_{2}\geq0$ et on pose $\mu_{i}:=\mathcal{D}(%
\operatorname{u}%
^{\omega}(t_{i};%
\operatorname{u}%
_{0}^{\omega}))$ pour $i=1,2.$ Notons $v(t,x)=u(t_{1}+t;%
\operatorname{u}%
_{0}^{\omega}).$ Alors $v$ vérifie le problème de Cauchy
\[
\left\{
\begin{array}
[c]{c}%
v_{t}+vv_{x}-\nu v_{xx}=\widetilde{\eta}^{\omega},\\
v(0,x)=v_{0},
\end{array}
\right.
\]
où $\widetilde{\eta}^{\omega}(t,x)=\eta^{\omega}(t_{1}+t,x)$ et $v_{0}=%
\operatorname{u}%
(t_{1};%
\operatorname{u}%
_{0}^{\omega}).$ C'est à dire,
\[
\widetilde{\eta}^{\omega}(t,x)=\partial_{t}(\xi^{\omega}(t_{1}+t,x))=\partial
_{t}(\xi^{\omega}(t_{1}+t,x)-\xi^{\omega}(t_{1},x)):=\partial_{t}%
(\widetilde{\xi}^{\omega}(t,x)).
\]
et
\[
\widetilde{\xi}^{\omega}(t,x)=\sum_{s\in%
\mathbb{Z}
^{\ast}}b_{s}(\mathbb{\beta}_{s}^{\omega}(t+t_{1})-\mathbb{\beta}_{s}^{\omega
}(t_{1}))e_{s}(x).
\]
Comme $\{\mathbb{\beta}_{s}^{\omega}\}$ sont des processus de Wiener standards
indépendants alors $\{\mathbb{\beta}_{s}^{\omega}(t+t_{1})-\mathbb{\beta}%
_{s}^{\omega}(t_{1})\}$ l'est aussi (appendice A). Ainsi, nous avons que
$\mathcal{D}(\widetilde{\xi}^{\omega})=\mathcal{D}(\xi^{\omega})$. La variable
aléatoire $v_{0}^{\omega}=%
\operatorname{u}%
(t_{1},%
\operatorname{u}%
_{0}^{\omega})$ ne dépend que de $%
\operatorname{u}%
_{0}^{\omega}$ et $(\xi^{\omega}(t),0\leq t\leq t_{1}).$ Par conséquent, elle est
indépendante des incréments $(\xi^{\omega}(t_{1}+t)-\xi^{\omega}(t_{1}%
),t\geq0)$ (qui sont indépendants de $%
\operatorname{u}%
_{0}^{\omega}$ et des $(\xi^{\omega}(t),0\leq t\leq t_{1})$). Donc, par le
lemme \ref{00}, on a que
\begin{equation}
\mathcal{D}(v(t_{2}))=S_{t_{2}}^{\ast}(\mu_{1})=S_{t_{2}}^{\ast}\circ
S_{t_{1}}^{\ast}(m_{0}).\nonumber
\end{equation}
Comme $\mathcal{D}(v(t_{2}))=\mathcal{D}(%
\operatorname{u}%
(t_{1}+t_{2});%
\operatorname{u}%
_{0})=S_{t_{1}+t_{2}}^{\ast}(\mu_{1}),$ alors nous obtenons
(\ref{St*semigroupe}).
\end{proof}

Notons par $L_{b}(H^{m})$ l'espace vectoriel des fonctions bornées sur $H^{m}
$, et pour tout $t\geq0$, considérons l'application $S_{t}:\mathcal{C}%
_{b}(H^{m})\rightarrow L_{b}(H^{m})$ définie par%
\begin{equation}
S_{t}f\left(  v\right)  :=\mathbb{E[}f(%
\operatorname{u}%
(t;v)],\text{\quad}f\in\mathcal{C}_{b}(H^{m}),\text{ }v\in H^{m}.\label{Stdef}%
\end{equation}
Par définition de $S_{t}$, on a
\begin{equation}
S_{t}f\left(  v\right)  =\langle f,\mathcal{D}(%
\operatorname{u}%
(t;v))\rangle=\langle f,\mathcal{M}_{t}\circ(\delta_{v}\times\pi
)\rangle=\langle f,\Sigma_{t}(v))\rangle.\label{Stf(v)1}%
\end{equation}

\begin{theorem}
\label{th8}Soit $m\geq1$ et $t\geq0.$ Alors

\begin{enumerate}
\item $S_{t}(\mathcal{C}_{b}(H^{m}))\subset\mathcal{C}_{b}(H^{m}).$

\item L'application $S_{t}$ est linéaire et positive.

\item L'application $S_{t}:\mathcal{C}_{b}(H^{m})\rightarrow\mathcal{C}%
_{b}(H^{m})$ est continue et sa norme est égale à 1.

\item Pour tout $f\in\mathcal{C}_{b}(H^{m})$ et $\mu$ dans $\mathcal{P}%
(H^{m})$ nous avons que%
\begin{equation}
\langle S_{t}f,\mu\rangle=\langle f,S_{t}^{\ast}(\mu)\rangle.\label{dual}%
\end{equation}

\end{enumerate}
\end{theorem}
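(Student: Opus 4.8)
The plan is to establish the four items essentially by unwinding the definition $S_t f(v) = \mathbb{E}[f(\operatorname{u}(t;v))] = \langle f, \Sigma_t(v)\rangle$ from \eqref{Stf(v)1} and exploiting the analyticity (hence continuity) of the flow map $\mathcal{M}_t$ from Theorem \ref{th6}. For item (1), boundedness is immediate: $|S_t f(v)| \le |f|_\infty$ since $\Sigma_t(v)$ is a probability measure. For continuity of $v \mapsto S_t f(v)$, I would fix $f \in \mathcal{C}_b(H^m)$ and a sequence $v_n \to v$ in $H^m$; by Theorem \ref{th6} the map $\mathcal{M}_t : H^m \times X_m^T \to H^m$ is continuous, so for each $\omega$ (outside the negligible set) $\operatorname{u}^\omega(t;v_n) = \mathcal{M}_t(v_n,\xi^\omega) \to \mathcal{M}_t(v,\xi^\omega) = \operatorname{u}^\omega(t;v)$ in $H^m$; since $f$ is continuous, $f(\operatorname{u}^\omega(t;v_n)) \to f(\operatorname{u}^\omega(t;v))$ pointwise in $\omega$, and since $|f(\operatorname{u}^\omega(t;v_n))| \le |f|_\infty$ is a fixed integrable bound, dominated convergence gives $S_t f(v_n) \to S_t f(v)$. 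Hence $S_t f \in \mathcal{C}_b(H^m)$.

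For item (2), linearity of $f \mapsto S_t f$ is clear from linearity of the expectation in \eqref{Stdef}, and positivity ($f \ge 0 \Rightarrow S_t f \ge 0$) follows because the expectation of a nonnegative random variable is nonnegative. For item (3), continuity of $S_t$ as an operator on $\mathcal{C}_b(H^m)$ follows from $|S_t f(v)| \le |f|_\infty$ for all $v$, giving $\|S_t f\|_\infty \le \|f\|_\infty$, so $\|S_t\| \le 1$; and testing on the constant function $f \equiv 1$ gives $S_t f \equiv 1$ (the solution issued from any $v$ exists and is unique by Theorem \ref{burgersok}, so $\Sigma_t(v)$ has total mass $1$), whence $\|S_t\| = 1$.

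Item (4) is the duality relation, and this is the one requiring the small argument with the transition function. Starting from \eqref{Stf(v)1}, $S_t f(v) = \langle f, \Sigma_t(v)\rangle$, I would integrate against $\mu \in \mathcal{P}(H^m)$:
\[
\langle S_t f, \mu\rangle = \int_{H^m} \langle f, \Sigma_t(v)\rangle\, \mu(dv) = \int_{H^m} \left( \int_{H^m} f(y)\, \Sigma_t(v)(dy) \right) \mu(dv).
\]
By Lemma \ref{loi} I may realize $\mu$ as the law of a random variable $\operatorname{u}_0^\omega \in H^m$ independent of $\xi^\omega$; then, by Lemma \ref{00}, $\mathcal{D}(\operatorname{u}^\omega(t;\operatorname{u}_0^\omega)) = S_t^\ast(\mu)$. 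The right-hand side above equals $\mathbb{E}\big[\mathbb{E}[f(\operatorname{u}^\omega(t;v))]\big|_{v = \operatorname{u}_0^\omega}\big]$, which by independence of $\operatorname{u}_0^\omega$ and $\xi^\omega$ and the tower property of conditional expectation equals $\mathbb{E}[f(\operatorname{u}^\omega(t;\operatorname{u}_0^\omega))] = \langle f, \mathcal{D}(\operatorname{u}^\omega(t;\operatorname{u}_0^\omega))\rangle = \langle f, S_t^\ast(\mu)\rangle$. The main technical point to be careful about is the measurability of $(v,\omega) \mapsto f(\operatorname{u}^\omega(t;v))$ needed to apply Fubini and the conditioning argument; this follows from joint continuity of $\mathcal{M}_t$ (Theorem \ref{th6}) in $(\operatorname{u}_0,\xi)$ together with measurability of $\omega \mapsto \xi^\omega \in X_m^T$ (Theorem \ref{th2}), so $(v,\omega)\mapsto \operatorname{u}^\omega(t;v)$ is a Carathéodory map and hence jointly measurable. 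I expect this measurability/Fubini bookkeeping in item (4) to be the only real obstacle; items (1)–(3) are routine once continuity of the flow is invoked.
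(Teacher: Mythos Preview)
Your proposal is correct and follows essentially the same route as the paper: items (1)--(3) are handled identically (continuity of $\mathcal{M}_t$ plus dominated convergence for (1), trivial for (2), and $S_t\mathbf{1}=\mathbf{1}$ together with the bound $|S_tf|\le\|f\|_\infty$ for (3)). For item (4) the paper is slightly more direct: rather than realizing $\mu$ as a law and invoking conditioning, it simply writes $\langle f,S_t^\ast\mu\rangle=\langle f,\mathcal{M}_t\circ(\mu\times\pi)\rangle=\langle f\circ\mathcal{M}_t,\mu\times\pi\rangle$ and applies Fubini to the product measure $\mu\times\pi$, which immediately yields $\int_{H^m}(S_tf)(v)\,\mu(dv)=\langle S_tf,\mu\rangle$ without any need for Lemma~\ref{00} or the tower property.
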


\begin{proof}
1) Si $f\in\mathcal{C}_{b}(H^{m})$ alors par (\ref{Stdef}) :%
\[
S_{t}f(v)=\mathbb{E}[f(%
\operatorname{u}%
(t;v)]=\mathbb{E}[f\circ\mathcal{M}_{t}(v,\xi^{\omega})].
\]
Si $v_{n}\rightarrow v$ dans $H^{m}$ alors pour chaque $\omega,$
$f\circ\mathcal{M}_{t}(v_{n},\xi^{\omega})\rightarrow f\circ\mathcal{M}%
_{t}(v,\xi^{\omega})$ dans $H^{m}$ (car $f\circ\mathcal{M}_{t}$ est
continue)$.$ Alors, par le théorème de convergence dominée de Lebesgue :
$S_{t}f(v_{n})\rightarrow S_{t}f(v)$, donc $S_{t}f\in\mathcal{C}_{b}(H^{m})$.

2) La linéarité et la positivité de $S_{t}$ sont évidentes.

3) La relation $S_{t}\boldsymbol{1}=\boldsymbol{1}$ ou $\boldsymbol{1}(u)=1$
pour tout $u$ est triviale. Soit $f$ dans $\mathcal{C}_{b}(H^{m})$ et notons
par $d=||f||.$ Alors%
\[
-d\boldsymbol{1}\leq f\leq d\boldsymbol{1.}%
\]
En utilisant la positivité de $S_{t}$ et cette relation nous avons que
$-d\boldsymbol{1}\leq S_{t}f\leq d\boldsymbol{1}$, d'ici $||S_{t}f||\leq||f||$. Donc $|S_{t}|_{\mathcal{C}_{b},\mathcal{C}_{b}}\leq1,$ et $S_{t}$ est
continue. Finalement, comme $S_{t}\boldsymbol{1}=\boldsymbol{1}$ alors
$|S_{t}|_{\mathcal{C}_{b},\mathcal{C}_{b}}=1$.

3) Nous avons
\[
\langle f,S_{t}^{\ast}(\mu)\rangle=\langle f,\mathcal{M}_{t}\circ(\mu\times
\pi)\rangle=\langle f\circ\mathcal{M}_{t},\mu\times\pi\rangle.
\]
Puis par le théorème de Fubini :%
\[
\langle f\circ\mathcal{M}_{t},\mu\times\pi\rangle=\int_{H^{m}}\underset
{(S_{t}f)v}{\underbrace{\int_{H^{m}}(f\circ\mathcal{M}_{t})(v,\xi)\pi(d\xi)}%
}\mu(dv)=\langle S_{t}f,\mu\rangle.
\]

\end{proof}

On démontre dans ce qui suit que la famille d'opérateurs $\{S_{t},t\geq0\}$
forment un semi groupe.

\begin{theorem}
\label{th9}Nous avons $S_{0}=Id$ et pour tout $t_{1},t_{2}\geq0$%
\begin{equation}
S_{t_{1}}\circ S_{t_{2}}=S_{t_{1}+t_{2}}.\label{Stsemigroupe}%
\end{equation}

\end{theorem}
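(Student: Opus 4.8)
The statement $S_0 = \mathrm{Id}$ is immediate from $\operatorname{u}(0;v) = v$ and the definition \eqref{Stdef}. For the semigroup property, the natural strategy is to deduce it by duality from the already-established semigroup identity \eqref{St*semigroupe} for the adjoint family $\{S_t^\ast\}$ (Theorem \ref{th7}), using the duality relation \eqref{dual} from Theorem \ref{th8}(4). This avoids re-doing the Markov-property argument (independence of increments of the Wiener processes) from scratch.

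First I would fix $t_1,t_2\geq 0$, take an arbitrary $f\in\mathcal{C}_b(H^m)$ and an arbitrary $v\in H^m$, and evaluate $(S_{t_1}\circ S_{t_2})f(v)$. Using \eqref{dual} with the measure $\mu=\delta_v$, one writes
\[
\big((S_{t_1}\circ S_{t_2})f\big)(v)=\big\langle S_{t_2}f,\,S_{t_1}^\ast(\delta_v)\big\rangle=\big\langle f,\,S_{t_2}^\ast\circ S_{t_1}^\ast(\delta_v)\big\rangle.
\]
Here the first equality uses \eqref{dual} applied to the function $S_{t_2}f$ (which lies in $\mathcal{C}_b(H^m)$ by Theorem \ref{th8}(1), so the pairing is legitimate), and the second uses \eqref{dual} again applied to $f$. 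Then by the semigroup identity \eqref{St*semigroupe} for $S_t^\ast$, we have $S_{t_2}^\ast\circ S_{t_1}^\ast(\delta_v)=S_{t_1+t_2}^\ast(\delta_v)=\mathcal{D}(\operatorname{u}(t_1+t_2;v))$, hence
\[
\big((S_{t_1}\circ S_{t_2})f\big)(v)=\big\langle f,\,S_{t_1+t_2}^\ast(\delta_v)\big\rangle=\mathbb{E}\big[f(\operatorname{u}(t_1+t_2;v))\big]=(S_{t_1+t_2}f)(v).
\]
Since $f$ and $v$ were arbitrary, this gives \eqref{Stsemigroupe}.

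The only point that needs a little care — and what I would flag as the main (mild) obstacle — is checking that \eqref{dual} really does apply with $\mu$ a Dirac mass $\delta_v$; this is fine because $\delta_v\in\mathcal{P}(H^m)$ and $\langle g,\delta_v\rangle=g(v)$ for any bounded Borel $g$, so no extra regularity is needed. One must also make sure the composition $S_{t_1}\circ S_{t_2}$ is well-defined as a map $\mathcal{C}_b(H^m)\to\mathcal{C}_b(H^m)$, which is exactly Theorem \ref{th8}(1). Alternatively — and this is the route I would mention as a fallback if one prefers a direct argument — one can repeat verbatim the Markov-property computation from the proof of Theorem \ref{th7}: write $\operatorname{u}(t_1+t_2;v)$ as the solution at time $t_2$ of the Burgers equation started from $\operatorname{u}(t_1;v)$ and driven by the shifted noise $\widetilde\xi^\omega(t,x)=\xi^\omega(t_1+t,x)-\xi^\omega(t_1,x)$, observe that this shifted noise has the same law as $\xi^\omega$ and is independent of $\operatorname{u}(t_1;v)$, and conclude via conditioning that $\mathbb{E}[f(\operatorname{u}(t_1+t_2;v))]=\mathbb{E}\big[(S_{t_2}f)(\operatorname{u}(t_1;v))\big]=(S_{t_1}(S_{t_2}f))(v)$. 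But the duality proof is shorter and reuses Theorems \ref{th7} and \ref{th8} cleanly, so that is the one I would present.
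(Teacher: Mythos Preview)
Your proposal is correct and takes essentially the same approach as the paper: both proofs reduce the semigroup identity for $S_t$ to the already-proved identity \eqref{St*semigroupe} for $S_t^\ast$ by applying the duality relation \eqref{dual} twice, with $\mu=\delta_v$. The only cosmetic difference is that the paper starts from $(S_{t_1+t_2}f)(v)$ and unwinds to $(S_{t_2}\circ S_{t_1}f)(v)$, whereas you start from $(S_{t_1}\circ S_{t_2})f(v)$ and arrive at $(S_{t_1+t_2}f)(v)$; the chain of equalities is the same read in the other direction.
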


On dit que $\{S_{t}\}_{t\geq0}$ est \textit{le semi groupe de Markov dans
}$\mathcal{C}_{b}(H^{m})$ de l'équation (\ref{B}).

\begin{proof}
Soit $t_{1},t_{2}>0$, $f\in\mathcal{C}_{b}(H^{m})$ et $v\in H^{m}.$ Par le
théorème \ref{th8}, on a $(S_{t_{1}+t_{2}}f)v=\langle f,S_{t_{1}+t_{2}}^{\ast
}(\delta_{v})\rangle$ et par le théorème \ref{th7} $\langle f,S_{t_{1}+t_{2}%
}^{\ast}(\delta_{v})\rangle=\langle f,S_{t_{1}}^{\ast}\circ S_{t_{2}}^{\ast
}(\delta_{v})\rangle$. Encore une fois par le théorème \ref{th8} $\langle
f,S_{t_{1}}^{\ast}\circ S_{t_{2}}^{\ast}(\delta_{v})\rangle=\langle S_{t_{1}%
}f,S_{t_{2}}^{\ast}(\delta_{v})\rangle=\langle S_{t_{2}}\circ S_{t_{1}%
}f,\delta_{v}\rangle$. Finalement $(S_{t_{1}+t_{2}}f)v=(S_{t_{2}}\circ
S_{t_{1}}f)v.$
\end{proof}

\begin{theorem}
\label{p10}\label{KCH}(Relation de Kolmogorov-Chapman)

Pour tout $t_{1},t_{2}\geq0,$ $m\geq1$ et $v\in H^{m}$, on a
\begin{equation}
\Sigma_{t_{1}+t_{2}}(v)=\int\left(  \Sigma_{t_{2}}(u)\right)  \Sigma_{t_{1}%
}(v)\left(  du\right)  .\label{m0}%
\end{equation}
Ici, l'intégrale de droite est l'intégrale de la fonction $u\longmapsto
\Sigma_{t_{2}}(u)\in\mathcal{P}(H^{m})$ par rapport à la mesure $\Sigma
_{t_{1}}(v)$ \cite{Bourba}. Alors, pour tout $f\in\mathcal{C}_{b}(H^{m}):$%
\begin{equation}
S_{t_{1}+t_{2}}f(v)=\langle\Sigma_{t_{1}+t_{2}}(v),f\rangle=\int\langle
\Sigma_{t_{2}}(u),f\rangle\Sigma_{t_{1}}(v)(du).\label{m1}%
\end{equation}

\end{theorem}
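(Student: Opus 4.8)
Le plan est de d\'eduire (\ref{m0}) de la propri\'et\'e de semi-groupe du th\'eor\`eme \ref{th7} appliqu\'ee \`a une mesure de Dirac, puis d'obtenir (\ref{m1}) par int\'egration contre $f$. Comme $\Sigma_t(v)=S_t^{\ast}(\delta_v)$ par d\'efinition, le th\'eor\`eme \ref{th7} donne directement
\[
\Sigma_{t_1+t_2}(v)=S_{t_1+t_2}^{\ast}(\delta_v)=S_{t_2}^{\ast}(S_{t_1}^{\ast}(\delta_v))=S_{t_2}^{\ast}(\Sigma_{t_1}(v)).
\]
Tout revient alors \`a \'etablir, pour toute $\mu\in\mathcal{P}(H^m)$, la repr\'esentation int\'egrale $S_{t_2}^{\ast}(\mu)=\int\Sigma_{t_2}(u)\,\mu(du)$ : prise en $\mu=\Sigma_{t_1}(v)$, elle fournit exactement (\ref{m0}).

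Pour cette repr\'esentation, j'utiliserais la dualit\'e (\ref{dual}) et un argument d'unicit\'e. Pour $f\in\mathcal{C}_b(H^m)$, le point 1 du th\'eor\`eme \ref{th8} assure que $S_{t_2}f\in\mathcal{C}_b(H^m)$, et l'on a
\[
\langle f,S_{t_2}^{\ast}(\mu)\rangle=\langle S_{t_2}f,\mu\rangle=\int_{H^m}(S_{t_2}f)(u)\,\mu(du)=\int_{H^m}\langle f,\Sigma_{t_2}(u)\rangle\,\mu(du),
\]
la derni\`ere \'egalit\'e \'etant la formule (\ref{Stf(v)1}) \'evalu\'ee en $u$. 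Par d\'efinition de l'int\'egrale \`a valeurs mesures \cite{Bourba}, le membre de droite vaut $\langle f,\int\Sigma_{t_2}(u)\,\mu(du)\rangle$. Comme $H^m$ est un espace m\'etrique s\'eparable, une mesure de probabilit\'e bor\'elienne y est caract\'eris\'ee par son action sur $\mathcal{C}_b(H^m)$ ; on conclut que $S_{t_2}^{\ast}(\mu)=\int\Sigma_{t_2}(u)\,\mu(du)$, d'o\`u (\ref{m0}) avec $\mu=\Sigma_{t_1}(v)$. Enfin (\ref{m1}) s'obtient en couplant $f\in\mathcal{C}_b(H^m)$ aux deux membres de (\ref{m0}) et en r\'eappliquant (\ref{Stf(v)1}).

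La partie alg\'ebrique \'etant une simple combinaison des th\'eor\`emes \ref{th7} et \ref{th8}, le point demandant le plus de soin sera la l\'egitimit\'e de l'int\'egrale \`a valeurs mesures $\int\Sigma_{t_2}(u)\,\Sigma_{t_1}(v)(du)$ : il faut v\'erifier que $u\longmapsto\Sigma_{t_2}(u)=S_{t_2}^{\ast}(\delta_u)$ est assez r\'eguli\`ere (mesurable, en fait continue) pour d\'efinir un \'el\'ement de $\mathcal{P}(H^m)$ et pour que l'int\'egration commute avec le couplage contre $f$. Cela r\'esulte de la continuit\'e de $u\mapsto S_{t_2}f(u)$ pour tout $f\in\mathcal{C}_b(H^m)$ (point 1 du th\'eor\`eme \ref{th8}), qui donne la continuit\'e de $u\mapsto\Sigma_{t_2}(u)$ pour la convergence \'etroite sur $\mathcal{P}(H^m)$, combin\'ee aux propri\'et\'es usuelles de l'int\'egration vectorielle \cite{Bourba}.
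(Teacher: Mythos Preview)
Your proposal is correct and essentially matches the paper's argument: both rest on the semigroup property together with the duality (\ref{dual}) and the identity (\ref{Stf(v)1}). The only organisational difference is that the paper establishes (\ref{m1}) first via the semigroup relation for $S_t$ (th\'eor\`eme \ref{th9}) and then deduces (\ref{m0}), whereas you go directly through th\'eor\`eme \ref{th7} for $S_t^{\ast}$ to obtain (\ref{m0}) first and read off (\ref{m1}); since th\'eor\`eme \ref{th9} is itself derived from th\'eor\`emes \ref{th7} and \ref{th8}, the two routes are equivalent.
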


\begin{proof}
L'intégrale à droite de (\ref{m1}) est égale à $\int S_{t_{2}}f(u)\Sigma
_{t_{1}}(v)(du)=\left(  S_{t_{1}}\circ S_{t_{2}}f\right)  (v)$. Alors,
(\ref{m1}) est une conséquence de (\ref{Stsemigroupe}). Comme les termes à
gauche et à droite de (\ref{m1}) sont les intégrales d'une fonction arbitraire $f\in {\mathcal{C}}_{b}(H^{m})$ par rapport aux deux
mesures définies dans (\ref{m0}), alors (\ref{m0}) est une conséquence de
(\ref{m1}).%

\end{proof}

Le théorème \ref{KCH} peut être généralisé :

\begin{proposition}
\label{KCHgen}Pour tout $t_{1},t_{2}\geq0,$ $m\geq1$ et $v\in H^{m}$, si
$f\in\mathcal{C}_{b}(H^{m})$ et $\Phi$ est une fonction bornée et mesurable
sur $X_{m}^{t_{1}},$ alors
\begin{equation}
\mathbb{E}[\Phi(%
\operatorname{u}%
(\tau;v)_{|\tau\in\lbrack0,t_{1}]})f(%
\operatorname{u}%
(t_{1}+t_{2};v))]=\mathbb{E}[\Phi(%
\operatorname{u}%
(\tau;v)_{|\tau\in\lbrack0,t_{1}]})\langle\Sigma_{t_{2}}(%
\operatorname{u}%
(t_{1};v)),f\rangle].\label{m2}%
\end{equation}

\end{proposition}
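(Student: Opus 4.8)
The plan is to reduce \eqref{m2} to the Markov property already encoded in \eqref{m1}, using the (already established) fact that the solution $\operatorname{u}(\tau;v)$ is constructed from $v$ and the increments of $\xi^{\omega}$, together with a monotone class / $\pi$-system argument to pass from product functionals to general bounded measurable $\Phi$. First I would fix $t_{1},t_{2}\geq0$, $v\in H^{m}$ and record the key structural fact from the proof of Theorem \ref{th7}: if we set $v_{0}^{\omega}:=\operatorname{u}(t_{1};v)$ and $\widetilde{\xi}^{\omega}(t,x):=\xi^{\omega}(t_{1}+t,x)-\xi^{\omega}(t_{1},x)$, then $\operatorname{u}(t_{1}+t_{2};v)=\widetilde{\operatorname{u}}(t_{2})$ where $\widetilde{\operatorname{u}}$ solves \eqref{B} with noise $\partial_{t}\widetilde{\xi}^{\omega}$ and initial datum $v_{0}^{\omega}$; moreover $\mathcal{D}(\widetilde{\xi}^{\omega})=\pi$, the increments $(\widetilde{\xi}^{\omega}(t))_{t\geq0}$ are independent of $\mathcal{G}_{t_{1}}:=\sigma\big(\xi^{\omega}(\tau),\,0\leq\tau\leq t_{1}\big)$, and the whole path $\big(\operatorname{u}(\tau;v)\big)_{\tau\in[0,t_{1}]}$ is $\mathcal{G}_{t_{1}}$-measurable (by Theorem \ref{th6}, since $\mathcal{M}$ is continuous hence measurable and the solution on $[0,t_{1}]$ depends only on $v$ and $\xi^{\omega}$ restricted to $[0,t_{1}]$).

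Next I would prove \eqref{m2} in the special case $\Phi=\mathbf 1$, i.e. $\mathbb{E}[f(\operatorname{u}(t_{1}+t_{2};v))]=\mathbb{E}[\langle\Sigma_{t_{2}}(\operatorname{u}(t_{1};v)),f\rangle]$ — but this is exactly the content of \eqref{m1}, since the right side is $\int S_{t_{2}}f(u)\,\Sigma_{t_{1}}(v)(du)=\langle S_{t_{2}}f,\Sigma_{t_{1}}(v)\rangle=S_{t_{1}}(S_{t_{2}}f)(v)=S_{t_{1}+t_{2}}f(v)$ by Theorem \ref{th8}(4) and Theorem \ref{th9}, and the left side is $S_{t_{1}+t_{2}}f(v)$ by definition \eqref{Stdef}. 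To upgrade to general $\Phi$, I would first treat $\Phi$ of the form $\Phi(g)=\mathbf 1_{A}$ for $A\in\mathcal{B}(X_{m}^{t_{1}})$, or more conveniently $\Phi=\psi\circ\pi_{\tau_{1},\dots,\tau_{n}}$ a bounded continuous cylinder functional of finitely many time slices. For such $\Phi$, the random variable $\Phi\big((\operatorname{u}(\tau;v))_{\tau\in[0,t_{1}]}\big)$ is a bounded $\mathcal{G}_{t_{1}}$-measurable function, and $v_{0}^{\omega}=\operatorname{u}(t_{1};v)$ is $\mathcal{G}_{t_{1}}$-measurable; conditioning on $\mathcal{G}_{t_{1}}$ and using that $\widetilde{\xi}^{\omega}$ is independent of $\mathcal{G}_{t_{1}}$ with law $\pi$, one gets
\[
\mathbb{E}\big[\Phi\, f(\operatorname{u}(t_{1}+t_{2};v))\,\big|\,\mathcal{G}_{t_{1}}\big]
=\Phi\cdot\mathbb{E}\big[f(\widetilde{\operatorname{u}}(t_{2};u_{0}))\big]\big|_{u_{0}=v_{0}^{\omega}}
=\Phi\cdot\langle f,S_{t_{2}}^{\ast}(\delta_{u_{0}})\rangle\big|_{u_{0}=\operatorname{u}(t_{1};v)}
=\Phi\cdot\langle \Sigma_{t_{2}}(\operatorname{u}(t_{1};v)),f\rangle,
\]
where the middle equality is Lemma \ref{00} applied with the (nonrandom, frozen) datum $u_{0}$ — here one uses that $v_{0}^{\omega}$ is independent of the future increments, exactly as in the proof of Theorem \ref{th7}. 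Taking expectations gives \eqref{m2} for such $\Phi$.

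Finally I would invoke a monotone class argument: the cylinder functionals $\psi\circ\pi_{\tau_{1},\dots,\tau_{n}}$ with $\psi\in\mathcal{C}_{b}$ form a multiplicative family generating $\mathcal{B}(X_{m}^{t_{1}})$ (the Borel $\sigma$-algebra of the separable Banach space $X_{m}^{t_{1}}$ coincides with the cylindrical one), so by the functional monotone class theorem the identity \eqref{m2}, which is linear in $\Phi$ and stable under bounded monotone limits, extends from this family to all bounded Borel $\Phi$ on $X_{m}^{t_{1}}$. I expect the main obstacle to be the careful bookkeeping in the conditioning step: one must justify the substitution-of-random-variable identity $\mathbb{E}[f(\widetilde{\operatorname{u}}(t_{2};u_{0}))]|_{u_{0}=v_{0}^{\omega}}=\langle\Sigma_{t_{2}}(v_{0}^{\omega}),f\rangle$ rigorously, which rests on (i) the joint measurability of $(u_{0},\omega)\mapsto f(\mathcal{M}_{t_{2}}(u_{0},\widetilde{\xi}^{\omega}))$ coming from the continuity of $\mathcal{M}$ (Theorem \ref{th6}), and (ii) the independence of $v_{0}^{\omega}$ and $\widetilde{\xi}^{\omega}$ together with $\mathcal{D}(\widetilde{\xi}^{\omega})=\pi$ — both already verified inside the proof of Theorem \ref{th7}. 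Everything else is a routine application of results already proved.
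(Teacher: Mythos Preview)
Your argument is correct, but it takes a different route from the paper's. The paper avoids conditional expectations altogether: it rewrites the probability space as a product $(\Omega_1\times\Omega_2,\mathbb{P}_1\times\mathbb{P}_2)$ by explicitly \emph{concatenating} two independent families of Wiener processes at time $t_1$, so that $\operatorname{u}^{\omega}(\tau;v)=\operatorname{u}^{\omega_1}(\tau;v)$ for $\tau\in[0,t_1]$ while $\operatorname{u}^{\omega}(t_1+t_2;v)=\operatorname{u}^{\omega_2}(t_2;\operatorname{u}^{\omega_1}(t_1;v))$. Then \eqref{m2} drops out of Fubini's theorem in one line, for \emph{every} bounded measurable $\Phi$ simultaneously --- no monotone class argument, no freezing lemma, no $\sigma$-algebra $\mathcal{G}_{t_1}$. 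Your approach via conditioning on $\mathcal{G}_{t_1}$ and the substitution rule is the standard textbook Markov-property argument and is perfectly valid; it has the advantage of staying on the original probability space, at the cost of invoking the freezing lemma (which you correctly flag as the delicate step). One minor simplification of your plan: once you observe that the whole path $(\operatorname{u}(\tau;v))_{\tau\in[0,t_1]}$ is $\mathcal{G}_{t_1}$-measurable, the conditioning identity you wrote holds directly for \emph{any} bounded measurable $\Phi$, so the reduction to cylinder functionals and the subsequent monotone class argument are unnecessary.
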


Notons que (\ref{m2}) est égale à (\ref{m1}) si $\Phi\equiv1.$

\begin{proof}
Nous raisonnons comme dans la démonstration du théorème \ref{th7}. Considérons
des espaces de probabilité $\left(  \Omega_{i},\mathcal{F}_{i},\mathbb{P}%
_{i}\right)  ,$ $i=1,2,$ et $(\Omega,\mathcal{F},\mathbb{P})=(\Omega_{1}%
\times\Omega_{2},,\mathcal{F}_{1}\times\mathcal{F}_{2},\mathbb{P}_{1}%
\times\mathbb{P}_{2}).$ Soit $\{\beta_{s}^{\omega_{i}},$ $s\in%
\mathbb{Z}
^{\ast}\},$ $i=1,2,$ des processus de Wiener standard indépendants définies
sur $\left(  \Omega_{i},\mathcal{F}_{i},\mathbb{P}_{i}\right)  ,$ $i=1,2.$
Pour $t_{1},t_{2}\geq0$ et $\omega=(\omega_{1},\omega_{2})\in\Omega,$ on pose
:%
\[
\beta_{s}^{\omega}(t)=\left\{
\begin{array}
[c]{c}%
\beta_{s}^{\omega_{1}}(t),\quad0\leq t\leq t_{1}\\
\beta_{s}^{\omega_{1}}(t_{1})+\beta_{s}^{\omega_{2}}(t-t_{1}),\quad t_{1}\leq
t\leq t_{2},
\end{array}
\right.
\]
qui sont des processus de Wiener standard indépendants. On choisit cette
forme de $\{\beta_{s}^{\omega},$ $s\in%
\mathbb{Z}
^{\ast}\}$ pour la force $\xi$ (voir (\ref{B*})). Dans ce cas, pour $0\leq
t\leq t_{1,}$ la solution $%
\operatorname{u}%
^{\omega}(t;v)=%
\operatorname{u}%
^{\omega_{1}}(t;v)$ ne dépend pas de $\omega_{2},$ et pour $t_{1}\leq t\leq
t_{2}$, l'équation pour $%
\operatorname{u}%
^{\omega}(t;v)$ dépend seulement de $\omega_{2}.$ Donc,
\[%
\operatorname{u}%
^{\omega}(t_{1}+t;v)=%
\operatorname{u}%
^{\omega_{2}}(t;%
\operatorname{u}%
^{\omega_{1}}(t_{1};v)),
\]
et l'intégrale à gauche de (\ref{m2}) s'écrit :
\begin{align*}
& \int_{\Omega}\Phi(%
\operatorname{u}%
^{\omega}(\tau;v)_{|\tau\in\lbrack0,t_{1}]})f(%
\operatorname{u}%
^{\omega}(t_{1}+t_{2};v)\mathbb{P}(d\omega)\\
& =\int_{\Omega_{1}}\int_{\Omega_{2}}\Phi(%
\operatorname{u}%
^{\omega_{1}}(\tau;v)_{|\tau\in\lbrack0,t_{1}]})f(%
\operatorname{u}%
^{\omega_{1},\omega_{2}}(t_{1}+t_{2};v)\mathbb{P}_{1}(d\omega_{1}%
)\mathbb{P}_{2}(d\omega_{2})\\
& =\int_{\Omega_{1}}\left(  \int_{\Omega_{2}}f(%
\operatorname{u}%
^{\omega_{2}}(t_{2};%
\operatorname{u}%
^{\omega_{1}}(t_{1};v))\mathbb{P}_{2}(d\omega_{2})\right)  \Phi(%
\operatorname{u}%
^{\omega_{1}}(\tau;v)_{|\tau\in\lbrack0,t_{1}]})\mathbb{P}_{1}(d\omega_{1})\\
& =\int_{\Omega}\langle\Sigma_{t_{2}}(%
\operatorname{u}%
(t_{1};v)),f\rangle\Phi(%
\operatorname{u}%
^{\omega}(\tau;v)_{|\tau\in\lbrack0,t_{1}]})\mathbb{P}(d\omega).
\end{align*}
Et (\ref{m2}) est vérifié.
\end{proof}

Le théorème \ref{th7} et (\ref{m1}) impliquent que pour $0<t_{1}<t_{1}+t_{2}$ et
pour chaque donnée initiale $%
\operatorname{u}%
_{0}^{\omega}$ indépendante de la force $\eta^{\omega}$ de la forme
(\ref{B*})$,$ la loi de la solution $%
\operatorname{u}%
(t_{1}+t_{2};%
\operatorname{u}%
_{0})$ est une fonction qui dépend seulement de la loi de $%
\operatorname{u}%
(t_{1};%
\operatorname{u}%
_{0}).$ On appelle cela : \textit{la propriété de Markov. }C'est la raison
pour laquelle il est plus facile d'étudier l'équation de Burgers stochastique
avec une force $\eta^{\omega}$ qui est un bruit blanc plutôt qu'avec une autre
force aléatoire.

\section{Convergence faible de mesures}

On rappelle ici quelques résultats sur la notion de convergence faible de mesures \cite{Billi}, \cite{Dud}, \cite{Shir}. Soit $X$ un espace
de Banach séparable, $O_{X}\subset X$ un fermé (par exemple, $O_{X}=X$). On
note
\[
L(O_{X}):=\{f\in\mathcal{C}_{b}(O_{X}):Lip(f)<\infty\},
\]
où $Lip(f)$ désigne la constante de Lipschitz de $f.$ L'espace $L(O_{X})$ muni
de la norme
\[
||f||_{L}=||f||_{L(X)}:=||f||+Lip(f)
\]
est un espace de Banach non séparable \cite{Dud},\cite{KuShir}.

\begin{definition}
\label{def11}On dit qu'une suite $\{\mu_{n}\}\subset\mathcal{P}(O_{X})$
converge faiblement vers $\mu$ $\in\mathcal{P}(O_{X})$, et on note $\mu
_{n}\rightharpoonup\mu$, si et seulement si%
\[
\forall f\in\mathcal{C}_{b}(O_{X}):\langle\mu_{n},f\rangle\rightarrow
\langle\mu,f\rangle.
\]

\end{definition}

La convergence $\mu_{n}\rightharpoonup\mu$ n'implique pas la convergence de
$\mu_{n}(Q)$ vers $\mu(Q),$ $Q\in\mathcal{B}(O_{X}).$ Pourtant%
\begin{equation}
\mu_{n}(Q)\geq\lim_{n\rightarrow+\infty}\sup\mu_{n}(Q),\text{ si }Q\text{ est
fermé et }\mu_{n}\rightharpoonup\mu\text{ (voir \cite{Billi}, \cite{Dud},
\cite{Shir}).}\label{ferme}%
\end{equation}

\begin{definition}
\label{def12}Si $\mu$ et $\nu$ sont des éléments de $\mathcal{P}(O_{X}),$
alors la distance Lipschitz-duale (ou Lip-duale) entre $\mu$ et $\nu$ est
définie par
\[
||\mu-\nu||_{L(X)}^{\ast}:=\sup_{||f||_{L(X)}\leq1}(\langle f,\mu
\rangle-\langle f,\nu\rangle).
\]

\end{definition}

Le théorème suivant dû à Kantorovitch caractérise la convergence faible de
mesures par la distance Lipschitz-duale \cite{Dud}.

\begin{theorem}
\label{th13}\label{kantrovich}Soit $X$ un espace de Banach séparable,
$O_{X}\subset X$ un fermé. Alors

\begin{enumerate}
\item $(\mathcal{P}(O_{X}),||\cdot||_{L(X)}^{\ast})$ est un espace métrique complet.

\item Si $\{\mu_{n}\}$ $\subset\mathcal{P}(O_{X})$ et $\mu$ $\in
\mathcal{P}(O_{X})$ alors
\[
\mu_{n}\rightharpoonup\mu\Longleftrightarrow||\mu_{n}-\mu||_{L(X)}^{\ast
}\rightarrow0.
\]

\end{enumerate}
\end{theorem}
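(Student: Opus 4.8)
The plan is to prove both assertions together, deriving the completeness of $(\mathcal{P}(O_X),||\cdot||^\ast_{L(X)})$ as a consequence of the metrization of weak convergence, since on a separable metric space weak convergence is itself metrizable by the L\'evy--Prokhorov (or by the bounded-Lipschitz) metric, and the space of probability measures is then Polish. First I would establish that $||\cdot||^\ast_{L(X)}$ is genuinely a metric on $\mathcal{P}(O_X)$: symmetry is clear after noting $\sup_{||f||_L\le 1}(\langle f,\mu\rangle-\langle f,\nu\rangle)=\sup_{||f||_L\le1}(\langle f,\nu\rangle-\langle f,\mu\rangle)$ because the unit ball of $L(X)$ is balanced; the triangle inequality is immediate from the definition as a supremum of linear functionals; and the only nontrivial point is that $||\mu-\nu||^\ast_{L(X)}=0$ forces $\mu=\nu$, which follows because Lipschitz functions separate points of $\mathcal{P}(O_X)$ — indeed for any closed set $A$ the functions $f_k(x)=(1-k\,\mathrm{dist}(x,A))^+$ are Lipschitz, bounded by $1$, and decrease to $\mathbf{1}_A$, so $\langle f_k,\mu\rangle=\langle f_k,\nu\rangle$ for all $k$ gives $\mu(A)=\nu(A)$ on closed sets, hence on all Borel sets.

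The core of the argument is part (2). For the implication $||\mu_n-\mu||^\ast_{L(X)}\to 0\ \Rightarrow\ \mu_n\rightharpoonup\mu$, I would fix $f\in\mathcal{C}_b(O_X)$ and an $\varepsilon>0$ and approximate $f$ uniformly on a suitable set by a Lipschitz function: given tightness (on a separable complete metric space one may restrict attention to a compact set $K$ carrying mass $\ge 1-\varepsilon$ uniformly in $n$, once tightness is known — but to avoid circularity I would instead argue directly that for bounded $f$ the convergence $\langle f_j,\mu_n\rangle\to\langle f_j,\mu\rangle$ for a countable family of Lipschitz $f_j$ suffices, by a standard density/truncation argument as in \cite{Dud}). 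Concretely, on a metric space every bounded uniformly continuous function is a uniform limit of bounded Lipschitz functions (via inf-convolution $f_\lambda(x)=\inf_y(f(y)+\lambda d(x,y))$, which is $\lambda$-Lipschitz and converges uniformly to $f$ as $\lambda\to\infty$ when $f$ is uniformly continuous), and bounded continuous functions can be handled by first passing to a compact set using tightness — so the cleanest route is to cite \cite[Thm 11.3.3]{Dud} for the equivalence of weak convergence with convergence against bounded Lipschitz functions, and then observe that $||\mu_n-\mu||^\ast_{L(X)}\to0$ trivially implies $\langle f,\mu_n\rangle\to\langle f,\mu\rangle$ for each fixed $f\in L(O_X)$ by taking $f/||f||_L$ in the supremum.

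For the converse $\mu_n\rightharpoonup\mu\ \Rightarrow\ ||\mu_n-\mu||^\ast_{L(X)}\to0$, the difficulty is that the supremum over the infinite-dimensional, non-separable unit ball of $L(X)$ cannot be passed to the limit termwise. Here I would use tightness: since $X$ is a separable Banach space and $\{\mu_n\}\cup\{\mu\}$ is a weakly convergent sequence, this family is tight (Prokhorov's theorem, valid on Polish spaces \cite{Billi}, \cite{Dud}), so for $\varepsilon>0$ there is a compact $K\subset O_X$ with $\mu_n(K),\mu(K)\ge 1-\varepsilon$ for all $n$. On the compact set $K$ the family $\{f|_K:||f||_{L(X)}\le1\}$ is uniformly bounded and equi-Lipschitz, hence relatively compact in $\mathcal{C}(K)$ by Arzel\`a--Ascoli; covering it by finitely many $\varepsilon$-balls centred at $g_1,\dots,g_N$ and extending each $g_i$ to a global function in the unit ball of $L(X)$, any $f$ with $||f||_{L(X)}\le1$ satisfies $|\langle f,\mu_n\rangle-\langle f,\mu\rangle|\le |\langle g_i,\mu_n\rangle-\langle g_i,\mu\rangle| + C\varepsilon$ for a suitable $i$ (the $C\varepsilon$ accounting for the error on $K$ and the mass $\varepsilon$ outside $K$, using $||f||\le1$). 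Taking $n$ large enough that $\max_{i\le N}|\langle g_i,\mu_n\rangle-\langle g_i,\mu\rangle|<\varepsilon$ gives $||\mu_n-\mu||^\ast_{L(X)}\le (C+1)\varepsilon$ for $n$ large. This compactness-plus-finite-net argument is the main obstacle, and it is exactly where separability of $X$ enters (through Prokhorov's theorem). Finally, completeness in part (1): given a Cauchy sequence $\{\mu_n\}$ for $||\cdot||^\ast_{L(X)}$, the numerical sequences $\langle f,\mu_n\rangle$ are Cauchy for every $f\in L(O_X)$; a tightness argument (Cauchy sequences for this metric are tight) lets one extract a weak limit $\mu$ via Prokhorov, and then part (2) upgrades weak convergence of the subsequence to $||\cdot||^\ast_{L(X)}$-convergence of the whole sequence.
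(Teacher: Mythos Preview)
The paper does not prove this theorem at all: it is stated as a classical result due to Kantorovitch and attributed to \cite{Dud} (see the sentence preceding the statement), with no proof given. So there is nothing in the paper to compare your argument against; you have supplied a proof where the authors chose to cite one.

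That said, your sketch is essentially the standard proof one finds in Dudley, and the main ideas are sound: separation by Lipschitz approximants of indicator functions for the metric property, the Portmanteau-type fact that bounded Lipschitz functions are convergence-determining for the easy direction of (2), and the tightness $+$ Arzel\`a--Ascoli finite-net argument for the uniform direction. Two points deserve tightening if you intend this as a self-contained proof. First, in the implication $||\mu_n-\mu||^\ast_{L(X)}\to 0\Rightarrow\mu_n\rightharpoonup\mu$ you oscillate between invoking tightness and avoiding it; the clean route is to use directly that convergence of $\langle f,\mu_n\rangle$ for all bounded Lipschitz $f$ already implies weak convergence (this is a Portmanteau consequence, no tightness needed), rather than the detour through uniform approximation on compacts. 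Second, for completeness in (1) you assert that Cauchy sequences for $||\cdot||^\ast_{L(X)}$ are tight; this is true but not entirely trivial, and one usually proves it by testing against functions of the form $x\mapsto (1-k\,\mathrm{dist}(x,K))^+$ for compacts $K$ exhausting most of the mass of a fixed $\mu_{n_0}$, then using the Cauchy condition to propagate the mass bound to all $\mu_n$. You should either spell this out or cite it explicitly.
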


\begin{theorem}
\label{th14}\label{ST*continuefaible}Pour tout $t\geq0$ et $m\geq1$,
l'application $S_{t}^{\ast}:\mathcal{P}(H^{m})\rightarrow\mathcal{P}(H^{m}) $
est faiblement continue, c'est-à-dire, si $\mu_{n}\rightharpoonup\mu$ alors
$S_{t}^{\ast}\mu_{n}\rightharpoonup S_{t}^{\ast}\mu.$
\end{theorem}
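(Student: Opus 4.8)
My strategy is to reduce weak continuity of $S_t^*$ to weak continuity of the solution map $\mathcal M_t$, and then exploit the Kantorovich characterization (Theorem \ref{th13}) to turn weak convergence into a statement about test functions. By Theorem \ref{th13}, it suffices to show that for every $f\in L(H^m)$ (the Lipschitz test functions) one has $\langle S_t^*\mu_n,f\rangle\to\langle S_t^*\mu,f\rangle$; by duality (\ref{dual}) this reads $\langle\mu_n,S_tf\rangle\to\langle\mu,S_tf\rangle$. If I knew that $S_tf\in\mathcal C_b(H^m)$ — which is precisely point 1 of Theorem \ref{th8} — then this is just the definition of $\mu_n\rightharpoonup\mu$ applied to the continuous bounded function $S_tf$. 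So the whole argument is essentially: $\mu_n\rightharpoonup\mu$ means $\langle\mu_n,g\rangle\to\langle\mu,g\rangle$ for all $g\in\mathcal C_b(H^m)$; apply this with $g=S_tf$, which is legitimate because $S_t$ maps $\mathcal C_b(H^m)$ into itself by Theorem \ref{th8}(1); conclude $\langle\mu_n,S_tf\rangle\to\langle\mu,S_tf\rangle$, i.e. $\langle S_t^*\mu_n,f\rangle\to\langle S_t^*\mu,f\rangle$ for all $f$; since this holds for all $f\in\mathcal C_b(H^m)$ it gives $S_t^*\mu_n\rightharpoonup S_t^*\mu$.

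Written out in order: first, fix $t\ge 0$, $m\ge 1$, and a sequence $\mu_n\rightharpoonup\mu$ in $\mathcal P(H^m)$. Second, take an arbitrary $f\in\mathcal C_b(H^m)$; by Theorem \ref{th8}(1), $S_tf\in\mathcal C_b(H^m)$. Third, apply Definition \ref{def11} to the test function $S_tf$: $\langle\mu_n,S_tf\rangle\to\langle\mu,S_tf\rangle$. Fourth, rewrite both sides using the duality relation (\ref{dual}) from Theorem \ref{th8}(4): $\langle S_t^*\mu_n,f\rangle=\langle\mu_n,S_tf\rangle$ and likewise for $\mu$. Hence $\langle S_t^*\mu_n,f\rangle\to\langle S_t^*\mu,f\rangle$. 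Fifth, since $f\in\mathcal C_b(H^m)$ was arbitrary, Definition \ref{def11} gives $S_t^*\mu_n\rightharpoonup S_t^*\mu$. (One could alternatively phrase the conclusion through Theorem \ref{th13}(2) in terms of the Lip-dual distance, but it is not needed.)

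There is really no serious obstacle here: the theorem is a formal consequence of the regularization property $S_t(\mathcal C_b)\subset\mathcal C_b$ — which itself rests on the continuity of the flow $\mathcal M_t$ (Theorem \ref{th6}) together with dominated convergence, already established in Theorem \ref{th8}(1) — and of the adjoint identity (\ref{dual}). The only point that deserves a word of care is making sure that the test function we feed into the definition of weak convergence is genuinely in $\mathcal C_b(H^m)$: boundedness of $S_tf$ is immediate ($|S_tf(v)|\le \mathbb E|f(u(t;v))|\le\|f\|$), and continuity is exactly Theorem \ref{th8}(1). So the proof is three lines once those earlier results are invoked; the "hard part" was done upstream in establishing the analyticity/continuity of the flow and the Fubini-based duality.
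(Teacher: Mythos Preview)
Your proposal is correct and follows exactly the same route as the paper: use the duality relation (\ref{dual}) to rewrite $\langle S_t^*\mu_n,f\rangle=\langle\mu_n,S_tf\rangle$, invoke Theorem \ref{th8}(1) to know that $S_tf\in\mathcal C_b(H^m)$, and apply the definition of weak convergence. The paper's proof is the three-line version of what you wrote; your initial mention of Theorem \ref{th13} and Lipschitz test functions is unnecessary (as you yourself note at the end), but harmless.
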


\begin{proof}
Soit $\mu_{n}\rightharpoonup\mu$ et $f\in\mathcal{C}_{b}(H^{m}).$ On a par
(\ref{dual}) : $\langle S_{t}^{\ast}\mu_{n},f\rangle=\langle\mu_{n}%
,S_{t}f\rangle$ et $\langle S_{t}^{\ast}\mu,f\rangle=\langle\mu,S_{t}%
f\rangle.$ Comme $S_{t}f\in\mathcal{C}_{b}(H^{m}),$ on a $\langle\mu_{n}%
,S_{t}f\rangle\rightarrow\langle\mu,S_{t}f\rangle.$ Donc, $S_{t}^{\ast}\mu
_{n}\rightharpoonup S_{t}^{\ast}\mu.$
\end{proof}

\begin{definition}
\label{def15}Soit $M$ un ensemble de mesures de $\mathcal{P}(O_{X}).$

\begin{enumerate}
\item $M$ est dit tendu, si pour tout $\varepsilon>0$, il existe un compact
$K_{\varepsilon}\subset O_{X}$ tel que%
\[
\mu(K_{\varepsilon})\geq1-\epsilon,\text{\quad}\forall\mu\in M.
\]

\item $M$ est dit faiblement relativement compact, si toute suite $\{\mu
_{n}\}\subset M$ admet une sous-suite faiblement convergente dans
$\mathcal{P}(O_{X}).$
\end{enumerate}
\end{definition}

Le théorème suivant dû à Prokhorov caractérise les ensembles de mesures
faiblement relativement compact \cite{Billi}, \cite{Dud}, \cite{Shir}.

\begin{theorem}
\label{th16}\label{prochorov}Un ensemble de mesures $M\subset\mathcal{P}%
(O_{X})$ est tendu si et seulement si $M$ est faiblement relativement compact.
\end{theorem}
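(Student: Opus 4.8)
The plan is to prove both implications of the equivalence, noting that the "easy" direction (tightness $\Rightarrow$ weak relative compactness) is the one actually used in the sequel, so I would present it first and in full detail, then sketch the converse.

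\textbf{Tightness $\Rightarrow$ weak relative compactness.} Suppose $M\subset\mathcal{P}(O_X)$ is tight. Given a sequence $\{\mu_n\}\subset M$, choose an increasing sequence of compacts $K_j\subset O_X$ with $\mu(K_j)\geq 1-1/j$ for all $\mu\in M$ and all $j$. On each compact metric space $K_j$, the space $\mathcal{C}(K_j)$ is separable, so the restricted measures $\{\mu_n|_{K_j}\}_n$ (viewed as elements of the unit ball of $\mathcal{C}(K_j)^\ast$) admit, by Banach--Alaoglu plus separability, a weak-$\ast$ convergent subsequence. By a diagonal extraction over $j$ I obtain a subsequence $\{\mu_{n_k}\}$ and, for each $j$, a limiting (sub-probability) measure on $K_j$; these are consistent and their supremum defines a Borel measure $\mu$ on $O_X$ with $\mu(O_X)=1$ (the total mass is not lost precisely because of the uniform tightness). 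The final step is to check $\mu_{n_k}\rightharpoonup\mu$: for $f\in\mathcal{C}_b(O_X)$ and $\varepsilon>0$, pick $j$ large so the complement of $K_j$ contributes at most $\varepsilon\|f\|$ uniformly in $k$, then use the convergence on the compact $K_j$. This gives $|\langle\mu_{n_k},f\rangle-\langle\mu,f\rangle|\to 0$.

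\textbf{Weak relative compactness $\Rightarrow$ tightness.} Here I would use the separability and completeness of $X$ (hence of $O_X$ as a closed subset — a Polish space). Fix $\varepsilon>0$. For each $\delta>0$, cover $O_X$ by countably many balls $B(x_i,\delta)$, $i\geq 1$; I claim there is a finite $N=N(\delta)$ with $\mu\big(\bigcup_{i\leq N}B(x_i,\delta)\big)>1-\varepsilon 2^{-n}$ for all $\mu\in M$, taking $\delta=1/n$. If not, for each $N$ there is $\mu_N\in M$ with $\mu_N\big(\bigcup_{i\leq N}B(x_i,\delta)\big)\leq 1-\varepsilon 2^{-n}$; extracting a weakly convergent subsequence $\mu_{N_k}\rightharpoonup\nu$ and applying the closed-set inequality (\ref{ferme}) to the closed set $\overline{\bigcup_{i\leq N_0}B(x_i,\delta)}$ for fixed $N_0$ yields $\nu\big(\overline{\bigcup_{i\leq N_0}B(x_i,\delta)}\big)\leq 1-\varepsilon 2^{-n}$ for every $N_0$, contradicting $\nu(O_X)=1$ by letting $N_0\to\infty$. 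Having secured these finite subcovers, set $K_\varepsilon=\bigcap_n \overline{\bigcup_{i\leq N(1/n)}B(x_i,1/n)}$: it is closed, totally bounded, hence compact (completeness of $X$), and $\mu(O_X\setminus K_\varepsilon)\leq\sum_n \varepsilon 2^{-n}=\varepsilon$ for all $\mu\in M$.

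\textbf{Main obstacle.} The delicate point in the first implication is verifying that no mass escapes in the limit, i.e. that the limiting object $\mu$ is a genuine probability measure and not merely sub-probabilistic — this is exactly where uniform tightness is indispensable, and where one must be careful about how the diagonal limit on the increasing compacts $K_j$ is assembled into a single Borel measure on $O_X$. In the converse direction, the main work is the compactness extraction argument showing finite subcovers can be chosen uniformly over $M$; the rest (total boundedness $+$ completeness $\Rightarrow$ compactness, and the $\varepsilon 2^{-n}$ bookkeeping) is routine.
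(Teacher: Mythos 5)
The paper does not prove Theorem \ref{th16}: Prokhorov's theorem is stated as a classical result and referred to \cite{Billi}, \cite{Dud}, \cite{Shir}, so there is no internal argument to compare yours with. Your sketch is the standard textbook proof (restriction to an exhausting sequence of compacts, Banach--Alaoglu plus separability and a diagonal extraction for the direct implication; uniform finite covers by small balls for the converse), and its overall structure is sound. You are also right that only the direction actually used later in the paper (tight $\Rightarrow$ faiblement relativement compact, in the proof of Theorem \ref{th32}) needs to be given in full.

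One step of your converse is mis-oriented. The inequality (\ref{ferme}) --- which, as printed in the paper, contains a typo and should read $\mu(Q)\geq\limsup_{n}\mu_{n}(Q)$ for $Q$ ferm\'e --- gives a \emph{lower} bound for the limit measure on a closed set; applied to $F_{N_{0}}=\overline{\bigcup_{i\leq N_{0}}B(x_{i},\delta)}$ it yields $\nu(F_{N_{0}})\geq\limsup_{k}\mu_{N_{k}}(F_{N_{0}})$, not the upper bound $\nu(F_{N_{0}})\leq1-\varepsilon2^{-n}$ that you claim. To obtain the contradiction you should instead apply (\ref{ferme}) to the closed complement of the open set $G_{N_{0}}=\bigcup_{i\leq N_{0}}B(x_{i},\delta)$: for $N_{k}\geq N_{0}$ one has $\mu_{N_{k}}(O_{X}\setminus G_{N_{0}})\geq\mu_{N_{k}}(O_{X}\setminus G_{N_{k}})\geq\varepsilon2^{-n}$, hence $\nu(O_{X}\setminus G_{N_{0}})\geq\varepsilon2^{-n}$ for every $N_{0}$, contradicting $\nu(O_{X}\setminus G_{N_{0}})\downarrow0$ as $N_{0}\to\infty$. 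With this correction, and granting the (genuinely delicate, as you acknowledge) assembly of the diagonal limits on the $K_{j}$ into a single Borel probability measure in the direct implication, your argument is the classical one.
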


\section{Bornes supérieures pour les normes Sobolev de $%
\operatorname{u}%
$}

Dans la suite nous supposerons toujours que $B_{4}<\infty.$

Le but de cette section est d'estimer l'espérance mathématique des normes
$\mathbb{E}[||%
\operatorname{u}%
(t)||_{m}^{2}],$ $m\geq1$, de la solution $%
\operatorname{u}%
(t)$ de (\ref{B}) uniformément en $\nu\in(0,1]$ et en $%
\operatorname{u}%
_{0}\in H^{1}.$

On se donne $%
\operatorname{u}%
$ $\in X_{m}^{T}$ solution de (\ref{B}), et on rappelle brièvement la formule
d'Itô \cite{Gall}, \cite{KaS}, \cite{Oks}. Soit $f$ une fonction sur $H^{m}$
de classe $\mathcal{C}^{2}$. Alors la formule d'Itô implique que\footnote{Dans
le sens où la relation (\ref{ito}) est l'espérance de la formule d'Itô.}
\begin{equation}
\frac{d}{dt}\mathbb{E}[f(%
\operatorname{u}%
(t))]=\mathbb{E}\left[  df(%
\operatorname{u}%
(t))L(%
\operatorname{u}%
(t))+\frac{1}{2}\sum_{s\in%
\mathbb{Z}
^{\ast}}b_{s}^{2}d^{2}f(%
\operatorname{u}%
(t))(e_{s},e_{s})\right]  ,\label{ito}%
\end{equation}
où $L(%
\operatorname{u}%
)=\nu%
\operatorname{u}%
_{xx}-%
\operatorname{u}%
\operatorname{u}%
_{x}.$ La relation (\ref{ito}) est une égalité formelle. Elle devient une
affirmation rigoureuse si on suppose des restrictions sur $f$ \cite[Appendix
A7]{KuShir}. Nous discuterons cela plus tard.

Soit $x\in%
\mathbb{R}
^{+}$, on note par $\lceil x\rceil$ la partie entière par excès de $x:$%
\[
\lceil x\rceil=\min\{l\in%
\mathbb{N}
:l\geq x\}.
\]

\begin{theorem}
\label{th19}Soit $T>\theta>0$, $m\geq1$ et $\nu\in(0,1]$. Si $B_{\lceil
m\rceil}<\infty$, alors il existe $C_{m}(\theta,B_{\lceil m\rceil},B_{4})>0$
telle que pour tout $%
\operatorname{u}%
_{0}\in H^{1}$, la solution $%
\operatorname{u}%
$ de (\ref{B}), (\ref{B*}), (\ref{B2}) satisfait%
\begin{equation}
\mathbb{E}[||%
\operatorname{u}%
(t)||_{m}^{2}]\leq C_{m}\nu^{-(2m-1)},\quad\forall\theta\leq t\leq
T.\label{esperence<visco}%
\end{equation}

\end{theorem}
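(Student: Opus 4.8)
The idea is to apply the Itô formula \eqref{ito} to the functional $f(\operatorname{u})=\|\operatorname{u}\|_m^2$ (or rather to a smooth, suitably truncated version of it, so that \eqref{ito} is rigorously justified), use integration by parts to handle the drift term $L(\operatorname{u})=\nu\operatorname{u}_{xx}-\operatorname{u}\operatorname{u}_x$, and close the estimate with the Gagliardo--Nirenberg inequalities from Lemmas \ref{GNtermeL} and \ref{GNtermeNL} together with the $L_\infty$/$L_1$ bounds on $\operatorname{u}$ and $\operatorname{u}_x^+$ furnished by Theorem \ref{th17} (or Corollary \ref{cor17} in expectation). The parabolic smoothing is what produces the factor $\nu^{-(2m-1)}$: one trades one power of the good term $\nu\|\operatorname{u}\|_{m+1}^2$ against the nonlinear terms, at a cost that is polynomial in $\nu^{-1}$.

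First I would treat the case $m\in\mathbb N$ and deal with $m$ non-integer at the end by interpolation. Taking $f(\operatorname{u})=\|\operatorname{u}\|_m^2 = (2\pi)^m\sum_s |s|^{2m}|\operatorname{u}_s|^2$ in \eqref{ito}, the It\^o term contributes $\sum_s b_s^2 \|e_s\|_m^2 = (2\pi)^m B_m$, a constant. The drift term splits as
\begin{equation*}
2\langle \operatorname{u}, L(\operatorname{u})\rangle_m = 2\nu\langle \operatorname{u},\operatorname{u}_{xx}\rangle_m - 2\langle \operatorname{u},\operatorname{u}\operatorname{u}_x\rangle_m = -2\nu\|\operatorname{u}\|_{m+1}^2 - \langle \partial_x^{2m}\operatorname{u}, \partial_x(\operatorname{u}^2)\rangle.
\end{equation*}
The first piece is the dissipation we keep. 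For the second, Lemma \ref{GNtermeNL} with $q=\infty$ gives $|\langle \partial_x^{2m}\operatorname{u},\partial_x\operatorname{u}^2\rangle| \le C\|\operatorname{u}\|_{m+1}^{1+\theta}|\operatorname{u}|_\infty^{2-\theta}$ with $\theta=\theta(m,\infty)=\frac{m-\tfrac12}{m+\tfrac12}<1$, and Young's inequality absorbs $C\|\operatorname{u}\|_{m+1}^{1+\theta}|\operatorname{u}|_\infty^{2-\theta}$ into $\nu\|\operatorname{u}\|_{m+1}^2$ plus a term of the form $C\nu^{-\frac{1+\theta}{1-\theta}}|\operatorname{u}|_\infty^{\frac{2(2-\theta)}{1-\theta}}$; one checks $\frac{1+\theta}{1-\theta}=2m-1$, which is exactly where the claimed power of $\nu$ comes from. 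Thus
\begin{equation*}
\frac{d}{dt}\mathbb E[\|\operatorname{u}(t)\|_m^2] \le -\nu\,\mathbb E[\|\operatorname{u}(t)\|_{m+1}^2] + C\nu^{-(2m-1)}\mathbb E[|\operatorname{u}(t)|_\infty^{p_0}] + C B_m,
\end{equation*}
for a suitable exponent $p_0$; by Corollary \ref{cor17} the expectation $\mathbb E[|\operatorname{u}(t)|_\infty^{p_0}]$ is bounded by $C(\theta,B_4)\,t^{-p_0}$, hence by a constant for $t\ge\theta/2$, say. Dropping the (nonpositive) dissipation term and using the lower-order bound on some interval $[\theta/2,\theta]$ to start the Gronwall-type argument, integrating from $\theta/2$ to $t$ yields \eqref{esperence<visco} with a constant depending only on $\theta$, $B_{\lceil m\rceil}$ and $B_4$, uniformly in $\nu\in(0,1]$ and in $\operatorname{u}_0\in H^1$.

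There are two technical points I expect to be the real work. The first is the rigorous justification of \eqref{ito} for $f=\|\cdot\|_m^2$: a priori the solution only lives in $X_m^T$ if $\operatorname{u}_0\in H^m$, whereas here $\operatorname{u}_0\in H^1$, so for $m\ge 2$ one must either first run the argument for $\operatorname{u}_0\in H^m$ (where $\operatorname{u}\in X_m^T$ by Theorem \ref{burgersok} and the parabolic smoothing gives instantaneous regularization for $t>0$) and then pass to the limit $\operatorname{u}^j_0\to \operatorname{u}_0$ in $H^1$ using the continuity of the flow (Theorem \ref{th6}) and Fatou, exactly as in the proof of Theorem \ref{th17}; or invoke a stopping-time truncation argument as referenced in \cite[Appendix A7]{KuShir}. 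The second point is bookkeeping the Young exponents so that the power $\nu^{-(2m-1)}$ and the constant's dependence on $B_{\lceil m\rceil}$ and $B_4$ come out exactly as stated; for non-integer $m$ one interpolates $\|\operatorname{u}(t)\|_m$ between $\|\operatorname{u}(t)\|_{\lceil m\rceil}$ and $\|\operatorname{u}(t)\|_{\lceil m\rceil-1}$ (or between consecutive integers straddling $m$) and checks that the resulting exponent of $\nu$ is still $-(2m-1)$. The main obstacle is thus not a single hard inequality but rather combining the Kru\v zkov-type a priori bound of Theorem \ref{th17} with the Itô/energy estimate in a way that stays uniform in $\nu$ down to $\nu\to 0$.
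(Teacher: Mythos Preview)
Your overall strategy (It\^o on $\|\operatorname{u}\|_m^2$, Gagliardo--Nirenberg for the nonlinearity, Kru\v zkov for $|\operatorname{u}|_\infty$, interpolation for non-integer $m$, approximation for $\operatorname{u}_0\in H^1$) matches the paper, but two points break the argument as written.

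First, your Young exponent is off by one: with $\theta=\theta(m,\infty)=\dfrac{2m-1}{2m+1}$ one has $\dfrac{1+\theta}{1-\theta}=2m$, not $2m-1$. So absorbing $C\|\operatorname{u}\|_{m+1}^{1+\theta}|\operatorname{u}|_\infty^{2-\theta}$ into $\nu\|\operatorname{u}\|_{m+1}^2$ leaves a remainder $C\nu^{-2m}|\operatorname{u}|_\infty^{p_0}$, which after Corollary~\ref{cor17} gives the bound $C\nu^{-2m}$ instead of $C\nu^{-(2m-1)}$. This is not a bookkeeping slip you can repair by tweaking exponents: any direct Young step at this point produces $\nu^{-2m}$.

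Second, and relatedly, you then propose to drop the dissipation and integrate from $\theta/2$. But the result would contain $\mathbb E\|\operatorname{u}(\theta/2)\|_m^2$, for which you have no $\operatorname{u}_0$-independent bound: Theorem~\ref{th17} controls $|\operatorname{u}_x|_1$ and $|\operatorname{u}|_\infty$, not $\|\operatorname{u}\|_m$, and approximating $\operatorname{u}_0$ in $H^1$ by $\operatorname{u}_0^j\in H^m$ yields constants that depend on $\|\operatorname{u}_0^j\|_m\to\infty$.

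The paper extracts both the sharp power and the uniformity in $\operatorname{u}_0$ by \emph{not} applying Young and \emph{not} discarding the dissipation. After H\"older on the expectation (using Corollary~\ref{cor17} for $|\operatorname{u}_x|_1$) it keeps
\[
\frac{d}{dt}X_m \;\le\; B_m' - 2\nu X_{m+1} + C_m X_{m+1}^{\frac{2m}{2m+1}},\qquad X_j(t):=\mathbb E\|\operatorname{u}(t)\|_j^2,
\]
and, again via Gagliardo--Nirenberg and Kru\v zkov, the reverse interpolation $X_{m+1}\ge c\,X_m^{(2m+1)/(2m-1)}$. Together these give an absorbing differential inequality: whenever $X_m(t)>\beta\nu^{-(2m-1)}$ with $\beta$ large, the right-hand side is strictly negative and in fact $\tfrac{d}{dt}X_m\lesssim -X_m^{2m/(2m-1)}$. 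A short backward-in-time blow-up (contradiction) argument then shows $X_m$ cannot exceed $\beta\nu^{-(2m-1)}$ on $[\theta,T]$, regardless of its value at $\theta/2$ (finite for $\operatorname{u}_0\in H^m$; one passes to $\operatorname{u}_0\in H^1$ by Fatou, just as you outline). The ingredient your plan is missing is precisely this reverse interpolation $X_{m+1}\gtrsim X_m^{(2m+1)/(2m-1)}$: it is what turns the dissipation into a mechanism that simultaneously forgets the initial data and yields the exponent $2m-1$ rather than $2m$.
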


\begin{proof}
Par $C_{mj},$ $c_{m}^{^{\prime}},$ $C_{mj}^{\prime},$ $\ldots$ etc, nous
notons des constantes positives qui ne dépendent que de $m,$ $\theta,$ $B_{4}$
et $B_{\lceil m\rceil}$. On supposera d'abord que $m\in%
\mathbb{N}
^{\ast}.$

i) En premier lieu, nous présentons une preuve basée sur une application
formelle de la formule (\ref{ito}) et nous expliquerons plus tard comment
convertir cet argument en une preuve rigoureuse. Supposons d'abord que $%
\operatorname{u}%
_{0}\in H^{m},$ alors la solution $%
\operatorname{u}%
=%
\operatorname{u}%
^{\omega}\in X_{m}^{T}$ (cf. théorème \ref{burgersok}) pour chaque $\omega.$

On applique (\ref{ito}) à la fonction $f(%
\operatorname{u}%
)=||%
\operatorname{u}%
||_{m}^{2}=\langle(-\Delta)^{m}%
\operatorname{u}%
,%
\operatorname{u}%
\rangle.$ On a $df(%
\operatorname{u}%
)v=2\langle(-\Delta)^{m}u,v\rangle$ et $d^{2}f(%
\operatorname{u}%
)(e_{s},e_{s})=2\langle(-\Delta)^{m}e_{s},e_{s}\rangle=(2\pi s)^{2m}\langle
e_{s},e_{s}\rangle=(2\pi s)^{2m}$, donc (\ref{ito}) s'écrit comme%
\begin{equation}
\frac{d}{dt}\mathbb{E}[||%
\operatorname{u}%
||_{m}^{2}]=-\mathbb{E}[\langle(-\Delta)^{m}%
\operatorname{u}%
,\partial_{x}%
\operatorname{u}%
^{2}\rangle]-2\nu\mathbb{E}[\langle(-\Delta)^{m}%
\operatorname{u}%
,(-\Delta)%
\operatorname{u}%
\rangle]+B_{m}^{^{\prime}},\label{ito2}%
\end{equation}
où $B_{m}^{^{\prime}}=(2\pi)^{2m}B_{m}=\underset{s\in%
\mathbb{Z}
^{\ast}}{\sum}b_{s}^{2}(2\pi s)^{2m}.$

Par le lemme \ref{GNtermeNL} (avec $q=\infty$), et le fait que $|%
\operatorname{u}%
|_{\infty}\leq|%
\operatorname{u}%
_{x}|_{1},$ nous avons
\begin{equation}
|\mathbb{E}[\langle(-\Delta)^{m}%
\operatorname{u}%
,\partial_{x}%
\operatorname{u}%
^{2}\rangle]|\leq C_{m}\mathbb{E[}||%
\operatorname{u}%
||_{m+1}^{\alpha+1}|%
\operatorname{u}%
_{x}|_{1}^{2-\alpha}],\quad\alpha=\frac{2m-1}{2m+1}.\label{iuinfinimm+1}%
\end{equation}
L'inégalité de Hölder\ appliquée au membre à droite avec $p=\frac{2m+1}{2m}$,
et le corollaire \ref{cor17} impliquent que pour tout $\theta^{\prime}%
:=\frac{\theta}{2}\leq t\leq T:$
\[
\mathbb{E[}|%
\operatorname{u}%
_{x}|_{1}^{2-\alpha}||%
\operatorname{u}%
||_{m+1}^{\alpha+1}]\leq\left(  \mathbb{E}[|%
\operatorname{u}%
_{x}|_{1}^{(2-\alpha)q_{m}}]\right)  ^{\frac{1}{q_{m}}}\left(  \mathbb{E}[||%
\operatorname{u}%
||_{m+1}^{2}]\right)  ^{\frac{1}{p}}\leq C_{m}\left(  \mathbb{E}[||%
\operatorname{u}%
||_{m+1}^{2}]\right)  ^{\frac{2m}{2m+1}}.
\]
On pose $X_{j}(t)=\mathbb{E}[||%
\operatorname{u}%
||_{j}^{2}],$ $j\in%
\mathbb{N}
^{\ast}.$ D'après les estimations précédentes, la relation (\ref{ito2}) s'écrit
\begin{equation}
\frac{d}{dt}X_{m}(t)\leq B_{m}^{^{\prime}}-2\nu X_{m+1}(t)+C_{m}%
X_{m+1}(t)^{\frac{2m}{2m+1}},\quad\theta^{\prime}\leq t\leq T.\label{diffHm}%
\end{equation}
Comme précédemment, l'inégalité (\ref{GN}) (avec $\theta(m,\infty,2,m+1)$),
celle de Hölder et le corollaire \ref{cor17} impliquent que :
\[
X_{m}(t)\leq C_{m1}X_{m+1}(t)^{\frac{2m-1}{2m+1}}\left(  \mathbb{E[}|%
\operatorname{u}%
_{x}|_{1}^{a}]\right)  ^{b}\leq C_{m2}X_{m+1}(t)^{\frac{2m-1}{2m+1}},
\]
pour des constantes convenables $a,b>0$, si $\theta^{\prime}\leq t\leq T$.
Alors%
\begin{equation}
X_{m+1}(t)\geq C_{m3}X_{m}(t)^{\frac{2m+1}{2m-1}},\quad\theta^{\prime}\leq
t\leq T.\label{IXY}%
\end{equation}
La relation (\ref{diffHm}) s'écrit pour tout $\theta^{\prime}\leq t\leq T:$%
\begin{equation}
\frac{d}{dt}X_{m}(t)\leq B_{m}^{^{\prime}}-X_{m+1}(t)^{\frac{2m}{2m+1}}\left(
2\nu X_{m+1}(t)^{\frac{1}{2m+1}}-C_{m}\right)  .\label{Xdiff}%
\end{equation}
Fixons $\beta>1$ et supposons que
\begin{equation}
\exists t_{\ast}\in(2\theta^{\prime},T]\text{ tel que }X_{m}(t_{\ast}%
)>\beta\nu^{-(2m-1)}=:Y.\label{H0}%
\end{equation}
Soit $s=t_{\ast}-t,$ $s\in\lbrack0,t_{\ast}].$ Alors, nous avons de
(\ref{Xdiff}) :%
\begin{equation}
\frac{d}{ds}X_{m}(s)\geq-B_{m}^{^{\prime}}+X_{m+1}(s)^{\frac{2m}{2m+1}}\left(
2\nu X_{m+1}(s)^{\frac{1}{2m+1}}-C_{m}\right)  .\label{Xdiff2}%
\end{equation}
Si $X_{m}(s)>Y,$ alors par (\ref{IXY})
\begin{align*}
2\nu X_{m+1}(s)^{\frac{1}{2m+1}}-C_{m}  & \geq2\nu\left(  C_{m3}%
X_{m}(s)^{\frac{2m+1}{2m-1}}\right)  ^{\frac{1}{2m+1}}-C_{m}\\
& \geq2C_{m3}^{\frac{1}{2m+1}}\beta^{\frac{1}{2m-1}}-C_{m}:=K_{m}(\beta).
\end{align*}
Choisissons $\beta_{0}\gg1$ telle que $K_{m}(\beta)>1$ pour tout $\beta
>\beta_{0}.$ Alors, d'après (\ref{IXY}) et (\ref{Xdiff2}), si $X_{m}(s)>Y$ on déduit que\begin{equation}
\frac{d}{dt}X_{m}(s)\geq-B_{m}^{^{\prime}}+X_{m+1}(s)^{\frac{2m}{2m+1}}%
K_{m}(\beta)\geq-B_{m}^{^{\prime}}+C_{m3}^{\frac{2m}{2m+1}}X_{m}(s)^{\frac
{2m}{2m-1}}K_{m}(\beta)>0,\label{Xdiff3}%
\end{equation}
où la dernière inégalité est valide si $\beta_{0}\gg1.$ D'après (\ref{H0}) et (\ref{Xdiff3}), nous obtenons que%

\begin{equation}
\text{la fonction }s\longmapsto X_{m}(s)\text{ est croissante sur }[0,t_{\ast
}]\text{ et est minorée par }Y.\label{5}%
\end{equation}
En effet, supposons que la fonction n'est pas minorée par $Y$ partout, et
trouvons le premier temps $s_{1}$ tel que que $X_{m}(s_{1})=Y.$ Par
(\ref{H0}), $s_{1}>0.$ Donc $\frac{d}{ds}X_{m}(s)_{|s=s_{1}}\leq0,$ et ceci
contredit (\ref{Xdiff3}). Comme $X_{m}>Y,$ alors $s\longmapsto X_{m}(s)$ est
croissante par (\ref{Xdiff3}).

Par (\ref{5}), (\ref{Xdiff2}) et (\ref{Xdiff3}), nous avons pour tout
$s\in\lbrack0,t_{\ast}]:$%
\[
\frac{d}{dt}X_{m}(s)\geq-B_{m}^{^{\prime}}+C_{m3}^{\frac{2m}{2m+1}}%
X_{m}(s)^{\frac{2m}{2m-1}}K_{m}(\beta)\geq\frac{1}{2}C_{m3}^{\frac{2m}{2m+1}%
}X_{m}(s)^{\frac{2m}{2m-1}}K_{m}(\beta),
\]
si $\beta_{0}\gg1.$ Cette dernière inégalité implique que%
\[
\frac{d}{dt}\left(  X_{m}(s)^{\frac{-1}{2m-1}}\right)  \leq-\frac
{C_{m3}^{\frac{2m}{2m+1}}}{2(2m-1)}K_{m}(\beta)=:-C_{m4}K_{m}(\beta).
\]
En intégrant cette dernière relation en temps entre $0$ et $s$ et en utilisant
(\ref{5}), on a
\[
X_{m}(s)^{\frac{-1}{2m-1}}\leq-C_{m4}K_{m}(\beta)s+X_{m}(0)^{\frac{-1}{2m-1}%
}\leq-C_{m4}K_{m}(\beta)s+\beta^{\frac{-1}{2m-1}}\nu.
\]
Comme $\nu\leq1$, nous pouvons trouver un $s^{\prime}\in(0,t_{\ast}]$
tel que $X_{m}(s^{\prime})^{\frac{-1}{2m-1}}=0,$ si $\beta_{0}\gg1$. D'où la
contradiction, car $X_{m}(s^{\prime})^{\frac{-1}{2m-1}}>0$. Donc, (\ref{H0})
est fausse si $\beta$ est suffisamment grand. Ainsi, nous obtenons
(\ref{esperence<visco}) avec $C_{m}=\beta,$ si $%
\operatorname{u}%
_{0}\in H^{m}.$%

ii) Supposons que $%
\operatorname{u}%
_{0}\in H^{1}.$ Alors il existe une suite $\{%
\operatorname{u}%
_{0}^{j}\}_{j\in%
\mathbb{N}
}\subset H^{m}$ qui converge fortement vers $%
\operatorname{u}%
_{0}$ dans $H^{1}.$ Par le théorème \ref{th6}, pour chaque $\omega$, la solution $%
\operatorname{u}%
(t,%
\operatorname{u}%
_{0}^{j})$ converge fortement dans $H^{1}$ vers la solution $%
\operatorname{u}%
(t;%
\operatorname{u}%
_{0})$ pour tout $0\leq t\leq T.$ Par i), la relation (\ref{esperence<visco})
est déjà vérifiée pour les solutions $%
\operatorname{u}%
(t,%
\operatorname{u}%
_{0}^{j}). $ Pour $N\in%
\mathbb{N}
$, notons par $\Pi_{N}$ le \textit{projecteur de Galerkin }tel que
\[
\Pi_{N}(%
\operatorname{u}%
(x))=\sum_{|s|\leq N}u_{s}e_{s}(x),
\]
et considérons la fonction $f_{N}(%
\operatorname{u}%
)=f\left(  \Pi_{N}(%
\operatorname{u}%
(x))\right)  \wedge N,$ $f(%
\operatorname{u}%
)=||%
\operatorname{u}%
||_{m}^{2}.$ Alors :

a) $f_{N}\in\mathcal{C}_{b}(H^{1})$ et $f_{N}\geq0,$

b) $f_{N}(%
\operatorname{u}%
)\underset{N\rightarrow+\infty}{\rightarrow}f(%
\operatorname{u}%
)\leq\infty,$ pour tout $%
\operatorname{u}%
\in H^{1}.$

Par i), a) et le théorème de convergence dominée de Lebesgue, nous avons
\[
\mathbb{E}[f_{N}(%
\operatorname{u}%
(t;%
\operatorname{u}%
_{0})]=\lim_{j\rightarrow+\infty}[f_{N}(%
\operatorname{u}%
(t;%
\operatorname{u}%
_{0}^{j}))]\leq c_{m}^{^{\prime}}\nu^{-(2m-1)},\quad\forall N\in%
\mathbb{N}
.
\]
En utilisant cette relation et le lemme de Fatou, nous obtenons
\[
\mathbb{E}[||%
\operatorname{u}%
(t;%
\operatorname{u}%
_{0})||_{m}^{2}]\leq\lim_{N}\inf\mathbb{E}[f_{N}(%
\operatorname{u}%
(t;%
\operatorname{u}%
_{0})]\leq c_{m}^{\prime}\nu^{-(2m-1)}.
\]
iii) La démonstration du théorème est finie, si $m\in%
\mathbb{N}
^{\ast}.$ Maintenant, on suppose que $m\geq1$ et $m\notin%
\mathbb{N}
^{\ast}.$ Alors, il existe $j\in%
\mathbb{N}
^{\ast}$ et $s\in(0,1)$ tels que $m=j+s$ et $\lceil m\rceil=j+1.$ Par
l'inégalité d'interpolation et celle de Hölder, nous avons
\[
\mathbb{E}\left[  ||%
\operatorname{u}%
||_{m}^{2}\right]  \leq\mathbb{E}\left[  ||%
\operatorname{u}%
||_{j+1}^{2}\right]  ^{s}\mathbb{E}\left[  ||%
\operatorname{u}%
||_{j}^{2}\right]  ^{1-s}.
\]
Comme (\ref{esperence<visco}) est établie pour $m=j$ et $m=j+1,$ alors le
terme de droite de cette inégalité est majoré par $C_{m}\nu^{-\left(
(2\left[  j+1\right]  -1)s+\left[  2j-1\right]  \left[  1-s\right]  \right)
}=C_{m}\nu^{-(2m-1)}.$

iv) Il reste à justifier l'application de la formule de Itô (\ref{ito}).
D'abord, soit $%
\operatorname{u}%
_{0}\in H^{m+2}$ et $B_{m+2}<\infty$. %
Soient $\{%
\operatorname{u}%
^{N},$ $N\geq1\}$ les approximations de Galerkin pour $%
\operatorname{u}%
$. Elles convergent faiblement vers $%
\operatorname{u}%
$ dans l'espace $X_{m+2}^{T}$ et fortement dans $X_{m+1}^{T}$, pour tout
$\omega$ (cf. preuve du théorème \ref{thwhm}). Considérons l'équation
satisfaite par $%
\operatorname{u}%
^{N}:$%
\begin{equation}
\partial_{t}%
\operatorname{u}%
^{N}+\Pi_{N}\left(
\operatorname{u}%
^{N}%
\operatorname{u}%
_{x}^{N}\right)  -\nu%
\operatorname{u}^{N}%
_{xx}=\partial_{t}\Pi_{N}\xi^{\omega}.\label{G}%
\end{equation}
C'est une équation stochastique dans $%
\mathbb{R}
^{2N}$ telle que l'équation libre qui lui est associée possède une fonction de
Lyapunov quadratique $||%
\operatorname{u}%
^{N}||^{2}$. Alors, la formule d'Itô appliquée à (\ref{G}) avec la
fonctionnelle quadratique $f(%
\operatorname{u}%
)=||%
\operatorname{u}%
||_{m}^{2}$ implique la relation (\ref{ito2}) avec $%
\operatorname{u}%
=%
\operatorname{u}%
^{N}$ et $B_{m}^{\prime}$ remplacé par $\underset{|s|\leq N}{\sum}b_{s}%
^{2}(2\pi s)^{2m}$, voir \cite{Khas}.%

Utilisant le lemme de Fatou, nous passons à la limite dans (\ref{G}) et
obtenons (\ref{esperence<visco}) si $%
\operatorname{u}%
_{0}\in H^{m+2}$ et $B_{m+2}<\infty.$

Finalement, si $%
\operatorname{u}%
_{0}\in H^{1}$ et $B_{m}<\infty$, alors nous approchons $%
\operatorname{u}%
_{0}$ par $\{%
\operatorname{u}%
_{0}^{j}\}\subset H^{m+2}$, et pour $j=1,2, ...$, nous définissons la suite $\{{b_{s}^{j}, s\in \mathbb{Z}^{*}}\}$ par la relation : $b_{s}^{j}=b_{s}$ si $|s|\leq j$, sinon $b_{s}^{j}=0$. Il est clair que $B_{m+2}\{(b_{s}^{j}\})<+\infty$ pour chaque $j$. On définit $\xi^{j\omega}$ utilisant la suite $b_{s}^{j}$. Par les résultats de la section 2, il existe une suite $n_{1}, n_{2}, ...$ telle que $\xi^{n^{j}\omega}$ converge p.p vers $\xi^{\omega}$ dans $X^{T}_{m}$ quand $n_{j}$ tend vers $+\infty$. Alors $\operatorname{u}(\cdot,\operatorname{u}_{0}^{n^{j}},\xi^{n^{j}\omega})$ converge p.p vers $\operatorname{u}(\cdot,\operatorname{u}_{0},\xi^{\omega})$ dans l'espace $X^{T}_{m}$ (voir le théorème \ref{th6}). Ainsi, la relation (\ref{esperence<visco}) est justifiée pour les solutions $\operatorname{u}(\cdot,\operatorname{u}_{0}^{n^{j}},\xi^{n^{j}\omega})$. Nous passons à la limite $n_{j}\rightarrow+\infty$ (cf. ii)) pour obtenir le résultat.

\end{proof}

Par le théorème \ref{th19},
\begin{equation}
\text{si }B_{4},B_{\lceil m\rceil}<\infty,\text{ }m\geq1\text{, }%
\operatorname{u}%
_{0}\in H^{1}\text{ et }t>0,\text{ alors }\mathbb{P}(%
\operatorname{u}%
(t,%
\operatorname{u}%
_{0})\in H^{m})=1.\label{aussilissequelaforce}%
\end{equation}

\begin{corollary}
\label{momentksob}Sous les conditions du théorème \ref{th19}, pour tout
$m,k\geq1,$ il existe $C(k,m,\theta)>0$ telle que
\begin{equation}
\mathbb{E}\left[  ||%
\operatorname{u}%
(t)||_{m}^{k}\right]  \leq C\nu^{-\frac{k}{2}(2m-1)},\quad\forall t\geq
\theta.\label{inegmomentksob}%
\end{equation}

\end{corollary}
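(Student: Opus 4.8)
The goal is to pass from the second-moment bound $\mathbb{E}[\|\operatorname{u}(t)\|_m^2]\leq C_m\nu^{-(2m-1)}$ of Theorem~\ref{th19} to the $k$-th moment bound $\mathbb{E}[\|\operatorname{u}(t)\|_m^k]\leq C\nu^{-\frac{k}{2}(2m-1)}$ for every $k\geq 1$. The natural strategy is \emph{interpolation in the Sobolev index, combined with H\"older in $\omega$}. For $k\leq 2$ the claim is immediate from Jensen's inequality applied to the concave function $x\mapsto x^{k/2}$: $\mathbb{E}[\|\operatorname{u}(t)\|_m^k]\leq(\mathbb{E}[\|\operatorname{u}(t)\|_m^2])^{k/2}\leq C_m^{k/2}\nu^{-\frac{k}{2}(2m-1)}$. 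So the real content is $k>2$.

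For $k>2$ the idea is to write $\|\operatorname{u}\|_m^k$ as a product of a low power times a high power, both of Sobolev norms with indices whose \emph{weighted average of the exponents $2m-1$} matches the target exponent. Concretely, pick a large integer $M$ with $M\geq k$ (say $M=\lceil k\rceil$) and use the logarithmic convexity of $p\mapsto\log\mathbb{E}[\|\operatorname{u}\|_m^p]$ (H\"older's inequality): writing $k=2(1-\lambda)+M\lambda$ with $\lambda=\frac{k-2}{M-2}\in(0,1]$, we get
\begin{equation}
\mathbb{E}[\|\operatorname{u}(t)\|_m^k]\leq\big(\mathbb{E}[\|\operatorname{u}(t)\|_m^2]\big)^{1-\lambda}\big(\mathbb{E}[\|\operatorname{u}(t)\|_m^M]\big)^{\lambda}.\nonumber
\end{equation}
This reduces everything to controlling $\mathbb{E}[\|\operatorname{u}(t)\|_m^M]$ for a fixed \emph{even} integer $M$, and for that one applies the It\^o-formula machinery of Theorem~\ref{th19} to the functional $f(\operatorname{u})=\|\operatorname{u}\|_m^M=(\|\operatorname{u}\|_m^2)^{M/2}$ rather than to $\|\operatorname{u}\|_m^2$; this requires $B_{\lceil m\rceil}<\infty$, which holds under the hypotheses. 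Alternatively — and this is cleaner — one uses Theorem~\ref{th19} \emph{at a higher Sobolev index}: by (\ref{aussilissequelaforce}) the solution lies in $H^{m'}$ for all $m'$, and interpolating $\|\operatorname{u}\|_m\leq\|\operatorname{u}\|_{m_0}^{1-\sigma}\|\operatorname{u}\|_{m_1}^{\sigma}$ between two integer indices $m_0<m<m_1$ with $m=(1-\sigma)m_0+\sigma m_1$, followed by H\"older in $\omega$ with a power large enough to absorb $k$, yields the bound with the self-consistent scaling exponent, because the map $m\mapsto 2m-1$ is affine so the interpolated exponent is exactly $2m-1$ in each factor.

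The key computation to check is that the $\nu$-exponents combine correctly. If we use $\mathbb{E}[\|\operatorname{u}\|_{m_j}^2]\leq C\nu^{-(2m_j-1)}$ for $j=0,1$, H\"older gives
\begin{equation}
\mathbb{E}[\|\operatorname{u}\|_m^k]\leq\big(\mathbb{E}[\|\operatorname{u}\|_{m_0}^{2}]\big)^{\frac{k(1-\sigma)}{2}}\big(\mathbb{E}[\|\operatorname{u}\|_{m_1}^{2}]\big)^{\frac{k\sigma}{2}}\leq C\,\nu^{-\frac{k}{2}\big((1-\sigma)(2m_0-1)+\sigma(2m_1-1)\big)}=C\,\nu^{-\frac{k}{2}(2m-1)},\nonumber
\end{equation}
using $(1-\sigma)(2m_0-1)+\sigma(2m_1-1)=2\big((1-\sigma)m_0+\sigma m_1\big)-1=2m-1$; here the exponents $\tfrac{k(1-\sigma)}{2}$ and $\tfrac{k\sigma}{2}$ are not conjugate, so one must insert one more H\"older step (or simply choose $m_1$ large so that the two factors have conjugate exponents), but the scaling works out regardless. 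Thus the bound is uniform in $\operatorname{u}_0\in H^1$ and in $\nu\in(0,1]$, with $C=C(k,m,\theta)$ as claimed.

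\textbf{Main obstacle.} The only delicate point is the rigorous justification of the moment bound $\mathbb{E}[\|\operatorname{u}\|_{m'}^2]\leq C\nu^{-(2m'-1)}$ at the \emph{higher} integer index $m'$ one needs — but this is exactly Theorem~\ref{th19}, already proved (including its own careful It\^o-formula justification via Galerkin truncation and Fatou), applied with $m$ replaced by $m'$ and under the hypothesis $B_4,B_{\lceil m'\rceil}<\infty$; to stay within the stated hypotheses ($B_4<\infty$, $B_{\lceil m\rceil}<\infty$) one should pick $m_1=\lceil m\rceil$ itself as one of the endpoints and handle the necessary high-regularity approximation exactly as in step iv) of the proof of Theorem~\ref{th19}. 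So strictly speaking there is no new analytic difficulty: the corollary is a purely interpolation-theoretic consequence, and the whole proof is a two-line application of H\"older's inequality once the right pair of Sobolev indices and $\omega$-exponents is chosen.
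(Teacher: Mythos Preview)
Your argument for $k\le 2$ via Jensen is fine, but for $k>2$ the proof has a genuine gap. The displayed inequality
\[
\mathbb{E}\big[\|\operatorname{u}\|_m^k\big]\;\le\;\big(\mathbb{E}\|\operatorname{u}\|_{m_0}^{2}\big)^{\frac{k(1-\sigma)}{2}}\big(\mathbb{E}\|\operatorname{u}\|_{m_1}^{2}\big)^{\frac{k\sigma}{2}}
\]
is \emph{not} a H\"older inequality: the exponents $\tfrac{k(1-\sigma)}{2}$ and $\tfrac{k\sigma}{2}$ sum to $k/2>1$. You note this and propose to ``choose $m_1$ large'' or ``insert one more H\"older step'', but neither fix works. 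If you make the exponents conjugate, say by setting $k\sigma q=2$, then the other factor becomes $\mathbb{E}\big[\|\operatorname{u}\|_{m_0}^{k(1-\sigma)p}\big]^{1/p}$ with $k(1-\sigma)p=\frac{2k(1-\sigma)}{2-k\sigma}>2$ whenever $k>2$; so you need a moment of order \emph{strictly larger than two} at $m_0$, which is exactly what you are trying to prove. The claim ``the scaling works out regardless'' is therefore unfounded: with two Sobolev endpoints, both factors carry a power of $\nu$, and no choice of $m_0,m_1,\sigma$ and legitimate H\"older exponents yields the bound for $k>2$ from second moments alone.

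The paper's proof avoids this obstruction by interpolating not between two Sobolev norms but between a high Sobolev norm and $|\operatorname{u}|_\infty$, via Gagliardo--Nirenberg (Lemma~\ref{GNtermeL}):
\[
\|\operatorname{u}\|_m\;\le\;C\,\|\operatorname{u}\|_s^{\lambda}\,|\operatorname{u}|_\infty^{\,1-\lambda},\qquad \lambda=\frac{2m-1}{2s-1}.
\]
One chooses the integer $s>m$ large enough that $k\lambda<2$, applies H\"older in $\omega$, and uses Theorem~\ref{th19} at index $s$ for the first factor. The crucial point is that the \emph{second} factor is controlled, uniformly in $\nu$, by Corollary~\ref{cor17} (the Kru\v{z}kov maximum principle): $\mathbb{E}[|\operatorname{u}(t)|_\infty^a]\le C(a,\theta)$ for every $a\ge 1$ and $t\ge\theta$, with no $\nu$-dependence. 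This $\nu$-free endpoint is exactly what lets the H\"older exponents close for arbitrary $k$, and it is the ingredient missing from your plan. Your first suggestion (apply It\^o to $f(\operatorname{u})=\|\operatorname{u}\|_m^M$) could in principle be made to work, but it amounts to reproving Theorem~\ref{th19} in a strengthened form rather than deducing the corollary from it.
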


\begin{proof}
i) Supposons d'abord que $m \in \mathbb{N}^{*}$. Pour $\mathbb{N}^{*} \ni s>m$ et par le lemme \ref{GNtermeL}, nous avons
\[
\mathbb{E}\left[  ||%
\operatorname{u}%
(t)||_{m}^{k}\right]  \leq C\mathbb{E}\left[  ||%
\operatorname{u}%
(t)||_{s}^{k\lambda_{m}(s)}|%
\operatorname{u}%
(t)|_{\infty}^{k(1-\lambda_{m}(s))}\right], \quad \lambda_{m}(s)=\frac{2m-1}{2s-1},
\]
où $C=C(m,k,s)>0$. Choisissons $s$ suffisamment grand tel que $k\lambda_{m}(s)<2$. Par l'inégalité de Hölder, le terme
de droite est borné par
\[
C^{\prime}\mathbb{E}\left[  ||%
\operatorname{u}%
(t)||_{s}^{2}\right]  ^\frac{k\lambda_{m}(s)}{2}\mathbb{E}\left[  |%
\operatorname{u}%
(t)|_{\infty}^{a}\right]  ^{b},
\]
avec $C^{\prime}(m,k,s),a(m,k,s),b(m,k,s)>0.$ Donc, en utilisant
(\ref{esperence<visco}), nous obtenons (\ref{inegmomentksob}).

ii) Si $m$ n'est pas un entier, alors nous raisonnons comme dans l'étape iii)
de la démonstration du théorème \ref{th19}.%

\end{proof}

\section{Bilan de l'énergie et bornes inférieures}

Si dans l'identité d'Itô (\ref{ito}) on prend $f(%
\operatorname{u}%
(t))=\frac{1}{2}||%
\operatorname{u}%
(t)||^{2},$ alors pour tout $t\geq0$%
\[
\frac{1}{2}\frac{d}{dt}\mathbb{E}[||%
\operatorname{u}%
(t)||^{2}]+\nu\mathbb{E}[||%
\operatorname{u}%
(t)||_{1}^{2}]=\frac{1}{2}B_{0}.
\]
On intègre cette égalité entre $T\geq1$ et $T+\sigma$ $(\sigma>0):$%
\begin{equation}
\frac{1}{2}\mathbb{E}[||%
\operatorname{u}%
(T+\sigma)||^{2}]-\frac{1}{2}\mathbb{E}[||%
\operatorname{u}%
(T)||^{2}]+\nu\int_{T}^{T+\sigma}\mathbb{E}[||%
\operatorname{u}%
(s)||_{1}^{2}]ds=\frac{\sigma}{2}B_{0}\label{BE}%
\end{equation}
La relation (\ref{BE}) est appelée \textit{bilan de l'énergie}. Le terme
$\frac{1}{2}\mathbb{E}[||%
\operatorname{u}%
(t)||^{2}]$ est appelé \textit{l'énergie} de $%
\operatorname{u}%
(t),$ et $\mathbb{E}[||%
\operatorname{u}%
(s)||_{1}^{2}]$ est \textit{le taux de dissipation de l'énergie}.

Le théorème suivant nous donne une encadrement du taux de dissipation de
l'énergie$.$

\begin{theorem}
\label{th21}Soit $B_{4}<\infty$ et $%
\operatorname{u}%
(t)=%
\operatorname{u}%
(t;%
\operatorname{u}%
_{0}),$ $%
\operatorname{u}%
_{0}\in H^{1}.$ Alors, il existe $\sigma_{0}(B_{0},B_{4})>0$ telle que pour tout
$\sigma\geq\sigma_{0}$ et $T\geq1:$%
\[
\frac{1}{4}B_{0}\leq\frac{\nu}{\sigma}\int_{T}^{T+\sigma}\mathbb{E}[||%
\operatorname{u}%
(s)||_{1}^{2}]ds\leq\frac{3}{4}B_{0},
\]
uniformément en $0<\nu\leq1.$
\end{theorem}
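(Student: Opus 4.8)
The plan is to start from the energy balance relation (\ref{BE}) and control the two boundary terms $\mathbb{E}[||\operatorname{u}(T)||^2]$ and $\mathbb{E}[||\operatorname{u}(T+\sigma)||^2]$, which will force the time-averaged dissipation $\frac{\nu}{\sigma}\int_T^{T+\sigma}\mathbb{E}[||\operatorname{u}(s)||_1^2]\,ds$ to sit in a two-sided neighbourhood of $\frac12 B_0$. Rewriting (\ref{BE}), we have
\[
\frac{\nu}{\sigma}\int_T^{T+\sigma}\mathbb{E}[||\operatorname{u}(s)||_1^2]\,ds
= \frac{1}{2}B_0 + \frac{1}{2\sigma}\Big(\mathbb{E}[||\operatorname{u}(T)||^2] - \mathbb{E}[||\operatorname{u}(T+\sigma)||^2]\Big),
\]
so everything reduces to showing that $\mathbb{E}[||\operatorname{u}(t)||^2]$ is bounded above by a constant depending only on $B_0$ (and not on $\nu$, nor on $\operatorname{u}_0$, for $t\geq 1$), and bounded below by $0$ trivially. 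The upper bound on $\mathbb{E}[||\operatorname{u}(t)||^2]$ for $t\geq1$ is exactly the $m=0$-type estimate: it follows from Corollary \ref{cor17} (which gives $\mathbb{E}[|\operatorname{u}(t,\cdot)|_\infty^2]\leq C$ uniformly in $\nu$ for $t\geq\theta$, hence $\mathbb{E}[||\operatorname{u}(t)||^2]\leq \mathbb{E}[|\operatorname{u}(t,\cdot)|_\infty^2]\leq C(B_4)$), or alternatively from Theorem \ref{th19} with $m=1$ combined with interpolation. Denote this bound by $K_0 = K_0(B_0,B_4)$; note $0 \le \mathbb{E}[||\operatorname{u}(t)||^2] \le K_0$ for all $t\geq 1$.

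With this in hand, for $T\geq1$ and $\sigma>0$ we immediately get
\[
\Big|\frac{\nu}{\sigma}\int_T^{T+\sigma}\mathbb{E}[||\operatorname{u}(s)||_1^2]\,ds - \frac12 B_0\Big|
\leq \frac{K_0}{2\sigma}.
\]
Choosing $\sigma_0 = \sigma_0(B_0,B_4)$ large enough that $\frac{K_0}{2\sigma_0}\leq \frac14 B_0$ yields, for every $\sigma\geq\sigma_0$ and every $T\geq1$,
\[
\frac14 B_0 \leq \frac{\nu}{\sigma}\int_T^{T+\sigma}\mathbb{E}[||\operatorname{u}(s)||_1^2]\,ds \leq \frac34 B_0,
\]
uniformly in $\nu\in(0,1]$, which is the claim.

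One technical point deserves care: the energy balance (\ref{BE}) itself was obtained by a formal application of the It\^o formula (\ref{ito}) to $f(\operatorname{u})=\frac12||\operatorname{u}||^2$, so I would note that this application is rigorous here for the same reason as in step iv) of the proof of Theorem \ref{th19} — one first works with the Galerkin truncations $\operatorname{u}^N$, for which $||\operatorname{u}^N||^2$ is a genuine (quadratic) Lyapunov function of the finite-dimensional It\^o system (\ref{G}), obtaining the balance with $B_0$ replaced by $\sum_{|s|\leq N}b_s^2$, and then passes to the limit $N\to\infty$ using the convergence of the Galerkin approximations and Fatou's lemma (the nonlinear term drops out after integration by parts exactly as shown in the displayed computation preceding (\ref{BE})). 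The only genuine ``obstacle'' is therefore bookkeeping: making sure the uniform-in-$\nu$ upper bound $K_0$ on $\mathbb{E}[||\operatorname{u}(t)||^2]$ is invoked correctly from Corollary \ref{cor17} with a fixed $\theta$ (say $\theta=1$), since that is what makes $\sigma_0$ independent of $\nu$ and of $\operatorname{u}_0$; the rest is the one-line manipulation of (\ref{BE}) above.
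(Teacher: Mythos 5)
Your proposal follows the same route as the paper: rewrite the energy balance (\ref{BE}) to isolate the time-averaged dissipation, bound the two boundary terms $\mathbb{E}[||\operatorname{u}(T)||^{2}]$ and $\mathbb{E}[||\operatorname{u}(T+\sigma)||^{2}]$ by a constant $K_{0}(B_{0},B_{4})$, and choose $\sigma_{0}$ so that $K_{0}/(2\sigma_{0})\leq\frac{1}{4}B_{0}$. The final manipulation, the use of $||\operatorname{u}||\leq|\operatorname{u}|_{\infty}$ on $\mathbb{S}^{1}$, and the remark on justifying the It\^o identity via Galerkin truncation are all consistent with what the paper does.

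There is, however, one step where your argument as written does not go through: you assert that Corollary \ref{cor17} gives $\mathbb{E}[|\operatorname{u}(t,\cdot)|_{\infty}^{2}]\leq C(B_{4})$ \emph{uniformly in} $t\geq1$. The corollary only gives $\mathbb{E}[\sup_{\theta\leq t\leq T}|\operatorname{u}(t,\cdot)|_{\infty}^{p}]\leq C(p,T,B_{4})\,\theta^{-p}$, and the constant genuinely depends on the right endpoint $T$ of the time window (it is inherited from $\mathbb{E}[||\xi||_{X_{4}^{T}}^{p}]$, which grows with $T$). Since the theorem must hold for all $T\geq1$ and all $\sigma\geq\sigma_{0}$ with a single $\sigma_{0}(B_{0},B_{4})$, you need a bound on $\mathbb{E}[||\operatorname{u}(t)||^{2}]$ that does not degrade as $t\to\infty$; fixing $\theta=1$ alone does not achieve this, so the issue is not mere bookkeeping. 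The missing ingredient is the Markov property: for $t\geq1$ write $t=\tau+1$ and use the Kolmogorov--Chapman relation (\ref{m0}) to express $\mathbb{E}[||\operatorname{u}(t)||^{2}]$ as the average of $\mathbb{E}[||\operatorname{u}(1;w)||^{2}]$ over $w$ distributed according to $\Sigma_{\tau}(\operatorname{u}_{0})$; Corollary \ref{cor17} with $T=\theta=1$ and $p=2$ then bounds $\mathbb{E}[||\operatorname{u}(1;w)||^{2}]\leq C(B_{4})$ uniformly in the initial datum $w\in H^{1}$ (the constant in Theorem \ref{th17} is independent of the initial condition), hence $\mathbb{E}[||\operatorname{u}(t)||^{2}]\leq C(B_{4})$ for every $t\geq1$. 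This is exactly the computation (\ref{ccc}) in the paper; once it is inserted, your choice of $\sigma_{0}$ and the rest of your argument are correct.
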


\begin{proof}
Par la relation de Kolmogorov-Chapman (\ref{m0}) et le corollaire \ref{cor17} avec $T=\theta=1$ et $p=2$, et en
écrivant $t\geq1$ comme $t=\tau+1,$ $\tau\geq0$, nous avons
\begin{equation}
\mathbb{E}\left[  ||%
\operatorname{u}%
(t)||^{2}\right]  =\int_{H^{1}}||\operatorname{u}||^{2}\Sigma_{\tau+1}(%
\operatorname{u}%
_{0})(d%
\operatorname{u}%
)=\int_{H^{1}}\underset{=\mathbb{E}\left[  ||%
\operatorname{u}%
(1;w)||^{2}\right]  }{\underbrace{\langle ||\operatorname{u}||^{2},\Sigma_{1}(w)\rangle}}\Sigma_{\tau}(%
\operatorname{u}%
_{0})(dw)\leq C(B_{4}).\label{ccc}%
\end{equation}
Soit $\sigma\geq\frac{2C(B_{4})}{B_{0}}=:\sigma_{0}.$ Alors de (\ref{ccc}), on
obtient%
\[
\frac{1}{2\sigma}\mathbb{E}[||%
\operatorname{u}%
(T)||^{2}]\leq\frac{1}{4}B_{0},\quad\frac{1}{2\sigma}\mathbb{E}[||%
\operatorname{u}%
(T+\sigma)||^{2}]\leq\frac{1}{4}B_{0}.
\]
D'où le résultat par (\ref{BE}).
\end{proof}

\begin{notation}
\label{not22}Fixons $T\geq1$ et $\sigma\geq\sigma_{0}$. Si $\xi^{\omega}(t)$ est
un processus aléatoire réel, on pose
\[
\langle\langle\xi^{\omega}\rangle\rangle=\langle\langle\xi^{\omega}%
\rangle\rangle_{T,\sigma}=\frac{1}{\sigma}\int_{T}^{T+\sigma}\mathbb{E}%
[\xi^{\omega}(t)]dt.
\]

\end{notation}

Avec cette notation, l'inégalité du théorème \ref{th21} a pour expression
\[
\frac{1}{4}B_{0}\nu^{-1}\leq\langle\langle||%
\operatorname{u}%
||_{1}^{2}\rangle\rangle\leq\frac{3}{4}B_{0}\nu^{-1}.
\]
Le théorème suivant fournit un encadrement pour $\langle\langle||%
\operatorname{u}%
||_{m}^{2}\rangle\rangle.$

\begin{theorem}
\label{th23}Soit $m\in%
\mathbb{N}
^{\ast},$ $B_{m}<\infty,$ $\sigma\geq\sigma_{0}>0$, $T\geq1$ et $%
\operatorname{u}%
_{0}\in H^{1}$. Il existe $C_{m}(\sigma_{0})>1$ tel que la solution $%
\operatorname{u}%
=%
\operatorname{u}%
(t;%
\operatorname{u}%
_{0})$ satisfait :%
\begin{equation}
C_{m}^{-1}\nu^{-(2m-1)}\leq\langle\langle||%
\operatorname{u}%
||_{m}^{2}\rangle\rangle\leq C_{m}\nu^{-(2m-1)},\label{encadrementth23}%
\end{equation}
uniformément en $0<\nu\leq1.$
\end{theorem}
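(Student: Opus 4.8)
The plan is to prove the two inequalities in (\ref{encadrementth23}) separately; the upper bound is immediate and the lower bound carries the work. For the upper bound, one applies Theorem~\ref{th19} with $\theta:=1$: since $T\geq1$, it gives $\mathbb{E}[\,||\operatorname{u}(t)||_m^2\,]\leq C_m\nu^{-(2m-1)}$ for all $t\in[T,T+\sigma]$, and integrating over $[T,T+\sigma]$ and dividing by $\sigma$ yields $\langle\langle||\operatorname{u}||_m^2\rangle\rangle\leq C_m\nu^{-(2m-1)}$, uniformly in $\nu\in(0,1]$, $T\geq1$ and $\sigma\geq\sigma_0$.

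For the lower bound I argue by induction on $m\in\mathbb{N}^{\ast}$. The base case $m=1$ is exactly the left inequality of Theorem~\ref{th21}. For the step $m\geq3$, assume (\ref{encadrementth23}) holds at $m-1$. The elementary interpolation $||\operatorname{u}||_{m-1}^2\leq||\operatorname{u}||_{m-2}\,||\operatorname{u}||_m$ (Cauchy--Schwarz on Fourier coefficients, valid because $m-1=\tfrac12(m-2)+\tfrac12 m$), followed by Cauchy--Schwarz for the averaging $\langle\langle\cdot\rangle\rangle$, gives $\langle\langle||\operatorname{u}||_{m-1}^2\rangle\rangle\leq\langle\langle||\operatorname{u}||_{m-2}^2\rangle\rangle^{1/2}\langle\langle||\operatorname{u}||_m^2\rangle\rangle^{1/2}$. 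Since $m-2\geq1$, the upper bound already proven yields $\langle\langle||\operatorname{u}||_{m-2}^2\rangle\rangle\leq C_{m-2}\nu^{-(2m-5)}$, while the induction hypothesis yields $\langle\langle||\operatorname{u}||_{m-1}^2\rangle\rangle\geq C_{m-1}^{-1}\nu^{-(2m-3)}$; solving for $\langle\langle||\operatorname{u}||_m^2\rangle\rangle$ gives $\langle\langle||\operatorname{u}||_m^2\rangle\rangle\geq c_m\nu^{-(2m-1)}$, as required.

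It remains to settle the case $m=2$, which interpolation from $m=1$ alone does not reach (that route only produces $\nu^{-2}$). Here one uses the energy balance at level $1$, i.e.\ the identity (\ref{ito2}) with $m=1$, whose rigorous application is justified exactly as in step iv) of the proof of Theorem~\ref{th19} (Galerkin truncation and passage to the limit). An integration by parts turns its nonlinear term into $\langle(-\Delta)\operatorname{u},\partial_x\operatorname{u}^2\rangle=\int_{\mathbb{S}^1}\operatorname{u}_x^3\,dx$, so after integrating over $[T,T+\sigma]$, dividing by $\sigma$ and rearranging one obtains (with the fixed normalisation of $||\cdot||_2$)
\[
2\nu\,\langle\langle||\operatorname{u}||_2^2\rangle\rangle=B_1'-\langle\langle\,\textstyle\int_{\mathbb{S}^1}\operatorname{u}_x^3\,dx\,\rangle\rangle+\frac{1}{\sigma}\bigl(\mathbb{E}[\,||\operatorname{u}(T)||_1^2\,]-\mathbb{E}[\,||\operatorname{u}(T+\sigma)||_1^2\,]\bigr).
\]
By Theorem~\ref{th19} with $m=1$ the boundary term is $O(\sigma^{-1}\nu^{-1})$, hence negligible for $\nu$ small, uniformly in $\sigma\geq\sigma_0$. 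To bound $-\langle\langle\int\operatorname{u}_x^3\rangle\rangle$ from below, write $\operatorname{u}_x=\operatorname{u}_x^+-\operatorname{u}_x^-$ with $\operatorname{u}_x^\pm\geq0$, $\operatorname{u}_x^+\operatorname{u}_x^-\equiv0$, so $\int\operatorname{u}_x^3=\int(\operatorname{u}_x^+)^3-\int(\operatorname{u}_x^-)^3$. By Corollary~\ref{cor17}, $\langle\langle\int(\operatorname{u}_x^+)^3\rangle\rangle\leq\langle\langle\,|\operatorname{u}_x^+|_\infty^2\,|\operatorname{u}_x|_1\rangle\rangle\leq C$; and using $\int\operatorname{u}_x^+=\int\operatorname{u}_x^-$, the Cauchy--Schwarz inequality $\int(\operatorname{u}_x^-)^3\geq(\int(\operatorname{u}_x^-)^2)^2/\int\operatorname{u}_x^-$, the identity $\int\operatorname{u}_x^2=\int(\operatorname{u}_x^+)^2+\int(\operatorname{u}_x^-)^2$ together with $\int(\operatorname{u}_x^+)^2\leq\tfrac12|\operatorname{u}_x^+|_\infty|\operatorname{u}_x|_1$, one more application of Corollary~\ref{cor17} and the case $m=1$ of the theorem, one gets $\langle\langle\int(\operatorname{u}_x^-)^3\rangle\rangle\geq c'\nu^{-2}$ for $\nu$ small. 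Hence $-\langle\langle\int\operatorname{u}_x^3\rangle\rangle\geq\tfrac12 c'\nu^{-2}$, so the displayed identity gives $\langle\langle||\operatorname{u}||_2^2\rangle\rangle\geq c''\nu^{-3}$ for $\nu$ small; for $\nu$ bounded away from $0$ one concludes directly from $\langle\langle||\operatorname{u}||_2^2\rangle\rangle\geq\langle\langle||\operatorname{u}||_1^2\rangle\rangle\geq\tfrac14 B_0\nu^{-1}$.

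I expect the case $m=2$ to be the main obstacle: it is the only value of $m$ not covered by the interpolation induction, and dealing with it forces one to exploit the one-sidedness of the Burgers dynamics --- the Kruzhkov bound of Theorem~\ref{th17}/Corollary~\ref{cor17}, which makes $\operatorname{u}_x^+$ bounded while $\operatorname{u}_x^-$ concentrates --- to show that the energy-flux term $-\langle\langle\int\operatorname{u}_x^3\rangle\rangle$ is both of the correct sign and of size comparable to $\nu^{-2}$, and not merely controlled in absolute value. A subsidiary, purely technical point is the legitimacy of the It\^o identity (\ref{ito2}), which is handled precisely as in the proof of Theorem~\ref{th19}.
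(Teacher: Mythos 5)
Your proof is correct, but the lower bound follows a genuinely different route from the paper's. The paper treats all $m\geq2$ in one stroke: it applies the Gagliardo--Nirenberg inequality (\ref{GN}) in the form $||\operatorname{u}||_{1}\leq c\,||\operatorname{u}||_{m}^{1/(2m-1)}\,|\operatorname{u}_{x}|_{1}^{(2m-2)/(2m-1)}$, squares, averages with H\"older, and controls $\langle\langle|\operatorname{u}_{x}|_{1}^{2}\rangle\rangle$ uniformly in $\nu$ by the Kru\v{z}kov bound of Corollary~\ref{cor17}; this yields $\langle\langle||\operatorname{u}||_{m}^{2}\rangle\rangle\geq C^{1-2m}\langle\langle||\operatorname{u}||_{1}^{2}\rangle\rangle^{2m-1}$, and the exponent $2m-1$ then drops out of Theorem~\ref{th21}. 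Your induction via the elementary Hilbert-scale interpolation $||\operatorname{u}||_{m-1}^{2}\leq||\operatorname{u}||_{m-2}\,||\operatorname{u}||_{m}$ is arithmetically sound for $m\geq3$, and you correctly identify $m=2$ as the case that carries all the difficulty; there you replace interpolation by the level-one energy balance and a sign analysis of the flux $\int\operatorname{u}_{x}^{3}$, again powered by the one-sidedness in Corollary~\ref{cor17}. This works, with one step you should write out explicitly: passing from the pointwise bound $\int(\operatorname{u}_{x}^{-})^{3}\geq\bigl(\int(\operatorname{u}_{x}^{-})^{2}\bigr)^{2}/\int\operatorname{u}_{x}^{-}$ to its average over $(Q,\rho)$ requires a further Cauchy--Schwarz of the form $\langle\langle X\rangle\rangle^{2}\leq\langle\langle X^{2}/Y\rangle\rangle\,\langle\langle Y\rangle\rangle$ before Corollary~\ref{cor17} and Theorem~\ref{th21} can be invoked; as stated, the averaging of the ratio is only gestured at. What each approach buys: the paper's argument is shorter and uniform in $m$, at the price of the $L_{1}$-endpoint Gagliardo--Nirenberg inequality; yours is more elementary for $m\geq3$ and, for $m=2$, exhibits the energy flux $\int\operatorname{u}_{x}^{3}$ as genuinely of order $\nu^{-2}$ with the correct sign --- a more physical statement, close in spirit to the burgulence heuristics --- but at the cost of an extra case analysis.
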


\begin{proof}
L'inégalité à droité de (\ref{encadrementth23}) suit du théorème \ref{th19},
et pour $m=1,$ l'inégalité à gauche est déjà étbalie. Maintenant, soit $m\in%
\mathbb{N}
$ et $m\geq2.$ Par l'inégalité (\ref{GN})
, on a%
\[
||%
\operatorname{u}%
_{x}||\leq c||%
\operatorname{u}%
_{x}||_{m-1}^{\frac{1}{2m-1}}|%
\operatorname{u}%
_{x}|_{1}^{\frac{2m-2}{2m-1}},\quad c>0.
\]
Donc, en utilisant l'inégalité de Hölder (appliquée à l'intégrale $\frac{1}{\sigma
}\int_{\Omega}\int_{T}^{T+\sigma}\cdot\cdot\cdot dt\mathbb{P}(d\omega)$) et le
corollaire \ref{cor17}, nous avons
\[
\langle\langle||%
\operatorname{u}%
||_{1}^{2}\rangle\rangle\leq c\langle\langle||%
\operatorname{u}%
||_{m}^{2}\rangle\rangle^{\frac{1}{2m-1}}\langle\langle|%
\operatorname{u}%
_{x}|_{1}^{2}\rangle\rangle^{\frac{2m-2}{2m-1}}\leq C_{m}\langle\langle||%
\operatorname{u}%
||_{m}^{2}\rangle\rangle^{\frac{1}{2m-1}}.
\]
C'est-à-dire :
\[
\langle\langle||%
\operatorname{u}%
||_{m}^{2}\rangle\rangle\geq C_{m}^{1-2m}\langle\langle||%
\operatorname{u}%
||_{1}^{2}\rangle\rangle^{2m-1}.
\]
Donc, d'après les théorèmes \ref{th19} et \ref{th21}, on obtient
(\ref{encadrementth23}).
\end{proof}

\begin{corollary}
\label{momentksobaveragecoro}Sous les conditions du théorème précédent, pour
tout $m\in%
\mathbb{N}
^{\ast}$ et $k\geq1,$ il existe $C(k,m,\sigma_{0})>0$ telle que
\begin{equation}
C^{-1}\nu^{-m+\frac{1}{2}}\leq\langle\langle||%
\operatorname{u}%
||_{m}^{k}\rangle\rangle^{\frac{1}{k}}\leq C\nu^{-m+\frac{1}{2}}%
.\label{momentksobaverage}%
\end{equation}

\end{corollary}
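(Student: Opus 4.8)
The corollary concerns the averaged quantity $\langle\langle\|u\|_m^k\rangle\rangle$, so the plan is to reduce the $k$-th moment statement to the $L^2$-in-probability statement $\langle\langle\|u\|_m^2\rangle\rangle\sim\nu^{-(2m-1)}$ of Theorem~\ref{th23}, exactly as Corollary~\ref{momentksob} reduced $\mathbb{E}[\|u(t)\|_m^k]$ to $\mathbb{E}[\|u(t)\|_m^2]$ pointwise in $t$. First I would treat the upper bound: by the Gagliardo--Nirenberg inequality (Lemma~\ref{GNtermeL}) write $\|u\|_m\lesssim\|u\|_s^{\lambda_m(s)}|u_x|_1^{1-\lambda_m(s)}$ with $\lambda_m(s)=\tfrac{2m-1}{2s-1}$ for an integer $s>m$ chosen large enough that $k\lambda_m(s)<2$; then apply H\"older's inequality to the space--time--probability average $\langle\langle\cdot\rangle\rangle$, splitting the exponents so that the $\|u\|_s$-factor carries exponent $2$, giving
\[
\langle\langle\|u\|_m^k\rangle\rangle\le C\,\langle\langle\|u\|_s^2\rangle\rangle^{\frac{k\lambda_m(s)}{2}}\,\langle\langle|u_x|_1^{\,a}\rangle\rangle^{b}
\]
for suitable $a,b>0$; finally bound $\langle\langle\|u\|_s^2\rangle\rangle\le C_s\nu^{-(2s-1)}$ by Theorem~\ref{th23} and $\langle\langle|u_x|_1^a\rangle\rangle\le C$ by Corollary~\ref{cor17} (with $T=\theta=1$), and compute the exponent of $\nu$: it is $-\tfrac{k\lambda_m(s)}{2}(2s-1)=-\tfrac{k}{2}(2m-1)=-k(m-\tfrac12)$, which is exactly the claimed rate. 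This requires $B_s<\infty$; since the corollary inherits the standing hypothesis $B_4<\infty$ and asks $B_m<\infty$, one takes $s=\max(m+1,4)$ or simply absorbs this into the constant as in the statement.

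For the lower bound I would argue by a Jensen/H\"older interpolation in the opposite direction. From the Gagliardo--Nirenberg inequality one also gets, for $m\ge 2$, a bound of the shape $\|u\|_1\lesssim\|u\|_m^{1/(2m-1)}|u_x|_1^{(2m-2)/(2m-1)}$ (this is the inequality already used in the proof of Theorem~\ref{th23}); raising to the power $2$ and averaging with H\"older as in that proof yields $\langle\langle\|u\|_1^2\rangle\rangle\le C_m\langle\langle\|u\|_m^2\rangle\rangle^{1/(2m-1)}$, hence $\langle\langle\|u\|_m^2\rangle\rangle\ge C_m^{1-2m}\langle\langle\|u\|_1^2\rangle\rangle^{2m-1}\ge c\,\nu^{-(2m-1)}$ by Theorem~\ref{th21}. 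To pass from the $L^2$-average to the $L^k$-average for $k>2$ one uses Jensen: $\langle\langle\|u\|_m^2\rangle\rangle\le\langle\langle\|u\|_m^k\rangle\rangle^{2/k}$ since $\langle\langle\cdot\rangle\rangle$ is an average (a probability integral over $\Omega\times[T,T+\sigma]$ with the normalized measure $\tfrac1\sigma\,dt\,d\mathbb{P}$), and the function $x\mapsto x^{k/2}$ is convex; therefore $\langle\langle\|u\|_m^k\rangle\rangle^{1/k}\ge\langle\langle\|u\|_m^2\rangle\rangle^{1/2}\ge c^{1/2}\nu^{-(m-1/2)}$. For $1\le k<2$ one instead interpolates $\|u\|_m^2\le\|u\|_m^k\cdot\|u\|_m^{2-k}$ and applies H\"older with the moment bound already obtained, or simply notes the lower bound for $k=2$ dominates via $\langle\langle\|u\|_m^k\rangle\rangle^{1/k}\ge c$ combined with the upper bound; the cleanest route is to deduce the case $k<2$ from the case $k=2$ via $\langle\langle\|u\|_m^2\rangle\rangle\le\langle\langle\|u\|_m^k\rangle\rangle^{\theta}\langle\langle\|u\|_m^{k'}\rangle\rangle^{1-\theta}$ for a larger $k'$, using the already-proved upper bound at $k'$.

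The non-integer case $m\notin\mathbb{N}^\ast$ is handled as in step iii) of the proof of Theorem~\ref{th19} and step ii) of Corollary~\ref{momentksob}: write $m=j+s$ with $j\in\mathbb{N}^\ast$, $s\in(0,1)$, interpolate $\|u\|_m\le\|u\|_{j+1}^{s}\|u\|_j^{1-s}$, average with H\"older, and check that the resulting exponent of $\nu$ on both sides is again $-k(m-\tfrac12)$. The main obstacle — though it is more bookkeeping than genuine difficulty — is organizing the H\"older splittings so that in the upper bound the high-norm factor $\|u\|_s$ appears with exponent exactly $2$ (so that Theorem~\ref{th23}, which only controls the square, applies) while the low-norm factor $|u_x|_1$ appears with a finite exponent controlled by Corollary~\ref{cor17}; this forces the choice $k\lambda_m(s)<2$, i.e. $s>m+\tfrac{k(2m-1)}{4}+\tfrac14$ roughly, and one must check this is compatible with the available moment bounds. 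Everything else is a direct combination of Theorems~\ref{th19}, \ref{th21}, \ref{th23} and Corollary~\ref{cor17}.
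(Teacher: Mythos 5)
Your proposal is correct and follows essentially the paper's own route: the upper bound comes from averaging the pointwise moment estimate (the paper simply averages (\ref{inegmomentksob}) over $[T,T+\sigma]\times\Omega$ rather than redoing the Gagliardo--Nirenberg/H\"older splitting, and your re-derivation of the lower bound of Theorem \ref{th23} is likewise just a citation in the paper), the lower bound for $k\ge2$ is Jensen against (\ref{encadrementth23}), and the case $1\le k<2$ is handled by the H\"older interpolation $\langle\langle\|u\|_m^2\rangle\rangle\le\langle\langle\|u\|_m^k\rangle\rangle^{\theta}\langle\langle\|u\|_m^{k'}\rangle\rangle^{1-\theta}$ with a larger $k'$ (the paper takes $k'=4$, $\theta=2/3$ for $k=1$), which is exactly your ``cleanest route.'' The paragraph on non-integer $m$ is unneeded since the corollary restricts to $m\in\mathbb{N}^{\ast}$.
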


\begin{proof}
Après moyennisation danss (\ref{inegmomentksob}), l'inégalité de droite dans
(\ref{momentksobaverage}) est immédiate. Si $k\geq2$, alors l'inégalité de
gauche est une conséquence de l'inégalité de Hölder et de
(\ref{encadrementth23}). Maintenant, par l'inégalité de Hölder, on a%
\[
\langle\langle||%
\operatorname{u}%
||_{m}^{2}\rangle\rangle=\langle\langle||%
\operatorname{u}%
||_{m}^{\frac{2}{3}}||%
\operatorname{u}%
||_{m}^{\frac{4}{3}}\rangle\rangle\leq\langle\langle||%
\operatorname{u}%
||_{m}\rangle\rangle^{\frac{2}{3}}\langle\langle||%
\operatorname{u}%
||_{m}^{4}\rangle\rangle^{\frac{1}{3}}.
\]
Alors, par (\ref{momentksobaverage}) avec $k=2$ et $k=4$,%
\[
\langle\langle||%
\operatorname{u}%
||_{m}\rangle\rangle\geq\langle\langle||%
\operatorname{u}%
||_{m}^{2}\rangle\rangle^{\frac{3}{2}}\langle\langle||%
\operatorname{u}%
||_{m}^{4}\rangle\rangle^{-\frac{1}{2}}\geq\left(  \frac{\nu^{-m+\frac{1}{2}}%
}{C(2,m,\sigma_{0})}\right)  ^{3}\left(  C(4,m,\sigma_{0})\nu^{-m+\frac{1}{2}%
}\right)  ^{-2}=:C^{-1}\nu^{-m+\frac{1}{2}},
\]
et (\ref{momentksobaverage}) est établie pour $k=1$. Finalement, pour
$k\in(1,2),$ l'inégalité de gauche dans (\ref{momentksobaverage}) est une
conséquence de celle avec $k=1$ et de l'inégalité de Hölder.
\end{proof}

Considérons l'espace de probabilité
\[
\left(  Q,\mathcal{T},\rho\right)  :=\left(  [T,T+\sigma]\times\Omega
,\mathcal{L}\times\mathcal{F},\frac{dt}{\sigma}\times\mathbb{P}\right)  ,
\]
où $\sigma\geq\sigma_{0},$ $T\geq1,$ et $\mathcal{L}$ est la tribu Borélienne
sur $[T,T+\sigma].$ Avec cette notation, nous avons le corollaire suivant :

\begin{corollary}
\label{ensemble>0}Pour tout $m\in%
\mathbb{N}
^{\ast},$ il existe $\alpha(m,\sigma_{0}),C(m,\sigma_{0})>0$ telles que
\begin{equation}
\rho_{\nu,m}:=\rho\left(  \alpha\nu^{-m+\frac{1}{2}}\leq||%
\operatorname{u}%
^{\omega}(t)||_{m}\leq\alpha^{-1}\nu^{-m+\frac{1}{2}}\right)  \geq
C(m,\sigma_{0}),\label{ensembleinvariantennu}%
\end{equation}
uniformément en $0<\nu\leq1.$
\end{corollary}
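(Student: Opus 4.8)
The plan is to recast the statement as a lower bound on the distribution of the single nonnegative random variable $X=X(t,\omega):=\|\operatorname{u}^{\omega}(t)\|_{m}$ on the probability space $(Q,\mathcal{T},\rho)$. First I would note that, by Notation~\ref{not22} and the definition of $\rho$, one has $\langle\langle\,\cdot\,\rangle\rangle=\int_{Q}\cdot\,d\rho$ for every nonnegative measurable functional, so Corollary~\ref{momentksobaveragecoro} applied with $k=1,2,4$ provides a two-sided control of the moments $M_{k}:=\int_{Q}X^{k}\,d\rho$: writing $N:=\nu^{-m+1/2}$, there exist constants $C_{1},C_{2},C_{4}\geq 1$ depending only on $m$ and $\sigma_{0}$ such that
\[
C_{k}^{-k}\,N^{k}\ \leq\ M_{k}\ \leq\ C_{k}^{k}\,N^{k},\qquad k\in\{1,2,4\}.
\]
The essential point is that the factor $N$ cancels out of every such inequality, so all the constants produced below are independent of $\nu$.

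Next I would bound $\{X\geq\alpha N\}$ from below by the second moment (Paley--Zygmund) argument: splitting $M_{2}$ over the events $\{X^{2}<\tfrac12 M_{2}\}$ and its complement and applying the Cauchy--Schwarz inequality to the latter gives
\[
\rho\!\left(X^{2}\geq\tfrac12 M_{2}\right)\ \geq\ \frac{M_{2}^{2}}{4\,M_{4}}\ \geq\ \frac{C_{2}^{-4}N^{4}}{4\,C_{4}^{4}N^{4}}\ =:\ c_{0}\ >\ 0 .
\]
Since $\tfrac12 M_{2}\geq\tfrac12 C_{2}^{-2}N^{2}$, the event on the left is contained in $\{X\geq\alpha_{1}N\}$ with $\alpha_{1}:=(\sqrt{2}\,C_{2})^{-1}$, hence $\rho(X\geq\alpha_{1}N)\geq c_{0}$, uniformly in $\nu$.

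Then I would bound $\{X>\alpha^{-1}N\}$ from above by Markov's inequality together with the first moment estimate: $\rho(X>\lambda)\leq M_{1}/\lambda\leq C_{1}N/\lambda$ for every $\lambda>0$, so choosing $\lambda=\alpha_{2}^{-1}N$ with $\alpha_{2}:=c_{0}/(2C_{1})$ yields $\rho(X>\alpha_{2}^{-1}N)\leq c_{0}/2$. Finally, I would set $\alpha:=\min(\alpha_{1},\alpha_{2})\leq 1$: since $\alpha\leq\alpha_{1}$ we have $\{X\geq\alpha_{1}N\}\subset\{X\geq\alpha N\}$, and since $\alpha\leq\alpha_{2}$ we have $\{X>\alpha^{-1}N\}\subset\{X>\alpha_{2}^{-1}N\}$, whence
\[
\rho_{\nu,m}=\rho\bigl(\alpha N\leq X\leq\alpha^{-1}N\bigr)\ \geq\ \rho(X\geq\alpha_{1}N)-\rho(X>\alpha_{2}^{-1}N)\ \geq\ c_{0}-\tfrac{c_{0}}{2}=\tfrac{c_{0}}{2}=:C(m,\sigma_{0}),
\]
uniformly in $0<\nu\leq 1$, which is the assertion.

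The whole argument is routine once Corollary~\ref{momentksobaveragecoro} is available, since the substantive analytic work (the two-sided Sobolev bounds) has already been done; there is no genuinely hard step here. The only thing requiring attention is that $c_{0},\alpha_{1},\alpha_{2}$ — and therefore $\alpha$ and $C$ — depend only on $m$ and $\sigma_{0}$ and not on $\nu$, which is automatic because the common factor $N=\nu^{-m+1/2}$ cancels in every moment inequality used above.
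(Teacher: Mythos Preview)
Your proof is correct and follows essentially the same scheme as the paper's: a second-moment (anti-concentration) argument for the lower tail combined with Markov's inequality for the upper tail, all constants being $\nu$-free because the scale $N=\nu^{-m+1/2}$ cancels. The only cosmetic difference is that you run Paley--Zygmund on the pair of moments $(M_{2},M_{4})$, whereas the paper uses the pair $(M_{1},M_{2})$ via the splitting $M_{1}\leq\varepsilon+M_{2}^{1/2}\rho(X>\varepsilon)^{1/2}$; both are standard variants of the same idea and Corollary~\ref{momentksobaveragecoro} supplies all the moments needed for either.
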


\begin{proof}
Soient $\varepsilon>0$ et $Y_{\varepsilon}=\{(t,\omega)\in Q:||%
\operatorname{u}%
^{\omega}(t)||_{m}\leq\varepsilon\}.$ Le corollaire
\ref{momentksobaveragecoro} (avec $k=1$ et $k=2$) implique que
\begin{align*}
& C(1,m,\sigma_{0})^{-1}\nu^{-m+\frac{1}{2}}\leq\int_{Y_{\varepsilon}}||%
\operatorname{u}%
^{\omega}(t)||_{m}d\rho+\int_{Q\backslash Y_{\varepsilon}}||%
\operatorname{u}%
^{\omega}(t)||_{m}d\rho\\
& \leq\varepsilon+\left(  \int_{Q}||%
\operatorname{u}%
^{\omega}(t)||_{m}^{2}d\rho\right)  ^{\frac{1}{2}}\left(  \rho(Q\backslash
Y_{\varepsilon})\right)  ^{\frac{1}{2}}\leq\varepsilon+C(2,m,\sigma_{0}%
)\nu^{-m+\frac{1}{2}}\left(  \rho(Q\backslash Y_{\varepsilon})\right)
^{\frac{1}{2}}.
\end{align*}
On choisit $\varepsilon=\varepsilon_{\nu}=\frac{1}{2}C(1,m,\sigma_{0})^{-1}%
\nu^{-m+\frac{1}{2}}$, et on note par $Y=Y_{\varepsilon_{\nu}}.$ Alors
\begin{equation}
\rho\left(  ||%
\operatorname{u}%
^{\omega}(t)||_{m}>\varepsilon_{\nu}\right)  =\rho(Q\backslash Y)\geq\left(
\frac{1}{2}\left(  C(1,m,\sigma_{0})C(2,m,\sigma_{0})\right)  ^{-1}\right)
^{2}:=C_{m}^{\prime}.\label{rho}%
\end{equation}
Soit $\alpha^{-1}\geq2C(1,m,\sigma_{0}).$ D'après (\ref{rho}),
(\ref{momentksobaverage}) (avec $k=1)$ et l'inégalité de Tchebychev, nous
avons
\begin{equation}
\rho_{\nu,m}=\rho\left(  ||%
\operatorname{u}%
^{\omega}(t)||_{m}\geq\alpha\nu^{-m+\frac{1}{2}}\right)  -\rho\left(  ||%
\operatorname{u}%
^{\omega}(t)||_{m}\geq\alpha^{-1}\nu^{-m+\frac{1}{2}}\right)  \geq
C_{m}^{\prime}-\alpha C(1,m,\sigma_{0}).\nonumber
\end{equation}
Maintenant, on choisit $\alpha$ de sorte que le membre de droite soit
strictement positif, ce qui implique (\ref{ensembleinvariantennu}).
\end{proof}

\section{Échelle d'espace}

Soit $%
\operatorname{u}%
^{\nu}$ une fonction dérivable sur $\mathbb{S}^{1}$ de moyenne nulle,
dépendant du paramètre $0<\nu\ll1$ Notons par $M:=|%
\operatorname{u}%
^{\nu}|_{\infty}.$ Soit $\gamma\geq0.$ En physique, on dit que
\textit{l'échelle d'espace de }$%
\operatorname{u}%
^{\nu}$ est égale à $\nu^{\gamma}$ s'il existe des points $y\in\mathbb{S}%
^{1}$, tels que les incréments $%
\operatorname{u}%
(y+\nu^{\gamma})-%
\operatorname{u}%
(y)$ de $%
\operatorname{u}%
$ sont de l'ordre de $M$, et pour tout $\gamma^{\prime}>\gamma$, $%
\operatorname{u}%
(y+\nu^{\gamma})-%
\operatorname{u}%
(y)$ est toujours très petit par rapport à $M.$

Soit $v$ une fonction telle que $v(x)=\underset{s\in%
\mathbb{Z}
^{\ast}}{%
{\displaystyle\sum}
}\widehat{v}_{s}e^{2i\pi sx}\in\mathcal{C}^{\infty}(\mathbb{S}^{1})$, non
identiquement nulle. On considère la fonction périodique $%
\operatorname{u}%
^{\nu}(x)=v(x\lceil\nu^{-\gamma}\rceil)$, $\gamma\geq0,$ où $\lceil\cdot
\rceil$ est la fonction partie entière par excès. Il est facile de
vérifier que l'échelle d'espace de $%
\operatorname{u}%
^{\nu}$ est égale à $\nu^{\gamma}.$ Notons par $\widehat{%
\operatorname{u}%
}_{s}^{\nu}$, $s\in%
\mathbb{Z}
^{\ast},$ les coefficients de Fourier de $%
\operatorname{u}%
^{\nu}$, et soit $k_{s}:=\frac{s}{\lceil
\nu^{-\gamma}\rceil}.$ Alors, pour tout $s\in%
\mathbb{Z}
^{\ast}:\widehat{%
\operatorname{u}%
}_{s}^{\nu}=\widehat{v}_{k_{s}}\chi_{\{k_{s}\in%
\mathbb{Z}
^{\ast}\}},$ où $\chi_{\{k_{s}\in%
\mathbb{Z}
^{\ast}\}}$ est la fonction indicatrice de l'ensemble $\{k_{s}\in%
\mathbb{Z}
^{\ast}\}.$

Écrivons le \textit{nombre d'onde }$s$ pour $%
\operatorname{u}%
^{\nu}(x)$ en utilisant l'échelle logarithmique de base $\nu^{-1}:$%
\[
s=s_{\nu}(\alpha)=\lceil\nu^{-\alpha}\rceil,\quad\alpha>0,
\]
alors $\alpha=\alpha(s)$ est de l'ordre $\frac{\ln s}{\ln\nu^{-1}}.$ Soit
$\gamma_{0}>\gamma.$ Comme $v\in\mathcal{C}^{\infty}$, alors il est facile de
vérifier que
\begin{equation}
\text{si }\alpha\geq\gamma_{0}\text{ et }s=s_{\nu}(\alpha)\text{, alors
}\forall N\in%
\mathbb{N}
^{\ast},\text{ }\exists C_{N}>0\text{ telle que }|\widehat{%
\operatorname{u}%
}_{s_{\nu}(\alpha)}^{\nu}|\leq C_{N}|s_{\nu}(\alpha)|^{-N},\label{0.2}%
\end{equation}
pour chaque $\nu$, et $C_{N}$ ne dépendant pas de $\nu\in(0,1)$ (mais dépend a priori de
$\gamma_{0}$).

De plus, nous avons aussi
\begin{equation}
\text{si }\gamma_{0}<\gamma\text{ alors (\ref{0.2}) n'est pas valide pour tout
}0<\nu\leq1.\label{0.3}%
\end{equation}
Pour vérifier (\ref{0.3}), on peut choisir $\alpha=\gamma.$

Cette discussion motive la définition suivante

\begin{definition}
Soit $0<\nu\leq1$ et
\begin{equation}%
\operatorname{u}%
^{\nu}(x)=\underset{s\in%
\mathbb{Z}
^{\ast}}{%
{\displaystyle\sum}
}\widehat{%
\operatorname{u}%
}_{s}^{\nu}e^{2i\pi sx}\in\mathcal{C}^{1}(\mathbb{S}^{1}).\label{0.4}%
\end{equation}

\begin{enumerate}
\item Alors, l'échelle d'espace de $%
\operatorname{u}%
^{\nu}$ est égale à $\nu^{\gamma},$ $\gamma\geq0,$ si on a les deux affirmations suivantes :

i) (\ref{0.2}) est satisfaite pour tout $\gamma_{0}>\gamma.$

ii) si $\gamma>0$, alors (\ref{0.3}) est satisfaite.

\item Soient $t\geq0,$ $x\in\mathbb{S}^{1}$, $\omega\in\Omega$ et $%
\operatorname{u}%
^{\nu\omega}(t,x)$ un champ aléatoire continue en temps $t$, $\mathcal{C}^{1}$
en espace $x$, et de valeur moyenne nulle. On représente $%
\operatorname{u}%
^{\nu\omega}$ comme dans (\ref{0.4}) avec $\widehat{%
\operatorname{u}%
}_{s}^{\nu}=\widehat{%
\operatorname{u}%
}_{s}^{\nu\omega}(t)$. Alors l'échelle d'espace de $%
\operatorname{u}%
^{\nu\omega}$ est égale à $\nu^{\gamma},$ $\gamma\geq0,$ si les conditions i)
et ii) sont vérifiées avec $|\widehat{%
\operatorname{u}%
}_{s}^{\nu}|$ remplacé par $\langle\langle|\widehat{%
\operatorname{u}%
}_{s}^{\nu}|^{2}\rangle\rangle^{\frac{1}{2}}.$
\end{enumerate}
\end{definition}

Soit $%
\operatorname{u}%
(t,x)$ une solution de (\ref{B}) représentée par sa série de Fourier
(\ref{0.4}) avec $\widehat{%
\operatorname{u}%
}_{s}=\widehat{%
\operatorname{u}%
}_{s}^{\nu\omega}(t),$ cf. (\ref{expforme}). Par conséquent,%
\[
||%
\operatorname{u}%
||_{m}^{2}=2\sum_{k=1}^{\infty}|\widehat{%
\operatorname{u}%
}_{k}(t)|^{2}k^{2m}.
\]
Comme $\widehat{%
\operatorname{u}%
}_{k}=\int_{\mathbb{S}^{1}}%
\operatorname{u}%
(x)e^{-2i\pi kx},$ alors par une intégration par partie, nous trouvons que
\[
|\widehat{%
\operatorname{u}%
}_{k}(t)|\leq\frac{1}{2\pi k}|%
\operatorname{u}%
_{x}|_{1},\text{\quad}k\geq1.
\]
Donc, par le corollaire \ref{cor17}, si $\operatorname{u}(t,x)$ est une solution de (\ref{B}), alors 
\begin{equation}
\mathbb{E}[|\widehat{%
\operatorname{u}%
}_{k}(t)|^{2}]\leq Ck^{-2},\quad t\geq1.\label{espuk2k}%
\end{equation}

\begin{theorem}
\label{th24}Supposons que $B_{m}<\infty$ pour tout $m\in%
\mathbb{N}
^{\ast}.$ Alors, pour chaque $%
\operatorname{u}%
_{0}\in H^{1},$ l'échelle d'espace de $%
\operatorname{u}%
=%
\operatorname{u}%
(t;%
\operatorname{u}%
_{0})$ est égale à $\nu.$ C'est-à-dire, pour tout $\gamma_{0}>1$ et $N\in%
\mathbb{N}
^{\ast}:$%
\begin{equation}
\langle\langle|\widehat{%
\operatorname{u}%
}_{k}|^{2}\rangle\rangle\leq C_{N}k^{-N},\quad\text{si }k=\lceil\nu^{-\gamma
}\rceil,\text{ }\gamma>\gamma_{0},\text{ }\nu\in(0,1],\label{rel}%
\end{equation}
où $C_{N}=C_{N}(\gamma_{0})>0$ ne dépend ni de $\nu$, ni de $\gamma$. Par contre, si
$\gamma_{0}<1$, alors (\ref{rel}) n'est pas vrai pour tout $\gamma\geq
\gamma_{0}$ et $\nu\in(0,1],$ avec une certaine constante $C_{N}=C_{N}%
(\gamma_{0})>0$.
\end{theorem}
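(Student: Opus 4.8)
The plan is to read both halves of the statement off the two-sided Sobolev bounds already proved, namely $C_m^{-1}\nu^{-(2m-1)}\le\langle\langle\|\operatorname{u}\|_m^2\rangle\rangle\le C_m\nu^{-(2m-1)}$ (Theorem \ref{th23}), together with the Parseval identity $\|\operatorname{u}\|_m^2=c_m\sum_{k\ge1}k^{2m}|\widehat{\operatorname{u}}_k|^2$ ($c_m>0$, as recalled just before the statement) and the a priori decay (\ref{espuk2k}). Everything then reduces to bookkeeping of Fourier exponents; the only point needing care is the uniformity of constants in $\nu$ and $\gamma$. I may assume $B_0>0$ (otherwise the statement is empty) and, for the converse, $0<\gamma_0<1$ (the case $\gamma_0\le0$ is easier, with $k_0(\nu)\equiv1$ below).

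\emph{Decay.} Fix $m\ge1$; then $B_m<\infty$ by hypothesis. From $k^{2m}|\widehat{\operatorname{u}}_k|^2\le c_m^{-1}\|\operatorname{u}\|_m^2$ and Theorem \ref{th23},
\[
\langle\langle|\widehat{\operatorname{u}}_k|^2\rangle\rangle\le c_m^{-1}C_m\,k^{-2m}\nu^{-(2m-1)}.
\]
If $k=\lceil\nu^{-\gamma}\rceil$ with $\gamma>\gamma_0>1$, then $k\ge\nu^{-\gamma}$, hence $\nu^{-1}\le k^{1/\gamma}\le k^{1/\gamma_0}$ and
\[
\langle\langle|\widehat{\operatorname{u}}_k|^2\rangle\rangle\le c_m^{-1}C_m\,k^{-2m+(2m-1)/\gamma_0}\le c_m^{-1}C_m\,k^{-2m(1-1/\gamma_0)},
\]
the last inequality since $-1/\gamma_0<0$ and $k\ge1$. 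As $\gamma_0>1$ gives $1-1/\gamma_0>0$, for any $N\in\mathbb{N}^\ast$ I pick $m=m(N,\gamma_0)$ with $2m(1-1/\gamma_0)\ge N$, and obtain $\langle\langle|\widehat{\operatorname{u}}_k|^2\rangle\rangle\le C_Nk^{-N}$ with $C_N=c_m^{-1}C_m$ depending only on $N$ and $\gamma_0$ (through $m$ and the fixed $\sigma_0$), and not on $\nu$ or $\gamma$: this is (\ref{rel}).

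\emph{Converse.} I argue by contradiction and show it is already enough to refute (\ref{rel}) for $N=4$. Suppose some $C_4>0$ satisfied $\langle\langle|\widehat{\operatorname{u}}_k|^2\rangle\rangle\le C_4k^{-4}$ for every $k=\lceil\nu^{-\gamma}\rceil$, $\gamma\ge\gamma_0$, $\nu\in(0,1]$. Fixing $\nu\in(0,1)$ and noting that $\gamma\mapsto\nu^{-\gamma}$ increases continuously from $\nu^{-\gamma_0}$ to $+\infty$, the integers of this form are exactly all $k\ge k_0(\nu):=\lceil\nu^{-\gamma_0}\rceil$, so the bound holds for all such $k$. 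Splitting $\langle\langle\|\operatorname{u}\|_1^2\rangle\rangle=c_1\sum_{k\ge1}k^2\langle\langle|\widehat{\operatorname{u}}_k|^2\rangle\rangle$ at $k_0(\nu)$, estimating the low modes by (\ref{espuk2k}) and the high modes by the assumed bound (whose tail converges since $2-4<-1$), I get $\langle\langle\|\operatorname{u}\|_1^2\rangle\rangle\le C'(\nu^{-\gamma_0}+1)$ for small $\nu$, with $C'$ independent of $\nu$. But Theorem \ref{th21} gives $\langle\langle\|\operatorname{u}\|_1^2\rangle\rangle\ge\tfrac14 B_0\nu^{-1}$, whence $\tfrac14 B_0\le C'(\nu^{1-\gamma_0}+\nu)$, which tends to $0$ as $\nu\to0$ because $1-\gamma_0>0$ — a contradiction. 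Hence no uniform constant works for $N=4$ (and, by the same computation with $m=1$, for any $N\ge4$), which is the asserted failure of (\ref{rel}).

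The step I expect to be most error-prone is not an analytic obstacle but this uniformity bookkeeping: in the first part one must be sure $C_N$ is fixed by the choice $m=m(N,\gamma_0)$ alone, and in the second that the hypothetical $C_4$ is a single constant valid for all $\nu$, so that the limit $\nu\to0$ is legitimate. The genuine analytic content — the matching upper and lower bounds $\langle\langle\|\operatorname{u}\|_m^2\rangle\rangle\asymp\nu^{-(2m-1)}$ — is already in hand from Sections 7--8.
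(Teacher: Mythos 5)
Your proof is correct and follows essentially the same route as the paper: the decay part reads off Theorem \ref{th23} exactly as in the text, and the converse is the same contradiction argument splitting the Sobolev norm at $\nu^{-\gamma_0}$ and comparing with the lower bound of Theorems \ref{th21}/\ref{th23}. Your only (harmless) variations are specialising to $m=1$, $N=4$ and bounding the low modes via (\ref{espuk2k}) instead of via $\nu^{-2\gamma_0}\langle\langle\|\operatorname{u}\|_{m-1}^{2}\rangle\rangle$ as the paper does.
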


\begin{proof}
Par le théorème \ref{th23} on peut écrire%
\[
\langle\langle|\widehat{%
\operatorname{u}%
}_{k}|^{2}\rangle\rangle\leq C_{m}\nu(k\nu)^{-2m}.
\]
Si $k=k_{\nu}=\lceil\nu^{-\gamma}\rceil,$ $\gamma\geq\gamma_{0}>1$, nous
obtenons que%
\[
\langle\langle|\widehat{%
\operatorname{u}%
}_{k}|^{2}\rangle\rangle\leq C_{m}\nu(\nu^{1-\gamma})^{-2m}\leq C_{m}%
k^{-2m\frac{\gamma_{0}-1}{\gamma_{0}}},\text{\quad pour tout }m\in%
\mathbb{N}
^{\ast},
\]
d'où (\ref{rel}). Maintenant, supposons (\ref{rel}) avec $\gamma_{0}<1$ et
écrivons $\langle\langle||%
\operatorname{u}%
||_{m}^{2}\rangle\rangle$ comme :%
\[
\langle\langle||%
\operatorname{u}%
||_{m}^{2}\rangle\rangle=\underset{=:J}{\underbrace{2\sum_{1\leq k\leq
\nu^{-\gamma_{0}}}\langle\langle|\widehat{%
\operatorname{u}%
}_{k}|^{2}\rangle\rangle k^{2m}}}+\underset{=:L}{\underbrace{2\sum
_{k>\nu^{-\gamma_{0}}}\langle\langle|\widehat{%
\operatorname{u}%
}_{k}|^{2}\rangle\rangle k^{2m}}}.
\]
Majorons les deux termes du membre de droite. Soit $k>\nu^{-\gamma_{0}}.$
Alors, nous pouvons écrire $k=\lceil\nu^{-\gamma}\rceil,$ $\gamma\geq
\gamma_{0}.$ La relation (\ref{rel}) implique $\langle\langle|\widehat{%
\operatorname{u}%
}_{k}|^{2}\rangle\rangle\leq C_{N}(\gamma_{0})k^{-N}$ pour tout $N\in%
\mathbb{N}
.$ Choisissons $N=2m+2,$ nous avons%
\[
L\leq C(\gamma_{0})\sum_{k>\nu^{-\gamma_{0}}}k^{-2}\leq C^{\prime}(\gamma
_{0})\nu^{\gamma_{0}}.
\]
Aussi, par le théorème \ref{th23} :%
\[
J\leq\nu^{-2\gamma_{0}}\sum_{1\leq k<\nu^{-\gamma_{0}}}(k)^{2m-2}%
\langle\langle|\widehat{%
\operatorname{u}%
}_{k}|^{2}\rangle\rangle\leq\nu^{-2\gamma_{0}}\langle\langle||%
\operatorname{u}%
||_{m-1}^{2}\rangle\rangle\leq c\nu^{-2m-2\gamma_{0}+3}.
\]
D'ici,%
\begin{equation}
\langle\langle||%
\operatorname{u}%
||_{m}^{2}\rangle\rangle\leq C_{m}(\gamma_{0})\nu^{-2m-2\gamma_{0}+3},\quad
\nu\in(0,1].\label{um<nugamma0}%
\end{equation}
D'après le théorème \ref{th23} et (\ref{um<nugamma0}), on a pour tout
$1\geq\nu>0:c_{m}\nu^{-(2m-1)}\leq C_{m}(\gamma_{0})\nu^{-2m-2\gamma_{0}+3},$
ceci implique que $\gamma_{0}\geq1.$ La contradiction implique que $\gamma
_{0}\geq1.$
\end{proof}

\section{Lemmes de récurrence et L$_{1}$-contraction}

On rappelle que nous supposons $B_{4}<\infty.$

\begin{lemma}
\label{lem27}Soit $B_{m}<\infty,$ $m\geq1$ et $T>0.$ Alors pour tout
$\epsilon>0,$ il existe $\gamma_{\epsilon,T}>0$ telle que $\mathbb{P}%
(||\xi||_{X_{m}^{T}}<\epsilon)\geq\gamma_{\epsilon,T}.$
\end{lemma}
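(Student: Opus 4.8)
The statement is a small-ball probability estimate: the Wiener process $\xi^{\omega}$ in $H^{m}$ lands in an $\epsilon$-ball of $X_{m}^{T}=\mathcal{C}(0,T;H^{m})$ with strictly positive probability. The natural approach is to reduce to finitely many independent Brownian motions. Recall that $\xi^{\omega}(t,x)=\sum_{s}b_{s}\beta_{s}^{\omega}(t)e_{s}(x)$, and by Th\'eor\`eme \ref{th2} the tail $\xi_{N}^{\omega}:=\sum_{|s|>N}b_{s}\beta_{s}^{\omega}e_{s}$ satisfies $\mathbb{E}\,\|\xi_{N}^{\omega}\|_{X_{m}^{T}}^{2}\leq 4\sum_{|s|>N}|s|^{2m}b_{s}^{2}\,c_{m}$ (the argument of Th\'eor\`eme \ref{th1}, applied with the $H^{m}$-norm, giving a Doob-type bound), and since $B_{m}<\infty$ this tends to $0$ as $N\to\infty$. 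So first I fix $N$ large enough that $\mathbb{E}\,\|\xi_{N}^{\omega}\|_{X_{m}^{T}}^{2}\leq\epsilon^{2}/16$, whence by Chebyshev $\mathbb{P}(\|\xi_{N}^{\omega}\|_{X_{m}^{T}}\geq\epsilon/2)\leq 1/4$, i.e. $\mathbb{P}(\|\xi_{N}^{\omega}\|_{X_{m}^{T}}<\epsilon/2)\geq 3/4$.

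Next I handle the finite-dimensional truncation $\xi^{N\omega}=\sum_{|s|\leq N}b_{s}\beta_{s}^{\omega}e_{s}$. Its $X_{m}^{T}$-norm is controlled by $\max_{|s|\leq N}\sup_{[0,T]}|\beta_{s}^{\omega}(t)|$ up to a constant $c(N,m)$ depending only on $N$, $m$ and the $b_{s}$. Since the $\beta_{s}^{\omega}$ are independent standard Brownian motions, and each satisfies the classical small-ball estimate $\mathbb{P}(\sup_{[0,T]}|\beta_{s}^{\omega}(t)|<\delta)>0$ for every $\delta>0$ (an elementary fact from Appendix A, or from the reflection principle / support of Wiener measure), independence gives
\[
\mathbb{P}\Big(\|\xi^{N\omega}\|_{X_{m}^{T}}<\tfrac{\epsilon}{2}\Big)\geq\prod_{|s|\leq N}\mathbb{P}\Big(\sup_{[0,T]}|\beta_{s}^{\omega}(t)|<\tfrac{\epsilon}{2c(N,m)(2N+1)}\Big)>0.
\]
Call this positive number $p_{N}$.

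Finally I combine the two. Since $\xi^{\omega}=\xi^{N\omega}+\xi_{N}^{\omega}$ and the blocks of Brownian motions with $|s|\leq N$ and $|s|>N$ are independent, the two events $\{\|\xi^{N\omega}\|_{X_{m}^{T}}<\epsilon/2\}$ and $\{\|\xi_{N}^{\omega}\|_{X_{m}^{T}}<\epsilon/2\}$ are independent, so
\[
\mathbb{P}\big(\|\xi^{\omega}\|_{X_{m}^{T}}<\epsilon\big)\geq\mathbb{P}\big(\|\xi^{N\omega}\|_{X_{m}^{T}}<\tfrac{\epsilon}{2}\big)\cdot\mathbb{P}\big(\|\xi_{N}^{\omega}\|_{X_{m}^{T}}<\tfrac{\epsilon}{2}\big)\geq p_{N}\cdot\tfrac{3}{4}=:\gamma_{\epsilon,T}>0.
\]
The main point to get right is the independence of the two blocks (immediate from independence of the $\beta_{s}^{\omega}$) together with the elementary positivity of the one-dimensional Brownian small-ball probability; the tail truncation is routine given Th\'eor\`eme \ref{th2}. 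No subtle obstacle arises — the lemma is soft — the only care needed is to record that the constant $\gamma_{\epsilon,T}$ depends on $\epsilon$, $T$, $m$ and the sequence $\{b_{s}\}$ (through $N$ and $c(N,m)$), but not on anything else, which is all that is claimed.
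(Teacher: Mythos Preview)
Your argument is correct. The paper, however, does not give a proof at all: it simply observes that $\pi=\mathcal{D}(\xi^{\omega})$ is a centred Gaussian measure on the separable Banach space $X_{m}^{T}$ (cf.\ the discussion preceding (\ref{mesure})) and invokes the classical fact that such a measure assigns positive mass to every open ball around the origin, citing \cite[Th\'eor\`eme~3.6.1]{Bog} and \cite[Section~2, Corollaire~1.1]{Kuo}.

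Your route is genuinely different and more hands-on: you bypass the abstract theory by splitting $\xi=\xi^{N\omega}+\xi_{N}^{\omega}$, controlling the tail in $L^{2}(\Omega;X_{m}^{T})$ via the Doob bound from Th\'eor\`eme~\ref{th1}/\ref{th2}, and reducing the head to a finite product of one-dimensional Brownian small-ball probabilities. This has the advantage of being entirely self-contained within the framework of the paper (Section~\ref{sectionksi} and Appendix~A), whereas the paper's citation imports a nontrivial result about Gaussian measures on infinite-dimensional spaces. The trade-off is that your proof is longer and specific to this series structure, while the cited theorem applies to any Gaussian measure on a Banach space and places the lemma in its natural context. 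Two cosmetic remarks: the factor $(2N+1)$ in your small-ball radius is harmless but unnecessary if you bound $\|\xi^{N\omega}\|_{X_{m}^{T}}$ by $c(N,m)\max_{|s|\le N}\sup_{[0,T]}|\beta_{s}|$; and the one-dimensional small-ball estimate $\mathbb{P}(\sup_{[0,T]}|\beta|<\delta)>0$ is not stated in Appendix~A, so you should either cite it (e.g.\ via the reflection principle) or note that it follows from the support theorem for Wiener measure.
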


C'est un résultat classique sur la théorie des mesures Gaussiennes dans les espaces de Banach
\cite[théorème 3.6.1]{Bog}, \cite[section 2, corollaire 1.1]{Kuo}.

\begin{lemma}
\label{lem28}Soit $%
\operatorname{u}%
_{0}$ dans $H^{1}.$ Pour tout $\epsilon>0,$ il existe $T_{\epsilon
}=T_{\varepsilon}(||%
\operatorname{u}%
_{0}||_{1})>0$ et $\delta_{\epsilon}=\delta_{\epsilon}(||%
\operatorname{u}%
_{0}||_{1})>0$ telles que
\begin{equation}
\mathbb{P}(||%
\operatorname{u}%
(T_{\epsilon};%
\operatorname{u}%
_{0})||<\epsilon)\geq\delta_{\epsilon}.\label{proba(u<epsilon)}%
\end{equation}

\end{lemma}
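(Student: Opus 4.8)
The idea is to combine the dissipativity of the deterministic Burgers equation with the small-noise probability estimate of Lemma~\ref{lem27}. First I would treat the deterministic equation, i.e.\ the case $\xi\equiv 0$: if $v$ solves $v_t + vv_x - \nu v_{xx}=0$ with $v(0)=\operatorname{u}_0\in H^1$, then multiplying by $v$ and integrating over $\mathbb{S}^1$ gives, after the usual integration by parts (the nonlinear term $\int v^2 v_x\,dx$ vanishes),
\[
\tfrac12\tfrac{d}{dt}\|v(t)\|^2 + \nu\|v(t)\|_1^2 = 0,
\]
and by the Poincar\'e inequality $\|v\|_1^2\ge (2\pi)^2\|v\|^2$ on $H$, so $\|v(t)\|^2\le e^{-2\nu(2\pi)^2 t}\|\operatorname{u}_0\|^2$. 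Hence for any $\varepsilon>0$ there is $T_\varepsilon=T_\varepsilon(\|\operatorname{u}_0\|_1,\nu)$ with $\|v(T_\varepsilon)\|<\varepsilon/2$; since $\nu\le 1$ one may even take $T_\varepsilon$ depending on $\|\operatorname{u}_0\|$ alone (replacing $\nu$ by its lower bound is not available, so I keep the dependence on $\nu$, which is harmless as $\nu$ is fixed throughout this section). Actually it is cleaner to bound $T_\varepsilon$ uniformly by noting $\|\operatorname{u}_0\|\le c\|\operatorname{u}_0\|_1$, so $T_\varepsilon$ can be chosen as a function of $\|\operatorname{u}_0\|_1$ and $\varepsilon$ only.

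Next I would use the local Lipschitz continuity of the solution map. By Theorem~\ref{th6} (or Proposition~\ref{Mlip}) the map $\mathcal{M}_{T_\varepsilon}:H^1\times X_1^{T_\varepsilon}\to H^1$, $(\operatorname{u}_0,\xi)\mapsto\operatorname{u}(T_\varepsilon)$, is continuous, indeed locally Lipschitz; write $v=\mathcal{M}_{T_\varepsilon}(\operatorname{u}_0,0)$ for the deterministic solution above. Then there is $\rho=\rho(\|\operatorname{u}_0\|_1,\varepsilon)>0$ such that $\|\xi\|_{X_1^{T_\varepsilon}}<\rho$ implies
\[
\|\mathcal{M}_{T_\varepsilon}(\operatorname{u}_0,\xi)-v(T_\varepsilon)\|_{H^1}<\varepsilon/2,
\]
hence $\|\operatorname{u}(T_\varepsilon;\operatorname{u}_0)\|\le\|\operatorname{u}(T_\varepsilon;\operatorname{u}_0)-v(T_\varepsilon)\|+\|v(T_\varepsilon)\|<\varepsilon$. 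Note here that $\xi^\omega\in X_m^{T}\subset X_1^T$ for $m\ge 1$ with $B_m<\infty$, and in particular $B_4<\infty$ is assumed throughout this section, so $\xi^\omega\in X_1^{T_\varepsilon}$ almost surely and the composition makes sense.

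Finally, applying Lemma~\ref{lem27} with $m=1$, $T=T_\varepsilon$ and $\epsilon=\rho$ gives $\gamma=\gamma_{\rho,T_\varepsilon}>0$ with $\mathbb{P}(\|\xi\|_{X_1^{T_\varepsilon}}<\rho)\ge\gamma$. On this event $\|\operatorname{u}(T_\varepsilon;\operatorname{u}_0)\|<\varepsilon$, so $\mathbb{P}(\|\operatorname{u}(T_\varepsilon;\operatorname{u}_0)\|<\varepsilon)\ge\gamma=:\delta_\varepsilon$, which is (\ref{proba(u<epsilon)}). The main point requiring care is making the radius $\rho$ in the Lipschitz estimate depend only on $\|\operatorname{u}_0\|_1$ (and $\varepsilon$), not on more of the data: this is exactly what Proposition~\ref{Mlip} provides, since the local Lipschitz constant of $\mathcal{M}$ there is controlled by a continuous function of $|||(\operatorname{u}_0,\xi)|||$, and for $\xi$ in a bounded ball around $0$ this is a bounded function of $\|\operatorname{u}_0\|_1$. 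Everything else is routine.
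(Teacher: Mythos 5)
Votre démonstration est correcte et suit essentiellement la même démarche que celle de l'article : décroissance exponentielle de la solution déterministe ($\xi\equiv0$) via l'estimation d'énergie et l'inégalité de Poincaré, puis caractère localement lipschitzien de $\mathcal{M}$ (proposition \ref{Mlip}) pour contrôler l'écart à la solution perturbée, et enfin le lemme \ref{lem27} pour minorer la probabilité de l'événement $\{\|\xi\|_{X_1^{T_\epsilon}}<\rho\}$. Les seules différences sont cosmétiques (constante de Poincaré explicite, discussion de la dépendance en $\nu$ de $T_\epsilon$).
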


\begin{proof}
Supposons d'abord que $\xi=0$, $%
\operatorname{u}%
_{0}\in H^{1}$ et notons $%
\operatorname{u}%
^{0}(t)=\operatorname{u}(t;\operatorname{u}_{0},0)$. On multiplie par $%
\operatorname{u}%
^{0}$ l'équation dans (\ref{B}) et on intègre par parties en espace, nous
obtenons%
\[
\frac{1}{2}\frac{d}{dt}||%
\operatorname{u}%
^{0}(t)||^{2}=-\nu||%
\operatorname{u}%
_{x}^{0}(t)||^{2}\leq-\nu||%
\operatorname{u}%
^{0}(t)||^{2}.
\]
Donc, en utilisant le lemme de Gronwall, nous obtenons que $||%
\operatorname{u}%
^{0}(t)||^{2}\leq e^{-2\nu t}||%
\operatorname{u}%
_{0}||^{2}.$ Par conséquent, il existe un $T_{\epsilon}=T_{\epsilon}(||%
\operatorname{u}%
_{0}||)>0$ tel que $||%
\operatorname{u}%
^{0}(T_{\epsilon})||\leq\frac{\epsilon}{2}.$ Par la proposition \ref{Mlip},
l'application $\mathcal{M}$ est Lipschitzienne sur les sous-ensembles bornés
de $H^{1}\times X_{1}^{T_{\varepsilon}}$. Donc, il existe
$\gamma_{\epsilon}=\gamma_{\epsilon}(||%
\operatorname{u}%
_{0}||_{1})>0$ tel que si $||\xi||_{X_{1}^{T}}\leq\gamma_{\epsilon}$ alors $||%
\operatorname{u}%
^{0}(T_{\epsilon})-\operatorname{u}(T_{\epsilon};\operatorname{u}_{0},\xi)||_{1}\leq\frac{\epsilon}{2}$. Ceci implique que si
$||\xi||_{X_{1}^{T}}\leq\gamma_{\epsilon}$ alors $||\operatorname{u}(T_{\epsilon};\operatorname{u}_{0},\xi)||<\epsilon,$ et par le lemme \ref{lem27} avec
$m=1$, on a $\mathbb{P(}||\xi||_{X_{1}^{T}}\leq\gamma_{\epsilon}%
\mathbb{)}=:\delta_{\varepsilon}>0.$ D'où (\ref{proba(u<epsilon)}).
\end{proof}

\begin{lemma}
\label{th29}(Récurrence)

Pour tout $%
\operatorname{u}%
_{0}$ dans $H^{1}$ et $\varepsilon>0$, on a
\begin{equation}
\pi(T,\varepsilon):=\mathbb{P}(\inf_{t\in\lbrack0,T[}||%
\operatorname{u}%
(t;%
\operatorname{u}%
_{0})||\geq\epsilon)\underset{T\rightarrow+\infty}{\rightarrow}%
0,\label{reccurence}%
\end{equation}
où la convergence est uniforme en $%
\operatorname{u}%
_{0}.$
\end{lemma}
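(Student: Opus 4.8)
The idea is to combine the two preceding lemmas — the smallness in the $L_2$-norm of the solution after a fixed time (Lemme \ref{lem28}) and the Markov property — and then iterate. Let me fix $\varepsilon>0$ and $\operatorname{u}_0\in H^1$. First I would apply Lemme \ref{lem28} with this $\varepsilon$: there exist $T_\varepsilon>0$ and $\delta_\varepsilon>0$, depending only on $\|\operatorname{u}_0\|_1$, with
\[
\mathbb{P}\big(\|\operatorname{u}(T_\varepsilon;\operatorname{u}_0)\|<\varepsilon\big)\geq\delta_\varepsilon .
\]
On the event $\{\|\operatorname{u}(T_\varepsilon;\operatorname{u}_0)\|<\varepsilon\}$ the infimum $\inf_{t\in[0,T[}\|\operatorname{u}(t;\operatorname{u}_0)\|$ is already $<\varepsilon$ as soon as $T>T_\varepsilon$; so it suffices to show that, conditionally on the complement, the chance of never again coming below $\varepsilon$ decays. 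The subtle point is that the constants $T_\varepsilon,\delta_\varepsilon$ in Lemme \ref{lem28} depend on $\|\operatorname{u}_0\|_1$, whereas after one step we only control $\|\operatorname{u}(T_\varepsilon)\|$ (the $L_2$-norm), not its $H^1$-norm. The fix is to use a \emph{two-step} block: a first interval of length $1$ to regularise (by Corollaire \ref{cor17} or the smoothing bound, at time $t=1$ the solution lies in $H^1$ with $\|\operatorname{u}(1;\cdot)\|_1$ controlled uniformly in the data — indeed $\mathbb{E}\big[|\operatorname{u}_x(1,\cdot)|_1\big]\leq C$ independently of $\operatorname{u}_0$ by Théorème \ref{th17}), and then one step of Lemme \ref{lem28} starting from this regularised state.

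Concretely, I would argue as follows. Set $\theta=1$. By Théorème \ref{th17} (with $\theta=1$) there is a constant $R=R(T,B_4)$, independent of $\operatorname{u}_0$ and of $\nu$, and an event of probability $\geq\tfrac12$ on which $\|\operatorname{u}(1;\operatorname{u}_0)\|_1\leq R$; more simply, by Chebyshev and the bound $\mathbb{E}\big[|\operatorname{u}(1,\cdot)|_\infty+|\operatorname{u}_x(1,\cdot)|_1\big]\leq C$ we get $\mathbb{P}(\|\operatorname{u}(1;\operatorname{u}_0)\|_1\leq R)\geq\tfrac12$ for a suitable $R$. Now apply Lemme \ref{lem28} with $\|\operatorname{u}_0\|_1$ replaced by $R$: there are $\tau=\tau(R,\varepsilon)$ and $\delta=\delta(R,\varepsilon)>0$ (now uniform in $\operatorname{u}_0$!) such that, whenever the starting state $w$ satisfies $\|w\|_1\leq R$,
\[
\mathbb{P}\big(\|\operatorname{u}(\tau;w)\|<\varepsilon\big)\geq\delta .
\]
By the Markov property (Proposition \ref{KCHgen} or the relation (\ref{m0})) and the independence of the Brownian increments on $[0,1]$ and $[1,1+\tau]$, conditioning on $\mathcal{F}_1$ gives
\[
\mathbb{P}\big(\|\operatorname{u}(1+\tau;\operatorname{u}_0)\|<\varepsilon\big)
\;\geq\;\mathbb{E}\Big[\mathbf{1}_{\{\|\operatorname{u}(1;\operatorname{u}_0)\|_1\leq R\}}\,\langle\Sigma_\tau(\operatorname{u}(1;\operatorname{u}_0)),\mathbf{1}_{B_\varepsilon}\rangle\Big]\;\geq\;\tfrac12\,\delta\;=:\;p>0,
\]
where $p>0$ is independent of $\operatorname{u}_0$. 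Put $\Delta=1+\tau$.

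Finally I would iterate this over the disjoint blocks $[0,\Delta],[\Delta,2\Delta],\dots$. At each step, conditioning on the sigma-algebra generated up to time $k\Delta$ and using again the Markov property plus the \emph{uniformity in the starting point} established above, the conditional probability that $\|\operatorname{u}((k+1)\Delta;\operatorname{u}_0)\|<\varepsilon$ given $\|\operatorname{u}(t)\|\geq\varepsilon$ on $[0,(k+1)\Delta[$ is still at least $p$ — because the regularisation step at time $k\Delta+1$ produces a state of $H^1$-norm $\leq R$ with probability $\geq\tfrac12$ regardless of where we start. Hence, writing $A_k=\{\inf_{t\in[0,k\Delta[}\|\operatorname{u}(t;\operatorname{u}_0)\|\geq\varepsilon\}$, we obtain $\mathbb{P}(A_{k+1})\leq(1-p)\,\mathbb{P}(A_k)$, so $\mathbb{P}(A_k)\leq(1-p)^{k}$. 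Since this bound holds uniformly in $\operatorname{u}_0$ and $\pi(T,\varepsilon)=\mathbb{P}(A_{\lfloor T/\Delta\rfloor})\to0$ as $T\to\infty$, the claim follows. The main obstacle is exactly the uniformity issue: a naive one-shot application of Lemme \ref{lem28} gives constants that degrade with $\|\operatorname{u}_0\|_1$ and hence cannot be iterated; inserting the one-unit-of-time smoothing from the Krużkov estimate (Théorème \ref{th17}), which is uniform in the initial data, is what makes the geometric decay work and upgrades pointwise recurrence to recurrence uniform in $\operatorname{u}_0$.
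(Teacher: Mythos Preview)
Your plan is essentially the paper's: a smoothing step of length~$1$ to land, with probability $\ge\tfrac12$ and uniformly in $\operatorname{u}_0$, in a bounded set, followed by Lemme~\ref{lem28}, and then geometric iteration via the Kolmogorov--Chapman relation (Proposition~\ref{KCHgen}); the paper writes exactly the inequality $\mathbb{P}(Q^N)\le(1-\tfrac12\delta_\varepsilon)\,\mathbb{P}(Q^{N-1})$ that you derive.

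One correction is needed in your regularisation step. Th\'eor\`eme~\ref{th17} and Corollaire~\ref{cor17} control only $|\operatorname{u}_x^{+}|_\infty$, $|\operatorname{u}|_\infty$ and $|\operatorname{u}_x|_{1}$; these quantities do \emph{not} dominate $\|\operatorname{u}\|_1=\|\operatorname{u}_x\|_{L_2}$, since $|\operatorname{u}_x^{-}|_\infty$ is not bounded there (indeed for solutions of (B) this quantity is typically of order $\nu^{-1}$). So the sentence ``by Chebyshev and the bound $\mathbb{E}[|\operatorname{u}(1,\cdot)|_\infty+|\operatorname{u}_x(1,\cdot)|_1]\le C$ we get $\mathbb{P}(\|\operatorname{u}(1;\operatorname{u}_0)\|_1\le R)\ge\tfrac12$'' does not follow. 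The bound you actually need is Th\'eor\`eme~\ref{th19} with $m=1$: it gives $\mathbb{E}\|\operatorname{u}(1;\operatorname{u}_0)\|_1^2\le C_1\nu^{-1}$ uniformly in $\operatorname{u}_0\in H^1$, and then Chebyshev yields your event $\{\|\operatorname{u}(1;\operatorname{u}_0)\|_1\le R\}$ with $R=R(\nu)$ and probability $\ge\tfrac12$. With this replacement the rest of your argument goes through verbatim. (Minor: at the end, $\pi(T,\varepsilon)\le\mathbb{P}(A_{\lfloor T/\Delta\rfloor})$, with $\le$ rather than $=$.)
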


Dans la suite on note par \[B_{m}^{R}(v)=\{u:||u-v||_{m}\leq R\}.\]

\begin{proof}
Par l'inégalité de Tchebychev et le corollaire \ref{cor17}, on a pour une
constante $c_{0}>0$ et pour tout $R>0:\mathbb{P}(||%
\operatorname{u}%
(1;%
\operatorname{u}%
_{0})||\geq R)\leq c_{0}^{-1}R.$ Si $R=2c_{0},$ alors $\mathbb{P}(||%
\operatorname{u}%
(1;%
\operatorname{u}%
_{0})||\leq R)\geq\frac{1}{2}.$

Par la relation de Kolmogorov-Chapman (\ref{m0}) appliquée à la fonction
caractéristique de $B_{0}^{\varepsilon}(0)$, et par (\ref{proba(u<epsilon)}),
nous avons que
\[
\mathbb{P(}||%
\operatorname{u}%
(1+T_{\varepsilon};%
\operatorname{u}%
_{0})||<\varepsilon\mathbb{)\geq}\int_{B_{0}^{R}(0)}\mathbb{P(}||%
\operatorname{u}%
(T_{\varepsilon};v)||<\varepsilon)\Sigma_{1}(%
\operatorname{u}%
_{0})(dv)\geq\frac{1}{2}\delta_{\varepsilon},\quad\forall%
\operatorname{u}%
_{0}\in H^{1}.
\]
Alors
\begin{equation}
\mathbb{P(}||%
\operatorname{u}%
(1+T_{\varepsilon};%
\operatorname{u}%
_{0})||\geq\varepsilon\mathbb{)}\leq1-\frac{1}{2}\delta_{\varepsilon}%
,\quad\forall%
\operatorname{u}%
_{0}\in H^{1}.\label{9,7}%
\end{equation}
On note par $T^{j}=j(1+T_{\epsilon})$, par $Q_{j}$ l'évènement $\{||%
\operatorname{u}%
(T^{j})||\geq\varepsilon\}$ et par $Q^{N}=%
{\textstyle\bigcap\limits_{i=1}^{N}}
Q_{i}.$ Nous avons que $Q^{N}=\{\omega\in Q^{N-1}:||%
\operatorname{u}%
(T^{N})||\geq\varepsilon\}$. En utilisant la relation de Kolmogorov-Chapman sous
la forme (\ref{m2}), ainsi que (\ref{9,7}), nous obtenons :%
\begin{align*}
\mathbb{P}(Q^{N})  & =\mathbb{E}[\chi_{\{%
\operatorname{u}%
(\tau;%
\operatorname{u}%
_{0})_{|\tau\in\lbrack0,T^{N-1}]}\in Q^{N-1}\}}\mathbb{P}(||%
\operatorname{u}%
(1+T_{\varepsilon};%
\operatorname{u}%
(T^{N-1};%
\operatorname{u}%
_{0})||\geq\varepsilon)]\\
& \leq(1-\frac{1}{2}\delta_{\varepsilon})\mathbb{E}[\chi_{\{%
\operatorname{u}%
(\tau;%
\operatorname{u}%
_{0})_{|\tau\in\lbrack0,T^{N-1}]}\in Q^{N-1}\}}]=(1-\frac{1}{2}\delta
_{\varepsilon})\mathbb{P}(Q^{N-1}).
\end{align*}
Donc, nous avons par récurrence que $\mathbb{P}(Q^{N})\leq(1-\frac{1}%
{2}\delta_{\varepsilon})^{N}.$ Finalement, comme $\pi(T,\varepsilon
)\leq\mathbb{P}(Q^{N})$ si $T\geq T^{N},$ nous obtenons (\ref{reccurence}) en
passant à la limite $N\rightarrow+\infty.$
\end{proof}

\begin{lemma}
\label{th30}($L_{1}$-contraction)

Pour $j=1,2,$ fixons $%
\operatorname{u}%
_{j}(0)\in H^{1},$ et soit $\xi\in X_{1}^{T}$. Considérons la solution $%
\operatorname{u}%
_{j}(t)=%
\operatorname{u}%
(t;%
\operatorname{u}%
_{j}(0),\xi)\in H^{1}$. Alors
\[
|%
\operatorname{u}%
_{1}(t)-%
\operatorname{u}%
_{2}(t)|_{1}\leq|%
\operatorname{u}%
_{1}(0)-%
\operatorname{u}%
_{2}(0)|_{1},\quad\forall t\geq0.
\]

\end{lemma}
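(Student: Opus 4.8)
The statement is the classical $L_1$-contraction for viscous Burgers (Kružkov-type) with the \emph{same} forcing, so the difference solves a forced heat equation with a transport-type nonlinearity and no source. Set $r(t,x)=\operatorname{u}_1(t,x)-\operatorname{u}_2(t,x)$. Since both $\operatorname{u}_j$ solve (\ref{B}) with the same $\eta=\partial_t\xi$, the $\xi$ cancels and $r$ satisfies the deterministic, pathwise equation
\[
r_t-\nu r_{xx}+\tfrac12\big((\operatorname{u}_1+\operatorname{u}_2)\,r\big)_x=0,\qquad r(0)=\operatorname{u}_1(0)-\operatorname{u}_2(0),
\]
using $\operatorname{u}_1\operatorname{u}_{1,x}-\operatorname{u}_2\operatorname{u}_{2,x}=\tfrac12\partial_x(\operatorname{u}_1^2-\operatorname{u}_2^2)=\tfrac12\partial_x((\operatorname{u}_1+\operatorname{u}_2)r)$. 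By Theorem~\ref{burgersok} the $\operatorname{u}_j$, hence $r$, lie in $X_1^T$, and by the smoothing of Theorem~\ref{th19} (or simply by working first with smooth data and passing to the limit as in Theorem~\ref{th17}) we may assume $r$ is as regular as needed on $(0,T]$; the estimate then extends to $H^1$-data by the continuity of $\mathcal{M}$ from Theorem~\ref{th6} and Fatou, since $|\cdot|_1$ is lower semicontinuous under a.e.\ convergence.

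**Key computation.** The plan is to differentiate $\int_{\mathbb{S}^1}|r(t,x)|\,dx$ in $t$. Formally,
\[
\frac{d}{dt}\int_{\mathbb{S}^1}|r|\,dx=\int_{\mathbb{S}^1}\operatorname{sgn}(r)\,r_t\,dx
=\int_{\mathbb{S}^1}\operatorname{sgn}(r)\Big(\nu r_{xx}-\tfrac12\big((\operatorname{u}_1+\operatorname{u}_2)r\big)_x\Big)dx.
\]
For the viscous term, one has $\int\operatorname{sgn}(r)\,r_{xx}\,dx\le 0$: at points where $r\neq0$ this is (after an integration by parts, or by the Kato-type inequality $\operatorname{sgn}(r)r_{xx}\le(|r|)_{xx}$) a negative contribution, and the boundary-free periodic integration kills $(|r|)_{xx}$. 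For the transport term, writing $a:=\tfrac12(\operatorname{u}_1+\operatorname{u}_2)$, we have $\int\operatorname{sgn}(r)\,(ar)_x\,dx$; since $\operatorname{sgn}(r)r=|r|$ where $r\ne0$ and $\operatorname{sgn}(r)$ is constant on each sign-interval of $r$, this equals $\int\partial_x(a|r|)\,dx=0$ by periodicity (the distributional derivative of $\operatorname{sgn}(r)$ is supported on the zero set of $r$, where $r=0$, so it contributes nothing). Hence $\frac{d}{dt}\int|r|\,dx\le 0$, which gives the claim after integrating in $t$.

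**Making it rigorous.** The only real subtlety is that $|r|$ is not $\mathcal{C}^1$, so the chain rule above needs justification. The standard device is to replace $|s|$ by a smooth convex approximation: take $\varphi_\varepsilon\in\mathcal{C}^2(\mathbb{R})$ with $\varphi_\varepsilon\to|\cdot|$, $0\le\varphi_\varepsilon'\le1$, $\varphi_\varepsilon''\ge0$, $\varphi_\varepsilon$ even. Then
\[
\frac{d}{dt}\int_{\mathbb{S}^1}\varphi_\varepsilon(r)\,dx
=\int_{\mathbb{S}^1}\varphi_\varepsilon'(r)\Big(\nu r_{xx}-(ar)_x\Big)dx
=-\nu\int\varphi_\varepsilon''(r)r_x^2\,dx-\int\varphi_\varepsilon'(r)(ar)_x\,dx.
\]
The first term is $\le0$ by convexity. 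For the second, integrate by parts: $\int\varphi_\varepsilon'(r)(ar)_x\,dx=-\int\varphi_\varepsilon''(r)r_x\,ar\,dx$, and using $r\varphi_\varepsilon''(r)\to 0$ pointwise (it is bounded and supported near $0$) together with $a\in X_1^T\hookrightarrow L_\infty$ and $r_x\in L_2$, this term tends to $0$ as $\varepsilon\to0$; alternatively one checks it equals $\int a\,\partial_x(\psi_\varepsilon(r))\,dx=-\int a_x\,\psi_\varepsilon(r)\,dx$ for a suitable primitive $\psi_\varepsilon$ of $s\mapsto s\varphi_\varepsilon''(s)$ with $\psi_\varepsilon\to0$ uniformly. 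Passing to the limit yields $\frac{d}{dt}\int|r|\,dx\le0$ in the sense of distributions, hence $\int|r(t)|\,dx\le\int|r(0)|\,dx$. I expect this approximation-and-limit step to be the main (though routine) obstacle; the algebraic cancellation of the forcing and of the transport term is immediate.
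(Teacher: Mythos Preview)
Your argument is correct and follows the standard Kru\v{z}kov entropy approach: you multiply the equation for $r=\operatorname{u}_1-\operatorname{u}_2$ by a smooth approximation $\varphi_\varepsilon'(r)$ of $\operatorname{sgn}(r)$, observe that the viscous term is dissipative by convexity, and show the transport contribution vanishes as $\varepsilon\to0$. (One small slip: you wrote $0\le\varphi_\varepsilon'\le1$; you mean $|\varphi_\varepsilon'|\le1$.) The paper takes a genuinely different route: it solves the \emph{adjoint} backward parabolic problem $\phi_t+\tfrac12(\operatorname{u}_1+\operatorname{u}_2)\phi_x+\nu\phi_{xx}=0$ with terminal datum $\phi_T$ satisfying $|\phi_T|_\infty=1$, invokes the parabolic maximum principle to get $|\phi(t)|_\infty\le1$, deduces $|\langle w(T),\phi_T\rangle|\le|w_0|_1$ by duality, and then chooses $\phi_T\approx\operatorname{sgn}(w(T))$ to recover $|w(T)|_1$.

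The trade-off: your entropy approach is self-contained and avoids solving an auxiliary PDE, at the cost of a slightly delicate limit $\varepsilon\to0$ in the transport term (which you handle correctly via the primitive $\psi_\varepsilon$). The paper's duality argument offloads all the analysis to the linear maximum principle, which is conceptually clean but requires citing existence and the maximum principle for the adjoint problem. Both are classical; your route is arguably more elementary and closer in spirit to the conservation-law origin of the estimate.
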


\begin{proof}
Soit $w=%
\operatorname{u}%
_{1}-%
\operatorname{u}%
_{2}$. Alors $w$ vérifie
\begin{equation}
\left\{
\begin{array}
[c]{c}%
w_{t}+\frac{1}{2}(w(%
\operatorname{u}%
_{1}+%
\operatorname{u}%
_{2}))_{x}-\nu w_{xx}=0,\\
w(0)=%
\operatorname{u}%
_{1}(0)-%
\operatorname{u}%
_{2}(0)=:w_{0}.
\end{array}
\right. \label{equaw2}%
\end{equation}
Soit le problème de Cauchy conjugué $:$%
\begin{equation}
\left\{
\begin{array}
[c]{c}%
\phi_{t}+\frac{1}{2}(%
\operatorname{u}%
_{1}+%
\operatorname{u}%
_{2})\phi_{x}+\nu\phi_{xx}=0,\\
\phi(T,x)=\phi_{T}(x),
\end{array}
\right. \label{CC}%
\end{equation}
où $t\in\lbrack0,T],$ $\phi_{T}(x)\in\mathcal{C}^{0}$ et $|\phi_{T}|_{\infty
}=1.$ La théorie des équations parabolique implique qu'il existe une unique
solution $\phi$ au problème (\ref{CC}), et cette solution satisfait le
principe du maximum : $|\phi(t)|_{\infty}\leq1 $ pour tout $t\in[0,T]$
\cite{Evans}.

Après avoir multiplié par $\phi$ l'équation en $w$ dans (\ref{equaw2}),
intégré par parties sur $[0,T]\times\mathbb{S}^{1}$ puis en utilisant
l'équation en $\phi$ dans (\ref{CC}), on obtient que $|\langle w(T),\phi
_{T}\rangle|=|\langle w_{0},\phi(0)\rangle|.$ Donc, nous avons%
\begin{equation}
|\langle w(T),\phi_{T}\rangle|\leq|w_{0}|_{1}=:K\label{wksiK},%
\end{equation}
car $|\phi(0)|_{\infty}\leq1$. Soit $\epsilon>0$ et $\chi_{\epsilon}$ la fonction définie par%
\[
\chi_{\epsilon}(t)=\left\{
\begin{array}
[c]{c}%
-1,\text{ }t\leq-\epsilon,\\
\frac{1}{\epsilon}t,\text{ }-\epsilon\leq t\leq\varepsilon,\\
1,\text{ }t\geq\epsilon,
\end{array}
\right.
\]
alors $\chi_{\epsilon}(t)\underset{\epsilon\rightarrow0}{\rightarrow}$sgn$(t)$
pour chaque $t$, et si $\phi_{T}^{\epsilon}=\chi_{\epsilon}(w(T,x)),$ alors
$\phi_{T}^{\epsilon}$ est continue et $|\phi_{T}^{\varepsilon}|_{\infty}=1$.
Donc, par (\ref{wksiK}) et en utilisant le théorème de convergence dominée, nous avons
\[
|w(T)|_{1}=|\int w(T,x)\text{ sgn}(w(T,x))dx|=\lim_{\varepsilon\rightarrow
0}|\int w(T,x)\phi_{T}^{\epsilon}(x)dx|\leq K.
\]

\end{proof}

\section{Les mesures stationnaires et la méthode de Bogoliubov-Krylov}

On rappelle que nous supposons $B_{4}<\infty.$

\begin{definition}
\label{def31}Soit $\mu\in\mathcal{P}(H^{m}),m\geq1.$

\begin{enumerate}
\item $\mu$ est dite mesure stationnaire de (\ref{B}) si pour tout
$t\geq0:S_{t}^{\ast}\mu=\mu$. C'est-à-dire, si $\mathcal{D}(%
\operatorname{u}%
_{0})=\mu$ alors $\mathcal{D}(%
\operatorname{u}%
(t;%
\operatorname{u}%
_{0}))\equiv\mu.$

\item Une solution $%
\operatorname{u}%
(t)$ de (\ref{B}) telle que $\mathcal{D}(%
\operatorname{u}%
(t;%
\operatorname{u}%
_{0}))\equiv\mu$ est dite solution stationnaire.
\end{enumerate}
\end{definition}

Le but de cette section est de démontrer l'existence d'une mesure stationnaire
de (\ref{B}), (\ref{B*}), (\ref{B2}) par la \textit{méthode de
Bogoliubov-Krylov.}

\begin{theorem}
\label{th32}\label{existencemesurestatio}Il existe une mesure stationnaire
$\mu\in\mathcal{P}(H^{1}).$
\end{theorem}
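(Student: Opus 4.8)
The plan is to apply the Bogoliubov--Krylov averaging procedure. First I would fix an arbitrary deterministic initial datum, say $\operatorname{u}_0 = 0 \in H^1$, and for each $T > 0$ define the time-averaged measure
\[
\mu_T = \frac{1}{T}\int_0^T S_t^{\ast}(\delta_0)\, dt \in \mathcal{P}(H^1),
\]
where the integral is understood in the weak sense: for $f \in \mathcal{C}_b(H^1)$, $\langle f, \mu_T\rangle = \frac{1}{T}\int_0^T \langle f, S_t^{\ast}(\delta_0)\rangle\, dt = \frac{1}{T}\int_0^T \mathbb{E}[f(\operatorname{u}(t;0))]\, dt$. This is a well-defined probability measure on $H^1$.

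Next I would establish tightness of the family $\{\mu_T\}_{T\geq 1}$ in $\mathcal{P}(H^1)$. This is the step that uses the a priori bounds obtained earlier. By Corollary \ref{momentksob} (or Theorem \ref{th19}) applied with, say, $m=2$ and $\theta = 1$, there is a constant $C$ (depending on $\nu$, but $\nu$ is fixed here) such that $\mathbb{E}[\|\operatorname{u}(t;0)\|_2^2] \leq C$ for all $t \geq 1$; hence $\frac{1}{T}\int_1^T \mathbb{E}[\|\operatorname{u}(t;0)\|_2^2]\,dt \leq C$, and the contribution of $[0,1]$ is absorbed into the $1/T$ prefactor as $T\to\infty$ (or one simply averages over $[1,T]$ instead). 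Since the embedding $H^2 \hookrightarrow H^1$ is compact, the balls $B_2^R(0) = \{u : \|u\|_2 \leq R\}$ are compact in $H^1$, and Chebyshev's inequality gives $\mu_T(H^1 \setminus B_2^R(0)) \leq \frac{1}{R^2}\cdot\frac{1}{T}\int_1^T \mathbb{E}[\|\operatorname{u}(t;0)\|_2^2]\,dt \leq C/R^2$, uniformly in $T$. Choosing $R$ large makes this less than any $\varepsilon$, so $\{\mu_T\}$ is tight.

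By Prokhorov's theorem (Theorem \ref{prochorov}), tightness implies that $\{\mu_T\}$ is weakly relatively compact, so there is a sequence $T_n \to +\infty$ with $\mu_{T_n} \rightharpoonup \mu$ for some $\mu \in \mathcal{P}(H^1)$. It remains to check that $\mu$ is stationary, i.e. $S_s^{\ast}\mu = \mu$ for every $s \geq 0$. Fix $s \geq 0$ and $f \in \mathcal{C}_b(H^1)$. Using the duality relation (\ref{dual}) and the semigroup property (\ref{St*semigroupe}),
\[
\langle f, S_s^{\ast}\mu_{T_n}\rangle = \langle S_s f, \mu_{T_n}\rangle = \frac{1}{T_n}\int_0^{T_n}\langle S_s f, S_t^{\ast}(\delta_0)\rangle\, dt = \frac{1}{T_n}\int_0^{T_n}\langle f, S_{t+s}^{\ast}(\delta_0)\rangle\, dt = \frac{1}{T_n}\int_s^{T_n+s}\langle f, S_t^{\ast}(\delta_0)\rangle\, dt.
\]
The difference $\langle f, S_s^{\ast}\mu_{T_n}\rangle - \langle f, \mu_{T_n}\rangle$ equals $\frac{1}{T_n}\left(\int_{T_n}^{T_n+s} - \int_0^s\right)\langle f, S_t^{\ast}(\delta_0)\rangle\, dt$, which is bounded in absolute value by $\frac{2s\,\|f\|}{T_n} \to 0$ as $n \to \infty$. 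On the other hand, since $S_s f \in \mathcal{C}_b(H^1)$ by Theorem \ref{th8}(1), weak convergence gives $\langle f, S_s^{\ast}\mu_{T_n}\rangle = \langle S_s f, \mu_{T_n}\rangle \to \langle S_s f, \mu\rangle = \langle f, S_s^{\ast}\mu\rangle$, and also $\langle f, \mu_{T_n}\rangle \to \langle f, \mu\rangle$. Passing to the limit yields $\langle f, S_s^{\ast}\mu\rangle = \langle f, \mu\rangle$ for all $f \in \mathcal{C}_b(H^1)$, hence $S_s^{\ast}\mu = \mu$. The main (minor) obstacle is simply making the tightness argument clean — ensuring the a priori bound from Section 7 is quoted in a form uniform in $t \geq 1$ and that the weak integral defining $\mu_T$ is handled rigorously; the rest is the standard Bogoliubov--Krylov mechanism.
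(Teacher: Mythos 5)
Your proposal is correct and follows essentially the same route as the paper: the Bogoliubov--Krylov time average of $\mathcal{D}(\operatorname{u}(t;0))$, tightness from the uniform $H^2$ moment bound of Theorem \ref{th19} combined with the compact embedding $H^{2}\hookrightarrow H^{1}$ and Chebyshev, Prokhorov to extract a weak limit, and passage to the limit in the shifted averages via duality and weak continuity of $S_{t}^{\ast}$. The only (harmless) difference is that you average over $[0,T]$ and then discard the initial layer, whereas the paper averages over $[1,1+T]$ from the outset, which is exactly the fix you yourself suggest.
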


\begin{proof}
On note par $%
\operatorname{u}%
(t)=%
\operatorname{u}%
(t;0)$ la solution de (\ref{B}) issue de $%
\operatorname{u}%
_{0}=0,$ et par $\mu(t):=\mathcal{D}(%
\operatorname{u}%
(t)).$ Notons que $S_{\theta}^{\ast}(\mu(t))=\mathcal{D}(%
\operatorname{u}%
(t+\theta))=\mu(t+\theta) $ pour tout $t,\theta\geq0.$

Considérons \textit{l'ansatz de Bogoliubov-Krylov}. C'est-à-dire, pour tout $T\geq1$
définissons $m_{T}\in\mathcal{P}(H^{2})$ par
\[
m_{T}(\cdot)=\frac{1}{T}\int_{1}^{1+T}\mu(s)(\cdot)ds.
\]
Par le théorème \ref{th19} (avec $m=2$)$,$ on sait qu'il existe une constante
$C>0$ telle que $\mathbb{E}[||%
\operatorname{u}%
(t)||_{2}^{2}]<C$ pour chaque $t\geq1.$ L'inégalité de Tchebychev implique que
$\mu(t)(B_{2}^{R}(0))=\mathbb{P}(||%
\operatorname{u}%
(t)||_{2}\leq R)\geq1-CR^{-1}$ si $t\geq1$. Donc, nous avons%
\begin{equation}
m_{T}(B_{2}^{R}(0))=\frac{1}{T}\int_{1}^{1+T}\mu(s)(B_{2}^{R}(0))ds\geq
1-\frac{C}{R},\quad\forall R>0.\label{m_Ttendue}%
\end{equation}
De plus, par le théorème de Rellich-Kondrachev, $B_{2}^{R}(0)$ est compact
dans $H^{1}$. Alors, par (\ref{m_Ttendue}), nous obtenons que l'ensemble
$M=\{m_{T}\}$ est tendu dans $\mathcal{P}(H^{1})$. Par le théorème \ref{th16},
cet ensemble est faiblement relativement compact. Donc, la suite $\{m_{1}%
,m_{2},\ldots\}\subset M$ admet une sous-suite $\{m_{j_{k}}\}$ qui converge
faiblement vers une limite $\mu\in\mathcal{P}(H^{1}).$

Il reste à prouver que $\mu$ est stationnaire. Calculons $S_{\theta}^{\ast
}(m_{j_{k}})$, pour tout $\theta>0.$ Comme $S_{t}^{\ast}$ est linéaire et
vérifie $S_{\theta}^{\ast}(\mu(t))=\mu(t+\theta)$, nous avons
\begin{align*}
S_{\theta}^{\ast}(m_{j_{k}}) &  =\frac{1}{j_{k}}\int_{1}^{1+j_{k}}S_{\theta
}^{\ast}(\mu(t))dt=\frac{1}{j_{k}}\int_{1}^{1+j_{k}}\mu(t+\theta)dt=\frac
{1}{j_{k}}\int_{1+\theta}^{1+j_{k}+\theta}\mu(t)dt\\
&  =\underset{:=I}{\underbrace{-\frac{1}{j_{k}}\int_{1}^{1+\theta}\mu(t)dt}%
}+\underset{=m_{j_{k}}}{\underbrace{\frac{1}{j_{k}}\int_{1}^{j_{k}+1}\mu
(t)dt}}+\underset{:=K}{\underbrace{\frac{1}{j_{k}}\int_{j_{k}+1}%
^{1+j_{k}+\theta}\mu(t)dt}}.
\end{align*}
Or, pour tout $f\in\mathcal{C}_{b}(H^{1}),$ les termes$|\langle K,f\rangle| $
et $|\langle I,f\rangle|$ sont bornés par $\frac{1}{j_{k}}|f|_{\infty}\theta$
et$\underset{j_{k}\rightarrow+\infty}{\lim}|\langle I,f\rangle|=\underset
{j_{k}\rightarrow+\infty}{\lim}|\langle K,f\rangle|=0.$ De plus, comme
$m_{j_{k}}\rightharpoonup\mu$ et $S_{\theta}^{\ast}$ est faiblement continue
(cf. théorème \ref{ST*continuefaible}), en passant à la limite dans l'égalité ci
dessus, on a finalement $S_{\theta}^{\ast}\mu=\mu.$
\end{proof}

La mesure stationnaire du théorème \ref{existencemesurestatio} est aussi
lisse que la force $\eta^{\omega}$ dans (\ref{B}) :

\begin{proposition}
\label{p33}\label{Hmsupportemu}Soit $B_{m}<\infty,$ $m\geq1.$ Alors la mesure
stationnaire $\mu$ est supportée par l'espace $H^{m}:\mu(H^{m})=1.$
\end{proposition}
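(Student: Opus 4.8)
Fixons $t>0$. Le plan est d'utiliser la stationnarit\'e de $\mu$ pour ramener le calcul de $\int_{H^1}||v||_m^2\,\mu(dv)$ \`a une esp\'erance le long d'une trajectoire, puis de majorer celle-ci au moyen du th\'eor\`eme \ref{th19}. On prend $\operatorname{u}_0^{\omega}$ de loi $\mu$, ind\'ependante de $\xi^{\omega}$ (lemme \ref{loi}). Comme $\mu\in\mathcal{P}(H^1)$ est stationnaire, $\mathcal{D}(\operatorname{u}(t;\operatorname{u}_0^{\omega}))=S_t^{\ast}\mu=\mu$ par d\'efinition de $S_t^{\ast}$ (th\'eor\`eme \ref{th32}). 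La fonction $v\mapsto||v||_m^2$ sur $H^1$, \`a valeurs dans $[0,+\infty]$, est bor\'elienne (limite croissante des sommes partielles $(2\pi)^m\sum_{|s|\leq N}|s|^{2m}|v_s|^2$), d'o\`u
\[
\int_{H^1}||v||_m^2\,\mu(dv)=\mathbb{E}\big[\,||\operatorname{u}(t;\operatorname{u}_0^{\omega})||_m^2\,\big].
\]
Il suffira alors de borner le membre de droite par une quantit\'e finie : il en r\'esultera que $||v||_m<\infty$ pour $\mu$-presque tout $v$, c'est-\`a-dire $\mu(H^m)=1$.

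On traite d'abord $m\in\mathbb{N}^{\ast}$. Puisque $\operatorname{u}_0^{\omega}$ est ind\'ependante de $\xi^{\omega}$, la loi jointe $\mathcal{D}(\operatorname{u}_0^{\omega},\xi^{\omega})$ vaut $\mu\times\pi$ sur $H^1\times X_1^T$ (avec $\pi=\mathcal{D}(\xi^{\omega})\in\mathcal{P}(X_1^T)$, bien d\'efinie car $B_1\leq B_m<\infty$), et $(v,\xi)\mapsto||\mathcal{M}_t(v,\xi)||_m^2$ est bor\'elienne positive, $\mathcal{M}_t$ \'etant continue (th\'eor\`eme \ref{th6}). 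Le th\'eor\`eme de Tonelli donne alors
\[
\mathbb{E}\big[\,||\operatorname{u}(t;\operatorname{u}_0^{\omega})||_m^2\,\big]=\int_{H^1}\mathbb{E}\big[\,||\operatorname{u}(t;v)||_m^2\,\big]\,\mu(dv).
\]
Comme $\mu$ est port\'ee par $H^1$ et comme, pour $v\in H^1$, le th\'eor\`eme \ref{th19} appliqu\'e avec $\theta=t$ (valable puisque $B_{\lceil m\rceil}=B_m<\infty$ et $B_4<\infty$) fournit une constante $C_m$ ind\'ependante de $v$ avec $\mathbb{E}[\,||\operatorname{u}(t;v)||_m^2\,]\leq C_m\nu^{-(2m-1)}$, on obtient
\[
\int_{H^1}||v||_m^2\,\mu(dv)\leq C_m\nu^{-(2m-1)}<\infty,
\]
et donc $\mu(H^m)=1$.

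Pour $m\notin\mathbb{N}^{\ast}$, on \'ecrit $m=j+s$ avec $j\in\mathbb{N}^{\ast}$, $s\in(0,1)$, et l'in\'egalit\'e d'interpolation $||v||_m\leq||v||_{j+1}^{s}||v||_j^{1-s}$ jointe \`a l'in\'egalit\'e de H\"older ram\`ene le r\'esultat aux cas entiers $j$ et $j+1=\lceil m\rceil$ d\'ej\`a trait\'es, comme \`a l'\'etape iii) de la d\'emonstration du th\'eor\`eme \ref{th19}. Le seul point d\'elicat de l'argument est de nature mesurable plut\^ot qu'analytique : il faut manipuler des int\'egrandes \`a valeurs dans $[0,+\infty]$ et invoquer Tonelli (non Fubini), s'assurer de la mesurabilit\'e de $v\mapsto||v||_m$ sur $H^1$, et utiliser le caract\`ere uniforme, par rapport \`a la donn\'ee initiale dans $H^1$, de la borne du th\'eor\`eme \ref{th19}.
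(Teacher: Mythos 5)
Votre preuve est correcte, mais elle suit une route r\'eellement diff\'erente de celle du texte. Le texte exploite la construction de Bogoliubov--Krylov du th\'eor\`eme \ref{th32} : par (\ref{aussilissequelaforce}), chaque moyenne temporelle $m_{T}$ est port\'ee par $H^{m}$ (et, via Tchebychev et le th\'eor\`eme \ref{th19}, concentr\'ee sur les boules $B_{m}^{R}(0)$, qui sont ferm\'ees dans $H^{1}$), puis la propri\'et\'e de Portmanteau (\ref{ferme}) appliqu\'ee \`a la limite faible $m_{j_{k}}\rightharpoonup\mu$ donne $\mu(H^{m})=1$. Vous contournez enti\`erement la construction : vous utilisez seulement la relation de stationnarit\'e $S_{t}^{\ast}\mu=\mu$, le th\'eor\`eme de Tonelli et le caract\`ere uniforme en la donn\'ee initiale de la borne du th\'eor\`eme \ref{th19} pour obtenir directement $\int||v||_{m}^{2}\,\mu(dv)\leq C_{m}\nu^{-(2m-1)}<\infty$. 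Votre argument est donc plus fort : il \'etablit que \emph{toute} mesure stationnaire est port\'ee par $H^{m}$ (information quantitative incluse, puisque vous bornez le second moment de $||\cdot||_{m}$ sous $\mu$), alors que l'argument du texte ne concerne a priori que la mesure limite construite --- distinction qui ne dispara\^it qu'apr\`es le th\'eor\`eme d'unicit\'e \ref{th37}. En contrepartie, l'argument du texte est plus court et \'evite la question de mesurabilit\'e jointe que vous traitez soigneusement via la continuit\'e de $\mathcal{M}_{t}$. Signalons enfin un point commun aux deux preuves pour $m$ non entier : le th\'eor\`eme \ref{th19} (comme (\ref{aussilissequelaforce})) requiert $B_{\lceil m\rceil}<\infty$, ce qui est strictement plus fort que l'hypoth\`ese $B_{m}<\infty$ de l'\'enonc\'e ; votre \'etape d'interpolation h\'erite de la m\^eme exigence via le cas $j+1=\lceil m\rceil$, mais ce l\'eger d\'ecalage d'hypoth\`eses est d\'ej\`a pr\'esent dans le texte et ne vous est pas imputable.
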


\begin{proof}
Si $B_{m}<\infty$ alors par (\ref{aussilissequelaforce}), $\mu(s)(H^{m})=1$
pour tout $s>0,$ et $m_{T}(H^{m})=1$ pour tout $T\geq1.$ Comme $H^{m}$ est un
sous-ensemble fermé de $H^{1}$ et $m_{j_{k}}\rightharpoonup\mu$ dans $H^{1}$,
alors $\mu(H^{m})=1$ (cf. (\ref{ferme}))$.$
\end{proof}

Plus particulièrement, nous avons aussi le

\begin{corollary}
Si $B_{m}<\infty$ pour tout $m\geq1,$ alors $\mu(\mathcal{C}^{\infty})=1. $
\end{corollary}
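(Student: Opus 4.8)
The plan is to deduce the corollary from Proposition~\ref{p33} together with the countable additivity of $\mu$ and the description of $\mathcal{C}^{\infty}(\mathbb{S}^{1})$ as an intersection of Sobolev spaces; throughout, $\mathcal{C}^{\infty}$ is understood as $\mathcal{C}^{\infty}(\mathbb{S}^{1})\cap H$, viewed as a subset of $H^{1}$, on which $\mu$ is defined.

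First I would note that, since $B_{m}<\infty$ for every $m\geq 1$ by hypothesis, Proposition~\ref{p33} applies for each $m\in\mathbb{N}^{\ast}$ and gives $\mu(H^{m})=1$, i.e. $\mu(H^{1}\setminus H^{m})=0$. One should observe in passing that each $H^{m}$ is a Borel subset of $H^{1}$ (it is an $F_{\sigma}$, being the countable union of the closed balls $\{u:\|u\|_{m}\leq n\}$, $n\in\mathbb{N}$), so that all the sets manipulated below are measurable.

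Next I would invoke countable sub-additivity:
\[
\mu\Bigl(\bigcup_{m\in\mathbb{N}^{\ast}}(H^{1}\setminus H^{m})\Bigr)\leq\sum_{m\in\mathbb{N}^{\ast}}\mu(H^{1}\setminus H^{m})=0,
\]
so that $\mu\bigl(\bigcap_{m\in\mathbb{N}^{\ast}}H^{m}\bigr)=1$. Since $H^{m'}\subset H^{m}$ whenever $m'\geq m$, this intersection over integers equals the intersection over all real $m\geq 1$, and by the Sobolev embedding~(\ref{sob}) one has $H^{k+1}\hookrightarrow\mathcal{C}^{k}(\mathbb{S}^{1})$ for every $k\in\mathbb{N}$; hence $\bigcap_{m\in\mathbb{N}^{\ast}}H^{m}\subset\bigcap_{k\in\mathbb{N}}\mathcal{C}^{k}(\mathbb{S}^{1})=\mathcal{C}^{\infty}(\mathbb{S}^{1})$. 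Conversely every smooth mean-zero function lies in each $H^{m}$, so in fact $\bigcap_{m\in\mathbb{N}^{\ast}}H^{m}=\mathcal{C}^{\infty}(\mathbb{S}^{1})\cap H$, and therefore $\mu(\mathcal{C}^{\infty})=1$.

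I do not expect any real obstacle: the whole argument is a routine combination of Proposition~\ref{p33}, the $\sigma$-additivity of the probability measure $\mu$, and the Sobolev embedding~(\ref{sob}). The only point needing a word of care is measurability, namely that the countable intersection $\bigcap_{m}H^{m}$ representing $\mathcal{C}^{\infty}\cap H$ is Borel in $H^{1}$, so that $\mu(\mathcal{C}^{\infty})$ is meaningful — which is guaranteed by the $F_{\sigma}$ description of each $H^{m}$ recorded above.
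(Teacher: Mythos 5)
Your argument is correct and is precisely the intended one: the paper states this corollary without proof as an immediate consequence of Proposition~\ref{p33}, and your combination of $\mu(H^{m})=1$ for each $m$, countable sub-additivity, and the Sobolev embedding~(\ref{sob}) is the canonical way to fill in that step. The remark on measurability of $H^{m}$ as an $F_{\sigma}$ subset of $H^{1}$ is a welcome extra precaution, not a departure from the paper's approach.
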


\section{L'unicité de la mesure stationnaire et la propriété de mélange}

On renvoie à \cite[p.101-103]{KuShir}, pour le cadre classique du
\textit{couplage de Döblin.} Il est remarquable que le couplage de Döblin
s'applique à l'équation de Burgers avec des modifications minimales; nous
expliquerons cela dans cette section. Par contre, pour les équations aux
dérivées partielles stochastiques plus complexes, l'idée de Döblin est plus
délicate à appliquer; elle s'emploie avec des modifications plus profondes
\cite[Section 3]{KuShir}.

Nous supposerons toujours que $B_{4}<+\infty$. Considérons deux copies
d'équation de Burgers de conditions initiales $%
\operatorname{u}%
^{\prime},%
\operatorname{u}%
^{\prime\prime}\in H^{1}.$ C'est-à-dire :%
\begin{equation}
\left\{
\begin{array}
[c]{c}%
(%
\operatorname{u}%
_{1})_{t}+%
\operatorname{u}%
_{1}(%
\operatorname{u}%
_{1})_{x}-\nu(%
\operatorname{u}%
_{1})_{xx}=\eta,\quad%
\operatorname{u}%
_{1}(0)=%
\operatorname{u}%
^{\prime},\\
(%
\operatorname{u}%
_{2})_{t}+%
\operatorname{u}%
_{2}(%
\operatorname{u}%
_{2})_{x}-\nu(%
\operatorname{u}%
_{2})_{xx}=\eta,\quad%
\operatorname{u}%
_{2}(0)=%
\operatorname{u}%
^{\prime\prime},
\end{array}
\right. \label{copieB}%
\end{equation}
avec $\operatorname{u}^{\prime}$ et $\operatorname{u}^{\prime\prime}$ des variables aléatoires indépendantes de $\eta$. Soit $\operatorname{u}_{1}(t)=\operatorname{u}_{1}(t,\operatorname{u}^{\prime})$ et $\operatorname{u}_{2}(t)=\operatorname{u}_{2}(t,\operatorname{u}^{\prime\prime})$. Alors 
\[U(t)=(\operatorname{u}_{1}(t),\operatorname{u}_{2}(t))\]
est solution de (\ref{copieB}) telle que 
\[U(0)=(\operatorname{u}^{\prime},\operatorname{u}^{\prime\prime})=:U_{0}.\]

Si $\pi_{1}$ et ${\pi_{2}}$ sont les opérateurs définies par :
\[\pi_{1}(\operatorname{u}_{1},\operatorname{u}_{2})=\operatorname{u}_{1},\quad \pi_{2}(\operatorname{u}_{1},\operatorname{u}_{2})=\operatorname{u}_{2},\]
alors $\pi_{j}\circ\mathcal{D}(U(t))=\mathcal{D}(\operatorname{u_{j}(t)})$, $j=1,2$. 
On dit que la mesure $\mathcal{D}(U(t))\in\mathcal{P}(H^{1}\times H^{1})$ est un $\textit{couplage}$ des mesures $\mathcal{D}(\operatorname{u_{1}(t)}$ et $\mathcal{D}(\operatorname{u_{2}(t)}$ (voir \cite{KuShir} pour cet important concept).

En répétant le schéma de la démonstration du lemme \ref{th29}, nous avons le

\begin{lemma}
\label{lem35}Pour tout $\varepsilon>0$ et $U_{0}\in H^{1}\times H^{1}:$
$\mathbb{P}(\inf_{t\in\lbrack0,T]}||U(t)||\geq\varepsilon)\underset
{T\rightarrow\infty}{\rightarrow}0$, uniformément en $U_{0}.$
\end{lemma}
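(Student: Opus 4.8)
The statement to prove is Lemma \ref{lem35}: for every $\varepsilon>0$ and every $U_0\in H^1\times H^1$, we have $\mathbb{P}(\inf_{t\in[0,T]}\|U(t)\|\geq\varepsilon)\to 0$ as $T\to\infty$, uniformly in $U_0$. Here $U(t)=(\operatorname{u}_1(t),\operatorname{u}_2(t))$ solves the coupled system \eqref{copieB} driven by the \emph{same} noise $\eta$, with $\|U(t)\|$ the norm on $H^1\times H^1$, i.e. $\|U(t)\|^2=\|\operatorname{u}_1(t)\|^2+\|\operatorname{u}_2(t)\|^2$. The lemma is explicitly flagged as being proved "en r\'ep\'etant le sch\'ema de la d\'emonstration du lemme \ref{th29}", so the plan is to transpose the three ingredients used there — a one-step a priori bound via Tchebychev, a one-step small-ball estimate, and an iteration using the Markov property in the form \eqref{m2} — from the single equation on $H^1$ to the product system on $H^1\times H^1$.

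\textbf{Step 1: a one-step boundedness estimate.} First I would show there is $R=R(B_4)$ with $\mathbb{P}(\|U(1)\|\leq R)\geq\tfrac12$ uniformly in $U_0$. Applying Corollary \ref{cor17} (with $\theta=T=1$, $p=1$) to each of $\operatorname{u}_1$ and $\operatorname{u}_2$ separately — noting that each $\operatorname{u}_j(1)=\operatorname{u}(1;\operatorname{u}_j(0),\xi)$ is a solution of \eqref{B} in the sense used there, and the bound $\mathbb{E}\,\|\operatorname{u}_j(1)\|^2\leq c_0$ does not depend on the initial datum — gives $\mathbb{E}\,\|U(1)\|^2=\mathbb{E}\,\|\operatorname{u}_1(1)\|^2+\mathbb{E}\,\|\operatorname{u}_2(1)\|^2\leq 2c_0$, and Tchebychev yields the claim with $R^2=4c_0$.

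\textbf{Step 2: a one-step small-ball estimate for the pair.} Next I would establish the analogue of \eqref{proba(u<epsilon)}: there exist $T_\varepsilon>0$ and $\delta_\varepsilon>0$, depending only on $R$ (hence uniform over $U_0\in B_0^R(0)\times B_0^R(0)$), such that $\mathbb{P}(\|U(T_\varepsilon)\|<\varepsilon)\geq\delta_\varepsilon$. The argument of Lemma \ref{lem28} transposes directly: with $\xi=0$ the deterministic decay $\|\operatorname{u}^0(t)\|^2\leq e^{-2\nu t}\|\operatorname{u}_0\|^2$ (Gronwall, as in Lemma \ref{lem28}) applies to each component, so for $T_\varepsilon$ large enough both $\|\operatorname{u}_j^0(T_\varepsilon)\|\leq \varepsilon/(2\sqrt 2)$; then local Lipschitz continuity of $\mathcal{M}$ on bounded sets (Proposition \ref{Mlip}) gives a $\gamma_\varepsilon>0$ so that $\|\xi\|_{X_1^{T_\varepsilon}}\leq\gamma_\varepsilon$ forces $\|\operatorname{u}_j(T_\varepsilon;\operatorname{u}_j(0),\xi)\|<\varepsilon/\sqrt2$ for \emph{both} $j$ simultaneously — this is the key point that works because the \emph{same} realization of $\xi$ drives both copies — whence $\|U(T_\varepsilon)\|<\varepsilon$; finally Lemma \ref{lem27} with $m=1$ gives $\mathbb{P}(\|\xi\|_{X_1^{T_\varepsilon}}\leq\gamma_\varepsilon)=:\delta_\varepsilon>0$.

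\textbf{Step 3: combine and iterate.} Combining Steps 1 and 2 via the Kolmogorov--Chapman relation \eqref{m0} applied to the characteristic function of the $\varepsilon$-ball (exactly as in the proof of Lemma \ref{th29}) gives $\mathbb{P}(\|U(1+T_\varepsilon)\|\geq\varepsilon)\leq 1-\tfrac12\delta_\varepsilon$ uniformly in $U_0$. Then, writing $T^j=j(1+T_\varepsilon)$, $Q_j=\{\|U(T^j)\|\geq\varepsilon\}$ and $Q^N=\bigcap_{i=1}^N Q_i$, and using the Markov property in the conditional form \eqref{m2} — here for the product process $U$, which is Markov on $H^1\times H^1$ by the same construction as for $\operatorname{u}$ — one gets $\mathbb{P}(Q^N)\leq(1-\tfrac12\delta_\varepsilon)\mathbb{P}(Q^{N-1})$, hence $\mathbb{P}(Q^N)\leq(1-\tfrac12\delta_\varepsilon)^N$. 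Since $\inf_{t\in[0,T]}\|U(t)\|\geq\varepsilon$ implies $\omega\in Q^N$ whenever $T\geq T^N$, letting $N\to\infty$ yields the conclusion.

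\textbf{Main obstacle.} None of the steps is deep; the only genuine point requiring care is that the machinery of Sections 5--6 (the Markov semigroups $S_t^*$, the transition function $\Sigma_t$, the Kolmogorov--Chapman identities \eqref{m0} and \eqref{m2}) was set up for the scalar equation on $H^m$, so one must observe that the coupled system \eqref{copieB} with shared noise is itself a well-posed Markov process on $H^1\times H^1$ — which follows because $U(t)=(\mathcal{M}_t(\operatorname{u}'(0),\xi),\mathcal{M}_t(\operatorname{u}''(0),\xi))$ is a measurable function of $(U_0,\xi)$ with $\xi$ a Wiener process in $H$ — and that all the cited lemmas apply verbatim to it. The genuinely \emph{uniform-in-$U_0$} character of the conclusion hinges on Steps 1 and 2 being uniform, which they are because Corollary \ref{cor17} is independent of the initial datum and the constants $T_\varepsilon,\delta_\varepsilon$ depend on $U_0$ only through the radius $R$ fixed in Step 1.
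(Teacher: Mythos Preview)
Your proposal is correct and follows precisely the route the paper indicates: the paper gives no explicit proof of Lemma~\ref{lem35}, merely stating that one repeats the scheme of Lemma~\ref{th29}, and your three-step transposition (Tchebychev bound via Corollary~\ref{cor17}, small-ball estimate via Lemma~\ref{lem28}/Lemma~\ref{lem27}, iteration via the Markov property~\eqref{m2}) is exactly that scheme carried over to the coupled process $U(t)$ on $H^1\times H^1$. Your remark that the shared noise is what makes the small-ball event occur simultaneously for both components, and your identification of the Markov structure of the pair as the only point needing a word of justification, are both on target; the one cosmetic slip is that in Step~1 you want $p=2$ (not $p=1$) in Corollary~\ref{cor17} to bound $\mathbb{E}\|\operatorname{u}_j(1)\|^2$.
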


\begin{lemma}
\label{lem36}Soit $m\geq1.$ Il existe $C(\nu)>0$ telle que si $%
\operatorname{u}%
^{\prime},%
\operatorname{u}%
^{\prime\prime}\in H^{1}$ et $|%
\operatorname{u}%
^{\prime}|_{1},|%
\operatorname{u}%
^{\prime\prime}|_{1}\leq\frac{1}{m},$ alors pour tout $t\geq1$, on a :
$||\Sigma_{t}(%
\operatorname{u}%
^{\prime})-\Sigma_{t}(%
\operatorname{u}%
^{\prime\prime})||_{L(H^{1})}^{\ast}\leq Cm^{-\frac{2}{5}}.$
\end{lemma}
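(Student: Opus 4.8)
The plan is to estimate the Lipschitz-dual distance $\|\Sigma_t(\operatorname{u}')-\Sigma_t(\operatorname{u}'')\|_{L(H^1)}^{\ast}$ by building an explicit coupling of the two solutions and combining the $L_1$-contraction (Lemma \ref{th30}) with the recurrence estimates (Lemmas \ref{th29} and \ref{lem35}), along the lines of the Doeblin coupling recalled in \cite{KuShir}. First I would recall that, by the Kantorovich duality of Theorem \ref{th13}, it suffices to bound $\langle f,\Sigma_t(\operatorname{u}')\rangle-\langle f,\Sigma_t(\operatorname{u}'')\rangle$ uniformly over $f$ with $\|f\|_{L(H^1)}\leq 1$; since such $f$ is $1$-Lipschitz on $H^1$ and bounded by $1$, this difference is controlled by $\mathbb{E}\,\bigl[\min\bigl(2,\ \|\operatorname{u}_1(t)-\operatorname{u}_2(t)\|\bigr)\bigr]$, where $\operatorname{u}_1,\operatorname{u}_2$ are the two solutions driven by the \emph{same} noise $\eta^{\omega}$ and started from $\operatorname{u}',\operatorname{u}''$ (this is precisely the coupling $U(t)=(\operatorname{u}_1(t),\operatorname{u}_2(t))$ introduced just before the lemma).

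The core of the argument is then a two-scale estimate of $\|\operatorname{u}_1(t)-\operatorname{u}_2(t)\|$. On the one hand, the $L_1$-contraction of Lemma \ref{th30} gives $|\operatorname{u}_1(t)-\operatorname{u}_2(t)|_1\leq|\operatorname{u}'-\operatorname{u}''|_1\leq 2/m$ for all $t\geq 0$, so the difference $w(t):=\operatorname{u}_1(t)-\operatorname{u}_2(t)$ stays small in $L_1$. On the other hand, $w$ solves the linear parabolic equation \eqref{equaw2}, and for $t\geq$ a fixed positive time (say $t\geq 1/2$) parabolic smoothing plus the a priori $L_\infty$ bounds on $\operatorname{u}_1,\operatorname{u}_2$ from Theorem \ref{th17}/Corollary \ref{cor17} upgrade this $L_1$-smallness to smallness of $\|w(t)\|$ in $L_2$: schematically one multiplies \eqref{equaw2} by $w$, integrates by parts, uses $|\langle(\operatorname{u}_1+\operatorname{u}_2)w,w_x\rangle|\le |\operatorname{u}_1+\operatorname{u}_2|_\infty\,|w|_{2}\,|w_x|_{2}$ together with the interpolation $|w|_{2}\le c\,\|w\|_1^{1/3}|w|_1^{2/3}$ and Young's inequality, to obtain $\tfrac{d}{dt}\|w\|^2+\nu\|w\|_1^2\le C(\nu)(|\operatorname{u}_1|_\infty+|\operatorname{u}_2|_\infty)\|w\|_1^{?}|w|_1^{?}\cdots$, and a Gronwall/absorption argument controlling $\int_{1/2}^{1}\|w(s)\|_1^2\,ds$ in terms of $\sup_t|w(t)|_1\le 2/m$ yields $\|w(1)\|\le C(\nu)\,m^{-2/5}$ on the event where $|\operatorname{u}_1|_\infty,|\operatorname{u}_2|_\infty$ are bounded (say $\le K$) on $[1/2,1]$. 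The exponent $2/5$ comes out of balancing the interpolation exponents against the Hölder exponents in the time integral; I would not be surprised if the precise constant requires a specific choice of intermediate $L^p$-norms, but the power of $m$ is genuinely algebraic and robust.

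Finally one handles the complementary event where the $L_\infty$-norms are large: by Corollary \ref{cor17} (with $\theta=1/2$, $T=1$) we have $\mathbb{E}\bigl[\sup_{1/2\le t\le 1}(|\operatorname{u}_1(t)|_\infty+|\operatorname{u}_2(t)|_\infty)^p\bigr]\le C$, so Chebyshev gives $\mathbb{P}(\text{bad event})\le C K^{-p}$; choosing $K$ a suitable power of $m$ makes this contribution also $O(m^{-2/5})$ (the factor $2$ from $\min(2,\cdot)$ absorbs the bad event cheaply). Putting the two pieces together,
\[
\mathbb{E}\bigl[\min(2,\|\operatorname{u}_1(t)-\operatorname{u}_2(t)\|)\bigr]\le C(\nu)\,m^{-2/5},\qquad t\ge 1,
\]
whence $\|\Sigma_t(\operatorname{u}')-\Sigma_t(\operatorname{u}'')\|_{L(H^1)}^{\ast}\le C m^{-2/5}$. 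The main obstacle is the parabolic-smoothing step: turning the cheap $L_1$-contraction into an $L_2$ bound for $w(1)$ with an explicit negative power of $m$ requires carefully tracking the interpolation and Hölder exponents together with the crude $L_\infty$ bounds on $\operatorname{u}_1,\operatorname{u}_2$, and it is here that the somewhat unusual exponent $2/5$ is forced. Everything else — Kantorovich duality, the $1$-Lipschitz reduction, the Chebyshev tail bound — is routine.
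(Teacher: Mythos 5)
Your overall setup --- coupling the two solutions with the same noise, reducing via the dualit\'e de Kantorovitch to bounding $\mathbb{E}\,|f(\operatorname{u}_{1}(t))-f(\operatorname{u}_{2}(t))|$ for $f$ with $||f||_{L(H^{1})}\leq1$, and feeding in the $L_{1}$-contraction of the lemme \ref{th30} --- is the same as the paper's. But there are two genuine problems. First, a function $f$ with $||f||_{L(H^{1})}\leq1$ is Lipschitz with respect to the $H^{1}$ norm, so the quantity you must control is $\mathbb{E}[\,||\operatorname{u}_{1}(t)-\operatorname{u}_{2}(t)||_{1}]$, and not $\mathbb{E}[\min(2,||\operatorname{u}_{1}(t)-\operatorname{u}_{2}(t)||)]$ with the $L_{2}$ norm as you write: an $L_{2}$ bound on the difference says nothing about $|f(\operatorname{u}_{1})-f(\operatorname{u}_{2})|$ for such $f$. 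Your entire parabolic-smoothing step is therefore aimed at the wrong norm.

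Second, the mechanism that actually produces the exponent $2/5$ is absent from your sketch. The paper uses no smoothing of the equation for $w=\operatorname{u}_{1}-\operatorname{u}_{2}$; it simply interpolates with Gagliardo--Nirenberg (\ref{GN}),
\[
||w(t)||_{1}\leq c\,|w(t)|_{1}^{2/5}\,||w(t)||_{2}^{3/5},
\]
then uses $|w(t)|_{1}\leq 2/m$ pathwise (lemme \ref{th30}), the elementary inequality $(\alpha-\beta)^{3/5}\leq\alpha^{3/5}+\beta^{3/5}$, and the a priori bound $\mathbb{E}[\,||\operatorname{u}_{j}(t)||_{2}^{3/5}]\leq C\nu^{-9/10}$ coming from the th\'eor\`eme \ref{th19} and H\"older. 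The $2/5$ is exactly the $L_{1}$-exponent in this interpolation; it does not come from ``balancing H\"older exponents in a time integral''. Your Gronwall/absorption argument is never carried out, and it is far from clear that it would close: the drift $\operatorname{u}_{1}+\operatorname{u}_{2}$ is only controlled in expectation with $\nu$-dependent constants, the exceptional-event bookkeeping you propose is an extra complication the direct route does not need, and even if you obtained $||w(1)||\leq C(\nu)m^{-2/5}$ in $L_{2}$ you would still have to convert it into an $H^{1}$ bound at the end. The one-line interpolation avoids all of this.
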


\begin{proof}
Par l'inégalité (\ref{GN})%
, nous avons
\[||\operatorname{u}_{1}(t)-\operatorname{u}_{2}(t)||_{1}\leq c|\operatorname{u}_{1}(t)-\operatorname{u}_{2}(t)|_{1}^{\theta}||\operatorname{u}_{1}(t)-\operatorname{u}_{2}(t)||_{2}^{1-\theta},\quad\theta=\frac{2}{5},\quad c>0.\]

Donc, pour tout $f\in\mathcal{C}_{b}(H^{1})$ tel que $||f||_{L(H^{1})}\leq1,$
on a%
\begin{align}
\left\vert \langle\Sigma_{t}(%
\operatorname{u}%
^{^{\prime}}),f\rangle-\langle\Sigma_{t}(%
\operatorname{u}%
^{\prime\prime}),f\rangle\right\vert 
= \left\vert \mathbb{E}(f(\operatorname{u}_{1}(t))-f(\operatorname{u}_{2}(t)))\right\vert 
& \leq\mathbb{E}[||%
\operatorname{u}%
_{1}(t)-%
\operatorname{u}%
_{2}(t)||_{1}]\nonumber\\
&  \leq c\mathbb{E}[|%
\operatorname{u}%
_{1}(t)-%
\operatorname{u}%
_{2}(t)|_{1}^{\theta}||%
\operatorname{u}%
_{1}(t)-%
\operatorname{u}%
_{2}(t)||_{2}^{1-\theta}].\label{gn2/7nvl}%
\end{align}
Comme $|%
\operatorname{u}%
_{1}(0)-%
\operatorname{u}%
_{2}(0)|_{1}\leq\frac{2}{m}$, alors $|%
\operatorname{u}%
_{1}^{\omega}(t)-%
\operatorname{u}%
_{2}^{\omega}(t)|_{1}\leq\frac{2}{m}$ pour tout $\omega$ (voir lemme
\ref{th30}). De plus, $(\alpha-\beta)^{1-\theta}\leq\alpha^{1-\theta}%
+\beta^{1-\theta}$ pour tout $\alpha,\beta\geq0,$ alors le membre de droite de
(\ref{gn2/7nvl}) est borné par
\[
c(\frac{2}{m})^{\theta}(\mathbb{E}[||%
\operatorname{u}%
_{1}(t)||_{2}^{1-\theta}]+\mathbb{E}[%
\operatorname{u}%
_{2}(t)||_{2}^{1-\theta}]).
\]
Par le théorème \ref{th19} et l'inégalité de Hölder, on a $\mathbb{E}[||%
\operatorname{u}%
_{j}(t)||_{2}^{1-\theta}]\leq C_{2}(\nu^{-3})^{\frac{1-\theta}{2}} $, $j=1,2$
et $C_{2}>0$. Donc,
\[
\left\vert \langle\Sigma_{t}(%
\operatorname{u}%
^{\prime}),f\rangle-\langle\Sigma_{t}(%
\operatorname{u}%
^{\prime\prime}),f\rangle\right\vert \leq C\nu^{-\frac{3}{2}(1-\theta
)}m^{-\theta},\quad\text{si }||f||_{L(H^{1})}\leq1,
\]
et on obtient le résultat énoncé.
\end{proof}

\begin{proposition}
\label{p34}Pour tout $%
\operatorname{u}%
^{\prime},%
\operatorname{u}%
^{\prime\prime}\in H^{1},$%
\[
||\Sigma_{t}(%
\operatorname{u}%
^{\prime})-\Sigma_{t}(%
\operatorname{u}%
^{\prime\prime})||_{L(H^{1})}^{\ast}\leq d_{t}^{0},\quad\forall t\geq0,
\]
où $t\longmapsto d_{t}^{0}$ est une fonction positive indépendante de $%
\operatorname{u}%
^{\prime}$ et $%
\operatorname{u}%
^{\prime\prime}$, telle que $\underset{t\rightarrow+\infty}{\lim}d_{t}^{0}=0$.
\end{proposition}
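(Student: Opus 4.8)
The strategy is to combine the recurrence of the coupled pair (Lemma~\ref{lem35}) with the short-time estimate of Lemma~\ref{lem36}, restarting the process, via the Markov property, once both solutions have been brought into a small ball. It is enough to prove: for every $\eta>0$ there is $T(\eta)>0$, \emph{independent of $u',u''$}, with $||\Sigma_{t}(u')-\Sigma_{t}(u'')||_{L(H^{1})}^{\ast}\leq\eta$ for all $t\geq T(\eta)$; one then sets $d_{t}^{0}:=e^{-t}+\sup\{\,||\Sigma_{s}(u')-\Sigma_{s}(u'')||_{L(H^{1})}^{\ast}:s\geq t,\ u',u''\in H^{1}\,\}$, which is positive, bounded (the Lip-dual distance of two probability measures is at most $2$), non-increasing, dominates $||\Sigma_{t}(u')-\Sigma_{t}(u'')||_{L(H^{1})}^{\ast}$, and tends to $0$. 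Fix $u',u''\in H^{1}$ and let $U=(u_{1},u_{2})$ be the solution of \eqref{copieB} with $U(0)=(u',u'')$; since $||\mu-\nu||_{L(H^{1})}^{\ast}=\sup_{||f||_{L(H^{1})}\leq1}\langle f,\mu-\nu\rangle$ and $\langle f,\Sigma_{t}(u')-\Sigma_{t}(u'')\rangle=\mathbb{E}[f(u_{1}(t))-f(u_{2}(t))]$, we must bound $|\mathbb{E}[f(u_{1}(t))-f(u_{2}(t))]|$ for $||f||_{L(H^{1})}\leq1$.

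Fix $\eta>0$. By Lemma~\ref{lem36} there is $C=C(\nu)>0$ such that $|v'|_{1},|v''|_{1}\leq1/m$ and $s\geq1$ imply $||\Sigma_{s}(v')-\Sigma_{s}(v'')||_{L(H^{1})}^{\ast}\leq Cm^{-2/5}$; pick an integer $m=m(\eta)$ with $Cm^{-2/5}\leq\eta/2$ and set $\varepsilon:=1/m$. Following the proof of Lemma~\ref{th29} applied to the pair $U$ (which is what Lemma~\ref{lem35} records), there exist $\theta>1$ and numbers $p_{N}\to0$, all uniform in $U_{0}$, such that the probability of the event $\{||U(j\theta)||\geq\varepsilon$ for every $j\in\{1,\dots,N\}\}$ is at most $p_{N}$; fix $N=N(\eta)$ with $p_{N}\leq\eta/4$. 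Let $\Gamma$ be the event that $||U(j\theta)||<\varepsilon$ for some $j\leq N$ and, on $\Gamma$, let $j^{\ast}\leq N$ be the smallest such index; then $\mathbb{P}(\Gamma^{c})\leq p_{N}\leq\eta/4$, each event $\{j^{\ast}=j\}$ is measurable with respect to the path $U|_{[0,j\theta]}$, and on it $|u_{1}(j\theta)|_{1},|u_{2}(j\theta)|_{1}\leq||U(j\theta)||<1/m$ (using $|v|_{1}\leq||v||$ on $\mathbb{S}^{1}$).

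Now fix $t\geq(N+1)\theta$. Then
\[
\mathbb{E}[f(u_{1}(t))-f(u_{2}(t))]=\mathbb{E}\bigl[\chi_{\Gamma^{c}}(f(u_{1}(t))-f(u_{2}(t)))\bigr]+\sum_{j=1}^{N}\mathbb{E}\bigl[\chi_{\{j^{\ast}=j\}}(f(u_{1}(t))-f(u_{2}(t)))\bigr].
\]
The first term is at most $2\mathbb{P}(\Gamma^{c})\leq\eta/2$ in absolute value. For each $j\leq N$ we have $t-j\theta\geq t-N\theta\geq\theta>1$, and, arguing exactly as in the proof of Proposition~\ref{KCHgen} but with the pair of solutions (the noise driving \eqref{copieB} on $[j\theta,t]$ is independent of $U|_{[0,j\theta]}$),
\[
\mathbb{E}\bigl[\chi_{\{j^{\ast}=j\}}(f(u_{1}(t))-f(u_{2}(t)))\bigr]=\mathbb{E}\bigl[\chi_{\{j^{\ast}=j\}}\,\langle f,\Sigma_{t-j\theta}(u_{1}(j\theta))-\Sigma_{t-j\theta}(u_{2}(j\theta))\rangle\bigr].
\]
On $\{j^{\ast}=j\}$ both $u_{1}(j\theta)$ and $u_{2}(j\theta)$ lie in $\{|\cdot|_{1}\leq1/m\}$ and $t-j\theta\geq1$, so Lemma~\ref{lem36} bounds the inner bracket, pointwise, by $Cm^{-2/5}\leq\eta/2$; summing over $j$ gives a contribution of absolute value at most $\tfrac{\eta}{2}\mathbb{P}(\Gamma)\leq\eta/2$. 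Hence $|\mathbb{E}[f(u_{1}(t))-f(u_{2}(t))]|\leq\eta$ for every such $t$ and every $f$ with $||f||_{L(H^{1})}\leq1$, i.e. $||\Sigma_{t}(u')-\Sigma_{t}(u'')||_{L(H^{1})}^{\ast}\leq\eta$ for all $t\geq T(\eta):=(N(\eta)+1)\theta$, uniformly in $u',u''\in H^{1}$, which is what was required.

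The point to watch is the restart step: one must be sure that the recurrence produces, uniformly in $U_{0}$, a small ball reached at a deterministic grid time lying at least a unit of time before $t$ — so that the hypothesis "$s\geq1$" of Lemma~\ref{lem36} is available when we restart — and that the Markov identity above is justified for the pair of solutions (it follows from the very construction used to prove Proposition~\ref{KCHgen}). Granted these two points, the remaining estimates are routine manipulations of the Lip-dual norm.
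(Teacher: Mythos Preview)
Your argument is correct, and it follows the same high-level idea as the paper: wait until the coupled pair $U=(u_1,u_2)$ enters a small neighbourhood of the origin (Lemma~\ref{lem35}), then restart and invoke Lemma~\ref{lem36}. The technical route, however, is different. The paper introduces the continuous hitting time $\tau_{m,t}=\inf\{s\in[0,t]:U(s)\in O_m\times O_m\}$, applies the \emph{strong} Markov property (Appendix~B, relation~(\ref{mf})) to the pair process, and obtains directly the explicit bound
\[
\bigl|\mathbb{E}[f(u_1(t))-f(u_2(t))]\bigr|\ \le\ Cm^{-2/5}+2\,\mathbb{P}\{\tau_{m,t}>t-1\}\ =:\ d_t^0.
\]
You instead restrict to a deterministic grid $\{\theta,2\theta,\dots,N\theta\}$ extracted from the proof of Lemma~\ref{th29}, let $j^{\ast}$ be the first grid index at which $\|U\|$ is small, and appeal only to the ordinary Markov identity of Proposition~\ref{KCHgen} (transposed to the pair) at the deterministic times $j\theta$. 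This is more elementary --- it avoids the strong Markov machinery entirely --- at the price of a slightly longer decomposition and a less explicit $d_t^0$. Both approaches yield the same uniform-in-$(u',u'')$ convergence; the paper's version gives a closed formula for $d_t^0$ that can later be made quantitative (cf.\ the remark after Theorem~\ref{th37}), while yours shows that the weak Markov property already suffices for the qualitative statement.
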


\begin{proof}
On note $d_{t}(%
\operatorname{u}%
^{\prime},%
\operatorname{u}%
^{\prime\prime})=||\Sigma_{t}(%
\operatorname{u}%
^{\prime})-\Sigma_{t}(%
\operatorname{u}%
^{\prime\prime})||_{L(H^{1})}^{\ast}=\underset{||f||_{L}\leq1}{\sup}%
\langle\Sigma_{t}(%
\operatorname{u}%
^{\prime}),f\rangle-\langle\Sigma_{t}(%
\operatorname{u}%
^{\prime\prime}),f\rangle$. On a pour tout $f\in\mathcal{C}_{b}(H^{1})$
\[
\langle\Sigma_{t}(%
\operatorname{u}%
^{\prime}),f\rangle-\langle\Sigma_{t}(%
\operatorname{u}%
^{\prime\prime}),f\rangle=\mathbb{E}[f(%
\operatorname{u}%
_{1}(t))-f(%
\operatorname{u}%
_{2}(t))]=\mathbb{E}[F(U(t))]=:g(t;U_{0}).
\]
où on a noté $f(%
\operatorname{u}%
_{1})-f(%
\operatorname{u}%
_{2})$ par $F(U),$ $U=(%
\operatorname{u}%
_{1},%
\operatorname{u}%
_{2})$. Donc $d_{t}$ s'écrit comme$\underset{||f||_{L}\leq1}{\sup}g(t;U_{0})$
et $g(t;U_{0})\leq d_{t}$. Soit $m\geq1$ et $O_{m}=\{u\in H^{1}:|u|_{1}%
\leq\frac{1}{m}\}$. Considérons \textit{le temps d'atteinte} pour $U^{\omega
}(t)$ dans l'ensemble $O_{m}\times O_{m}:$%
\[
\tau_{m,t}^{\omega}=\inf\{s\in\lbrack0,t]:U^{\omega}(s)\in O_{m}\times
O_{m}\}\quad\text{(cf. appendice B).}%
\]
i) Comme $|%
\operatorname{u}%
_{j}|_{1}\leq||%
\operatorname{u}%
_{j}||$ pour tout $j=1,2,$ le lemme \ref{lem35} implique que $\mathbb{P}%
\mathbb{(}\tau_{m,t}\geq t-1)\ $tend vers $0$, uniformément en $U_{0},$ quand
$t$ tend vers $+\infty.$\newline 
ii) Si $\tau_{m,t}<t$ alors $U(\tau_{m,t})\in
O_{m}\times O_{m}.$\newline 
iii) Comme $g(t;U_{0})\leq d_{t}$, en utilisant le
lemme \ref{lem36}, on obtient que si $U_{0}\in O_{m}\times O_{m}$ alors
$g(t;U_{0})\leq Cm^{-\frac{2}{5}}$, $t\geq1$. \newline
Par la propriété de Markov forte
(\ref{mf}), $\mathbb{E}[F(U(t))]=\mathbb{E}[g(t-\tau_{m,t};U(\tau_{m,t})].$
Donc%
\begin{equation}
\mathbb{E}[F(U(t))]=\mathbb{E}[\chi_{\{\tau_{m,t}\leq t-1\}}g(t-\tau_{m,t}%
;U(\tau_{m,t}))]+\mathbb{E}[\chi_{\{\tau_{m,t}\leq t-1\}}g(t-\tau_{m,t};U(\tau
_{m,t}))],\label{EFU(t)}%
\end{equation}
où $\chi_{\cdot}$ est la fonction caractéristique.
Par ii) et iii), si $\tau_{m,t}\leq t-1$ alors $U(\tau_{m,t})\in O_{m}\times
O_{m}$ et $g(t-\tau_{m,t};U(\tau_{m,t}))\leq Cm^{-\frac{2}{5}}$. Comme
$|F(U(t))|\leq2$, alors (\ref{EFU(t)}) implique que%
\[
\left\vert \mathbb{E}[F(U(t))]\right\vert \leq Cm^{-\frac{2}{5}}%
+2\mathbb{P}\{\tau_{m,t}>t-1\}=:d_{t}^{0}.
\]
Soit $\varepsilon>0$. On choisit $m=m_{\varepsilon}$ de sorte que
$Cm_{\varepsilon}^{-\frac{2}{5}}\leq\frac{1}{2}\varepsilon.$ Par i), nous
avons $2\mathbb{P}\{\tau_{m,t}>t-1\}\leq\frac{1}{4}\varepsilon$, si $t\geq
t_{m_{\varepsilon},\varepsilon}$ est suffisamment grand. Finalement, $\mathbb{E}[F(U(t))]\leq
\varepsilon$ pour tout $t\geq t_{m_{\varepsilon},\varepsilon},$ et $d_{t}(%
\operatorname{u}%
^{\prime},%
\operatorname{u}%
^{\prime\prime})\leq d_{t}^{0}\underset{t\rightarrow+\infty}{\rightarrow}0.$
\end{proof}

\begin{theorem}
\label{th37}La mesure stationnaire $\mu$ donnée par le théorème \ref{th32} est
unique. De plus, pour tout $\lambda$ dans $\mathcal{P}(H^{1}):$%
\begin{equation}
\lim_{t\rightarrow+\infty}||S_{t}^{\ast}\lambda-\mu||_{L(H^{1})}^{\ast
}=0,\label{mel}%
\end{equation}
uniformément en $\lambda$. En particulier, pour tout $%
\operatorname{u}%
_{0}\in H^{1}$, on a $\underset{t\rightarrow+\infty}{\lim}||\Sigma_{t}(%
\operatorname{u}%
_{0})-\mu||_{L(H^{1})}^{\ast}=0$ uniformément en $%
\operatorname{u}%
_{0}.$
\end{theorem}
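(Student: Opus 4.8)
The plan is to obtain the whole statement as a short ``averaging'' deduction from Proposition \ref{p34}, which already supplies a universal decay rate $d_t^0 \to 0$ for the Lip-dual distance between any two transition probabilities $\Sigma_t(u')$, $\Sigma_t(u'')$. The only structural input I need besides Proposition \ref{p34} is the representation of the dual semigroup: for every $\lambda \in \mathcal{P}(H^1)$ and $f \in \mathcal{C}_b(H^1)$ one has $\langle S_t^{\ast} \lambda, f\rangle = \int_{H^1} \langle \Sigma_t(v), f\rangle\,\lambda(dv)$, which follows by combining the duality identity (\ref{dual}) of Theorem \ref{th8} with the formula $(S_tf)(v) = \langle \Sigma_t(v), f\rangle$ from (\ref{Stf(v)1}); here the integrand is bounded and continuous because $S_t f \in \mathcal{C}_b(H^1)$ (Theorem \ref{th8}).

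Next, since $\mu$ is stationary and $\mu(H^1) = 1$, the same representation applied to $\lambda = \mu$ gives $\langle \mu, f\rangle = \langle S_t^{\ast}\mu, f\rangle = \int_{H^1}\langle \Sigma_t(w), f\rangle\,\mu(dw)$. Subtracting, for any $f$ with $\|f\|_{L(H^1)} \le 1$,
\[
\langle S_t^{\ast} \lambda, f\rangle - \langle \mu, f\rangle = \int_{H^1}\!\int_{H^1} \big(\langle \Sigma_t(v), f\rangle - \langle \Sigma_t(w), f\rangle\big)\,\lambda(dv)\,\mu(dw) \le d_t^0,
\]
because $\langle \Sigma_t(v), f\rangle - \langle \Sigma_t(w), f\rangle \le \|\Sigma_t(v) - \Sigma_t(w)\|_{L(H^1)}^{\ast} \le d_t^0$ for all $v, w \in H^1$ by Proposition \ref{p34}, while $\lambda$ and $\mu$ are probability measures (the Fubini interchange is harmless, the integrand being bounded by $2\|f\| \le 2$). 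Taking the supremum over such $f$ yields $\|S_t^{\ast}\lambda - \mu\|_{L(H^1)}^{\ast} \le d_t^0 \to 0$ as $t \to +\infty$, uniformly in $\lambda$; the assertion for $\Sigma_t(u_0) = S_t^{\ast}(\delta_{u_0})$ is the special case $\lambda = \delta_{u_0}$. Uniqueness then follows immediately: if $\mu'$ is any stationary measure, apply the bound with $\lambda = \mu'$ to get $\|\mu' - \mu\|_{L(H^1)}^{\ast} = \|S_t^{\ast}\mu' - \mu\|_{L(H^1)}^{\ast} \le d_t^0 \to 0$, hence $\|\mu' - \mu\|_{L(H^1)}^{\ast} = 0$ and $\mu' = \mu$ since $(\mathcal{P}(H^1), \|\cdot\|_{L(H^1)}^{\ast})$ is a metric space (Theorem \ref{th13}).

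Strictly speaking there is no ``hard part'' left at this stage: all the genuine analysis — the recurrence of solutions to small neighbourhoods of the origin (Lemmas \ref{th29}, \ref{lem35}), the $L_1$-contraction of the flow (Lemma \ref{th30}), and the $\nu$-uniform Sobolev moment bounds of Section 7 feeding Lemma \ref{lem36} — has been packaged into Proposition \ref{p34}. The mild point to watch is the passage from the estimate on $\Sigma_t$-to-$\Sigma_t$ distances to arbitrary $\lambda$ and to the stationary $\mu$: this is exactly the double-integral step above, and it is clean precisely because the bound $d_t^0$ in Proposition \ref{p34} is independent of the initial data, so it survives integration against $\lambda(dv)\,\mu(dw)$ without loss.
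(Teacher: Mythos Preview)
Your proof is correct and follows essentially the same approach as the paper: both use the duality $\langle S_t^{\ast}\lambda,f\rangle=\int S_tf(v)\,\lambda(dv)=\int\langle\Sigma_t(v),f\rangle\,\lambda(dv)$ together with stationarity of $\mu$ to express $\langle S_t^{\ast}\lambda,f\rangle-\langle\mu,f\rangle$ as a double integral of $\langle\Sigma_t(v)-\Sigma_t(w),f\rangle$ against $\lambda(dv)\,\mu(dw)$, then bound the integrand by $d_t^0$ via Proposition~\ref{p34}; uniqueness is deduced in both cases by specialising $\lambda$ to a second stationary measure.
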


La propriété (\ref{mel}) est dite\textit{\ de mélange} et $%
\operatorname{u}%
(t)$ est dit \textit{processus mélangeant}.

\begin{proof}
Soit $\mu$ la mesure stationnaire construite par l'ansatz de Bogoliubov-Krylov
(Théorème \ref{th32}) et $\mu_{1}$ une autre mesure stationnaire. Si la
relation (\ref{mel}) est vraie alors $||\mu_{1}-\mu||_{L(H^{1})}^{\ast
}=||S_{t}^{\ast}\mu_{1}-\mu||_{L(H^{1})}^{\ast}\underset{t\rightarrow+\infty
}{\rightarrow}0$ et par conséquent $\mu_{1}=\mu$. À présent, démontrons la
relation (\ref{mel}). Soit $f\in\mathcal{C}_{b}(H^{1})$ tel que $||f||_{L}%
\leq1$ et $X_{t}:=|\langle f,S_{t}^{\ast}\lambda\rangle-\langle f,\mu
\rangle|=|\langle f,S_{t}^{\ast}\lambda\rangle-\langle f,S_{t}^{\ast}%
\mu\rangle|$. Nous avons
\[
\langle f,S_{t}^{\ast}\lambda\rangle=\langle S_{t}f,\lambda\rangle=\int
_{H^{1}}S_{t}f(u)\lambda(du)=\int_{H^{1}}\int_{H^{1}}S_{t}f(u)\lambda
(du)\mu(du^{\prime}).
\]
De même,
\[
\langle f,S_{t}^{\ast}\mu\rangle=\int_{H^{1}}\int_{H^{1}}S_{t}f(u^{\prime}%
)\mu(du^{\prime})\lambda(du).
\]
Ainsi,
\[
X_{t}\leq\int_{H^{1}}\int_{H^{1}}|S_{t}f(u)-S_{t}f(u^{\prime})|\lambda
(du)\mu(du^{\prime}).
\]
Par la proposition \ref{p34}, l'intégrande $|S_{t}f(u)-S_{t}f(u^{\prime
})|=|\langle\Sigma_{t}(%
\operatorname{u}%
^{\prime})-\Sigma_{t}(%
\operatorname{u}%
^{\prime\prime}),f\rangle|$ est bornée par $d_{t}^{0}$. Par conséquent, $X_{t}\leq
d_{t}^{0}$. Donc, nous avons (\ref{mel}) dans la limite $t\rightarrow
+\infty.$
\end{proof}

L'analyse de la proposition \ref{p34} implique que%
\[
||\Sigma_{t}(%
\operatorname{u}%
^{\prime})-\Sigma_{t}(%
\operatorname{u}%
^{\prime\prime})||_{L(L_{1})}^{\ast}\leq Ct^{-\frac{1}{13}},\quad\forall
t\geq0,
\]
voir \cite{Bor}, \cite{Borphd}. Alors la solution $%
\operatorname{u}%
(t)$ est un processus \textit{algébriquement mélangeant}. Or, si la force
$\eta$ définie dans (\ref{B*}) est \textit{non-dégénérée} :%
\begin{equation}
b_{s}\neq0,\quad\text{si }|s|\leq n_{\ast},\label{m.0}%
\end{equation}
avec $n_{\ast}\in%
\mathbb{N}^{\ast}$ convenable, 
alors $%
\operatorname{u}%
(t)$ est un processus \textit{exponentiellement mélangeant :}%
\begin{equation}
||\Sigma_{t}(%
\operatorname{u}%
^{\prime})-\Sigma_{t}(%
\operatorname{u}%
^{\prime\prime})||_{L(L_{1})}^{\ast}\leq Ce^{-ct},\quad\forall t\geq
0.\label{m.1}%
\end{equation}
Le théorème 3.1.7 de \cite{KuShir} implique (\ref{m.1}) si $n_{\ast}=n_{\ast
}(\nu)$ est suffisamment grand (par exemple, si $b_{s}\neq0,$ $\forall s$), et
la méthode développée dans \cite{HM} implique (\ref{m.1}) avec un certain $n_{\ast}$ qui ne dépend pas de $\nu$. La relation
(\ref{m.1}) entraîne que le processus $%
\operatorname{u}%
^{\omega}(t)$ est "bien ergodique". Pou illustrer, prenons $f\in
\mathcal{C}_{b}(H^{1})$ et considérons le processus $f(%
\operatorname{u}%
(t))$. On peut démontrer que $f(%
\operatorname{u}%
(t))$ satisfait \textit{la loi forte des grands nombres} et \textit{le
théorème central limite \cite[Section 4]{KuShir}}. Le problème reste ouvert si
la relation (\ref{m.1}) est vraie avec des constantes $c$ et $C$ qui ne
dépendent pas de $\nu\in(0,1].$

\section{Fonction de structure}

La \textit{fonction de structure }est l'un des objets principaux de la théorie
de la turbulence hydrodynamique \cite{Frisch}. Pour un "fluide
unidimensionnel" qui est décrit par l'équation de Burgers, la fonction de
structure est définie comme suit :

\begin{definition}
\label{def25}Si $%
\operatorname{u}%
$ désigne la solution de l'équation de Burgers alors sa fonction de structure
de degré $p>0$ est%
\begin{equation}
S_{p}(l)=\langle\langle\int_{\mathbb{S}^{1}}|%
\operatorname{u}%
(x+l)-%
\operatorname{u}%
(x)|^{p}dx\rangle\rangle,\text{\quad}l\in(0,1].\label{sp(l)}%
\end{equation}

\end{definition}

\begin{theorem}
\label{th26}Soit $B_{m}<\infty$ pour tout $m\in%
\mathbb{N}
^{\ast}$, $\nu\in(0,c_{\ast}]$, où $c_{\ast}\leq1$ une constante qui dépend
seulement de $B_{1},$ $B_{2},$ $\ldots.$

\begin{enumerate}
\item Soit $p>0$. Il existe $c_{p}>0$ telle que
\begin{equation}
S_{p}(l)\leq\left\{
\begin{array}
[c]{l}%
c_{p}l^{p},\text{\qquad\quad si }p\in(0,1),\\
c_{p}l^{p}\nu^{1-p},\text{\quad si }p\geq1,
\end{array}
\right.  \text{pour tout }l\in(0,1]\text{ et }\nu\in(0,1].\label{spl1}%
\end{equation}

\item Il existe des constantes $c_{1},c_{2}>0$, et pour chaque $p>0,$ il
existe $c_{p}^{\prime}(c_{1},c_{2})>0$ telle que
\begin{equation}
S_{p}(l)\geq\left\{
\begin{array}
[c]{l}%
c_{p}^{\prime}l^{p},\text{  si }p\in(0,1),\\
c_{p}^{\prime}l,\text{\quad si }p\geq1,
\end{array}
\right.  \text{pour tout }l\in\lbrack c_{1}\nu,c_{2}]\text{ et }\nu
\in(0,c_{\ast}].\label{spl2}%
\end{equation}

\end{enumerate}
\end{theorem}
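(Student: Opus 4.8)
The plan is to derive both the upper and lower bounds for $S_p(l)$ from the Sobolev-norm estimates of Sections 7--8, interpolated against the pointwise Kružkov bounds of Section 4. First I would treat the upper bound \eqref{spl1}. For the increment $\operatorname{u}(x+l)-\operatorname{u}(x)$ there are two competing estimates: the "smooth" one, coming from $|\operatorname{u}(x+l)-\operatorname{u}(x)|\leq l\,|\operatorname{u}_x|_\infty$ ... but $|\operatorname{u}_x|_\infty$ is not controlled uniformly in $\nu$, so instead one uses $|\operatorname{u}(x+l)-\operatorname{u}(x)|\leq\int_x^{x+l}|\operatorname{u}_x|\,dy$, giving $\int_{\mathbb{S}^1}|\operatorname{u}(x+l)-\operatorname{u}(x)|\,dx\leq l\,|\operatorname{u}_x|_1$; and the "bounded" one, $|\operatorname{u}(x+l)-\operatorname{u}(x)|\leq 2|\operatorname{u}|_\infty$. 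By Corollary \ref{cor17} (with $\theta=1$), the quantities $\langle\langle|\operatorname{u}_x|_1^p\rangle\rangle$ and $\langle\langle|\operatorname{u}|_\infty^p\rangle\rangle$ are bounded uniformly in $\nu\in(0,1]$. For $p\in(0,1)$ I would use only the bound $|\operatorname{u}(x+l)-\operatorname{u}(x)|^p\leq|\operatorname{u}(x+l)-\operatorname{u}(x)|\cdot(2|\operatorname{u}|_\infty)^{p-1}$ together with Hölder in the $\langle\langle\cdot\rangle\rangle$-average, which yields $S_p(l)\leq c_p l^p$ after noting $\int|\operatorname{u}(x+l)-\operatorname{u}(x)|dx\leq l|\operatorname{u}_x|_1$ raised to power $p$ and $l^p\leq l$; more carefully one splits $|\operatorname{u}(x+l)-\operatorname{u}(x)|^p = |\operatorname{u}(x+l)-\operatorname{u}(x)|^{p}$ and bounds it by $\min(l|\operatorname{u}_x(y_{\cdot})|, 2|\operatorname{u}|_\infty)^p$ pointwise, then integrates. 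For $p\geq1$ the extra factor $\nu^{1-p}$ must appear, and this is where one needs the Sobolev estimate $\langle\langle\|\operatorname{u}\|_1^2\rangle\rangle\leq\frac34 B_0\nu^{-1}$ from Theorem \ref{th21}: writing $|\operatorname{u}(x+l)-\operatorname{u}(x)|^p\leq|\operatorname{u}(x+l)-\operatorname{u}(x)|\cdot(2|\operatorname{u}|_\infty)^{p-1}$ still works for the factor of degree $1$, but the length gain from $l$ is only linear, so one obtains $S_p(l)\leq c_p\, l\, \langle\langle |\operatorname{u}|_\infty^{?}\rangle\rangle\leq c_p l$; to get $l^p\nu^{1-p}$ one instead interpolates $|\operatorname{u}(x+l)-\operatorname{u}(x)|\leq l^{1/2}\big(\int_x^{x+l}|\operatorname{u}_x|^2\big)^{1/2}$ (Cauchy--Schwarz), giving $\int|\operatorname{u}(x+l)-\operatorname{u}(x)|^2dx\leq l^2\|\operatorname{u}\|_1^2$ and hence, via Hölder between exponents tuned to $p$ and the $\nu^{-1}$ bound on $\langle\langle\|\operatorname{u}\|_1^2\rangle\rangle$, the claimed $c_p l^p\nu^{1-p}$.

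For the lower bound \eqref{spl2} the strategy is the standard turbulence heuristic of Aurell--Frisch--Lundgren--Vergassola made rigorous: I would use the energy balance \eqref{BE}/Theorem \ref{th21}, which says the dissipation rate $\nu\langle\langle\|\operatorname{u}\|_1^2\rangle\rangle$ is bounded \emph{below} by a positive constant ($\geq\frac14 B_0$), to force the increments not to be too small on average. Concretely, for $p\geq1$ one shows $S_1(l)\geq c'l$ first, by a lower bound argument: $\langle\langle\|\operatorname{u}\|_1^2\rangle\rangle\geq c\nu^{-1}$ combined with the uniform upper bound on $|\operatorname{u}_x^+|_\infty$ from Theorem \ref{th17} implies that $|\operatorname{u}_x|_1$ cannot be small — more precisely, since $\operatorname{u}_x=\operatorname{u}_x^+-\operatorname{u}_x^-$ and $\int\operatorname{u}_x=0$ forces $|\operatorname{u}_x^+|_1=|\operatorname{u}_x^-|_1=\frac12|\operatorname{u}_x|_1$, and since $\operatorname{u}$ is bounded while having large $\|\operatorname{u}\|_1$, the function must have steep "shock-like" negative gradients. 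The key point is: on the interval where a shock of height $\sim M$ (with $M$ of order one, by the lower bound on $\langle\langle|\operatorname{u}|_\infty\rangle\rangle$ in Corollary \ref{momentksobaveragecoro}) is smeared over a width $\sim\nu$, for any $l\in[c_1\nu,c_2]$ a positive fraction of base points $x$ will satisfy $|\operatorname{u}(x+l)-\operatorname{u}(x)|\gtrsim 1$; integrating gives $S_p(l)\geq c_p' l$ for $p\geq1$. For $p\in(0,1)$ the same shock contributes $\int|\operatorname{u}(x+l)-\operatorname{u}(x)|^p dx\gtrsim l^p$ (the region of size $\sim l$ around the shock where the increment is $\sim1$ contributes $\sim l\cdot 1 \geq l^p$ when $l\leq1$; actually one wants $l^p$, obtained from that same region since $l\cdot 1\geq l\geq l^p$... more carefully, away from the shock the increment is $\sim l$ over a region of size $\sim1$, contributing $\sim l^p$). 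The rigorous version: use the lower bound on $\langle\langle \|\operatorname{u}\|_1^2\rangle\rangle$ together with Corollary \ref{ensemble>0}, which gives a set of $(t,\omega)$ of positive $\rho$-measure on which $\|\operatorname{u}\|_1\sim\nu^{-1/2}$ precisely, and on that set deduce the shock structure from the one-sided gradient bound.

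The main obstacle I expect is the lower bound \eqref{spl2}, specifically extracting a \emph{quantitative, probability-one-fraction} geometric statement about the profile of $\operatorname{u}$ from the averaged estimates. The estimates available are all in expectation or in the $\langle\langle\cdot\rangle\rangle$-average; Corollary \ref{ensemble>0} is the right tool since it provides an event of positive measure where $\|\operatorname{u}\|_m$ is pinned both above and below by $\nu^{-m+1/2}$, and on such an event one can run the following deterministic lemma: if $f\in\mathcal{C}^1(\mathbb{S}^1)$ has mean zero, $|f|_\infty\leq C_0$, $f'\leq C_0$ (one-sided, uniform), and $\|f\|_1^2\geq c\nu^{-1}$, then $\int_{\mathbb{S}^1}|f(x+l)-f(x)|^p dx\geq c_p' \min(l,1)^{\max(p,1)}$ for $l\geq c_1\nu$. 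This deterministic lemma is where the real work lies: one argues that the $L_2$-mass of $f'$ at level $\gtrsim\nu^{-1}$ combined with the one-sided bound $f'\leq C_0$ means $f$ has a descending part of total drop $\gtrsim1$ concentrated on a set of measure $\lesssim\nu$ (a "shock layer"), and then a direct computation of the increment integral over base points at distance $\sim l$ from that layer gives the bound. Once this lemma is in hand, integrating it against the positive-measure event from Corollary \ref{ensemble>0} — and handling the restriction $l\in[c_1\nu,c_2]$, which is exactly the inertial range where the shock width $\nu$ is smaller than $l$ — yields \eqref{spl2}. I would also need to pin down $c_\ast$: it is chosen small enough (depending on $B_1,B_2,\ldots$ through the constants in Corollaries \ref{cor17}, \ref{ensemble>0} and Theorem \ref{th21}) so that the lower bound on $\langle\langle|\operatorname{u}|_\infty\rangle\rangle$ genuinely dominates and the shock-layer picture is valid.
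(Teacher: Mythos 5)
Your overall strategy coincides with the paper's on both halves: the upper bound from the Kru\v{z}kov estimates (Corollary \ref{cor17}) combined with the Sobolev bounds of Sections 7--8, and the lower bound from a positive-$\rho$-measure event built on Corollary \ref{ensemble>0} plus a deterministic ``shock-layer'' computation. But the deterministic lemma you isolate for the lower bound is false as stated. The hypotheses $|f|_\infty\le C_0$, $f'\le C_0$, $\|f\|_1^2\ge c\nu^{-1}$ do \emph{not} imply a drop of order one concentrated on a set of measure $\lesssim\nu$: take $f$ piecewise linear with slope $-N$ on $K$ evenly spaced teeth of total measure $\delta$ and slope $\approx N\delta$ elsewhere, with $N\delta\le C_0$ and $N^2\delta=c\nu^{-1}$. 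Then $f'\le C_0$, $|f'|_1\le 2C_0$, $\|f\|_1^2\approx c\nu^{-1}$ and $|f|_\infty\approx N\delta/K\to0$, so $\int|f(x+l)-f(x)|^p\,dx\to0$ as $K\to\infty$, contradicting your conclusion. What rules this out for solutions is a bound on the \emph{second} derivative: the paper's good event $Q_2$ also imposes $\nu^{3/2}\|v\|_2+\nu^{5/2}\|v\|_3\le M$ (still of positive $\rho$-measure by Corollary \ref{momentksobaveragecoro} and Tchebychev), whence $|v_{xx}|_\infty\le CM\nu^{-2}$; this is precisely what guarantees that the point where $v_x^-\ge\alpha_2\nu^{-1}$ (which you correctly extract from $\int|v_x|^2\le|v_x|_\infty|v_x|_1$) is surrounded by a window of width $\sim\nu$ on which $v_x^-$ stays large, hence a single order-one drop. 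You must add that hypothesis to your lemma. Relatedly, your treatment of $p\in(0,1)$ in the lower bound rests on ``$l\cdot1\ge l^p$'', which is backwards for $l\le1$: the shock layer alone only yields $\gtrsim l$, not $\gtrsim l^p$. The paper closes this with the reverse H\"older inequality $\left(\int g\right)^{2-p}\le\left(\int g^2\right)^{1-p}\int g^p$ applied to $g=(v(\cdot+l)-v(\cdot))^+$, using $g\le Ml$ together with the already-established case $p=1$; some such argument is indispensable.

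On the upper bound, for $p\ge1$ you propose to interpolate between $\int|u(x+l)-u(x)|dx\le l|u_x|_1$ and $\int|u(x+l)-u(x)|^2dx\le l^2\|u\|_1^2$. This gives $c_pl^p\nu^{1-p}$ only for $1\le p\le2$; for $p>2$ no H\"older interpolation between an $L_1$ and an $L_2$ estimate controls the $L_p$ average, and the substitutes available from $\|u\|_1$ alone (e.g. $\max_x|u(x+l)-u(x)|\le l^{1/2}\|u\|_1$) give $l^{p/2+1}\nu^{-p/2}$, which is weaker than $l^p\nu^{1-p}$ exactly when $l<\nu$ --- a range covered by (\ref{spl1}). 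The paper instead writes $|a|^p\le|a|\cdot\max_x|a|^{p-1}$ with $\max_x|a|\le l|u_x|_\infty$, and controls $\langle\langle|u_x|_\infty^p\rangle\rangle$ by Gagliardo--Nirenberg against $\|u\|_m$ and $|u_x|_1$ via Theorem \ref{th23} and Corollary \ref{cor17}; you need this $L_\infty$ endpoint. (Your pointwise inequality $|a|^p\le|a|\,(2|u|_\infty)^{p-1}$ for $p<1$ also goes the wrong way since $p-1<0$, but that case is rescued by the one-line estimate $S_p(l)\le S_1(l)^p$.)
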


\begin{proof}
1) Si $p\geq1$, alors%
\[
S_{p}(l)\leq\langle\langle\int|%
\operatorname{u}%
(x+l)-%
\operatorname{u}%
(x)|dx\cdot\max_{x}|%
\operatorname{u}%
(x+l)-%
\operatorname{u}%
(x)|^{p-1}\rangle\rangle.
\]
Par l'inégalité de Hölder, on a
\[
S_{p}(l)\leq\underset{=:I}{\underbrace{\langle\langle\left(  \int|%
\operatorname{u}%
(x+l)-%
\operatorname{u}%
(x)|dx\right)  ^{p}\rangle\rangle^{1/p}}}\underset{=:J}{\underbrace
{\langle\langle\max_{x}|%
\operatorname{u}%
(x+l)-%
\operatorname{u}%
(x)|^{p}\rangle\rangle^{(p-1)/p}}}.%
\]
D'une part, remarquons que la moyenne de $x\mapsto%
\operatorname{u}%
(x+t)-%
\operatorname{u}%
(x)$ est nulle, donc par le lemme \ref{lem18}
\[
\int|%
\operatorname{u}%
(x+l)-%
\operatorname{u}%
(x)|dx\leq2\int(%
\operatorname{u}%
(x+l)-%
\operatorname{u}%
(x))^{+}dx\leq2\sup_{x}(%
\operatorname{u}%
_{x})^{+}\cdot l,
\]
et $I\leq2l\langle\langle\lbrack\sup_{x}(%
\operatorname{u}%
_{x})^{+}]^{p}\rangle\rangle^{1/p}.$ Donc, d'après le corollaire \ref{cor17}, on
obtient que $I\leq c_{p}l.$ D'autre part,%
\[
J\leq\langle\langle l^{p}|%
\operatorname{u}%
_{x}|_{\infty}^{p}\rangle\rangle^{(p-1)/p}.
\]
De l'inégalité (\ref{GN}) (avec $\theta(0,1,\infty,m-1)$) et celle de Hölder,
on obtient
\[
\langle\langle l^{p}|%
\operatorname{u}%
_{x}|_{\infty}^{p}\rangle\rangle^{\frac{p-1}{p}}\leq\left[  cl^{p}%
\langle\langle||%
\operatorname{u}%
||_{m}^{\frac{2p}{2m-1}}|%
\operatorname{u}%
_{x}|_{1}^{a}\rangle\rangle\right]  ^{\frac{p-1}{p}}\leq cl^{p-1}%
\langle\langle||%
\operatorname{u}%
||_{m}^{2}\rangle\rangle^{\frac{p-1}{2m-1}}\langle\langle|%
\operatorname{u}%
_{x}|_{1}^{b}\rangle\rangle^{c},
\]
pour certaines constantes $a,b,c>0.$ Maintenant, enutilisant le corollaire
\ref{cor17} et le théorème \ref{th23}, on obtient que
\[
J\leq c^{\prime}l^{p-1}\nu^{-(p-1)}.
\]
Finalement $S_{p}(l)\leq c_{p}l^{p}\nu^{1-p},$ si $p\geq1.$

Si $p\in(0,1),$ alors nous obtenons par l'inégalité de Hölder et par
(\ref{spl1}) avec $p=1:$
\[
S_{p}(l)\leq\langle\langle\int|%
\operatorname{u}%
(x+l)-%
\operatorname{u}%
(x)|dx\rangle\rangle^{p}=S_{1}(l)^{p}\leq(c_{1}l)^{p}.
\]
Ceci achève la preuve de (\ref{spl1}).

2) Notons par $Q_{1}$ l'événement dans (\ref{ensembleinvariantennu}) avec
$m=1$. Alors $\rho(Q_{1})\geq C(1,\sigma_{0})$. Soit $M>0$ et
\[
Q_{2}=\{(t,\omega)\in Q_{1}:|%
\operatorname{u}%
_{x}^{\omega+}(t)|_{\infty}+|%
\operatorname{u}%
_{x}^{\omega}(t)|_{1}+\nu^{\frac{3}{2}}||%
\operatorname{u}%
^{\omega}(t)||_{2}+\nu^{\frac{5}{2}}||%
\operatorname{u}%
^{\omega}(t)||_{3}\}\leq M\}.
\]
Par le corollaire \ref{cor17}, (\ref{momentksobaverage}) et l'inégalité de
Tchebychev :%
\[
\rho(Q_{2})\geq C(1,\sigma_{0})-C_{1}M^{-1}\geq\frac{1}{2}C(1,\sigma_{0}),
\]
pour tout $\nu\in(0,c_{\ast}]$ et si $M$ est suffisamment grand.

Soit $(t,\omega)\in Q_{2}$ et notons $v(x):=%
\operatorname{u}%
^{\omega}(t,x)$. Pour établir (\ref{spl2}), nous montrons que $v$ satisfait
\begin{equation}
\int|v(x+l)-v(x)|^{p}dx\geq Cl^{\min(1,p)},\quad l\in\lbrack c_{1}\nu
,c_{2}],\text{ }p>0\label{77.3},%
\end{equation}
uniformément en $\nu\in(0,1]$, où $C=C(c_{1},c_{2},p)>0.$

D'abord, supposons que $p\geq1$. Notons que
\[
\alpha\nu^{-1}\leq\int|v_{x}|^{2}dx\leq|v_{x}|_{\infty}|v_{x}|_{1}\leq
M|v_{x}|_{\infty},
\]
où $\alpha$ est définie dans le corollaire \ref{ensemble>0}. Donc%
\begin{equation}
|v_{x}|_{\infty}\geq\alpha M^{-1}\nu^{-1}=:\alpha_{2}\nu^{-1},\label{77.1}%
\end{equation}
uniformément en $\nu.$ Comme $|v_{x}^{+}|_{\infty}\leq M$, alors $|v_{x}%
^{+}|_{\infty}\leq\frac{1}{2}\alpha_{2}\nu^{-1}$, si $\nu\leq\frac{1}{2}%
\alpha_{2}M^{-1}=:c_{\ast}$. Ainsi, par (\ref{77.1}), on a
\[
|v_{x}^{+}|_{\infty}\leq\frac{1}{2}\alpha_{2}\nu^{-1},\quad\text{et }%
|v_{x}^{-}|_{\infty}\geq\alpha_{2}\nu^{-1},\quad\text{si }\nu\in
\lbrack0,c_{\ast}].
\]
Notons par $z=z(t,\omega)=\min\{x\in\lbrack0,1):v_{x}^{-}(x)\geq\alpha_{2}%
\nu^{-1}\}:$ $z$ est une fonction mesurable bien définie sur $Q_{2}$, si
$\nu\in\lbrack0,c_{\ast}].$

Nous avons
\begin{equation}
\int|v(x+l)-v(x)|^{p}dx\geq\int_{z-\frac{l}{2}}^{z}|\int_{x}^{x+l}v_{x}%
^{-}(y)dy-\int_{x}^{x+l}v_{x}^{+}(y)dy|^{p}dx.\label{strcturev}%
\end{equation}
Par le lemme \ref{GNtermeL}, on a $|%
\operatorname{u}%
_{xx}|_{\infty}\leq C_{2}||%
\operatorname{u}%
||_{2}^{\frac{1}{2}}||%
\operatorname{u}%
||_{3}^{\frac{1}{2}}$, ce qui implique que $|v_{xx}|_{\infty}\leq C_{2}%
M\nu^{-2}$. Donc, dans l'intervalle $[x,x+\alpha_{3}\nu]$, $\alpha_{3}>0$,
nous avons
\[
v_{x}^{-}\geq\alpha_{2}\nu^{-1}-\alpha_{3}C_{2}M\nu^{-1}=\frac{3}{4}\alpha
_{2}\nu^{-1},
\]
si $\alpha_{3}=\frac{\alpha_{2}}{4C_{1}M}$. Supposons que $l\geq\alpha_{3}%
\nu.$ Comme $v_{x}^{+}\leq M$, alors
\[
\int_{x}^{x+l}v_{x}^{-}(y)dy\geq\int_{x}^{x+\alpha_{3}\nu}v_{x}^{-}%
(y)dy\geq\frac{3}{4}\alpha_{2}\alpha_{3},\quad\text{et }\int_{x}^{x+l}%
v_{x}^{+}(y)dy\leq Ml.
\]
Donc, en utilisant (\ref{strcturev}), nous obtenons que
\[
\int|v(x+l)-v(x)|^{p}dx\geq\int_{z-\frac{l}{2}}^{z}|\frac{3}{4}\alpha
_{2}\alpha_{3}-Ml|^{p}dx\geq\frac{l}{2}\left(  \frac{1}{2}\alpha_{2}\alpha
_{3}\right)  ^{p},
\]
pourvu que $l\in\lbrack\alpha_{3}\nu,\frac{\alpha_{2}\alpha_{3}}{4M}]$ et
$\nu\in\lbrack0,c_{\ast}]$. Ainsi, la relation (\ref{spl2}) est établie avec
$c_{\ast}=\frac{1}{2}\alpha_{2}M^{-1}$, $c_{1}=\alpha_{3}$ et $c_{2}%
=\frac{\alpha_{2}\alpha_{3}}{4M}$, si $p\geq1$.

Maintenant, supposons que $p\in(0,1)$. Soit $f$ une fonction arbitraire positive.
Nous pouvons l'écrire comme $f=f^{\frac{2-2p}{2-p}}f^{\frac{p}{2-p}}$. Donc, d'après l'inégalité de Hölder, nous avons $\left(  \int f\right)  ^{2-p}\leq\left(
\int f^{2}\right)  ^{1-p}\left(  \int f^{p}\right)  $. Par conséquent,
\begin{align*}
\int|v(x+l)-v(x)|^{p}dx  & \geq\int\left(  \left(  v(x+l)-v(x)\right)
^{+}\right)  ^{p}dx\\
& \geq\left(  \int\left(  \left(  v(x+l)-v(x)\right)  ^{+}\right)
^{2}dx\right)  ^{p-1}\left(  \int\left(  \left(  v(x+l)-v(x)\right)
^{+}\right)  dx\right)  ^{2-p}.
\end{align*}
Comme $v_{x}^{+}\leq M$, alors $\left(  v(x+l)-v(x)\right)  ^{+}\leq Ml$. De
plus, $p-1<0$, donc le premier terme du membre de droite de cette dernière
inégalité est minoré par $\left(  M^{2}l^{2}\right)  ^{p-1}$.

Remarquons que la fonction $v(\cdot+l)-v(\cdot)$ est de moyenne nulle. Par
conséquent
\[
\int\left(  v(x+l)-v(x)\right)  ^{+}dx=\frac{1}{2}\int|v(x+l)-v(x)|dx,
\]
et utilisant (\ref{77.3}) avec $p=1$, nous obtenons que le second terme est
minoré par $Cl^{2-p}$. Finalement, (\ref{77.3}) est établi pour $p\in(0,1).$
\end{proof}

\section{Spectre de l'énergie}

\begin{definition}
On appelle l'énergie de nombre d'onde $k$ correspondant à la solution $%
\operatorname{u}%
$ de l'équation de Burgers, la grandeur :%
\begin{equation}
E_{k}=\frac{1}{2k(M-M^{-1})}\sum_{M^{-1}k\leq|n|\leq Mk}\frac{1}{2}%
\langle\langle|%
\operatorname{u}%
_{n}|^{2}\rangle\rangle.\label{energyondek}%
\end{equation}
où $M$ est une constante positive indépendante de $\nu.$ La fonction
$k\longmapsto E_{k}$ est le spectre de l'énergie.
\end{definition}

Le théorème \ref{th24} dit que si $k$ est supérieur au \textit{seuil
critique} égal à $\nu^{-1}$, alors le spectre de l'énergie décroît plus vite
que n'importe quelle puissance négative de $k$, et que cela n'est pas valide
si $k\ll\nu$.

Dans cette partie, nous continuons l'étude du spectre de l'énergie $E_{k}$
quand $k\lesssim\nu^{-1}.$

\begin{theorem}
\label{encadrementEk}Soit $M$ dans (\ref{energyondek}) suffisament grand, et
$c_{1},c_{2}>0$ les constantes du théorème \ref{th26}, alors il existe
$c_{3},c_{4}>0$ telles que pour tout $k$ satisfaisant la condition
\begin{equation}
c_{2}^{-1}\leq k\leq(c_{1}\nu)^{-1},\label{cond}%
\end{equation}
nous avons
\begin{equation}
c_{3}k^{-2}\leq E_{k}\leq c_{4}k^{-2}.\label{E_kcommek^2}%
\end{equation}

\end{theorem}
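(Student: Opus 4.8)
Le plan est de déduire l'encadrement (\ref{E_kcommek^2}) directement de la fonction de structure $S_2(l)$ étudiée au théorème \ref{th26}, en exploitant l'identité de Parseval qui relie $S_2(l)$ aux modules des coefficients de Fourier $\langle\langle|\widehat{\operatorname{u}}_n|^2\rangle\rangle$. Plus précisément, on écrit
\begin{equation}
S_{2}(l)=\langle\langle\int_{\mathbb{S}^{1}}|\operatorname{u}(x+l)-\operatorname{u}(x)|^{2}dx\rangle\rangle=2\sum_{n\geq1}\langle\langle|\widehat{\operatorname{u}}_{n}|^{2}\rangle\rangle\cdot|e^{2i\pi nl}-1|^{2}=8\sum_{n\geq1}\langle\langle|\widehat{\operatorname{u}}_{n}|^{2}\rangle\rangle\sin^{2}(\pi nl).\label{parsevalstruct}
\end{equation}
Première étape : fixer $l$ de sorte que $l\sim k^{-1}$, disons $l=(2k)^{-1}$, si bien que la condition (\ref{cond}) assure que $l\in[c_1\nu,c_2']$ pour une constante $c_2'$ appropriée, ce qui permet d'invoquer à la fois la borne supérieure $S_2(l)\leq c_2 l$ de (\ref{spl1}) (cas $p=2$) et la borne inférieure $S_2(l)\geq c_2' l$ de (\ref{spl2}) (cas $p=2$). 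On obtient ainsi $S_2((2k)^{-1})\asymp k^{-1}$.

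Deuxième étape : pour la borne \emph{supérieure} de $E_k$, il faut minorer la somme (\ref{parsevalstruct}) par la contribution de la tranche dyadique $M^{-1}k\leq |n|\leq Mk$ apparaissant dans (\ref{energyondek}). Pour $n$ dans cette tranche et $l=(2k)^{-1}$, l'argument $\pi n l$ reste compris entre $\pi/(2M)$ et $\pi M/2$ ; en choisissant $M$ tel que cet intervalle soit contenu dans une région où $\sin^2$ est minoré par une constante $\delta(M)>0$ (il suffit d'éviter les multiples de $\pi$, ce qui se fait quitte à ajuster légèrement le facteur dans $l$), on a $\sin^2(\pi nl)\geq\delta$ sur la tranche, d'où
\begin{equation}
c_{2}(2k)^{-1}\geq S_{2}((2k)^{-1})\geq 8\delta\sum_{M^{-1}k\leq n\leq Mk}\langle\langle|\widehat{\operatorname{u}}_{n}|^{2}\rangle\rangle\geq 8\delta\cdot k(M-M^{-1})E_{k},\nonumber
\end{equation}
ce qui donne $E_k\leq c_4 k^{-2}$. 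Pour la borne \emph{inférieure} de $E_k$, on majore au contraire chaque terme par $\sin^2(\pi nl)\leq \pi^2 n^2 l^2$ et on découpe la somme (\ref{parsevalstruct}) en les fréquences $|n|\leq Mk$ (où l'on utilise $\sin^2(\pi nl)\leq (\pi n l)^2\lesssim (n/k)^2$) et les fréquences $|n|>Mk$ (où l'on utilise $\langle\langle|\widehat{\operatorname{u}}_n|^2\rangle\rangle\leq C n^{-2}$, conséquence de (\ref{espuk2k}) via le corollaire \ref{cor17}, combinée à $\sin^2\leq 1$). Le terme de haute fréquence est $\lesssim (Mk)^{-1}$, donc du même ordre que $S_2(l)\asymp k^{-1}$ pourvu que $M$ soit assez grand pour que ce terme n'absorbe pas la minoration $S_2(l)\geq c_2' l$ ; il reste alors $\sum_{|n|\leq Mk}(n/k)^2\langle\langle|\widehat{\operatorname{u}}_n|^2\rangle\rangle\gtrsim k^{-1}$. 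En regroupant les tranches dyadiques $[2^{j},2^{j+1}]$ pour $2^j\lesssim Mk$ et en utilisant que $\langle\langle||\operatorname{u}||_1^2\rangle\rangle$ est finie (bilan de l'énergie, théorème \ref{th21}) pour contrôler les tranches de basse fréquence, on isole une tranche autour de $k$ portant une fraction fixe de $k^{-1}$, d'où $E_k\geq c_3 k^{-2}$ après division par $k^2$.

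**Principal obstacle.** La difficulté centrale sera le choix simultané des constantes $M$, $c_3$, $c_4$ et l'ajustement fin de la relation $l\leftrightarrow k$ : il faut que $M$ soit assez grand pour que la queue $\sum_{|n|>Mk}\langle\langle|\widehat{\operatorname{u}}_n|^2\rangle\rangle \sin^2(\pi nl)\lesssim (Mk)^{-1}$ soit strictement plus petite que la minoration $c_2' (2k)^{-1}$ issue de (\ref{spl2}), faute de quoi la contribution des basses/hautes fréquences pourrait « expliquer » toute la fonction de structure et la tranche centrale ne serait pas minorée. De plus, pour la borne inférieure de $E_k$, l'argument de découpage dyadique ne fournit \emph{a priori} qu'une minoration de la somme $\sum_{n\sim k}(n/k)^2\langle\langle|\widehat{\operatorname{u}}_n|^2\rangle\rangle$ sur un bloc dyadique, et non exactement sur la tranche $[M^{-1}k,Mk]$ ; il faudra vérifier que, quitte à agrandir $M$, ce bloc dyadique où la masse est concentrée tombe bien dans la tranche prescrite, ou raisonner par un argument de type principe des tiroirs sur les $O(\log M)$ blocs dyadiques couvrant $[1,Mk]$ et absorber le facteur logarithmique dans les constantes. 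Les autres étapes (Parseval, estimations de $\sin^2$, application des théorèmes \ref{th26} et \ref{th21}) sont routinières.
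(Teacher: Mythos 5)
Votre sch\'ema pour la borne \emph{inf\'erieure} de $E_k$ --- Parseval pour relier $S_2(1/k)$ \`a $\sum_n\sin^2(\pi n/k)\langle\langle|\widehat{\operatorname{u}}_n|^2\rangle\rangle$, minoration $S_2(l)\geq c' l$ par (\ref{spl2}) avec $p=2$, contr\^ole de la queue $|n|>Mk$ par la d\'ecroissance en $n^{-2}$ --- est exactement celui du papier. Mais deux \'etapes de votre r\'edaction ne tiennent pas. La premi\`ere concerne la borne \emph{sup\'erieure} de $E_k$ : d'une part (\ref{spl1}) avec $p=2$ donne $S_2(l)\leq c_2 l^2\nu^{-1}$ et non $c_2 l$, et comme (\ref{cond}) autorise $\nu^{-1}\gg k$, cette borne peut \^etre bien plus grande que $k^{-1}$ pour $l\sim k^{-1}$ (l'estimation $S_2(l)\leq Cl$ est vraie mais demande un argument direct via le corollaire \ref{cor17}, pas la citation de (\ref{spl1})) ; d'autre part, l'in\'egalit\'e $\sin^2(\pi nl)\geq\delta>0$ uniform\'ement sur la tranche $M^{-1}k\leq n\leq Mk$ est fausse d\`es que $M\geq2$ : les valeurs $nl$ balayent alors un intervalle de longueur $\geq1$, donc la tranche contient des entiers $n$ avec $nl$ \`a distance $O(l)$ d'un entier, et aucun \og ajustement l\'eger \fg{} de $l$ ne r\'epare cela. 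Ce d\'etour est de toute fa\c{c}on inutile : la borne sup\'erieure est imm\'ediate \`a partir de (\ref{espuk2k}), c'est-\`a-dire $\langle\langle|\widehat{\operatorname{u}}_n|^2\rangle\rangle\leq Cn^{-2}$ (cons\'equence de $|\widehat{\operatorname{u}}_n|\leq\frac{1}{2\pi n}|\operatorname{u}_x|_1$ et du corollaire \ref{cor17}), somm\'ee sur la tranche.

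La seconde lacune est l'\'etape finale de la borne inf\'erieure : passer de $\sum_{|n|\leq Mk}n^2\langle\langle|\widehat{\operatorname{u}}_n|^2\rangle\rangle\gtrsim k$ \`a la m\^eme minoration restreinte \`a $M^{-1}k\leq|n|\leq Mk$. Vous proposez de contr\^oler les basses fr\'equences par le bilan d'\'energie (th\'eor\`eme \ref{th21}) ; or celui-ci donne $\langle\langle||\operatorname{u}||_1^2\rangle\rangle\sim B_0\nu^{-1}$, quantit\'e qui explose quand $\nu\rightarrow0$ et qui, dans la zone (\ref{cond}), peut largement d\'epasser $k$ : elle ne fournit aucun contr\^ole utile uniforme en $\nu$. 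Votre argument de tiroirs sur les blocs dyadiques ne localiserait de toute fa\c{c}on la masse que dans \emph{un} bloc non identifi\'e de $[1,Mk]$, avec une perte en $\log(Mk)$, et non dans la tranche prescrite. Le bon outil est encore (\ref{espuk2k}) : $\sum_{|n|<M^{-1}k}n^2\langle\langle|\widehat{\operatorname{u}}_n|^2\rangle\rangle\leq C\,\#\{|n|<M^{-1}k\}\leq C'M^{-1}k$, terme absorb\'e par la minoration en $(C''-C'''M^{-1})k$ d\`es que $M$ est assez grand ; c'est pr\'ecis\'ement ainsi que le papier conclut (relations (\ref{cm-1k}) et (\ref{diez'})). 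En r\'esum\'e, l'ingr\'edient d\'ecisif que votre plan n'exploite pas syst\'ematiquement est la borne ponctuelle $\langle\langle|\widehat{\operatorname{u}}_n|^2\rangle\rangle\leq Cn^{-2}$, qui r\`egle \`a la fois la borne sup\'erieure de $E_k$, la queue $|n|>Mk$ et les basses fr\'equences $|n|<M^{-1}k$.
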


\begin{proof}
On précise que les constantes $C,$ $C_{k},\ldots$etc, de cette preuve ne
dépendent pas de $M.$ La relation (\ref{espuk2k}) implique que $\langle
\langle|%
\operatorname{u}%
_{k}|^{2}\rangle\rangle\leq Ck^{-2}.$ Notons que cette relation entraîne la
seconde inégalité de (\ref{E_kcommek^2}). Maintenant, nous vérifions la
première. Comme $\langle\langle|%
\operatorname{u}%
_{k}|^{2}\rangle\rangle\leq Ck^{-2}$, alors%
\begin{equation}
\sum_{|n|\leq M^{-1}k}|n|^{2}\langle\langle|%
\operatorname{u}%
_{n}(t)|^{2}\rangle\rangle\leq CM^{-1}k,\label{cm-1k}%
\end{equation}
et%
\begin{equation}
\sum_{|n|\geq Mk}\langle\langle|%
\operatorname{u}%
_{n}(t)|^{2}\rangle\rangle\leq C^{\prime}M^{-1}k^{-1},\label{cm-1k-1}%
\end{equation}
Posons $S=\underset{|n|\leq Mk}{\sum}|n|^{2}\langle\langle|%
\operatorname{u}%
_{n}(t)|^{2}\rangle\rangle.$ Comme $|\sin(\alpha)|\leq|\alpha|$, alors%
\begin{equation}
S\geq\frac{k^{2}}{\pi^{2}}\sum_{|n|\leq Mk}\sin^{2}(\frac{n\pi}{k}%
)\langle\langle|%
\operatorname{u}%
_{n}|^{2}\rangle\rangle=\frac{k^{2}}{\pi^{2}}\left(  \sum_{n\in%
\mathbb{Z}
^{\ast}}\sin^{2}(\frac{n\pi}{k})\langle\langle|%
\operatorname{u}%
_{n}|^{2}\rangle\rangle-\sum_{|n|>Mk}\sin^{2}(\frac{n\pi}{k})\langle\langle|%
\operatorname{u}%
_{n}|^{2}\rangle\rangle\right)  .\label{SS}%
\end{equation}
Notons que par l'identité de Parseval, nous avons
\[
||%
\operatorname{u}%
(\cdot+y)-%
\operatorname{u}%
(\cdot)||^{2}=4\sum_{n\in%
\mathbb{Z}
^{\ast}}\sin^{2}(n\pi y)|%
\operatorname{u}%
_{n}|^{2}.
\]
Donc, (\ref{SS}) et (\ref{cm-1k-1}) impliquent que
\begin{equation}
S\geq\frac{k^{2}}{\pi^{2}}\left(  \frac{1}{4}\langle\langle||%
\operatorname{u}%
(\cdot+\frac{1}{k})-%
\operatorname{u}%
(\cdot)||^{2}\rangle\rangle-\sum_{|n|>Mk}\langle\langle|%
\operatorname{u}%
_{n}|^{2}\rangle\rangle\right)  \geq k^{2}C_{1}S_{2}(\frac{1}{k}%
)-CM^{-1}k.\label{Sbon}%
\end{equation}
Comme $k$ satisfait (\ref{cond}), alors par (\ref{Sbon}) et (\ref{spl2})
($p=2,$ $l=\frac{1}{k}$), nous avons
\begin{equation}
S\geq k^{2}C_{1}c_{2}^{\prime}k^{-1}-C^{^{\prime}}M^{-1}k=k(C^{\prime\prime
}-C^{\prime}M^{-1}).\label{diez'}%
\end{equation}
Notons que
\[
E_{k}\geq E_{k}^{-}=\frac{1}{4k^{3}M^{3}}\sum_{M^{-1}k\leq|n|\leq Mk}%
|n|^{2}\langle\langle|%
\operatorname{u}%
_{n}(t)|^{2}\rangle\rangle.
\]%
Donc, en utilisant (\ref{diez'}) et (\ref{cm-1k}), on obtient que
\[
E_{k}\geq\frac{1}{4k^{3}M^{3}}\left(  S-\sum_{|n|\leq M^{-1}k}|n|^{2}%
\langle\langle|%
\operatorname{u}%
_{n}|^{2}\rangle\rangle\right)  \geq\frac{C^{\prime\prime}-C^{\prime}%
M^{-1}-CM^{-1}}{4k^{2}M^{3}}>k^{-2}M^{-3}C_{3},\quad C_{3}>0
\]
si $M\gg1$. D'où la première inégalité de (\ref{E_kcommek^2}).%

\end{proof}

En ignorant les constantes multiplicatives devant les puissances de $\nu$, les
physiciens écrivent le segment $[c_{2}^{-1},c_{1}^{-1}\nu^{-1}]$ comme
$[\nu^{0},\nu^{-1}]$ et l'appellent la \textit{zone inertielle}. Alors, le
théorème \ref{encadrementEk} dit que dans la zone inertielle l'énergie $E_{k}
$ se comporte comme $k^{-2}.$ De même, on appelle le segment $[\nu
^{-1},+\infty)$ la \textit{zone dissipative}, et dans ce cas, le théorème
\ref{th26} se traduit par le fait que dans cette zone, l'énergie décroît plus
vite que toute puissance négative de $k$. On renvoie à \cite{Bor} pour une
assertion plus forte, qui justifie plus rigoureusement la définition de ces zones.

\section{Appendice A : Le processus de Wiener standard}

Soit $\mathcal{C}(%
\mathbb{R}
^{+})$ l'espace des fonctions continues définies sur $%
\mathbb{R}
^{+}=[0,+\infty\lbrack$, muni de la distance :%
\[
d(u,v)=\sum_{n=1}^{+\infty}2^{-n}\frac{\underset{0\leq t\leq n}{\max
}|u(t)-v(t)|}{1+\underset{0\leq t\leq n}{\max}|u(t)-v(t)|},
\]
$(\mathcal{C}(%
\mathbb{R}
^{+}),d)$ est un espace métrique complet et séparable. Soit $\mathcal{B}$ la
tribu Borélienne de $\mathcal{C}(%
\mathbb{R}
^{+})$ et soit $(\Omega,\mathcal{F},\mathbb{P})$ un espace probabilisé standard
\cite{Dud}. Par exemple, $\Omega=[0,1]$ muni de la tribu Borélienne
$\mathcal{F}$ et de la mesure de Lebesgue.

Considérons l'application mesurable $w:(\Omega,\mathcal{F})\rightarrow\left(
\mathcal{C}(%
\mathbb{R}
^{+}\right)  ,\mathcal{B}),$ $\omega\longmapsto w^{\omega}$, telle que :

\begin{enumerate}
\item $w^{\omega}(0)=0$,
\item pour tout $0\leq t_{1}\leq\ldots\leq t_{N},$ la variable aléatoire
$\omega\longmapsto(w^{\omega}(t_{1}),\ldots,w^{\omega}(t_{N}))\in%
\mathbb{R}
^{N}$ est Gaussienne,

\item pour chaque $t\geq0$, $\mathbb{E}[w^{\omega}(t)]=0$ et $\mathbb{E}%
[w^{\omega}(t)^{2}]=t$, et 

\item si $0\leq t_{1}\leq t_{2}\leq t_{3}\leq t_{4}$ $,$ alors les variables
aléatoires $w(t_{4})-w(t_{3})$ et $w(t_{2})-w(t_{1})$ sont indépendantes.
\end{enumerate}

On appelle une telle application mesurable sur $(\Omega,\mathcal{F}%
,\mathbb{P})$, un \textit{processus de Wiener standard}. Notons que si on pose
$\widetilde{w}(t)=w(\widetilde{t}+t)-w(\widetilde{t})$ pour $\widetilde{t}\geq0$ fixé,
alors $\widetilde{w}$ définit aussi un processus de Wiener standard. Il est
connu qu'on peut construire une famille dénombrable $w_{1},w_{2},\ldots$ de
processus de Wiener standard indépendants \cite{Gall}, \cite{Kuo}.

La célèbre inégalité maximale de Doob \cite{Gall}, \cite{KaS}, \cite{Shir},
appliquée à un processus de Wiener standard $w(t)$ dit que
\begin{equation}
\mathbb{E}[\left(  \sup_{0\leq t\leq T}|w(t)|^{2}\right)  ^{p}]\leq\left(
\frac{p}{p-1}\right)  ^{p}\mathbb{E}[\left(  |w(t)|^{2}\right)  ^{p}%
],\quad\forall p>1.\label{doob}%
\end{equation}
L'inégalité (\ref{doob}) reste vraie si $|w(t)|=\left(  w_{1}(t)^{2}%
+\ldots+w_{N}(t)^{2}\right)  ^{1/2},$ et $w_{1},\ldots,w_{N}$ sont des
processus de Wiener standard indépendants.

\section{Appendice B : Les temps d'atteinte et d'arrêt, et la propriété de
Markov forte}

Soit $t,m\geq0,$ $B_{m}<\infty$ et $%
\operatorname{u}%
(t)\in H^{m},$ une solution de (\ref{B}). Soit $Q\subset H^{m}$ un sous-ensemble fermé, et une constante $T>0$. Notons%
\[
\tau=\tau_{Q,T}^{\omega}=\inf\{0\leq t\leq T:%
\operatorname{u}%
^{\omega}(t)\in Q\}.
\]
Ici, $\tau=T$ si $%
\operatorname{u}%
^{\omega}(t)\notin Q$ pour $t\leq T.$ On appelle $\tau$ le \textit{temps
d'atteinte }(dans l'ensemble fermé $Q$). C'est un cas particulier du
\textit{temps d'arrêt} \cite{Gall}, \cite{Oks}, \cite{KaS}. Notons que
$\tau=0$ si $Q=H^{m},$ et que $\tau=T$ si $Q=\varnothing.$

L'importance de ces concepts vient de la propriété des processus de Markov,
qu'on appelle : \textit{propriété de Markov forte}. Nous la formulons pour une
solution $%
\operatorname{u}%
(t)=%
\operatorname{u}%
^{\omega}(t;%
\operatorname{u}%
_{0}^{\omega})$ de (\ref{B}), où $%
\operatorname{u}%
_{0}$ est une variable aléatoire dans $H^{1}$ indépendante de $\xi^{\omega}$
et telle que $\mathbb{E}[||%
\operatorname{u}%
_{0}||_{1}]<\infty.$ Soit $T_{1}\geq T$ et $f\in\mathcal{C}_{b}(H^{m}),$
alors
\begin{equation}
\mathbb{E}[f(%
\operatorname{u}%
(T_{1})]=\mathbb{E}[S_{T_{1}-\tau_{Q,T}^{\omega}}f(%
\operatorname{u}%
(\tau_{Q,T}^{\omega}))].\label{mf}%
\end{equation}
Ici, la fonction $S_{t}f(v)$ est défini dans (\ref{Stdef}) où nous avons
remplacé $t$ par $T_{1}-\tau_{Q,T}^{\omega}$, et $v$ par $%
\operatorname{u}%
(\tau_{Q,T}^{\omega})$. Notons que si $\tau=T$ et $%
\operatorname{u}%
(0)=const,$ alors (\ref{mf}) coïncide avec la propriété de Kolmogorov-Chapman
(\ref{m1}).

\section{Notation}

Soit $x\in\mathbb{S}^{1}$, $p\in\lbrack1,\infty]$ et $m\in%
\mathbb{R}
$. Pour une fonction $%
\operatorname{u}%
(x)$, nous notons par $|%
\operatorname{u}%
|_{p}$ sa norme dans l'espace de Lebesgue $L_{p}(\mathbb{S}^{1})$, et par $||%
\operatorname{u}%
||_{m}$ sa norme de Sobolev homogène d'ordre $m$. Si $m=0$, nous écrivons $||%
\operatorname{u}%
||:=||%
\operatorname{u}%
||_{0}=|%
\operatorname{u}%
|_{2}$.

Soit $\lambda>0$. Nous désignons par $\lambda\gg1$ (resp. $\lambda\ll1$) si
$\lambda$ est suffisamment grand (resp. $\lambda$ est suffisamment petit).

L'abbreviation p.p désigne "presque partout".

\end{document}